\documentclass[twoside, 11pt, reqno]{amsart}

\usepackage{amsfonts}
\usepackage[utf8]{inputenc}
\usepackage{array}

\usepackage[english]{babel}
\usepackage{latexsym}
\usepackage{amscd}
\usepackage{amsmath}
\usepackage{amssymb}
\usepackage{amsthm}
\usepackage{graphicx}
\usepackage{mathrsfs}
\usepackage{mathtools}
\usepackage{pict2e}
\usepackage{comment}
\usepackage{stackrel} 
\usepackage{wasysym}
\usepackage[all]{xy}
\usepackage[dvipsnames]{xcolor}
\usepackage[colorlinks,final,hyperindex]{hyperref}
\usepackage[noabbrev,capitalize]{cleveref}
\usepackage{tikz}
\usepackage{tikz-cd}
\usetikzlibrary{decorations.pathmorphing,decorations.markings,arrows,calc,shapes.geometric,arrows.meta,positioning}

\usepackage{geometry}
\usepackage{enumitem}

\setlist[enumerate]{font=\normalfont}

\allowdisplaybreaks

\newtheorem*{introtransmission}{Theorem~\ref{thm formality transmission}}
\newtheorem*{introinfinity}{Corollary~\ref{appli B}}
\newtheorem*{intropuncturing}{Corollary~\ref{formality under puncturing}}
\newtheorem*{introintrinsic}{Theorem~\ref{intrinsic}}
\newtheorem*{introchange}{Corollaries~\ref{change en char p} and~\ref{zinbiel en char p}}

\newtheorem*{intropddomination}{Corollary~\ref{cor:pd-domination}}

\newtheorem{lemma}{Lemma}[section]
\newtheorem{theorem}[lemma]{Theorem}

\newtheorem{corollary}[lemma]{Corollary}
\newtheorem{proposition}[lemma]{Proposition}

\newtheorem*{notation}{Notation}
\numberwithin{equation}{section}

\theoremstyle{definition}
\newtheorem{definition}[lemma]{Definition}
\newtheorem{remark}[lemma]{\sc Remark}
\newtheorem{example}[lemma]{\sc Example}

\newtheorem{assumption}[lemma]{\sc Assumptions}

\newcommand{\nocontentsline}[3]{}
\newcommand{\tocless}[2]{\bgroup\let\addcontentsline=\nocontentsline#1{#2}\egroup}

\hypersetup{citecolor=BleuMinuit, linkcolor=bordeau, filecolor=black, urlcolor=BleuMinuit}

\title{Formality transmission}
\author{Coline Emprin}
\address{Coline Emprin, Department of Mathematics, Stockholm University, SE-106 91, Sweden}
\email{\noindent \href{email:coline.emprin@math.su.se}{coline.emprin@math.su.se}}

\date{\today}

\subjclass[2020]{18N40, 18M85, 18M70, 17B55, 55P62}

\usepackage{etoc}

\newcommand{\sommairesection}{%
	\begingroup
	\etocsettocstyle{}{}
	\etocsetnexttocdepth{subsection}
	
	\etocsetstyle{subsection}
	{}
	{}
	{  \etocifnumbered{\etocnumber\quad}{}%
		\etocname\par}
	{}
	
	\localtableofcontents
	\endgroup
}
\begin{document}
\definecolor{red}{RGB}{230,97,0}
\definecolor{blue}{RGB}{93,58,155}
\definecolor{Chocolat}{rgb}{0.36, 0.2, 0.09}
\definecolor{BleuTresFonce}{rgb}{0.215, 0.215, 0.36}
\definecolor{BleuMinuit}{RGB}{0, 51, 102}
\definecolor{bordeau}{rgb}{0.5,0,0}
\definecolor{BORDEAU}{rgb}{0.5,0,0}
\definecolor{turquoise}{RGB}{6, 62, 62}
\definecolor{rose}{RGB}{235, 62, 124}

\newcommand{\coline}[1]{\textcolor{rose}{#1}}

\newcommand{\hooklongrightarrow}{\lhook\joinrel\longrightarrow}


\newcommand{\chdiscr}{\mathsf{discr.\ Ch}}
\newcommand{\sSe}{\mathsf{sSet}}
\newcommand{\sLialg}{\ensuremath{\mathcal{sL}_\infty\text{-}\,\mathsf{alg}}}
\newcommand{\isLialg}{\ensuremath{\infty\text{-}\,\mathcal{sL}_\infty\text{-}\,\mathsf{alg}}}
\newcommand{\sLiealg}{\ensuremath{\mathcal{s}\lie\,\text{-}\,\mathsf{alg}}}
\def\cD{\Delta}

\newcommand{\ucomnalg}{\ensuremath{\mathrm{uCom}_{\leslant 0}\text{-}\,\mathsf{alg}}}
\newcommand{\fnQsp}{\ensuremath{\mathsf{ft}\mathbb{Q}\text{-}\mathsf{ho}(\mathsf{sSet})}}
\newcommand{\fnsp}{\ensuremath{\mathsf{fn}\text{-}\mathsf{Sp}}}
\newcommand{\nQsp}{\ensuremath{\mathsf{n}\mathbb{Q}\text{-}\mathsf{Sp}}}
\newcommand{\nsp}{\ensuremath{\mathsf{n}\text{-}\mathsf{Sp}}}
\newcommand{\fQalg}{\ensuremath{\mathsf{ft}\text{-}\mathsf{ho}(\mathrm{uCom}_{\leqslant 0}\text{-}\,\mathsf{alg})}}

\newcommand{\coker}{\operatorname{coker}}
\newcommand{\Ran}{\operatorname{Ran}}
\newcommand{\ho}{\operatorname{ho}}
\newcommand{\Mal}{\operatorname{Mal}}
\newcommand{\Loc}{\operatorname{Loc}}


\newcommand{\sLi}{\ensuremath{\mathcal{sL}_\infty}}
\newcommand{\ccoLi}{\ensuremath{\mathrm{cBCom}}}
\newcommand{\sLie}{\ensuremath{\mathcal{s}\mathrm{Lie}}}
\newcommand{\com}{\ensuremath{\mathrm{Com}}}
\newcommand{\ucom}{\ensuremath{\mathrm{uCom}}}
\newcommand{\Cobar}{\ensuremath{\Omega}}
\newcommand{\hatCobar}{\ensuremath{\widehat{\Omega}}}
\renewcommand{\Bar}{\ensuremath{\mathrm{B}}}
\newcommand{\hatBar}{\ensuremath{\widehat{\mathrm{B}}}}
\newcommand{\lie}{\ensuremath{\mathrm{Lie}}}
\newcommand{\uhocom}{\ensuremath{\mathrm{u}\Omega\mathrm{BCom}}}

\newcommand{\Sh}{\ensuremath{\mathrm{Sh}}}


\newcommand{\g}{\ensuremath{\mathfrak{g}}}
\newcommand{\wsLi}{\ensuremath{\widehat{\mathcal{sL}_\infty}}}


\newcommand{\Rh}{\ensuremath{\mathrm{R^h}}}
\renewcommand{\L}{\ensuremath{\mathscr{L}}}
\newcommand{\Li}{\mathfrak{L}}
\newcommand{\APL}{\mathrm{A_{PL}}}
\newcommand{\CPL}{\mathrm{C_{PL}}}


\newcommand{\antishriek}{\text{\raisebox{\depth}{\textexclamdown}}}
\newcommand{\RT}{\mathrm{RT}}
\newcommand{\LRT}{\mathrm{LRT}}
\newcommand{\Sy}{\mathbb{S}}
\renewcommand{\d}{\ensuremath{\mathrm{d}}}
\newcommand{\PP}{\ensuremath{\mathrm{P}}}
\def\Ho#1#2{\Lambda^{#2}_{#1}}
\def\De#1{\Delta^{#1}}
\newcommand{\NN}{\mathbb{N}}
\newcommand{\RR}{\mathbb{R}}
\def\BCH{\mathrm{BCH}}
\newcommand{\wPT}{\ensuremath{\mathrm{wPT}}}
\newcommand{\oPT}{\ensuremath{\overline{\mathrm{PT}}}}
\newcommand{\PaRT}{\ensuremath{\mathrm{PaRT}}}
\newcommand{\PaPT}{\ensuremath{\mathrm{PaPT}}}
\newcommand{\PaPRT}{\ensuremath{\mathrm{PaPRT}}}
\def\rmC{\mathrm{C}}
\newcommand{\berglund}{\ensuremath{\mathcal{B}}}
\def\hot{\widehat{\otimes}} 
\def\whk{\widehat{k}}

\def\colim{\mathop{\mathrm{colim}}}

\newcommand{\CC}{\ensuremath{\mathrm{CC}_\infty}}


\newcommand{\Lalg}{\ensuremath{\mathscr{L}_\infty\text{-}\mathsf{alg}}}

\newcommand{\F}{\ensuremath{\mathrm{F}}}
\newcommand{\h}{\ensuremath{\mathfrak{h}}}

\newcommand{\QQ}{\ensuremath{\mathbb{Q}}}
\newcommand{\D}{\ensuremath{\mathscr{D}}}
\renewcommand{\P}{\ensuremath{\mathcal{P}}}
\newcommand{\C}{\ensuremath{\mathcal{C}}}
\newcommand{\VdL}{\ensuremath{\mathrm{VdL}}}
\newcommand{\ch}{\ensuremath{\mathrm{Ch}}}
\newcommand{\End}{\ensuremath{\mathrm{End}}}
\newcommand{\Aut}{\ensuremath{\mathrm{Aut}}}
\newcommand{\eend}{\ensuremath{\mathrm{end}}}
\newcommand{\susp}{\ensuremath{\mathscr{S}}}
\newcommand{\T}{\ensuremath{\mathcal{T}}}
\newcommand{\vdl}{\ensuremath{\mathrm{VdL}}}
\newcommand{\Tw}{\ensuremath{\mathrm{Tw}}}
\newcommand{\Hom}{\ensuremath{\mathrm{Hom}}}
\renewcommand{\S}{\ensuremath{\mathbb{S}}}
\renewcommand{\k}{\ensuremath{\mathbb{K}}}
\newcommand{\id}{\ensuremath{\mathrm{id}}}
\newcommand{\MC}{\ensuremath{\mathrm{MC}}}
\newcommand{\mc}{\ensuremath{\mathfrak{mc}}}
\newcommand{\mclie}{\ensuremath{\overline{\mathfrak{mc}}}}
\newcommand{\dgl}{\ensuremath{\mathsf{dgLie}}}

\newcommand{\B}{\mathcal{B}}
\newcommand{\Z}{\mathbb{Z}}

\newcommand{\ad}{\operatorname{ad}}
\newcommand{\PTt}{\ensuremath{\widetilde{\mathrm{PT}}}}
\newcommand{\PT}{\ensuremath{\mathrm{PT}}}

\newcommand{\A}{\ensuremath{\mathrm{A}}}

\makeatletter

\newcommand{\R}{\mathbb{R}}
\newcommand{\K}{\mathcal{K}}
\newcommand{\N}{\mathbb{N}}
\newcommand{\calA}{\mathcal{A}}
\newcommand{\calB}{\mathcal{B}}
\newcommand{\calC}{\mathcal{C}}
\newcommand{\calD}{\mathcal{D}}
\newcommand{\calE}{\mathcal{E}}
\newcommand{\calF}{\mathcal{F}}
\newcommand{\calG}{\mathcal{G}}
\newcommand{\calH}{\mathcal{H}}
\newcommand{\calI}{\mathcal{I}}
\newcommand{\calJ}{\mathcal{J}}
\newcommand{\calK}{\mathcal{K}}
\newcommand{\calL}{\mathcal{L}}
\newcommand{\calM}{\mathcal{M}}
\newcommand{\calN}{\mathcal{N}}
\newcommand{\calO}{\mathcal{O}}
\newcommand{\calP}{\mathcal{P}}
\newcommand{\calQ}{\mathcal{Q}}
\newcommand{\calR}{\mathcal{R}}
\newcommand{\calS}{\mathcal{S}}
\newcommand{\calT}{\mathcal{T}}
\newcommand{\calU}{\mathcal{U}}
\newcommand{\calV}{\mathcal{V}}
\newcommand{\calW}{\mathcal{W}}
\newcommand{\calX}{\mathcal{X}}
\newcommand{\calY}{\mathcal{Y}}
\newcommand{\calZ}{\mathcal{Z}}
\newcommand{\kk}{{\Bbbk}}

\newcommand{\ev}{\mathop{\mathrm{ev}}\nolimits}
\newcommand{\co}{\mathop{\mathrm{co}}\nolimits}
\newcommand{\pt}{\mathop{\mathrm{pt}}\nolimits}
\newcommand{\wt}{\mathop{\mathrm{wt}}\nolimits}
\newcommand{\pCY}{\mathop{\mathrm{pCY}}\nolimits}
\newcommand{\gr}{\mathop{\mathrm{gr}}\nolimits}

\newcommand{\Imm}{\mathop{\mathrm{Im}}\nolimits}

\tikzset{>={Stealth[scale=1.2]}}
\tikzset{->-/.style={decoration={
			markings,
			mark=at position #1 with {\arrow{>}}},postaction={decorate}}}
\tikzset{-w-/.style={decoration={
			markings,
			mark=at position #1 with {\arrow{Stealth[fill=white,scale=1.4]}}},postaction={decorate}}}
\tikzset{->-/.default=0.65}
\tikzset{-w-/.default=0.65}
\tikzstyle{bullet}=[circle,fill=black,inner sep=0.5mm]
\tikzstyle{circ}=[circle,draw=black,fill=white,inner sep=0.5mm]
\tikzstyle{vertex}=[circle,draw=black,thick,inner sep=0.5mm]
\tikzstyle{dot}=[draw,circle,fill=black,minimum size=0.5mm,inner sep = 0mm, outer sep = 0mm]
\tikzset{darrow/.style={double distance = 4pt,>={Implies},->},
	darrowthin/.style={double equal sign distance,>={Implies},->},
	tarrow/.style={-,preaction={draw,darrow}},
	qarrow/.style={preaction={draw,darrow,shorten >=0pt},shorten >=1pt,-,double,double
		distance=0.2pt}}
\newcommand\tikcirc[1][2.5]{\tikz[baseline=-#1]{\draw[thick](0,0)circle[radius=#1mm];}}
\newcommand{\tikzfig}[1]{\begin{tikzpicture}[auto,baseline={([yshift=-.5ex]current bounding box.center)}]#1\end{tikzpicture}}
\usetikzlibrary{decorations.pathreplacing}

\newcommand{\DD}{\EuScript D}
\newcommand{\BB}{\EuScript B}
\newcommand{\OO}{\EuScript O}
\newcommand{\KK}{\EuScript K}
\newcommand{\YY}{\EuScript Y}
\newcommand\nn         {\nonumber \\}
\newcommand{\pf}{{\it Proof.}\hspace{2ex}}
\newcommand{\apf}{{\it Another Proof.}\hspace{2ex}}
\newcommand{\epf}{\hspace*{\fill}\emph{$\square$}}
\newcommand{\halmos}{\rule{1ex}{1.4ex}}
\newcommand{\pfbox}{\hspace*{\fill}\emph{$\halmos$}}

\newcommand{\ac}{\mathop{ac}\nolimits}
\newcommand{\sC}{\mathop{s^{-1}\overline{C}}\nolimits}
\newcommand{\sA}{\mathop{s\overline{A}}\nolimits}
\newcommand{\Rep}{\mathop{\mathrm{Rep}}\nolimits}
\newcommand{\Span}{\mathop{\mathrm{Span}}\nolimits}
\newcommand{\proj}{\mathop{\mathrm{proj}}\nolimits}
\newcommand{\PHom}{\mathop{\mathrm{PHom}}\nolimits}
\newcommand{\Dim}{\mathop{\mathrm{Dim}}\nolimits}

\newcommand{\Aus}{\mathop{\mathrm{Aus}}\nolimits}
\newcommand{\Ann}{\mathop{\mathrm{Ann}}\nolimits}
\newcommand{\APC}{\mathop{\mathrm{APC}}\nolimits}
\newcommand{\Barr}{\mathop{\mathrm{Bar}}\nolimits}
\newcommand{\Cone}{\mathop{\mathrm{Cone}}\nolimits}
\newcommand{\modu}{\mathop{\mathrm{mod}}\nolimits}
\newcommand{\DGMod}{\mathop{\mathrm{DGMod}}\nolimits}
\newcommand{\soc}{\mathop{\mathrm{soc}}\nolimits}
\newcommand{\dg}{\mathop{\mathrm{dg}}\nolimits}
\newcommand{\red}{\mathop{\mathrm{red}}\nolimits}
\newcommand{\Map}{\mathop{\mathrm{Map}}\nolimits}
\newcommand{\inj}{\mathop{\mathrm{inj}}\nolimits}
\newcommand{\op}{\mathop{\mathrm{op}}\nolimits}
\newcommand\M          {{\mathcal{M}}}
\newcommand{\Ker}{\mathop{\mathrm{Ker}}\nolimits}
\newcommand{\Tor}{\mathop{\mathrm{Tor}}\nolimits}
\newcommand{\Tot}{\mathop{\mathrm{Tot}}\nolimits}
\newcommand{\Ext}{\mathop{\mathrm{Ext}}\nolimits}
\newcommand{\sg}{\mathop{\mathrm{sg}}\nolimits}
\newcommand{\Sl}{\mathop{\mathfrak{sl}}\nolimits}
\newcommand{\nc}{\mathop{\mathrm{nc}}\nolimits}
\newcommand{\Tr}{\mathop{\mathrm{Tr}}\nolimits}
\newcommand{\Irr}{\mathop{\mathrm{Irr}}\nolimits}
\newcommand{\HC}{\mathop{\mathrm{HC}}\nolimits}
\newcommand{\HH}{\mathop{\mathrm{HH}}\nolimits}
\newcommand{\THH}{\mathop{\mathrm{TH}}\nolimits}
\newcommand{\Perf}{\mathop{\mathrm{Perf}}\nolimits}
\newcommand{\del}{\partial}

\newcommand{\verteq}{\rotatebox{90}{$\;\;=\;\;$}}

\begin{abstract}
Across mathematics, formality results are usually established through general criteria rather than by constructing explicit equivalences. These criteria include domination, puncturing,   highly-connected formality, intrinsic formality, formality descent,  formality in families, purity, etc. In this article, we prove that all these criteria are instances of a single theorem,
\emph{formality transmission}. This unifying framework allows us
to extend these criteria in several directions, beyond characteristic zero and to differential graded algebras over operads or properads, leading to diverse new formality results.
\end{abstract}

\maketitle
	
\makeatletter
\def\@tocline#1#2#3#4#5#6#7{\relax
	\ifnum #1>\c@tocdepth 
	\else
	\par \addpenalty\@secpenalty\addvspace{#2}%
	\begingroup \hyphenpenalty\@M
	\@ifempty{#4}{%
		\@tempdima\csname r@tocindent\number#1\endcsname\relax
	}{%
		\@tempdima#4\relax
	}%
	\parindent\z@ \leftskip#3\relax \advance\leftskip\@tempdima\relax
	\rightskip\@pnumwidth plus4em \parfillskip-\@pnumwidth
	#5\leavevmode\hskip-\@tempdima
	\ifcase #1
	\or\or \hskip 1em \or \hskip 2em \else \hskip 3em \fi%
	#6\nobreak\relax
	\hfill\hbox to\@pnumwidth{\@tocpagenum{#7}}\par
	\nobreak
	\endgroup
	\fi}

\newcommand{\enableopenany}{%
	\@openrightfalse%
}
\makeatother
	
	\setcounter{tocdepth}{1}
	\etockeeporiginaltableofcontents
	\tableofcontents

\section*{\textcolor{bordeau}{Introduction}}

 Let $R$ be a commutative ring and let $\P$ denote a type of
algebraic structure, such as an associative algebra, a Frobenius algebra,
or a pre-Calabi--Yau algebra. A differential graded (dg) $\P$-algebra
$(A,\varphi)$ is \emph{formal} if it can be connected to its cohomology,
equipped with the induced structure, by a zig-zag of quasi-isomorphisms:
\[
(A,\varphi)
\overset{\sim}{\longleftarrow}\cdot
\overset{\sim}{\longrightarrow}\cdots
\overset{\sim}{\longleftarrow}\cdot
\overset{\sim}{\longrightarrow}(H(A),\varphi_*).
\]
Here a quasi-isomorphism is a morphism inducing an isomorphism in
cohomology. Formality plays a role in a broad range of mathematical
problems, including:
\begin{enumerate}
	\item purity in algebraic geometry~\cite{Deligne80};
	\item deformation quantization of Poisson manifolds in mathematical
	physics~\cite{Kon97,Hinich03bis};
	\item Koszul duality for the category $\mathcal O$ of a reductive group
	in representation theory~\cite{RSW14};
	\item the existence of a non-Sasakian $K$-contact manifold in contact
	geometry~\cite{BFVT16};
	\item splitting the de Rham complex of Drinfeld space in the $p$-adic
	Langlands program~\cite{BQ24}.
\end{enumerate} The notion originated in rational homotopy theory. A space $X$ is called \emph{formal over $\mathbb Q$} if its rational singular cochains are
formal as an associative dg algebra. For a connected nilpotent space of
finite type, this means that its rational homotopy type, and
hence its invariants, can be recovered from its cohomology algebra.
More generally, we call $X$ \emph{formal over $R$} if
$C^*_{\mathrm{sing}}(X;R)$ is formal as an associative dg algebra.  

\medskip

\noindent A central result is the rational formality of compact K\"ahler manifolds, proved by Deligne, Griffiths, Morgan, and Sullivan~\cite{DGMS75}. Their proof uses Hodge theory to construct an explicit zig-zag of
quasi-isomorphisms. This is a distinctive
feature: later formality results were often established without directly constructing such a zig-zag. An alternative approach is to appeal to formality criteria which include:  domination, puncturing and high connectivity, intrinsic formality, descent and families, lifts of homology automorphisms, and changes of algebraic structure.

\noindent These criteria share a common principle: the formality of a structure is deduced from that of another suitably related structure. In this article, we make this principle precise by showing that all these criteria are instances of a single theorem, \emph{formality transmission}. As explained below, this framework allows us to extend these results to other coefficient rings and to a broader range of algebraic structures. Hence, one can ask for the formality of spaces,  algebras encoded by operads, but also operads themselves, structures involving operations with  several outputs such as dg pre-Calabi-Yau algebras etc. Such structures are encoded by generalizations of operads: colored operads and properads. 

\bigskip 

\paragraph{\bf Hypotheses} Let $A$ be a chain complex over  $R$. Let $\C$ be a reduced weight-graded dg coproperad (resp. colored dg cooperad), and let  $(A, \varphi)$ be an $\Omega \C$-algebra structure.

\bigskip

\noindent Formality transmission rests on a deformation-theoretic approach to formality. Under suitable hypotheses on $R$  and $\C$, homotopy transfer yields an $\Omega\C$-algebra structure on $H(A)$,
\[
\varphi_t=\varphi_*+\varphi^{(2)}_t+\varphi^{(3)}_t+\cdots
\]
 together with $\infty$-quasi-isomorphisms relating it to
$(A,\varphi)$, see~\cite{LodayVallette12,berglund,PHC,LRL23} and the references therein. The transferred structure extends the induced structure
$\varphi_*$. Formality is thus governed by whether its higher
operations can be removed. An algebra admitting such a transferred structure is said to be
\begin{itemize}
	\item[$\centerdot$] \emph{gauge formal} if $\varphi_t$ and $\varphi_*$
	are related by an $\infty$-isomorphism;
	\item[$\centerdot$] \emph{gauge $n$-formal}, for $n\geqslant1$, if
	$\varphi_t$ is related by an $\infty$-isomorphism to a structure
	\[
	\psi=\varphi_*+\psi^{(n+2)}+\psi^{(n+3)}+\cdots.
	\]
\end{itemize}
Over a characteristic zero field, formality and gauge formality
coincide. Over a general ring, the comparison requires additional
hypotheses, see \cite[Proposition~2.18]{CE24b}. Algebraic structures on the homology are Maurer--Cartan elements of the
convolution algebra $\mathfrak g_{H(A)}$, and their deformations are
governed by its twisted complex~\cite{graphexp}:
\[
D_A
=\bigl(\mathfrak g_{H(A)},d+[\varphi_t,-]\bigr).
\]
This recovers the Hochschild, Harrison, and
Chevalley--Eilenberg deformation complexes in the associative,
commutative, and Lie cases, respectively. We write $E(\C,H(A),\varphi_t)_r^{p,q}$  for the spectral sequence of $D_A$, where $p$ is the weight,
and $q$ is the total homological degree. In this framework, Kaledin classes~\cite{Kal07,Lun07,MR19,Kaledin} \[
K_\varphi 
\in H_{-1}\!\left(\mathfrak g_{H(A)}[[\hbar]]^{\varphi_t}\right) 
\] provide a complete
obstruction to gauge formality under the stated coefficient conditions. Over a $\mathbb Q$-algebra, $K_\varphi=0$ if and only if
$(A,\varphi)$ is gauge formal. More generally, its truncation $K_\varphi^n$ modulo $\hbar^n$ vanishes if and only if the algebra is gauge $n$-formal,
provided that $n!\in R^\times$.
Formality transmission then rests on the following detection principle: a Kaledin class vanishes if its image under a filtered comparison vanishes and the maps on the relevant spectral-sequence terms are injective.

\begin{introtransmission}[Formality transmission]
Suppose that $(A,\varphi)$ admits a transferred
	structure and fix $	f\colon D_A\longrightarrow(\mathfrak h,d')$ a filtered chain map to a weight-graded complex. For $k\geqslant2$, set
	\[
	\theta_k:=E_k^{k,-1}(f)\colon
	E_k^{k,-1}(D_A)\longrightarrow E_k^{k,-1}(\mathfrak h).
	\]
	\begin{enumerate}
		\item If $R$ is a $\mathbb Q$-algebra, $f_*(K_\varphi)=0$, and
		$\theta_k$ is injective for every $k\geqslant2$, then $(A,\varphi)$
		is gauge formal.
		\item Let $n\geqslant1$ with $n!\in R^\times$. If
		$f_*(K_\varphi^n)=0$ and $\theta_k$ is injective for
		$2\leqslant k\leqslant n+1$, then $(A,\varphi)$ is gauge $n$-formal.
	\end{enumerate}
\end{introtransmission}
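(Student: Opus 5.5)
We reduce to the Kaledin-class criterion: over a $\mathbb Q$-algebra, $(A,\varphi)$ is gauge formal if and only if $K_\varphi=0$, and for $n!\in R^\times$ it is gauge $n$-formal if and only if $K_\varphi^n=0$. So it suffices to show that $f_*(K_\varphi)=0$ together with the injectivity of the $\theta_k$ forces $K_\varphi=0$, and similarly for the truncation modulo $\hbar^n$. For this we work in the $\hbar$-adic complexes. The class $K_\varphi$ lives in $H_{-1}(\mathfrak g_{H(A)}[[\hbar]]^{\varphi_t})$. Here $\mathfrak g[[\hbar]]^{\varphi_t}$ is the complex with differential $d+\hbar$-rescaled twisting by $\varphi_t$, obtained by weighting the weight-$p$ part of $\varphi_t$ by $\hbar^{p-1}$. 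A representative is
\[
K_\varphi=\Bigl[\sum_{p\geqslant 2}(p-1)\,\varphi_t^{(p)}\hbar^{p-1}\Bigr],
\]
up to the normalisation used in the paper. The chain map $f$ is filtered and the target is weight-graded, so $f$ induces a chain map $f[[\hbar]]$ between the $\hbar$-rescaled complexes, using the weight grading of $\mathfrak h$ to rescale $d'$. This map sends $K_\varphi$ to $f_*(K_\varphi)$.

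The key identification is with the spectral sequence. For the weight filtration, the homology of the Rees-type complex $\mathfrak g[[\hbar]]^{\varphi_t}$ is the standard algebraic model of $E(\C,H(A),\varphi_t)$. Concretely, an element $x=\sum x_p\hbar^{p}$ of leading weight $k$ that is a cycle modulo $\hbar^{r}$ gives a class in $E_r^{k,-1}$. Its leading coefficient survives to $E_r$ precisely when the lower-order corrections exist.

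We argue by induction on the leading $\hbar$-order of a representative of $K_\varphi$. Write $K_\varphi$ with a representative whose first nonzero term has weight $k\geqslant 2$. Such a term defines a class $[x_k]\in E_k^{k,-1}(D_A)$: the weight-$k$ leading term survives to page $k$ because the representative is a cycle in the $\hbar$-complex. Since $f_*(K_\varphi)=0$, the image $f(x)$ is a boundary $d'_\hbar(y)$ in $\mathfrak h[[\hbar]]$. Reading this equation in leading order, and using that $\mathfrak h$ is weight-graded (so its spectral sequence is computed directly), we get $\theta_k([x_k])=0$ in $E_k^{k,-1}(\mathfrak h)$. Injectivity of $\theta_k$ gives $[x_k]=0$ in $E_k^{k,-1}(D_A)$. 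This means $x_k$ is killed by some differential $d_j$ with $j<k$, or is a boundary at an earlier page. Translating back, there is $z$ in $\mathfrak g[[\hbar]]$ such that $x-d_\hbar z$ has leading order at least $k+1$. Iterating produces a convergent sequence of corrections, since the complex is $\hbar$-adically complete and weights are bounded below because $\C$ is reduced. Hence $K_\varphi=0$, which proves (1). For (2) we run the same induction modulo $\hbar^n$. Then only the pages $2\leqslant k\leqslant n+1$ intervene (the class starts in $\hbar$-degree $1$, matching weight $2$), and $n!\in R^\times$ is exactly what the criterion for $K_\varphi^n$ requires.

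The main obstacle is the bookkeeping in the identification step. We must make precise that vanishing of $f_*(K_\varphi)$ in $\hbar$-adic homology, possibly with a bounding element $y$ of arbitrary leading order, really yields vanishing of the leading term on the page $E_k$ and not merely on $E_\infty$. We must also check that the correction $z$ can be chosen so that the new leading term stays in the right bidegree $(k',-1)$ with $k'>k$. The plan is to handle both points by working with the explicit description of $E_r^{p,q}$ as $Z_r^{p,q}/B_r^{p,q}$ and matching $\hbar$-powers with filtration shifts. Convergence of the iteration also needs completeness of the filtration on $\mathfrak g_{H(A)}$, which holds since it is a product over weights.
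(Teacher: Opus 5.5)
Your finite induction works, and it is a genuinely different route from the paper's. Your convergence claim for (1), however, is false as stated.

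\textbf{The finite induction and how it compares.} The correct representative is $\partial_\hbar\Phi=\sum_{p\geqslant2}(p-1)\varphi_t^{(p)}\hbar^{p-2}$, with exponent $p-2$ rather than $p-1$. With your exponent, truncation mod $\hbar^n$ would not match the pages $2,\dots,n+1$.

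Grade $\mathfrak g[[\hbar]]$ by weight minus $\hbar$-exponent. Then $d^\Phi$, $\mathcal D_0(f)$ and $\mathcal D_1(d')$ are homogeneous. The component containing $\partial_\hbar\Phi$ identifies with $\mathcal F^2\mathfrak g$, and the component below it with $\mathcal F^1\mathfrak g$, with $d^{\varphi_t}$ as differential. You do not need the inaccurate claim that the homology of the $\hbar$-complex ``is'' the spectral sequence.

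In these terms, $K^n_{\varphi_t}=0$ if and only if $x:=\sum_{p\geqslant 2}(p-1)\varphi_t^{(p)}\in d^{\varphi_t}(\mathfrak g_0)+\mathcal F^{n+2}$, and likewise for $f_*(K^n_{\varphi_t})$ in $\mathfrak h$. Suppose $x_k\in\mathcal F^k$ is a cycle differing from $x$ by a boundary, with $k\leqslant n+1$.
\begin{itemize}
\item Then $f(x_k)=d'w+u$ with $u\in\mathcal F^{n+2}\subseteq\mathcal F^{k+1}$ closed and $w\in\mathcal F^1$ satisfying $d'w\in\mathcal F^k$, so $w\in Z^1_{k-1}$.
\item Hence $\theta_k[x_k]=0$ in $E_k^{k,-1}(\mathfrak h)=Z^k_k/(Z^{k+1}_{k-1}+d'Z^1_{k-1})$.
\item Injectivity gives $x_k=a+d^{\varphi_t}b$ with $a\in\mathcal F^{k+1}$ and $b\in Z^1_{k-1}$.
\end{itemize}
Running this for $k=2,\dots,n+1$ yields $K^n_{\varphi_t}=0$ without any division. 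Theorem~\ref{B prop} then gives (2).

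The paper argues differently, by minimal counterexample. It first replaces $\varphi_t$ by an isotopic $\psi$ with $\psi^{(2)}=\cdots=\psi^{(m)}=0$. Then $E_2=\cdots=E_{m+1}$ and the truncated class is the single term $[m\psi^{(m+1)}\hbar^{m-1}]$, which it transports back via $\mathrm{Ad}$ and Lemma~\ref{piquant prop}. Your chain-level version avoids that gauge and isotopy machinery and works in any complete filtered complex.

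\textbf{The gap: convergence for (1).} The correction $b$ lies only in $Z^1_{k-1}$, not in a filtration level that grows with $k$. The class $[x_k]$ may be killed by a long differential $d_{k-1}$ starting in weight one. In that case $b$ has a weight-one component, which has $\hbar$-order $0$ in its component.

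So $\sum_k b_k$ need not converge, either weight-adically or $\hbar$-adically; completeness and reducedness of $\C$ do not help. For a general complete filtered complex, $x\in\bigcap_n(\operatorname{im}d^{\varphi_t}+\mathcal F^n)$ does not imply $x\in\operatorname{im}d^{\varphi_t}$.

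To close the gap, deduce (1) from (2), as the paper does:
\begin{itemize}
\item $f_*(K_{\varphi_t})=0$ forces $f_*(K^n_{\varphi_t})=0$ for every $n$.
\item Your finite induction then gives $K^n_{\varphi_t}=0$ for every $n$.
\item Over a $\mathbb Q$-algebra, Theorem~\ref{Kaledin wg}~(3), applied via Theorem~\ref{graphexp2}, gives $K_{\varphi_t}=0$.
\end{itemize}
That theorem is exactly where the paper's normalization is indispensable. Once the lower weights are gauged away, the killing element can be chosen in exact weight $m$, so the successive gauge transformations converge.
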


\noindent A direct corollary addresses the following question:   let
$f\colon(A,\varphi)\rightsquigarrow(B,\psi)$ be an $\infty$-morphism,
when does formality of one algebra imply that of the other? This leads to the following criterion involving the properadic spectral sequences $E(\C, A, B, f)$ introduced in Section~\ref{section 2}. For the following corollary, we assume the hypotheses of
Theorem~\ref{homotopy invariance}, so that the spectral sequences of the original
algebras identify with those of their transferred structures.

\begin{introinfinity}
	Let $f\colon(A,\varphi)\rightsquigarrow(B,\psi)$ be an
	$\infty$-morphism of $\Omega\C$-algebras admitting transferred
	structures. For $k\geqslant2$, consider the induced maps
	\[
	E(\C,A,\varphi)_k^{k,-1}
	\xrightarrow{\ \theta_k\ }
	E(\C,A,B,f)_k^{k,-1}
	\xleftarrow{\ \widetilde\theta_k\ }
	E(\C,B,\psi)_k^{k,-1} \ .
	\] 	\begin{enumerate}
		\item If $R$ is a $\mathbb{Q}$-algebra and $\theta_k$ is injective
		for every $k\geqslant 2$, then $(A,\varphi)$ is gauge formal
		whenever $(B,\psi)$ is.	
		
		\item Let $n \geqslant 1$ with $n!$ invertible in $R$.
		If $\theta_k$ is injective for $2\leqslant k\leqslant n+1$,
		then $(A,\varphi)$ is gauge $n$-formal
		whenever $(B,\psi)$ is.	
	\end{enumerate}
	Both statements remain valid with $(A,\varphi)$ and $(B,\psi)$
	interchanged and $\theta_k$ replaced by $\tilde{\theta}_k$.
\end{introinfinity}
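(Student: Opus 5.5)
We deduce the corollary from the formality transmission Theorem~\ref{thm formality transmission}, applied to a filtered comparison map out of the twisted deformation complex of $A$. By the hypotheses of Theorem~\ref{homotopy invariance}, we may replace $(A,\varphi)$ and $(B,\psi)$ by their transferred structures $(H(A),\varphi_t)$ and $(H(B),\psi_t)$, so that $E(\C,A,\varphi)$ and $E(\C,B,\psi)$ are the spectral sequences of $D_A$ and $D_B$. We then transport $f$ to an $\infty$-morphism $f_t\colon (H(A),\varphi_t)\rightsquigarrow(H(B),\psi_t)$ by composing with the transfer $\infty$-quasi-isomorphisms. The mixed complex computing $E(\C,A,B,f)$ is the convolution complex $\mathfrak g_{H(A),H(B)}=\Hom(\C(H(A)),H(B))$ twisted by $f_t$. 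There are two filtered chain maps
\[
D_A\xrightarrow{\ (f_t)_\ast\circ-\ }\mathfrak g_{H(A),H(B)}^{f_t}\xleftarrow{\ -\circ f_t\ }D_B ,
\]
given by post- and precomposition with $f_t$. These maps respect the weight filtration because $f_t$ has leading term $H(f)$ in weight one and higher components of positive weight. On $E_k^{k,-1}$ they induce $\theta_k$ and $\widetilde\theta_k$. Writing these maps down and checking that they are chain maps is the routine step: it is the standard compatibility of the twisting differentials with the Maurer--Cartan equation for $\infty$-morphisms.

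The key step is to show that, if $(B,\psi)$ is gauge ($n$-)formal, then $f_\ast(K_\varphi)=0$ (respectively $f_\ast(K^n_\varphi)=0$) in $H_{-1}(\mathfrak g_{H(A),H(B)}^{f_t}[[\hbar]])$. We plan to compare both classes with a Kaledin-type class of the morphism $f_t$. Recall that $K_\varphi$ is the class of $\hbar\,\partial_\hbar$ applied to the rescaled structure $\varphi_t^\hbar=\sum\hbar^{k-1}\varphi_t^{(k)}$. Rescaling the components of $f_t$ in the same way gives a $\hbar$-family of $\infty$-morphisms $f_t^\hbar$. Differentiating the $\infty$-morphism equation $f_t^\hbar\star\varphi_t^\hbar=\psi_t^\hbar\circledcirc f_t^\hbar$ in $\hbar$ shows that
\[
(f_t)_\ast K_\varphi-f_t^\ast K_\psi=d^{f_t}\bigl(\hbar\,\partial_\hbar f_t^\hbar\bigr),
\]
so the images of $K_\varphi$ and $K_\psi$ agree in the mixed complex. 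Gauge formality of $B$ gives $K_\psi=0$, which holds over a $\mathbb Q$-algebra and, modulo $\hbar^n$, when $n!\in R^\times$, by the Kaledin-class criterion recalled above. Hence $f_\ast(K_\varphi)=0$, and truncating modulo $\hbar^n$ gives the $n$-version.

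Theorem~\ref{thm formality transmission}(1) or (2), applied with $\mathfrak h=\mathfrak g^{f_t}_{H(A),H(B)}$ and the injectivity assumption on $\theta_k$, then shows that $(A,\varphi)$ is gauge formal (respectively gauge $n$-formal). The interchanged statement follows by the same argument with the roles reversed, using the map $D_B\to\mathfrak g^{f_t}_{H(A),H(B)}$, which induces $\widetilde\theta_k$, and the same identity of Kaledin images.

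We expect the main obstacle to be the identity $(f_t)_\ast K_\varphi\sim f_t^\ast K_\psi$. The rescaling of the $\infty$-morphism must be chosen compatibly with the weight grading, so that $f_t^\hbar$ is still an $\infty$-morphism between the rescaled structures; this holds when each component of weight $k$ is multiplied by $\hbar^{k-1}$, and should be checked first. Moreover, in the properadic setting the composition $\circledcirc$ is not linear in $f$, so the derivation property of $\hbar\partial_\hbar$ has to be verified on connected graphs. Over a general ring, the $n$-truncated version needs $n!$ invertible only through the Kaledin criterion, not through this identity.
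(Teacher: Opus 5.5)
Your proposal is correct and follows the paper's proof essentially step for step. Both transport $f$ to $f_t=p_\infty^B\circledcirc f\circledcirc i_\infty^A$, use the filtered chain maps $f_t\rhd-$ and $-\lhd f_t$ of Lemma~\ref{compo}, differentiate the $\infty$-morphism equation for $F=\mathcal D_0(f_t)$ to get the identity of Lemma~\ref{piquant prop}\,(1), kill the Kaledin class of $\psi_t$ via Theorem~\ref{B prop}, and conclude with Theorem~\ref{thm formality transmission}.

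Two small corrections are needed. First, the Kaledin class is $[\partial_\hbar\Phi]$, not $[\hbar\,\partial_\hbar\Phi]$. With $\hbar\partial_\hbar$ you would only obtain $\hbar\,f_*(K_{\varphi_t})=0$ and would lose the top coefficient modulo $\hbar^n$; your differentiation argument works verbatim with $\partial_\hbar$. Second, the right rescaling of the morphism is $\mathcal D_0(f_t)$: the component on $\C^{(k)}$ is multiplied by $\hbar^k$, with the linear part in weight $0$. This is an $\infty$-morphism $\mathcal D_1(\varphi_t)\rightsquigarrow\mathcal D_1(\psi_t)$ precisely because $H(A)$ and $H(B)$ carry zero differential.
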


\noindent This provides a corrected version of previous formality transfer results, see Remark \ref{error}.  We now explain how the formality criteria mentioned above are special cases of formality transmission and the generalizations this provides.

\medskip 

\paragraph{\bf Formality under domination.}
By \cite{MSZ23}, rational formality of spaces is
preserved under domination: if a map $Y\to X$ from a formal manifold hits the fundamental class of a closed orientable manifold $X$, then $X$ is formal over $\mathbb{Q}$.  Formality transmission extends this criterion to algebras encoded by operads over a commutative ring: an $\infty$-morphism admitting a homotopy retract as an $A$-module induces injective maps $\theta_k$, see Theorem~\ref{formality under domination}.  Applying this to cochains, with the retraction given by
Poincar\'e duality, gives the following corollary.

\begin{intropddomination}	
	Let $f\colon Y\to X$ be a map between spaces homotopy equivalent to finite CW complexes, with finitely generated projective
	$R$-cohomology. Suppose that $X$ is an $R$-oriented
	Poincar\'e duality space of dimension $d$ and that
	\[
	f_*c=[X]
	\qquad\text{for some }c\in H_d(Y;R).
	\]
	If $C^*(Y;R)$ is gauge $n$-formal, so is $C^*(X;R)$ provided $n ! \in R^{\times}$. 
\end{intropddomination}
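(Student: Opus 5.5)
The corollary follows from the domination criterion, Theorem~\ref{formality under domination}: an $\infty$-morphism of algebras over an operad that admits a homotopy retract as a module over the source transmits gauge $n$-formality from target to source. That criterion rests on Corollary~\ref{appli B}, since the retraction makes the maps $\theta_k$ injective. We work with $C^*(-;R)$ as $\Omega\C$-algebras for $\C$ the Koszul dual cooperad of $\mathrm{As}$. The morphism is $f^*\colon C^*(X;R)\to C^*(Y;R)$, a strict dg algebra map, so it is in particular an $\infty$-morphism. The finiteness assumptions, finite CW type and finitely generated projective cohomology, supply the hypotheses of Theorem~\ref{homotopy invariance}. Over a general $R$ they make $H^*$ a retract of the cochains with projective pieces, so both algebras admit transferred structures and the spectral sequences of the cochain algebras identify with those of their transferred structures. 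The remaining task is to construct a homotopy retraction $r\colon C^*(Y;R)\to C^*(X;R)$ of $f^*$ as $C^*(X;R)$-modules, with $C^*(Y;R)$ a module via $f^*$.

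We build $r$ by the umkehr construction. Choose a cycle $\tilde c$ representing $c$ and set
\[
r(\beta) \;=\; \mathrm{PD}_X^{-1}\bigl(f_*(\beta\frown\tilde c)\bigr),
\]
where $\mathrm{PD}_X\colon C^*(X;R)\to C_{d-*}(X;R)$ is capping with a fundamental cycle. Capping is a map of right $C^*(X)$-modules (left if one uses the other convention), and $f_*(f^*\alpha\cdot\beta\frown\tilde c)=\alpha\frown f_*(\beta\frown\tilde c)$ by naturality of the cap product. Hence $\beta\mapsto f_*(\beta\frown\tilde c)$ is a strict module map. $\mathrm{PD}_X$ is a module quasi-isomorphism by Poincaré duality. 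Over $R$ we need the orientation and the finite-type assumption to get a genuine quasi-isomorphism. We then invert it up to homotopy in the derived category of $C^*(X)$-modules, or produce an $\infty$-module map $r$ by homotopy transfer. On cohomology, $r\circ f^*$ sends $\alpha$ to $\mathrm{PD}^{-1}(\alpha\frown f_*c)=\mathrm{PD}^{-1}(\alpha\frown[X])=\alpha$. So $r\circ f^*$ induces the identity on cohomology, and it is a module endomorphism of $C^*(X)$.

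Theorem~\ref{formality under domination} requires $r\circ f^*\simeq\id$ as module maps. The argument is as follows. A $C^*(X)$-module endomorphism of the free rank-one module $C^*(X)$ is, up to homotopy, multiplication by the class $r f^*(1)\in H^0(X;R)$, and we have just computed that this class is $1$. Therefore $r\circ f^*$ is homotopic to the identity as $\infty$-module maps, which gives the required homotopy retract.

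Applying Theorem~\ref{formality under domination} then gives injectivity of $\theta_k$ for $2\le k\le n+1$. Part~(2) of Corollary~\ref{appli B}, with $n!\in R^\times$, transmits gauge $n$-formality from $C^*(Y;R)$ to $C^*(X;R)$.

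The main obstacle is making the retraction honest at chain level over a general ring. Inverting $\mathrm{PD}_X$ as a module map must be done strictly enough to fit the retract notion in Theorem~\ref{formality under domination}. If that theorem demands a strict retract, I would instead replace $C^*(X)$ by a cofibrant module model, or work with the transferred structures on cohomology, where the retraction can be taken as the cohomological umkehr map $H^*(Y)\to H^*(X)$. This map is an $H^*(X)$-module map splitting $f^*$, and projectivity lets us lift it.
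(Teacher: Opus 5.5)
Your route is the paper's: cap with a cycle representing $c$, push forward along $f_*$, invert $\mathrm{PD}_X$, check $H(r^{(0)}\circ f^{(0)})=\id$ by the projection formula, and apply Theorem~\ref{formality under domination}. The gap is in the kind of module map you produce.

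\textbf{The module map must be a bimodule map.} In Theorem~\ref{formality under domination}, an $A$-module is an algebra over $\mathrm{Mod}_{\Omega\C}$. For cochains viewed as $A_\infty$-algebras this means an $A_\infty$-\emph{bimodule}, so $r$ must be an $\infty$-morphism of bimodules. This is exactly what makes $r_\sharp(x)=r\circledcirc(\id_A;x)$ intertwine $d^f$ and $d^\varphi$, because these Hochschild-type differentials use the actions on both sides.

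The chain-level cap product gives you only a strict one-sided module map: $(\alpha\smile\beta)\frown\sigma=\alpha\frown(\beta\frown\sigma)$ holds strictly. Chains do not carry a strict action on the other side compatible with capping. That structure, and the compatibility of $\frown c$, $f_*$ and $\frown[X]$ with it, exists only up to cup-$i$ type homotopies, which must be organized coherently. So as written, the hypothesis of the domination theorem is not verified.

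The paper fills this gap with the $E_\infty$-module structures of Ginot--Tradler--Zeinalian \cite{GTZ12}. After restriction along $f^*$, the maps $\frown c$, $f_*$ and $\frown[X]$ become morphisms of $E_\infty$-modules over $C^*(X;R)$. One then restricts them to $A_\infty$-bimodules, passes to transferred models, and inverts Poincar\'e duality there. This yields the required $\infty$-morphism $r$ of bimodules.

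\textbf{Two smaller points.} First, the theorem does not require $r\circ f^*\simeq\id$ as module maps. It only asks for $H(r^{(0)}\circ f^{(0)})=\id_{H(A)}$, which you already computed. Your ``free rank-one module'' step is therefore unnecessary. It would also be incorrect in the bimodule setting, where derived endomorphisms of $A$ are governed by Hochschild cohomology rather than by $H^0(A)$.

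Second, your fallback of taking the cohomological umkehr map $H^*(Y)\to H^*(X)$ and lifting it ``by projectivity'' does not work. A strict $H^*(X)$-module map is not automatically an $\infty$-morphism between the transferred $A_\infty$-bimodule structures. Lifting it is an obstruction problem that projectivity alone does not solve.
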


\noindent This corollary leads to the following restriction on mapping degrees. Let $X$ and $Y$ be equidimensional closed connected oriented manifolds and let $\ell$ be prime. For $1\leqslant n<\ell$, if $Y$ is gauge $n$-formal over $\mathbb{F}_\ell$ but $X$ is
not, $\ell$ divides the degree of every map $Y\to X$. Among other examples, the same method applies to finite-group invariants, see Corollary~\ref{cor:finite-invariants}. For example, \cite{DCH21} established partial formality of hyperplane arrangement complements. Transmission along finite coverings extends their bounds to quotients by free finite group actions of invertible order, see Corollary \ref{cor:arrangement-quotients}.

\paragraph{\bf Formality under puncturing.}
Stasheff proved that a closed simply connected manifold is
rationally formal if and only if its puncture is~\cite{Sta83}.
Formality transmission leads to generalizations to other coefficient
rings, to Poincar\'e duality complexes and to \'etale cochains.

\begin{intropuncturing}
	Let $M^d$ be a closed connected $R$-oriented manifold,
	$d\geqslant2$, with $H^*(M;R)$ finite projective and
	$H^1(M;R)=0$. Let us fix $x \in M$. 
	If $d+i\in R^\times$ for $1\leqslant i\leqslant d-4$ then  \[
	C^*(M;R)\text{ gauge formal}
	\quad\Longleftrightarrow\quad
C^*(M\setminus\{x\};R)\text{ gauge formal}.
	\] The invertibility condition is automatic when $d\leqslant4$ or when $R$ is a $\mathbb Q$-algebra.
\end{intropuncturing}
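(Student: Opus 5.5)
We plan to deduce the corollary from the $\infty$-morphism form of formality transmission, Corollary~\ref{appli B}, applied to the restriction map $i^*\colon C^*(M;R)\to C^*(M\setminus\{x\};R)$, in both directions.

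\textbf{Setup.} Write $M^\circ=M\setminus\{x\}$, which deformation retracts onto the complement of an open $d$-ball. The long exact sequence of the pair $(M,M^\circ)$ has $H^*(M,M^\circ;R)\cong R$ concentrated in degree $d$. Since $M$ is connected and oriented, $H^*(M^\circ;R)\cong H^*(M;R)$ in degrees $<d$ and $H^d(M^\circ;R)=0$. Thus $i^*$ induces the quotient map $H\to H/H^d$, where $H=H^*(M;R)$ and $H^d=R\cdot\omega$ is the top class. Both cohomologies are finite projective, so transferred structures exist. Under the hypotheses of Theorem~\ref{homotopy invariance}, the spectral sequences $E(\mathrm{Ass}^{\antishriek},A,\varphi)$ identify with those of the cohomology algebras. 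We then compute on the cohomology side, with $H$ and $H'=H/R\omega$.

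\textbf{Direction $C^*(M)$ formal $\Rightarrow$ $C^*(M^\circ)$ formal.} Here we need injectivity of $\widetilde\theta_k\colon E(C^*(M^\circ))_k^{k,-1}\to E(C^*(M),C^*(M^\circ),i^*)_k^{k,-1}$. The relative convolution complex is $\prod_n \Hom(s\overline{H}^{\otimes n},H')$. The map from the deformation complex of $H'$ is precomposition with $H\to H'$, which is split injective at the level of $E_1$ since $H\to H'$ is split surjective ($H^d$ is a projective summand). We must propagate injectivity to $E_k$ for $k\geqslant2$. The image consists of cochains vanishing whenever an input is $\omega$, and the complement consists of cochains with at least one input $\omega$. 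We expect the differential to respect this splitting up to the weight relevant for $E_k^{k,-1}$, because $\omega\cdot a=0$ for $\deg a>0$. The idea is a degree count: a cochain of weight $k$ in total degree $-1$ that involves $\omega$ as input must have its output in degree $\geqslant d-(k-2)$ shifted, which forces vanishing in $H'$ for small $k$.

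\textbf{Direction $C^*(M^\circ)$ formal $\Rightarrow$ $C^*(M)$ formal.} This is the harder direction and the one we expect to be the main obstacle. We need injectivity of $\theta_k$ from Hochschild-type cohomology of $H$ into the relative complex, that is, postcomposition with $H\to H'$. Its kernel on $E_1$ is $\Hom(s\overline{H}^{\otimes k},R\omega)$, the cochains with values in the top class. We will show that their contributions to $E_k^{k,-1}$ vanish in the following steps.
\begin{enumerate}
\item By Poincar\'e duality, identify $\Hom(\overline{H}^{\otimes k},R\omega)$ with $\Hom(\overline{H}^{\otimes k}\otimes H,R)$ up to the pairing. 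The cyclic structure then lets us write such a cochain as a cyclic cochain, which is a coboundary.
\item The averaging over the cyclic group of order $k+1$ needed for this requires inverting the integers $k+1$.
\item The degree constraints use $H^1=0$, so that $\overline{H}$ lives in degrees $2,\dots,d-2$ away from $\omega$. A degree $-1$ weight $k$ cochain landing in degree $d$ exists only for $k\leqslant d-2$.
\end{enumerate}
The relevant denominators then turn into $d+i$ for $1\leqslant i\leqslant d-4$, matching the hypothesis; we would track this precisely via the spectral-sequence weight bookkeeping. For $d\leqslant4$ the range is empty, so nothing needs inverting. Finally, apply Corollary~\ref{appli B}(1) for all $k$, noting that only finitely many $k$ are nontrivial by degree reasons, which removes the $\mathbb Q$-algebra assumption.
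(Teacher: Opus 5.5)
\paragraph{Overall.} There is a genuine gap in the converse direction. Your forward implication is workable: in the degrees $0,-1,-2$ that determine $E_k^{k,-1}$, a normalized relative cochain with an $\omega$-input has output degree $>d-2$, so it vanishes in $H/H^d$ because $H^{d-1}\cong (H^1)^\vee=0$. Hence $\widetilde\theta_k$ is even an isomorphism. The paper argues more directly: higher operations and higher isotopy components vanish on top-degree inputs, so formality isotopies descend to the quotient.

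\paragraph{The gap: $\theta_k$ is not injective.} Postcomposition with $q\colon H\to H/H^d$ is \emph{not} injective on $E_k^{k,-1}$ in general, so the claim you aim for in steps (1)--(3) is false. Take $M=S^2\times S^3$. Then $d=5$, the hypothesis reduces to $6\in R^\times$ (e.g.\ $R=\mathbb{Q}$), and $k=2$ is the only case needed. Here $H$ has basis $1,x,y,\omega$ in degrees $0,2,3,5$, with $xy=yx=\omega$ and $x^2=0$. Let $c$ be the normalized arity-$3$ cochain with $c(x,x,x)=\omega$, vanishing on all other basis tensors. It has weight $2$ and Maurer--Cartan degree.
\begin{itemize}
\item It is a cocycle, since no four inputs of degree $\geqslant 2$ have total degree $6$.
\item It is not a coboundary. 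Any $b$ of arity $2$ in the same degree has $b(x,x)=\mu y$, and the terms involving $x^2=0$ drop out, so
\[
(\delta b)(x,x,x)=x\,b(x,x)-b(x,x)\,x=\mu\,(xy-yx)=0 .
\]
\end{itemize}
Thus $[c]\neq 0$ in $E_2^{2,-1}$, while $\theta_2[c]=[q\circ c]=0$.

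This cannot be patched. Indeed, $m_2+c$ is a strictly unital minimal $A_\infty$-structure on $H$ that is not gauge formal, yet its quotient by $H^5$ is strict. So no argument that only uses the algebra morphism $H\to H/H^d$, as Corollary~\ref{appli B} does, can prove the converse. Step (1) does not help either: $\theta_k$ depends only on $q$, so no cyclic structure enters its injectivity, and cyclic cochains are not coboundaries in general.

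\paragraph{The missing idea.} You need chain-level Poincar\'e duality, used as an \emph{extra component of the test map}. The paper proceeds as follows.
\begin{itemize}
\item It lifts $H\to H^\vee[-d]$ to an $\infty$-morphism of $H$-bimodules, via cap product with the fundamental class as an equivalence of $E_\infty$-modules. It then reduces to the case where $q$ and $H/H^d$ are strict.
\item It applies Theorem~\ref{thm formality transmission} itself, not Corollary~\ref{appli B}, to the filtered map $F(c)=\bigl(q\star c,\,L_c^\vee\tau\bigr)$. Here $\tau$ is the closed functional induced by the bimodule duality, whose length-zero part is the trace, and the target is the relative complex of $q$ plus a completed dual of normalized Hochschild chains.
\item The Kaledin class maps to zero because $q$ is strict and the trace component of $\partial_\hbar\Phi$ is a boundary.
\item On $E_2$, the trace component of $c$ is the functional of $\Delta c$, where $\Delta$ is the BV operator. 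So for $[c]\in\ker E_2^{k,-1}(F)$, both $[c]$ and $\Delta[c]$ have top-degree-valued representatives.
\item The BV identity with the Euler class, using $[e_H]\smile[c]=[e_H]\smile\Delta[c]=0$ and $\Delta e_H=d\cdot 1$, gives $(1-k)[c]=d[c]$, i.e.\ $(d+k-1)[c]=0$.
\end{itemize}
This identity, not cyclic averaging over $\mathbb{Z}/(k+1)$, is where the units $d+i$ come from.

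\paragraph{The closing step.} Your final appeal to Corollary~\ref{appli B}(1) is unavailable outside $\mathbb{Q}$-algebras. Instead, use part (2) with $n=d-4$. Here $n!\in R^\times$ because $\prod_{i=1}^{n}(d+i)=n!\binom{d+n}{n}$. Then the arity bound, that operations of arity $r\geqslant 3$ vanish for $r>d-2$, upgrades gauge $(d-4)$-formality to gauge formality.
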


\noindent
For example, if $M$ is simply connected and $R = \mathbb F_p$ formality under puncturing holds when $p>2d-4$. Corollary~\ref{cor:etale-puncturing} is the \'etale version under analogous 
cohomological assumptions.

\bigskip

\paragraph{\bf Intrinsic formality.}
A graded $\P$-algebra $H$ is \emph{intrinsically formal} if every
dg $\P$-algebra inducing $H$ in cohomology is gauge formal. Diverse criteria like
suitable vanishing conditions on its deformation cohomology ensure intrinsic formality~\cite{Sul77,HJ79,FH82,MP91,Hinich03bis,Lup90,Kaledin}.
This key property is at the heart
of the above problem (2). Specializing formality transmission to the case $B = 0$ yields the following intrinsic formality criteria, generalizing former criteria by removing all the characteristic restrictions from \cite{Kaledin}.

\begin{introintrinsic}
	Assume that the $\Omega\C$-algebras under consideration admit
	transferred structures. Let $(H,\psi)$ be a strict $\Omega\C$-algebra  and set
	$E=E(\C,H,\psi)$.
	\begin{enumerate}
		\item If $E_2^{k,-1}=0$ for every $k\geqslant2$, then $(H,\psi)$
		is intrinsically formal.
		\item Let $n\geqslant1$. If $E_2^{k,-1}=0$ for
		$2\leqslant k\leqslant n+1$, then $(H,\psi)$ is intrinsically
		$n$-formal.
	\end{enumerate}
\end{introintrinsic}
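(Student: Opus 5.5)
The plan is to deduce the statement from formality transmission (Theorem~\ref{thm formality transmission}) applied with target $\mathfrak h=0$, as suggested by the phrase ``specializing to $B=0$''. Fix a dg $\Omega\C$-algebra $(A,\varphi)$ with $H(A)\cong H$ and induced structure $\varphi_*=\psi$. By assumption it admits a transferred structure $\varphi_t=\psi+\varphi_t^{(2)}+\cdots$ on $H$. Hence its deformation complex is $D_A=(\mathfrak g_H,d+[\varphi_t,-])$. This is the twisted convolution complex filtered by weight, and it is a deformation of $(\mathfrak g_H,d+[\psi,-])$.

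The first step is to identify the spectral sequence of $D_A$ with that of $(H,\psi)$ from page $2$ on. The weight filtration on $\mathfrak g_H$ is preserved by $[\varphi_t,-]$. The perturbation $[\varphi_t-\psi,-]$ strictly raises weight, since $\varphi_t^{(k)}$ has weight $k\geqslant 2$ while $\psi$ has the minimal weight. Therefore the $E_1$ page of $D_A$ depends only on $d+[\psi,-]$ restricted to associated graded pieces, and so does $E_1$ of $E(\C,H,\psi)$. The two filtrations agree, and the associated graded differentials coincide. I expect $E_1$ and $d_1$ to coincide, so $E_2(D_A)\cong E_2=E(\C,H,\psi)_2$. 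If the paper's indexing puts the weight-one shift elsewhere, I would adjust so that the relevant comparison is at the first page where the higher $\varphi_t^{(k)}$ enter. This identification, and the check that the grading conventions line up, is the step I expect to be the main obstacle.

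Next, take $f\colon D_A\to 0$. It is trivially a filtered chain map, and $f_*(K_\varphi)=0$, $f_*(K^n_\varphi)=0$ automatically. The maps $\theta_k\colon E_k^{k,-1}(D_A)\to 0$ are injective exactly when $E_k^{k,-1}(D_A)=0$. Since $E_k^{k,-1}$ is a subquotient of $E_2^{k,-1}$ for $k\geqslant 2$, the hypothesis $E_2^{k,-1}=0$ gives $E_k^{k,-1}(D_A)=0$ for all relevant $k$. This uses the bidegree convention: the page-$k$ term in position $(k,-1)$ is a subquotient of the page-$2$ term in the same position.

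For part (1), Theorem~\ref{thm formality transmission}(1) then gives gauge formality of every such $(A,\varphi)$, hence intrinsic formality. For part (2), it gives gauge $n$-formality via the corresponding item.

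The statement has no coefficient hypotheses, whereas the transmission theorem requires $R\supseteq\mathbb Q$ or $n!\in R^\times$. Here I would avoid Kaledin classes and argue directly, by induction on weight, that the higher terms can be gauged away. Suppose $\varphi_t$ has been gauged to $\psi+\phi^{(k)}+\cdots$. The lowest term $\phi^{(k)}$ is a cycle for $d+[\psi,-]$ in weight $k$, degree $-1$. It survives to $E_k^{k,-1}$, which vanishes, so it is a boundary modulo higher weight. The $\infty$-isomorphism $\exp$-type gauge action can then be replaced by the properadic gauge action of $\infty$-isomorphisms with linear part the identity, which exists over any ring. Composing these convergent $\infty$-isomorphisms kills all higher terms (or those up to weight $n+1$). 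This is where the removal of characteristic restrictions from \cite{Kaledin} comes from.
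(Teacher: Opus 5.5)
Your proposal is correct and follows the paper's proof. You first specialize formality transmission to a zero target, which the paper phrases as Corollary~\ref{appli B} with $B=0$, using that the $E_2$ page for $\varphi_t$ agrees with that for $\psi$, so $E_k^{k,-1}=0$. You then remove the coefficient hypotheses by the same division-free induction: write the lowest obstruction as $\phi^{(k)}=d^\psi(\nu^{(k-1)})$, act by the $\infty$-isotopy $1+\nu^{(k-1)}$ (the paper cites \cite[Proposition~2.33]{CE24b} for this step), and let the composites stabilize weightwise. Your indexing worry also resolves cleanly: $d^\psi$ is homogeneous of weight one, so both structures have $E_1\cong\mathfrak g_H$ with $d_1$ induced by $d^\psi$, and a $d^\psi$-boundary modulo higher weight is an honest boundary in weight $k$.
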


\noindent An application of this criterion in the properadic setting is given by the joint work with Alex Takeda \cite{ET24}, which establishes intrinsic coformality for all spheres in characteristic zero.

\bigskip

\paragraph{\bf Formality descent}
Let $S$ be a faithfully flat commutative $R$-algebra. Under transfer and finite-presentation
hypotheses, $A$ is gauge $n$-formal if and only if
$A\otimes_R S$ is, provided $n!\in R^\times$, see \cite{Kaledin}. Over a $\mathbb Q$-algebra, the same equivalence holds
for gauge formality, see
\cite{Sul77,NM78,HJ79,GNPR05,Kaledin}.
Applying transmission to the extension
$A\to A\otimes_R S$ recovers this descent theorem, see Theorem~\ref{formality descent}.
In this setting, the coefficient assumptions remain unchanged. Proposition~\ref{failure} shows why they cannot be removed in general: full gauge formality need not descend under faithfully flat extensions in positive characteristic.

\bigskip 

\paragraph{\bf Formality in families} Under suitable flatness, finiteness, and torsion-freeness hypotheses, formality of the generic fibre of a family over an integral $\mathbb{Q}$-algebra implies formality of every fibre
\cite{Kal07,Lun07,BB20, Kaledin}.
Theorem~\ref{formality in families} revisits the corresponding
criteria of~\cite[Section~4.2]{Kaledin} as a consequence of formality transmission. As an arithmetic application, Corollary \ref{cor:el-haouari} recovers El Haouari's
criterion~\cite{El92}.

\bigskip

\paragraph{\bf Lifting homology automorphisms.}
The existence of chain-level lifts of suitable homology automorphisms
is another source of formality criteria~\cite[Section~4.4]{Kaledin}.
The classical example is the grading automorphism $\sigma_\alpha$,
which acts by multiplication by $\alpha^k$ in degree $k$, where
$\alpha$ is a unit and $\alpha^k-1\in R^\times$ for every $k>0$.
 This leads to the \emph{purity criterion}
\cite{Sul77,GNPR05,Petersen14,Dup16,CH20,DCH21,CS22,CH22} at the heart of problems (1), (3) and (5) above: a chain-level lift of $\sigma_\alpha$ then implies
gauge formality. This approach is extended by \cite[Theorem~4.9]{Kaledin}  to more general homology automorphisms. Formality transmission refines this result in two directions:  relaxing the invertibility hypothesis therein by injectivity at the spectral sequence level
	(Theorem~\ref{relevements}) and under stronger assumptions on the automorphism, the conclusion holds over arbitrary
	commutative rings, without inverting factorials
	(Theorem~\ref{rev2}).

\medskip

\paragraph{\bf Change of algebraic structure.}
Over a field of characteristic zero,
a commutative dg algebra is formal if and only if its
underlying associative dg algebra is formal~\cite{Sal17}.   An operadic version of this result is given in \cite{CPW}. Let $f\colon\mathcal P\to\mathcal Q$ be a morphism of binary Koszul operads over a characteristic-zero field, with both
operads finite-dimensional in each arity.
If $f$ admits a retraction as an infinitesimal
$\mathcal P$-bimodule, a dg $\mathcal P^!$-algebra
is formal if and only if its restriction along
$f^!$ is formal.  Using formality transmission, Theorem~\ref{cooperad change} generalizes this criterion requiring injectivity on the
spectral sequence terms instead of a retraction. It provides results over more general coefficient rings, including the following two examples.

\begin{introchange}
	Let $A$ be a commutative dg algebra or a non-unital dg
	Zinbiel algebra over a commutative ring $R$, admitting
	a transferred structure. Restriction along
$	\mathrm{Ass}\to \mathrm{Com}$ or $	\mathrm{Dend}\to \mathrm{Zinb},$
	respectively, preserves and reflects:
	\begin{enumerate}
		\item gauge formality if $R$ is a $\mathbb Q$-algebra;
		\item gauge $n$-formality if $n\geqslant1$ and
		$(n+1)!\in R^\times$.
	\end{enumerate}
\end{introchange}

\noindent
In particular, the finite-order comparison holds over
$\mathbb F_\ell$ for $1\leqslant n\leqslant\ell-2$.

\bigskip

\paragraph{\bf Outline of the paper}
\begin{itemize}
	\item[$\centerdot$] Section~\ref{section 1} works in the general setting of weight-graded dg Lie algebras. We recall deformation theory in Section~\ref{1.1} and the construction of Kaledin classes in Section~\ref{1.2}, before establishing a transmission criterion for gauge triviality in Section~\ref{1.3}. Applied to convolution dg Lie algebras governing algebraic structures, this criterion yields formality transmission in characteristic zero.
	
	\medskip 
	\item[$\centerdot$]  Section~\ref{section 2} develops formality transmission for algebraic structures over arbitrary base rings. We first review properadic calculus (Section~\ref{2.1}), gauge formality (Section~\ref{2.2}), and the adjoint action of $\infty$-morphisms (Section~\ref{2.3}). Section~\ref{2.4} introduces the properadic spectral sequence associated with an $\infty$-morphism and studies its homotopy invariance, leading to the formality transmission criterion in Section~\ref{2.5}.  Section~\ref{Euler classes} is an interlude in which we introduce properadic Euler classes and use them to characterise formality in terms of degeneration of the associated spectral sequence. Section~\ref{sec:colored-setting} discusses the extension of these results to coloured operads.
	
	\medskip
	\item[$\centerdot$]  Finally, Section~\ref{section 4} recovers the criteria discussed above as special cases of formality transmission and generalises them in various settings.
\end{itemize}

\bigskip

\paragraph{\bf Acknowledgments} 
I would like to thank Alexander Berglund, Geoffroy Horel, Marco Manetti and
Bruno Vallette for the fruitful conversations. AI suggested corrections to the proof
of Theorem \ref{thm:pd-transmission} and the counterexample of Proposition \ref{failure}. This work was supported by the Göran Gustafsson Foundation for Research in Natural Sciences and Medicine through Alexander Berglund's 2024 Göran Gustafsson Prize. 

\bigskip

\section{\textcolor{bordeau}{Transmission of gauge triviality}} \label{section 1}

The purpose of this section is to establish a criterion for transmitting gauge triviality. It relies on Kaledin classes which faithfully detect whether a Maurer--Cartan element is gauge trivial, i.e. gauge equivalent to its first component. We first recall the deformation theory governed by dg Lie
algebras in Section~\ref{1.1}, and the construction of Kaledin classes in
Section~\ref{1.2}. This allows us to prove the transmission criterion in
Section~\ref{1.3}. A key application is the study of formality of algebraic structures, as discussed in Section \ref{section 2}. Throughout this section, the ground ring $R$ is a $\mathbb{Q}$-algebra. 

\subsection{Deformations governed by dg Lie algebras}\label{1.1}

 We briefly review the deformation theory governed by dg Lie algebras. We refer the reader to  \cite{DSV22} for more details.

\begin{definition}[dg Lie algebra and Maurer--Cartan elements]
	A \emph{dg Lie algebra} is a triple
\[\left(\mathfrak{g},[-,-],d\right)\] consisting of a graded $R$-module $\mathfrak{g}$ together with

\begin{itemize}
	\item[$\centerdot$] a Lie bracket $[-,-]\colon \mathfrak{g}\otimes \mathfrak{g}\to \mathfrak{g}$, namely a homogeneous map of degree $0$ satisfying graded anti-symmetry and the graded Jacobi identity;
	\item[$\centerdot$] a differential $d\colon \mathfrak{g}\to \mathfrak{g}$, that is, a degree $-1$ derivation such that $d^2=0$.
\end{itemize} A dg Lie algebra $\mathfrak{g}$ is called \emph{complete} if it is equipped with a decreasing filtration by subcomplexes compatible with the Lie bracket, 
	\[
	\mathfrak{g}=\mathcal{F}^1\mathfrak{g}
	\supset
	\mathcal{F}^2\mathfrak{g}
	\supset
	\mathcal{F}^3\mathfrak{g}
	\supset \cdots,
	 \qquad   \qquad
	\left[\mathcal{F}^n\mathfrak{g},\mathcal{F}^m\mathfrak{g}\right]
	\subset
	\mathcal{F}^{n+m}\mathfrak{g},
	\]
	and such that $\mathfrak{g}$ is recovered as the inverse limit of its quotients:
	\[
	\mathfrak{g}
	\xrightarrow{\ \cong\ }
	\lim_{n\in\mathbb{N}}
	\mathfrak{g}/\mathcal{F}^n\mathfrak{g}.
	\]	An element $\varphi\in \mathfrak{g}_{-1}$ is called a \emph{Maurer--Cartan element} if it satisfies
	\[
	d(\varphi)+\tfrac{1}{2}[\varphi,\varphi]=0.
	\]
	The set of Maurer--Cartan elements of $\mathfrak{g}$ will be denoted by $\mathrm{MC}(\mathfrak{g}) \ .$
In what follows, all dg Lie algebras are assumed to be complete without further mention. The notation $\mathfrak{g}$ will be used both for the underlying chain complex and for the complete dg Lie algebra structure.
\end{definition}

\begin{definition}[Gauge action and equivalences]
The Baker--Campbell--Hausdorff formula defines a group structure on the degree zero part $\mathfrak{g}_0$ of any dg Lie algebra $\mathfrak{g}$. This group \[\left(\mathfrak{g}_0,\mathrm{BCH},0\right)\] is called the \emph{gauge group} of $\mathfrak{g}$. It acts on the set of Maurer--Cartan elements through the \emph{gauge action} as follows. For $\lambda\in \mathfrak{g}_0$ and $\varphi\in \mathrm{MC}(\mathfrak{g})$, we set 
	\[
	\lambda\cdot \varphi
	\coloneqq
	e^{\mathrm{ad}_{\lambda}}(\varphi)
	-
	\frac{e^{\mathrm{ad}_{\lambda}}-\mathrm{id}}
	{\mathrm{ad}_{\lambda}}(d\lambda) \ ,
	\] where $\mathrm{ad}_{\lambda}(\varphi) \coloneqq [\lambda, \varphi]$. Two Maurer--Cartan elements $\varphi$ and $\psi$ are said to be \emph{gauge equivalent} if there exists $\lambda\in \mathfrak{g}_0$ such that $\lambda\cdot \varphi=\psi \ .$
\end{definition}

\begin{proposition}\label{technique}	Let $\mathfrak{g}$ be a dg Lie algebra and let $\varphi \in \mathrm{MC}(\mathfrak{g})$ be a Maurer--Cartan element. 
	\begin{enumerate}
		\item The operator $d^{\varphi}\coloneqq d+\mathrm{ad}_{\varphi}$ is a differential and defines a \emph{twisted dg Lie} algebra $\mathfrak{g}^{\varphi}$.
	\item  	For every $\lambda \in \mathfrak{g}_0$, the morphism  $e^{\mathrm{ad}_{\lambda}} : \mathfrak{g}^{\varphi}\to \mathfrak{g}^{\lambda \cdot \varphi}$ is an isomorphism of dg Lie algebras whose inverse is given by $e^{\mathrm{ad}_{-\lambda}}$. 
	\end{enumerate}

\end{proposition}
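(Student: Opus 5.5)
For part (1), I would compute directly with the Maurer--Cartan equation and the Jacobi identity. Write $d^\varphi = d + \mathrm{ad}_\varphi$. Since $d$ is a degree $-1$ derivation of the bracket and $\mathrm{ad}_\varphi$ is as well (Jacobi identity, with $\varphi$ of odd degree $-1$), their sum is a degree $-1$ derivation. So it only remains to show $(d^\varphi)^2=0$. Expand
\[
(d^\varphi)^2 = d^2 + (d\,\mathrm{ad}_\varphi + \mathrm{ad}_\varphi\, d) + \mathrm{ad}_\varphi^2 .
\]
The first term vanishes. The derivation property gives $d\,\mathrm{ad}_\varphi + \mathrm{ad}_\varphi d = \mathrm{ad}_{d\varphi}$. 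The Jacobi identity for odd $\varphi$ gives $\mathrm{ad}_\varphi^2 = \tfrac12 \mathrm{ad}_{[\varphi,\varphi]}$. Hence $(d^\varphi)^2 = \mathrm{ad}_{d\varphi + \frac12[\varphi,\varphi]} = 0$. Compatibility with the filtration is automatic, since $\varphi \in \mathcal F^1\mathfrak g$, so completeness is preserved.

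For part (2), I would proceed in three steps.

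First, $e^{\mathrm{ad}_\lambda}$ is well defined. The series converges because $\lambda\in\mathcal F^1$, so $\mathrm{ad}_\lambda^n$ raises filtration degree by $n$, and $\mathfrak g$ is complete. The coefficients $1/n!$ make sense since $R$ is a $\mathbb Q$-algebra.

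Second, $e^{\mathrm{ad}_\lambda}$ is a Lie algebra automorphism with inverse $e^{\mathrm{ad}_{-\lambda}}$. Since $\mathrm{ad}_\lambda$ is a degree $0$ derivation, the Leibniz rule for $\mathrm{ad}_\lambda^n$ gives $e^{\mathrm{ad}_\lambda}[x,y] = [e^{\mathrm{ad}_\lambda}x, e^{\mathrm{ad}_\lambda}y]$. The inverse statement follows because $\mathrm{ad}_\lambda$ and $\mathrm{ad}_{-\lambda}$ commute, so the exponential series multiply formally.

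Third, and this is the main point, I would check the chain map property
\[
e^{\mathrm{ad}_\lambda}\circ d^\varphi = d^{\lambda\cdot\varphi}\circ e^{\mathrm{ad}_\lambda}.
\]
Equivalently, I would show that $e^{\mathrm{ad}_\lambda} d\, e^{-\mathrm{ad}_\lambda} = d - \mathrm{ad}_{\frac{e^{\mathrm{ad}_\lambda}-\mathrm{id}}{\mathrm{ad}_\lambda}(d\lambda)}$. Together with the automorphism identity $e^{\mathrm{ad}_\lambda}\mathrm{ad}_\varphi e^{-\mathrm{ad}_\lambda} = \mathrm{ad}_{e^{\mathrm{ad}_\lambda}\varphi}$, this yields exactly $d + \mathrm{ad}_{\lambda\cdot\varphi}$.

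To prove the conjugation formula, I would use the identity $e^{A}Be^{-A} = e^{[A,-]}(B)$ in the algebra of operators, with $A=\mathrm{ad}_\lambda$ and $B=d$. Then $[\mathrm{ad}_\lambda,d] = -\mathrm{ad}_{d\lambda}$, and inductively $[\mathrm{ad}_\lambda,-]^n(d) = -\mathrm{ad}_{\mathrm{ad}_\lambda^{n-1}(d\lambda)}$ for $n\ge1$. Summing over $n$ gives the formula.

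The identity $e^{A}Be^{-A}=e^{[A,-]}(B)$ holds formally over $\mathbb Q$, and convergence comes from the filtration. The delicate part is the index and sign bookkeeping in this induction, in particular the $\frac{1}{n!}$ versus $\frac{1}{(n+1)!}$ reindexing. To avoid it, an alternative is to introduce $\lambda_t = t\lambda$ and differentiate both sides in $t$, checking that they satisfy the same first-order ODE with equal value at $t=0$.

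Finally, it also follows that $\lambda\cdot\varphi$ is Maurer--Cartan. This is implicit in the definition of the gauge action, and it is recovered here since $(d^{\lambda\cdot\varphi})^2$ is conjugate to $(d^\varphi)^2=0$.
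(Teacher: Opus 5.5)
Your argument is correct. The paper gives no proof of its own here and only cites \cite[Proposition~1.43]{DSV22} and \cite[Lemma~1.9]{Kaledin}. Your direct verification is therefore a self-contained replacement for those citations. It has three steps: the square of the twisted differential is $\mathrm{ad}$ of the curvature; $e^{\mathrm{ad}_\lambda}$ is a convergent filtered Lie automorphism; and conjugating $d$ by it yields $d-\mathrm{ad}_{\frac{e^{\mathrm{ad}_\lambda}-\mathrm{id}}{\mathrm{ad}_\lambda}(d\lambda)}$.

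The reindexing you flag as delicate is harmless. The operator $\sum_{n\geqslant1}\frac{1}{n!}\mathrm{ad}_\lambda^{n-1}$ is by definition $\frac{e^{\mathrm{ad}_\lambda}-\mathrm{id}}{\mathrm{ad}_\lambda}$, so the ODE detour is unnecessary.

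A more economical packaging of the same computation is standard. Adjoin a degree $-1$ element $\delta$ with $[\delta,x]=dx$ and $[\delta,\delta]=0$. Then:
\begin{itemize}
\item $d^\psi=\mathrm{ad}_{\delta+\psi}$ on $\mathfrak g$;
\item $e^{\mathrm{ad}_\lambda}(\delta+\varphi)=\delta+\lambda\cdot\varphi$, because $\mathrm{ad}_\lambda(\delta)=-d\lambda$;
\item the chain-map property is just the automorphism identity $e^{\mathrm{ad}_\lambda}\,\mathrm{ad}_{\delta+\varphi}\,e^{-\mathrm{ad}_\lambda}=\mathrm{ad}_{e^{\mathrm{ad}_\lambda}(\delta+\varphi)}$.
\end{itemize}

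One small correction concerns your final remark. The vanishing of $(d^{\psi})^2=\mathrm{ad}_{d\psi+\frac12[\psi,\psi]}$ only shows that the curvature of $\psi$ is central. In an abelian $\mathfrak g$, for instance, it holds for every $\psi$. So it does not by itself recover that $\lambda\cdot\varphi$ is Maurer--Cartan.

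This does not affect your proof. Part (2) only needs $(d^{\lambda\cdot\varphi})^2=0$, which your conjugation gives directly. Moreover, the Maurer--Cartan property of $\lambda\cdot\varphi$ is built into the definition of the gauge action. If you want it anyway, the $\delta$-trick gives it at once, since $[\delta+\psi,\delta+\psi]=2\bigl(d\psi+\tfrac12[\psi,\psi]\bigr)$ and $[\delta+\lambda\cdot\varphi,\delta+\lambda\cdot\varphi]=e^{\mathrm{ad}_\lambda}[\delta+\varphi,\delta+\varphi]=0$.
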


\begin{proof}
	See \cite[Proposition~1.43]{DSV22} and \cite[Lemma 1.9]{Kaledin}.
\end{proof}

\subsection{Kaledin classes in dg Lie algebras}\label{1.2}

Kaledin classes, introduced in \cite[Section~1]{Kaledin}, faithfully detect whether a given Maurer--Cartan element is gauge trivial, i.e. gauge equivalent to its lowest-weight component. The purpose of this section is to briefly recall their construction.  Let $R[\![\hbar]\!]$ be the ring of formal power series and let $\mathfrak{g}$ be any dg Lie algebra. We consider the dg Lie algebra of formal power series \[\mathfrak{g}[\![\hbar]\!] := \mathfrak{g} \ \widehat{\otimes} \ R[\![\hbar]\!] \ .\]

\begin{definition}[Formal deformation]
	Let $\varphi_0\in \mathrm{MC}(\mathfrak{g})$ be a Maurer--Cartan element. A \emph{formal deformation} of $\varphi_0$ is a Maurer--Cartan element
	\[
	\Phi
	=
	\varphi_0+\varphi_1\hbar+\varphi_2\hbar^2+\cdots
	\in
	\mathrm{MC}\bigl(\mathfrak{g}[\![\hbar]\!]\bigr)
	\]
	whose reduction modulo $(\hbar)$ is  $\varphi_0$. We denote the set of such deformations by
	\[
	\mathrm{Def}_{\varphi_0}(R[\![\hbar]\!]) \ .
	\]	For any graded $R$-module $V$~, the \emph{differentiation operator} $\partial_{\hbar} : V[\![\hbar]\!] \longrightarrow V[\![\hbar]\!]$ is defined by \[\sum_{i \geqslant0} x_i \hbar^i \longmapsto \sum_{i \geqslant1} i x_i \hbar^{i-1} \ .\]
\end{definition}

\begin{definition}[Kaledin classes]
	Let $\varphi_0\in \mathrm{MC}(\mathfrak{g})$, and let $\Phi$ be a formal deformation of $\varphi_0$.
	
	\begin{itemize}
		\item[$\centerdot$]  The \emph{Kaledin class} of $\Phi$ is the homology class
		\[
		K_{\Phi}
		\coloneqq
		\left[\partial_{\hbar}\Phi\right]
		\in
		H_{-1}\left(\mathfrak{g}[\![\hbar]\!]^{\Phi}\right).
		\]
		 
		\item[$\centerdot$] For every $n\geqslant 1$, the \emph{$n^{\mathrm{th}}$ truncated Kaledin class} of $\Phi$ is the homology class
		\[
		K_{\Phi}^n
		\coloneqq
		\left[
		\varphi_1+2\varphi_2\hbar+\cdots+n\varphi_n\hbar^{n-1}
		\right]
		\in
		H_{-1}\left(
		\left(\mathfrak{g}[\![\hbar]\!]/(\hbar^n)\right)^{\overline{\Phi}^{\,n}}
		\right) 
		\] where we denote by $\overline{\Phi}^{\,n}$ the image of $\Phi$ in
		$	\mathfrak{g}[\![\hbar]\!]/(\hbar^n).$
	\end{itemize}
\end{definition}

\noindent Kaledin classes behave naturally with respect to gauge transformations.

\begin{lemma}[{\cite[Lemmas~1.19 and 1.22]{Kaledin}}]\label{iso}
	Let $\Phi\in\mathrm{Def}_{\varphi_0}(R[\![\hbar]\!])$. The isomorphism
	\[
	e^{\mathrm{ad}_{\lambda}}\colon
	\mathfrak{g}[\![\hbar]\!]^{\Phi}
	\longrightarrow
	\mathfrak{g}[\![\hbar]\!]^{\lambda\cdot\Phi} \ , \quad \lambda\in\hbar\,\mathfrak{g}_0[\![\hbar]\!]
	\]
	sends $K_{\Phi}$ to $K_{\lambda\cdot\Phi}$ in homology. The same compatibility holds
	modulo $\hbar^n$ for every $n\geqslant 1$.
\end{lemma}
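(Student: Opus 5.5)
We need to show that $e^{\mathrm{ad}_\lambda}$ sends the cycle $\partial_\hbar\Phi$ to a cycle homologous to $\partial_\hbar(\lambda\cdot\Phi)$ in $\mathfrak{g}[\![\hbar]\!]^{\lambda\cdot\Phi}$. By Proposition~\ref{technique}, $e^{\mathrm{ad}_\lambda}$ is a dg Lie isomorphism between the twisted algebras, applied over $R[\![\hbar]\!]$. The hypothesis $\lambda\in\hbar\,\mathfrak{g}_0[\![\hbar]\!]$ makes all the exponential series converge $\hbar$-adically, and it ensures $\lambda\cdot\Phi$ is again a deformation of $\varphi_0$. So it suffices to exhibit an explicit element $\xi\in\mathfrak{g}_0[\![\hbar]\!]$ with
\[
\partial_\hbar(\lambda\cdot\Phi)-e^{\mathrm{ad}_\lambda}(\partial_\hbar\Phi)=d^{\lambda\cdot\Phi}(\xi).
\]

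\textbf{Step 1.} Note that $\partial_\hbar$ is a degree-zero derivation of $\mathfrak{g}[\![\hbar]\!]$. It commutes with $d$ and is a derivation of the bracket, since it acts only on the scalar factor $R[\![\hbar]\!]$. Thus $\partial_\hbar$ behaves like differentiation along a one-parameter family, with $\hbar$ playing the role of the parameter. We can therefore apply the standard formula for the derivative of an exponential:
\[
\partial_\hbar\bigl(e^{\mathrm{ad}_\lambda}\bigr)\, x=\Bigl[\tfrac{e^{\mathrm{ad}_\lambda}-\mathrm{id}}{\mathrm{ad}_\lambda}(\partial_\hbar\lambda),\,e^{\mathrm{ad}_\lambda}x\Bigr].
\]

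\textbf{Step 2.} Set $\xi:=\frac{e^{\mathrm{ad}_\lambda}-\mathrm{id}}{\mathrm{ad}_\lambda}(\partial_\hbar\lambda)$. Differentiating $\lambda\cdot\Phi=e^{\mathrm{ad}_\lambda}\Phi-\frac{e^{\mathrm{ad}_\lambda}-\mathrm{id}}{\mathrm{ad}_\lambda}(d\lambda)$ with $\partial_\hbar$ should give
\[
\partial_\hbar(\lambda\cdot\Phi)=e^{\mathrm{ad}_\lambda}(\partial_\hbar\Phi)+[\xi,\lambda\cdot\Phi]-d\xi=e^{\mathrm{ad}_\lambda}(\partial_\hbar\Phi)-d^{\lambda\cdot\Phi}(\xi).
\]
This is the familiar identity saying that the infinitesimal change of a gauge transform is the gauge derivative of $\xi$. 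The cleanest route is not to expand the series term by term. Instead, introduce an auxiliary variable $t$ and consider $\Psi(t)=(t\lambda)\cdot\Phi$. Then compare the two functions
\[
A(t)=\partial_\hbar\Psi(t)-e^{t\,\mathrm{ad}_\lambda}(\partial_\hbar\Phi),\qquad B(t)=-d^{\Psi(t)}\xi(t),
\]
where $\xi(t)$ is defined as above with $t\lambda$ in place of $\lambda$. Both vanish at $t=0$. One then checks that they satisfy the same linear ODE in $t$, using $\frac{d}{dt}\Psi(t)=[\lambda,\Psi(t)]-d\lambda$. Uniqueness of solutions of such ODEs in $R[\![\hbar]\!][\![t]\!]$ holds because $R\supset\mathbb{Q}$. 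Finally evaluate at $t=1$, which is legitimate since every coefficient of each fixed power of $\hbar$ is polynomial in $t$.

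\textbf{Step 3 (truncations).} Everything above is compatible with reduction modulo $\hbar^n$. The derivation $\partial_\hbar$ maps $\hbar^{n}\mathfrak{g}[\![\hbar]\!]$ into $\hbar^{n-1}\mathfrak{g}[\![\hbar]\!]$. Hence $\partial_\hbar\Phi$ modulo $\hbar^{n}$ is $\varphi_1+\cdots+n\varphi_n\hbar^{n-1}$, which depends only on $\Phi$ modulo $\hbar^{n+1}$. Reducing the identity of Step 2 modulo $\hbar^n$ then gives exactly the truncated statement, with $\xi$ reduced as well.

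The main obstacle is Step 2: verifying the ODE identity with the correct signs, and making sure the derivative-of-exponential formula holds in this graded, completed setting. I would first try the $t$-family argument. As a fallback, one can compare coefficients of the power series in $\mathrm{ad}_\lambda$ using the combinatorial identity for $\partial(e^{X})$.
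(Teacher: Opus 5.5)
Your proof is correct. Your $t$-ODE checks out: both sides satisfy $Y'=\mathrm{ad}_\lambda Y-d^{\Psi(t)}(\partial_\hbar\lambda)$ with $Y(0)=0$, so the identity $\partial_\hbar(\lambda\cdot\Phi)=e^{\mathrm{ad}_\lambda}(\partial_\hbar\Phi)-d^{\lambda\cdot\Phi}(\xi)$, with $\xi=\frac{e^{\mathrm{ad}_\lambda}-\mathrm{id}}{\mathrm{ad}_\lambda}(\partial_\hbar\lambda)$, holds and reduces modulo $\hbar^n$ as you say.

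The paper does not reprove this lemma; it cites \cite{Kaledin}. Your route, differentiating the gauge relation in $\hbar$ to exhibit an explicit primitive, is the same mechanism the paper uses for the properadic analogue, Lemma~\ref{piquant prop}~(1). You can skip the auxiliary $t$ by writing $d+\lambda\cdot\Phi=e^{\mathrm{ad}_\lambda}(d+\Phi)$ in $\mathfrak{g}[\![\hbar]\!]\rtimes R\,d$ and applying your Step~1 with $\partial_\hbar d=0$.
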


\begin{definition}[weight-graded dg Lie algebra and gauge triviality]
A \emph{weight-graded dg Lie algebra} is a dg Lie algebra together with an additional weight grading such that \[\mathfrak{g} \cong \prod_{k \geqslant 1} \mathfrak{g}^{(k)}, \quad d \left(\mathfrak{g}^{(k)} \right) \subset \mathfrak{g}^{(k + 1)}, \quad \left[ \mathfrak{g}^{(k)},  \mathfrak{g}^{(l)} \right] \subset  \mathfrak{g}^{(k +l)}  \ .  \]   Every element $\varphi$ in $\mathfrak{g}$ decomposes as $\varphi = \varphi^{(1)} + \varphi^{(2)} +  \varphi^{(3)} + \cdots $~, where $\varphi^{(k)} \in \mathfrak{g}^{(k)}$~, for all $k \geqslant 1$~. It is complete for the descending filtration defined by \[\mathcal{F}^n \mathfrak{g} \coloneqq  \prod_{k \geqslant n} \mathfrak{g}^{(k)} \ . \] A  Maurer--Cartan element $\varphi \in \mathrm{MC}\left(\mathfrak{g}\right)$ is

\begin{enumerate}
	\item \emph{gauge trivial} if  there exists $\lambda \in \mathfrak{g}_0$ such that $\lambda \cdot \varphi = \varphi^{(1)}$~.
	\item \emph{gauge $n$-trivial} for $n \geqslant 1$ if there exists $\lambda \in \mathfrak{g}_0 $ such that \[(\lambda \cdot \varphi)^{(k)} = 0 \quad \mbox{for all} \quad 2 \leqslant  k \leqslant   n +1 \ .\]
\end{enumerate}
\end{definition}

\begin{definition}[Prismatic decomposition]
	Let \(V \) be a weight-graded chain complex. The \emph{$s$-prismatic decomposition} of \(v \in  V\)  is the formal power series
	\[
	\mathcal D_s(v)
	:= v^{(0)} + \cdots + 
	v^{(s)}
	+ v^{(s+1)}\hbar
	+ v^{(s+2)}\hbar^2
	+\cdots
	\in V[\![\hbar]\!]
	\] where \(v^{(r)}\) denotes the component of weight \(r\).
\end{definition}

\begin{definition}[Kaledin classes]
	Let $\mathfrak{g}$ be a weight-graded dg Lie algebra and let $\varphi$ be a Maurer--Cartan element. Let us consider its prismatic decomposition $\mathfrak{D}_{1}(\varphi)$, which is a formal deformation of $\varphi^{(1)}$, see \cite[Lemma~1.29]{Kaledin}. 
	\begin{itemize}
		\item[$\centerdot$] The \emph{Kaledin class} $K_{\varphi}$ is the Kaledin class $K_{\mathfrak{D}_{1}(\varphi) }$ of its prismatic decomposition.
		\item[$\centerdot$]
		Its \emph{$n^{\text{th}}$-truncated Kaledin class} $K_{\varphi}^n$ is the $n^{\text{th}}$-truncated Kaledin class $K_{\mathfrak{D}_{1}(\varphi) }^n$.
	\end{itemize}
\end{definition}

\begin{theorem}[{\cite[Theorem 1.31 and Proposition 1.33]{Kaledin}}]\label{Kaledin wg}
	Let $R$ be a $\mathbb{Q}$-algebra. Let $\mathfrak{g}$ be a weight-graded dg Lie algebra over $R$ and let $\varphi $ be a Maurer--Cartan element. 
	\begin{enumerate}
		\item The Kaledin class $K_{\varphi}$ is zero if and only if $\varphi$ is gauge trivial. 
	\item  For $n \geqslant 1$, the truncated Kaledin class $K_{\varphi}^n$ is zero if and only if $\varphi$ is gauge $n$-trivial.  	
	\item The Kaledin class $K_{\Phi}$ is zero if and only if $K^n_{\Phi}$ is zero for every $n \geqslant 1$. 
	\end{enumerate}
\end{theorem}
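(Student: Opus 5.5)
The plan is to exploit that the prismatic decomposition $\Phi:=\mathcal D_1(\varphi)=\varphi^{(1)}+\varphi^{(2)}\hbar+\varphi^{(3)}\hbar^2+\cdots$ is obtained from $\varphi$ by a weight-to-$\hbar$ rescaling. The rule is that a weight-$k$ element is multiplied by $\hbar^{k-1}$ if it lives in degree $-1$, and by $\hbar^{k}$ if it lives in degree $0$ (a gauge parameter). Since $d$ raises weight by one and the bracket adds weights, a direct weight count shows two things. First, $\Phi$ is Maurer--Cartan (this is \cite[Lemma~1.29]{Kaledin}). Second, for $\lambda=\sum_k\lambda^{(k)}\in\mathfrak g_0$, setting $\Lambda:=\sum_k \lambda^{(k)}\hbar^k\in\hbar\,\mathfrak g_0[\![\hbar]\!]$ gives
\[
\Lambda\cdot\Phi=\mathcal D_1(\lambda\cdot\varphi).
\]
Indeed, each bracket with $\lambda^{(k)}\hbar^k$ raises both weight and $\hbar$-degree by $k$, and each term $d\lambda^{(k)}\hbar^k$ has weight $k+1$ and $\hbar$-degree $k$, exactly as $\mathcal D_1$ prescribes. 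I will check this identity first, term by term in the gauge formula.

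\emph{Gauge trivial $\Rightarrow K_\varphi=0$.} If $\lambda\cdot\varphi=\varphi^{(1)}$, the identity gives $\Lambda\cdot\Phi=\varphi^{(1)}$, which is constant in $\hbar$. Hence $K_{\Lambda\cdot\Phi}=[\partial_\hbar\varphi^{(1)}]=0$. Lemma~\ref{iso} then transports this back to $K_\Phi=0$. The truncated version is identical: one computes modulo $\hbar^n$, where $(\lambda\cdot\varphi)^{(k)}=0$ for $2\leqslant k\leqslant n+1$ forces $\Lambda\cdot\Phi\equiv\varphi^{(1)}$ modulo $\hbar^n$.

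\emph{$K_\varphi^n=0\Rightarrow$ gauge $n$-trivial.} I argue by induction on $n$, proving the stronger statement that the gauge parameter can be chosen with components only in weights $\leqslant n$. Suppose inductively, using Lemma~\ref{iso}, that $\varphi$ has already been gauged so that $\varphi^{(k)}=0$ for $2\leqslant k\leqslant n$. Then the leading term of $K^n_\varphi$ is $n\varphi^{(n+1)}\hbar^{n-1}$. The hypothesis provides $\nu\in\mathfrak g_0[\![\hbar]\!]/(\hbar^n)$ with
\[
\partial_\hbar\Phi\equiv d\nu+[\Phi,\nu] \pmod{\hbar^n}.
\]
Both sides split according to the grading ``weight minus $\hbar$-degree'', and $d^\Phi$ respects this splitting, so we may project $\nu$ onto its homogeneous part, of the form $\sum_k\nu^{(k)}\hbar^{k-1}$. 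Reading off the top coefficient $\hbar^{n-1}$ in weight $n+1$ gives $n\varphi^{(n+1)}=d\nu^{(n)}+[\varphi^{(1)},\nu^{(n)}]$. The lower coefficients vanish on the left-hand side, which is what makes the projection consistent. Since $n$ is invertible, gauging by $-\nu^{(n)}/n$, a weight-$n$ element, kills $\varphi^{(n+1)}$. This element leaves the weights $\leqslant n$ untouched, because its brackets with $\varphi$ start in weight $n+1$ and $d$ of it lies in weight $n+1$. This is the only place where rational coefficients enter.

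\emph{Parts (1) and (3).} For (3), the forward direction is just reduction modulo $\hbar^n$. For the converse, $K^n=0$ for all $n$ yields gauge $n$-trivial elements for all $n$. The inductive construction above produces the successive gauge parameters, of weight $1,2,3,\dots$, and I compose them with the Baker--Campbell--Hausdorff product. Since each new factor lies in $\mathcal F^n\mathfrak g_0$, the product converges by completeness of the weight filtration, and its limit $\lambda$ satisfies $\lambda\cdot\varphi=\varphi^{(1)}$. This proves the converse of (3), and (1) then follows from (2) and (3).

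The main obstacle I expect is the compatibility issue in this last step. A vanishing of $K^{n+1}$ certified by some $\nu$ is not a priori compatible with the gauge already chosen at stage $n$. To handle this, at stage $n$ I will re-apply Lemma~\ref{iso} to the already gauged element, so that vanishing of its class is preserved before extracting the next $\nu$. I will also verify carefully that the homogeneous projection of $\nu$ still solves the equation after the earlier gauges have been performed.
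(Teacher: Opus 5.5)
The paper does not prove this statement; it imports it from \cite{Kaledin}. Your proposal is a correct, self-contained proof. Its key step is the same computation the paper carries out inside the proof of Theorem~\ref{thm gauge transmission 2}. After gauging away weights $2,\ldots,n$ and transporting the hypothesis along that gauge with Lemma~\ref{iso}, the truncated class collapses to $[n\psi^{(n+1)}\hbar^{n-1}]$. So $n\psi^{(n+1)}$ is a $d^{\psi^{(1)}}$-boundary, and dividing by $n$ gives the next weight-$n$ gauge correction. The passage to the limit is the weight-by-weight stabilization the paper also uses in Theorems~\ref{intrinsic} and~\ref{rev2}. Your rescaling identity $\mathcal D_0(\lambda)\cdot\mathcal D_1(\varphi)=\mathcal D_1(\lambda\cdot\varphi)$ is correct and settles the easy implications.

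A few small corrections follow.

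\textbf{Sign of the correction.} With the paper's convention, the linear part of $\mu\cdot\psi$ is $\psi-d^{\psi}\mu$. The correcting parameter is therefore $+\nu^{(n)}/n$, not $-\nu^{(n)}/n$.

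\textbf{Modulus in the easy direction of (2).} Gauge $n$-triviality gives $\Lambda\cdot\Phi\equiv\varphi^{(1)}$ modulo $\hbar^{n+1}$, not just modulo $\hbar^{n}$. This is what you need for $\partial_\hbar(\Lambda\cdot\Phi)$ to vanish modulo $\hbar^{n}$.

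\textbf{Compatibility.} Your proposed fix is exactly right, and the issue is not a real obstacle. You never extend an old certificate $\nu$; you reuse the hypothesis $K^{n+1}_\varphi=0$, transported along the gauge already built. Since $(\lambda\cdot\varphi)^{(k)}$ depends only on $\lambda$ modulo $\mathcal F^{k}$, the successive gauges stabilize weightwise, so no $\lim^1$ term can intervene.

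\textbf{Weights $\leqslant n$.} The same observation justifies your claim that the gauge parameter can be taken in weights $\leqslant n$. This requires truncating, because the BCH product of your corrections has components in all higher weights.

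\textbf{Scope of item (3).} The notation suggests (3) may be meant for an arbitrary formal deformation $\Phi$ of $\varphi_0$. Your argument only treats $\Phi=\mathcal D_1(\varphi)$. The same induction, with $\hbar$-adic order in place of weight, handles the general case. If $\Psi=\Lambda\cdot\Phi\equiv\varphi_0\pmod{\hbar^n}$, then $K^n_\Psi=[n\psi_n\hbar^{n-1}]=0$ gives $n\psi_n=d^{\varphi_0}\nu$. Gauging by $\nu\hbar^n/n$ then pushes the error into $\hbar^{n+1}$.
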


\subsection{Transmission of gauge triviality}\label{1.3}

We now establish the transmission criterion, Theorem~\ref{thm gauge transmission 2}.
The guiding idea is that, although the vanishing of a Kaledin class may be
difficult to check directly, it can sometimes be detected after applying a
suitable filtered morphism of complexes. Throughout this subsection, a complete weight-graded chain complex is understood to have a differential strictly increasing the weight filtration. Let $(V,d)$ be a complete weight-graded chain complex with filtration
$\mathcal{F}^{p}V=\prod_{j\geqslant p}V^{(j)}$. Write $d^{(r)}$ for the component of $d$ raising weight by $r$.
We index the homological spectral sequence by weight and
total homological degree:
\[
E_0^{p,q}(V)=V_q^{(p)},
\qquad
d_r:E_r^{p,q}(V)\longrightarrow E_r^{p+r,q-1}(V).
\]
We denote by $H_q^{(p)}(V,d^{(1)})$ the homology of
$(V,d^{(1)})$ in degree $q$ and weight $p$.

\begin{lemma}\label{lem spectral-page-identification}
	Let $(V,d)$ be a complete weight-graded chain complex. Suppose that $d^{(r)}=0$ for $1<r<k+1$. Then, for every $(p,q)$, there are
	canonical isomorphisms
	\[
	H_q^{(p)}(V,d^{(1)})
	\cong E_{2}^{p,q}(V)
	\cong\cdots\cong
	E_{k+1}^{p,q}(V).
	\]
\end{lemma}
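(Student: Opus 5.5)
We argue directly from the construction of the spectral sequence of the weight filtration, by induction on the page $r$ running from $2$ to $k+1$. Since $d$ strictly increases weight, the associated graded $\mathcal{F}^pV/\mathcal{F}^{p+1}V = V^{(p)}$ has zero differential. Hence $E_0 = E_1 = V$ with $d_0=0$, and the differential $d_1$ on $E_1^{p,q}=V^{(p)}_q$ is exactly the weight-raising-by-one component $d^{(1)}$. This gives the first isomorphism $E_2^{p,q}\cong H^{(p)}_q(V,d^{(1)})$ canonically. (For the grading conventions, one checks that $d_r$ lands in $E_r^{p+r,q-1}$ as stated.)

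For the remaining isomorphisms, use the explicit description of the pages by approximate cycles and boundaries. Let
\[
Z_r^{p,q}=\{x\in \mathcal{F}^pV_q : dx\in \mathcal{F}^{p+r}V\},
\]
so that $E_r^{p,q}=Z_r^{p,q}/(Z_{r-1}^{p+1,q}+dZ_{r-1}^{p-r+1,q+1})$, and $d_r[x]=[dx]$. We show that $d_r=0$ for $2\leqslant r\leqslant k$; then $E_{r+1}=E_r$ canonically, which yields the chain of isomorphisms up to $E_{k+1}$.

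To see this, take a class in $E_r^{p,q}$ and represent it by $x=x^{(p)}+x^{(p+1)}+\cdots$ with $dx\in\mathcal{F}^{p+r}V$. Using $d^{(j)}=0$ for $1<j<k+1$, the weight $p+s$ component of $dx$ for $s\leqslant k$ is
\[
d^{(1)}x^{(p+s-1)}.
\]
The key step is to modify the representative, modulo $Z_{r-1}^{p+1}$, to the single homogeneous element $x^{(p)}$ with $d^{(1)}x^{(p)}=0$. Then
\[
dx^{(p)} = d^{(1)}x^{(p)} + \sum_{j\geqslant k+1} d^{(j)}x^{(p)} \in \mathcal{F}^{p+k+1}V,
\]
so $[dx^{(p)}]=0$ in $E_r^{p+r}$ for every $r\leqslant k$, since $\mathcal{F}^{p+r+1}$ classes vanish there. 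The point to check is that replacing $x$ by $x^{(p)}$ is legitimate. The tail $x-x^{(p)}$ lies in $\mathcal{F}^{p+1}$, and its differential satisfies
\[
d(x-x^{(p)}) = dx - dx^{(p)} \in \mathcal{F}^{p+r},
\]
because $dx^{(p)}\in \mathcal{F}^{p+k+1}\subset\mathcal{F}^{p+r}$ when $r\leqslant k+1$. Hence $x-x^{(p)}\in Z_{r-1}^{p+1}$, and indeed in $Z_{r}^{p+1}$, so it is zero in $E_r^{p,q}$.

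This bookkeeping of the indices in $Z_r$ and $B_r$ is the only delicate point; I expect it to be the main, though routine, obstacle. An alternative that avoids it is to compare $(V,d)$ with $(V,d^{(1)})$. Both have the same $E_1$-page and the same $d_1$, and for the latter the spectral sequence degenerates at $E_2$. Inductively, $d_r$ for $r\leqslant k$ only involves the components $d^{(1)},\dots,d^{(r)}$, so the two spectral sequences agree through page $k+1$.

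Canonicity follows because every identification is induced by the identity on representatives $x^{(p)}\in\ker d^{(1)}$.
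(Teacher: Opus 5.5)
Your proposal is correct and follows the same route as the paper: identify $E_2$ with $H(V,d^{(1)})$, then observe that $d_2,\ldots,d_k$ vanish because $d$ has no components of weight shift $2,\ldots,k$. The paper simply asserts this vanishing, and your explicit $Z_r$ computation verifies it by replacing a representative with $x^{(p)}$ and noting that $dx^{(p)}\in\mathcal{F}^{p+k+1}V$. One small slip: your parenthetical claim that $x-x^{(p)}$ lies in $Z_r^{p+1}$ is false in general, since the weight-$(p+r)$ component $d^{(1)}x^{(p+r-1)}$ of $dx$ need not vanish; however, only membership in $Z_{r-1}^{p+1}$ is used, so the argument is unaffected.
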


\begin{proof}
	The differential on $E_1$ is induced by $d^{(1)}$, so
	\[
	E_2^{p,q}(V)\cong H_q^{(p)}(V,d^{(1)}).
	\]
	Since $d$ has no components of weight shift $2,\ldots,k$, the
	differentials $d_2,\ldots,d_k$ vanish. Hence the pages
	$E_2,\ldots,E_{k+1}$ are canonically identified.
\end{proof}

\begin{lemma}
Given  $f\colon (V,d)\to(W,d')$, a morphism of weight-graded chain complexes,
	\[
	\mathcal D_0(f)\colon
	\bigl(V[\![\hbar]\!],\mathcal D_1(d)\bigr)
	\longrightarrow
	\bigl(W[\![\hbar]\!],\mathcal D_1(d')\bigr)
	\]
	is a morphism of chain complexes preserving the \(\hbar\)-adic filtration.
\end{lemma}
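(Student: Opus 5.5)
The plan is to read $\mathcal D_0(f)$ and $\mathcal D_1(d)$ as $\hbar$-linear operators and to reduce the statement to bookkeeping of weight shifts against powers of $\hbar$. Decompose $f=\sum_{a\geqslant 0}f^{(a)}$ and $d=\sum_{b\geqslant 1}d^{(b)}$, $d'=\sum_{b\geqslant 1}d'^{(b)}$ into components raising weight by $a$ (resp.\ $b$). Recall that $d^{(0)}=0$ by the standing convention that differentials strictly increase weight. The operators then take the form
\[
\mathcal D_0(f)=\sum_{a\geqslant 0}f^{(a)}\hbar^{a},\qquad \mathcal D_1(d)=\sum_{b\geqslant 1}d^{(b)}\hbar^{b-1},
\]
extended $\hbar$-linearly to $V[\![\hbar]\!]$, and similarly for $d'$. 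The first step is to check that these operators are well defined. On $\sum_i x_i\hbar^i$, the coefficient of $\hbar^m$ in the image is $\sum_{a+i=m}f^{(a)}(x_i)$, which is a finite sum. Each $f^{(a)}(x_i)$ makes sense in $W=\prod_j W^{(j)}$ because it is defined weight by weight. The same holds for $\mathcal D_1(d)$.

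Next, I will check that $\mathcal D_1(d)$ and $\mathcal D_1(d')$ square to zero, so that the source and target really are chain complexes. Split $d^2=0$ by weight shift: for each $r$ we get $\sum_{b+c=r}d^{(b)}d^{(c)}=0$. In $\mathcal D_1(d)^2$ the term $d^{(b)}d^{(c)}$ carries $\hbar^{(b-1)+(c-1)}=\hbar^{r-2}$. Hence the coefficient of $\hbar^{r-2}$ is exactly this vanishing sum.

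The key step is the chain-map identity $\mathcal D_1(d')\circ\mathcal D_0(f)=\mathcal D_0(f)\circ\mathcal D_1(d)$. Splitting $d'f=fd$ by weight shift $r\geqslant 1$ gives
\[
\sum_{a+b=r}d'^{(b)}f^{(a)}=\sum_{a+b=r}f^{(a)}d^{(b)} .
\]
On both sides, the term indexed by $(a,b)$ carries $\hbar^{a+(b-1)}=\hbar^{r-1}$, so the two composites agree coefficient by coefficient. This is precisely why $f$ is decomposed with shift $s=0$ while $d$ is decomposed with shift $s=1$: in both composites the exponent of $\hbar$ is the weight shift minus one. Finally, preservation of the $\hbar$-adic filtration is immediate, since $\mathcal D_0(f)$ is $\hbar$-linear and involves only nonnegative powers of $\hbar$, so it maps $\hbar^nV[\![\hbar]\!]$ into $\hbar^nW[\![\hbar]\!]$.

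The only point requiring care is the precise meaning of a morphism of weight-graded complexes. If it is strictly weight-preserving, then $\mathcal D_0(f)=f$ and the argument degenerates to the single identity $f^{(0)}d^{(b)}=d'^{(b)}f^{(0)}$. I will present the filtered version above, which covers both readings; the rest is routine.
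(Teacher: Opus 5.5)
Your proof is correct and follows the paper's argument: the paper also takes the weight-shift $2+n$ components of $d^2=d'^2=0$ and the weight-shift $1+n$ components of $fd=d'f$, then multiplies them by $\hbar^n$, which is exactly your matching of coefficients at $\hbar^{r-2}$ and $\hbar^{r-1}$. Your explicit use of $d^{(0)}=0$ and your well-definedness check spell out details the paper leaves implicit.
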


\begin{proof} This follows from taking the components of weight 
	\(2+n\), and \(1+n\) in the identities	\[ d^2= d'^2=0 \quad \mbox{and}  \quad fd=d'f \ ,\]  and multiplying them by
	\(\hbar^n\).   The preservation of the
	\(\hbar\)-adic filtration is immediate.
\end{proof}

\begin{theorem}[Gauge triviality transmission]
	\label{thm gauge transmission 2}
	Let $\mathfrak g$ be a weight-graded dg Lie algebra and let
	$\varphi\in\mathrm{MC}(\mathfrak g)$ be a Maurer--Cartan element.  Consider  a filtered morphism of chain complexes
	\[
	f\colon\mathfrak g^\varphi\longrightarrow(\mathfrak h,d') \ ,
	\] where $(\mathfrak h,d')$ is a weight-graded chain complex. Let us set $F\coloneqq\mathcal D_0(f),$ and for $k\geqslant1$,
	\[
	\theta_k\coloneqq E_k^{k,-1}(f)\colon
	E_k^{k,-1}(\mathfrak g^\varphi)
	\longrightarrow
	E_k^{k,-1}(\mathfrak h) \ .
	\]
	\begin{enumerate}
		\item If $F_*(K_\varphi)=0$ and $\theta_k$ is injective for every
		$k>1$, then $\varphi$ is gauge trivial.
		
		\item  If	$F_*(K_\varphi^n)=0$ for $n\geqslant 1$, and $\theta_k$ is injective for every
		$1<k\leqslant n+1$,  then $\varphi$ is gauge $n$-trivial.
	\end{enumerate}
\end{theorem}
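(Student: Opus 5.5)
We argue by induction on the weight, killing the higher components of $\varphi$ one at a time by gauge transformations, and using the injectivity of $\theta_k$ to show that each obstruction vanishes. By Theorem~\ref{Kaledin wg}, it suffices in (1) to prove $K_{\varphi}^n=0$ for every $n\geqslant 1$, and in (2) to prove $K_\varphi^n=0$. Suppose that, after a gauge transformation, $\varphi$ is gauge $(k-2)$-trivial, i.e.\ $\varphi=\varphi^{(1)}+\varphi^{(k)}+\varphi^{(k+1)}+\cdots$ with $2\leqslant k\leqslant n+1$. We must produce $\lambda\in\mathfrak g_0^{(k-1)}$ with $(\lambda\cdot\varphi)^{(k)}=0$.

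This gauge change does not alter the hypotheses. By Proposition~\ref{technique} and Lemma~\ref{iso}, $e^{\mathrm{ad}_\lambda}$ identifies the twisted complexes and sends Kaledin classes to Kaledin classes. Since $e^{\mathrm{ad}_\lambda}$ preserves the weight filtration and is the identity on the associated graded, it induces isomorphisms on all pages $E_r$. We therefore precompose $f$ with $e^{-\mathrm{ad}_\lambda}$. This yields a filtered map out of the new twisted algebra with the same injectivity of the $\theta_r$ and the same vanishing of $F_*$ of the Kaledin class. I expect one to check that $\mathcal D_0$ is compatible with this composition, using that the prismatic decomposition of $e^{\mathrm{ad}_\lambda}$ corresponds to a gauge by $\mathcal D_0(\lambda)\in\hbar\,\mathfrak g_0[\![\hbar]\!]$ with $\lambda$ of weight $\geqslant 2$.

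In the inductive situation, the twisted differential $d^\varphi=d+\mathrm{ad}_{\varphi}$ has components of weight shift $1$ (namely $d^{(1)}+\mathrm{ad}_{\varphi^{(1)}}$) and then only shifts $\geqslant k-1$. Here I assume $d$ is of weight shift one, as in the convolution setting; otherwise $d^{(r)}$ terms must be absorbed similarly. The Maurer--Cartan equation in weight $k+1$ says that $\varphi^{(k)}$ is a cycle for $d^{(1)}+\mathrm{ad}_{\varphi^{(1)}}$. By Lemma~\ref{lem spectral-page-identification} (with the vanishing range $1<r<k-1$, adapting indices), the class $[\varphi^{(k)}]$ survives to $E_k^{k,-1}(\mathfrak g^\varphi)$. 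It is exactly the leading term of the Kaledin class: modulo $\hbar^{k}$, $\partial_\hbar\mathcal D_1(\varphi)=(k-1)\varphi^{(k)}\hbar^{k-2}+\cdots$.

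The main step, and the main obstacle, is to show that $\theta_k[\varphi^{(k)}]=0$. This means translating $F_*(K^n_\varphi)=0$, an $\hbar$-adic statement in $W[\![\hbar]\!]$, into the vanishing of a class on page $E_k$ of the weight spectral sequence of $\mathfrak h$. I would write $F(\partial_\hbar\mathcal D_1\varphi)=\mathcal D_1(d')(\beta)$ and compare coefficients of $\hbar^{j}$. The coefficients for $j<k-2$ show that $\beta$'s low-order terms form a zig-zag of length at most $k-1$, and the coefficient of $\hbar^{k-2}$ then exhibits $(k-1)f(\varphi^{(k)})$ as a $d_r$-boundary with $r<k$. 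Hence this class is zero in $E_k^{k,-1}(\mathfrak h)$. Injectivity of $\theta_k$ gives $(k-1)[\varphi^{(k)}]=0$ in $E_k$, and since $k-1$ is invertible we get $[\varphi^{(k)}]=0$. Vanishing on $E_k$ means $\varphi^{(k)}=(d^{(1)}+\mathrm{ad}_{\varphi^{(1)}})\mu+(\text{higher zig-zag terms})$. Because the intermediate differentials vanish, this reduces to $\varphi^{(k)}=d^\varphi_{(1)}\mu$ with $\mu\in\mathfrak g_0^{(k-1)}$. Taking $\lambda=\mu$ gives $(\lambda\cdot\varphi)^{(k)}=\varphi^{(k)}-d^{\varphi^{(1)}}\lambda=0$, and lower weights are untouched.

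For (1), iterating produces a sequence of gauges in increasing weights. Their BCH product converges by completeness, yielding $\lambda$ with $\lambda\cdot\varphi=\varphi^{(1)}$.
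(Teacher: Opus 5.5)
Your proposal is correct and is essentially the paper's argument. You transport the hypotheses along $e^{\mathrm{ad}_{-\lambda}}$ via Lemma~\ref{iso}, extract from the $\hbar$-adic primitive $\beta$ the diagonal zig-zag $y=\sum_i\beta_i^{(i+1)}$ showing that $(k-1)f^{(0)}(\varphi^{(k)})$ dies in $E_k^{k,-1}(\mathfrak h)$, and conclude by injectivity of $\theta_k$. The paper only packages this differently: it argues by minimal counterexample and gets the intermediate gauge and the final conclusion from Theorem~\ref{Kaledin wg}, whereas you build the gauges directly and assemble them by BCH.

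One index fix. Once weights $2,\ldots,k-1$ are killed, $\mathrm{ad}_{\varphi^{(k)}}$ raises weight by $k$, so $d^\varphi$ has weight shifts $1$ and $\geqslant k$, not $\geqslant k-1$. This is exactly what gives $E_2\cong\cdots\cong E_k$ and lets vanishing in $E_k$ reduce to $\varphi^{(k)}=d^{\varphi^{(1)}}\mu$. Similarly, the accumulated gauge only needs weight $\geqslant 1$, not $\geqslant 2$, to get $\mathcal D_0(\lambda)\in\hbar\,\mathfrak g_0[\![\hbar]\!]$.
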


\begin{proof}
	We first prove Point~(2). Suppose that $\varphi$ is not gauge $n$-trivial. By Theorem \ref{Kaledin wg} (2) $K_\varphi^n\neq0$. Let
	$m\leqslant n$ be minimal such that $K_\varphi^m\neq0$. By
	Theorem~\ref{Kaledin wg}~(2), since $K_\varphi^{m-1}=0$ there exists
	$\lambda\in\mathfrak g_0$ such that, for
	$\psi\coloneqq\lambda\cdot\varphi$,
	\[
	\psi^{(2)}
	=\cdots=
	\psi^{(m)}
	=0,
	\qquad
	K_\psi^m
	=
	\bigl[m\psi^{(m+1)}\hbar^{m-1}\bigr] \ , 
	\]  taking $\lambda=0$ if $m=1$.  Let us set $	g\coloneqq e^{\operatorname{ad}_{-\lambda}}$ and $G\coloneqq\mathcal D_0(g)$. By
	Lemma~\ref{iso}, we have $G_*(K_\psi^m)=K_\varphi^m.$
	Moreover, reducing the hypothesis modulo $\hbar^m$ gives
	$F_*(K_\varphi^m)=0$ and
	\[
	f^{(0)}
	\left(m\psi^{(m+1)}\right)\hbar^{m-1}
	=
	\mathcal D_1\left(d'\right)(\nu)
	\pmod{\hbar^m},
	\]  for some
	$\nu\in \mathfrak h[\![\hbar]\!]$
	since $g^{(0)}=\mathrm{id}$. Let us write
	\[
	\nu=\sum_{i=0}^{m-1}\nu_i\hbar^i,
	\qquad
	y\coloneqq\sum_{i=0}^{m-1}\nu_i^{(i+1)}.
	\]
	Comparing the coefficient of $\hbar^i$ in weight $i+2$ gives
	\[
	d'y\in\mathcal F^{m+1}\mathfrak h,
	\qquad
	d'y\equiv
	f^{(0)}\bigl(m\psi^{(m+1)}\bigr)
	\pmod{\mathcal F^{m+2}\mathfrak h}.
	\]
	Thus, we have that $f^{(0)}\left(m\psi^{(m+1)}\right)$ is trivial in  $E_{m+1}^{m+1,-1}(\mathfrak h)$. Consequently,
	\[
	\theta_{m+1}\circ E_{m+1}(g)
	\left(\left[m\psi^{(m+1)}\right]\right)=0.
	\]
	Since $E_{m+1}(g)$ is an isomorphism and
	$\theta_{m+1}$ is injective,
	\[
	\left[m\psi^{(m+1)}\right]=0
	\quad\text{in}\quad
	H_{-1}^{(m+1)}
	\left(\mathfrak g,d^{\psi^{(1)}}\right).
	\]
	It follows that $K_\psi^m=0$, and therefore
	$K_\varphi^m=0$ by Lemma~\ref{iso}, contradicting the definition of
	$m$. Hence  $\varphi$ is gauge $n$-trivial. For Point~(1), the equality $F_*(K_\varphi)=0$ implies
	\[
	F_*(K_\varphi^n)=0
	\qquad\text{for every }n\geqslant1.
	\]
	Point~(2) therefore gives $K_\varphi^n=0$ for every $n$. 
	Theorem~\ref{Kaledin wg}~(1) concludes the proof.
\end{proof}

\section{\textcolor{bordeau}{Formality transmission}}\label{section 2}

\noindent In this section, we apply the transmission criterion developed in Section \ref{section 1} to establish the formality transmission theorem for
algebras encoded by colored operads or properads. In this particular context, we adapt the construction to deal with any commutative ground ring.

\begin{assumption}\label{assump}
Let us fix $R$ a commutative ring and let $A$ be a chain complex over $R$ related to $H(A)$ via a \emph{contraction}: there exists \[
\hbox{
	\begin{tikzpicture}
		
		\def\upshift{0.075}
		\def\downshift{0.075}
		\pgfmathsetmacro{\midshift}{0.005}
		
		\node[left] (x) at (0, 0) {$(A,d)$};
		\node[right=1.5 cm of x] (y) {$(H(A),0)$};
		
		\draw[->] ($(x.east) + (0.1, \upshift)$) -- node[above]{\mbox{\tiny{$p$}}} ($(y.west) + (-0.1, \upshift)$);
		\draw[->] ($(y.west) + (-0.1, -\downshift)$) -- node[below]{\mbox{\tiny{$i$}}} ($(x.east) + (0.1, -\downshift)$);
		
		\draw[->] ($(x.south west) + (0, 0.1)$) to [out=-160,in=160,looseness=5] node[left]{\mbox{\tiny{$h$}}} ($(x.north west) - (0, 0.1)$);
\end{tikzpicture}} \] such that $ip - \mathrm{id}_A = d_A h + h d_A\ ,$  $pi = \mathrm{id}_{H(A)} \ ,$ $h^2 =0\ , $ $  ph = 0\ ,$ $hi =0 \ .   $ Let $\C$ be a reduced weight-graded dg coproperad over $R$, \[\C =   \mathrm{I} \oplus \C^{(1)} \oplus \C^{(2)} \oplus  \cdots \oplus \C^{(n)} \oplus \cdots \quad \mbox{and} \quad d_{\C} \left(\C^{(k)}\right) \subset \C^{(k - 1)} \ .\]  Let $\overline{\C}$ be the coaugmentation coideal . Let us consider the properad 
\begin{equation}\label{operad P}
\P \coloneqq \mathcal{T}\left(s^{-1}\C^{(1)}\right) / \left(d_{\Cobar \C} \left(s^{-1}\C^{(2)}\right)\right) \ , 
\end{equation} which comes with a twisting morphism $\C \to \P$.   
\end{assumption}

\begin{example}
	Among possible choices for such data, we have: 
	\begin{enumerate}
		\item The bar construction $ \Bar \P$ of a reduced augmented properad in $\mathrm{gr Mod}_R$, see \cite{PHC}.
		\item The Koszul dual coproperad $\C := \P^{\antishriek}$, if $\P$ is a homogeneous Koszul properad, see \cite{Vallette_2007}, or an inhomogeneous quadratic Koszul properad, see \cite[Appendix~A]{GCTV12}.		
		\item Over a field, the quasi-planar cooperad $\C = \Bar \left(\mathcal{E} \otimes \P\right) $ where $\mathcal{E}$ denotes the Barratt-Eccles operad and $\P$ is any reduced augmented operad, see \cite{LRL23} \ . 
	\end{enumerate}
\end{example}

\subsection{Properadic homotopical calculus}\label{2.1}

In this section, we briefly recall the main ingredients of the \emph{properadic calculus} developed in \cite{PHC, SPH, graphexp}, which governs the homotopical properties of algebraic structures encoded by a properad.

\begin{definition}[Convolution Lie-admissible algebra and $\Cobar \C$-algebra structure]
	The \emph{convolution dg Lie-admissible algebra} associated to $(\C, A)$ is the complete dg Lie-admissible algebra \[\mathfrak{g}_{A} := \left( \Hom_{\mathbb{S}} \left(\overline{\C}, \mathrm{End}_A\right), \star, d , \mathcal{F} \right) \ , \] 	whose underlying space is \[\Hom_{\mathbb{S}}\left(\overline{\C}, \End_A \right) \coloneqq \prod_{m,n \geqslant0} \Hom_{\mathbb{S}_m^{op} \times \mathbb{S}_n}\left(\overline{\C}(m,n),\End_A(m,n) \right)\ .\] It is equipped with the Lie-admissible product \[\varphi \star \psi \coloneqq \overline{\C} \xrightarrow{\Delta_{(1,1)}} \overline{\C} \underset{(1,1)}{\boxtimes} \overline{\C} \xrightarrow{\varphi \underset{(1,1)}{\boxtimes}  \psi} \End_A \underset{(1,1)}{\boxtimes} \End_A \xrightarrow{\gamma_{(1,1)}} \End_A \ , \] the differential \[d (\varphi) \coloneqq d_{\End_A} \circ \varphi - (-1)^{|\varphi|} \varphi \circ d_{\overline{\C}} \] and the complete descending filtration given by \[
		\mathcal{F}^n \mathfrak{g}_{A} \coloneqq \prod_{ k \geqslant n  } \Hom_{\mathbb{S}}\left(  \C^{(k)},\End_A \right) \ . \]  There is a natural embedding into the dg Lie-admissible algebra of all maps from $\C$, \[\mathfrak{g}_{A} \hookrightarrow \mathfrak{a}_{A} \coloneqq \left( \Hom_{\mathbb{S}} \left(\C, \mathrm{End}_A\right),  d \right) \ . \]   An \emph{$\Cobar \C$-algebra structure} $(A,\varphi)$  is a Maurer--Cartan element in $\mathfrak{g}_A$.  We refer the reader to \cite{Merkulov_2009} for more details. 	
\end{definition}

\begin{definition}[$\infty$-morphism]
	Let $(A,\varphi)$ and $(B,\psi) $ be two $\Cobar \C$-algebra structures and let \[ f  \in  \mathfrak{a}^A_B \coloneqq \left(\Hom_{\mathbb{S}} \left( \C , \End^A_B\right),  d \right)\ .\] We equip $\mathfrak a_B^A$ with the same weight filtration than $\mathfrak g_B^A$. The \emph{left action} of $\psi \in  \mathfrak{a}_B$ on $f$ and the \emph{right action} of $\varphi \in  \mathfrak{a}_A$ on $f$ are \[\begin{array}{ccccccccc}
		\psi 	\lhd f & \coloneqq & \overline{\C} & \xrightarrow{\Delta_{(1,*)}} & \overline{\C} \underset{(1,*)}{\boxtimes} \C &  \xrightarrow{\psi \underset{(1,*)}{\boxtimes} f} &  \End_B \underset{(1,*)}{\boxtimes} \End_B^A & \longrightarrow & \End_B^A \ ,  \\
		& & \mathcal{I} & \overset {  \cong }{ \ \longrightarrow \ } & \mathcal{I} \ \boxtimes \ \mathcal{I} &  \xrightarrow{\psi \; \boxtimes \; f} & \End_B \ \boxtimes \ \End_B^A & \longrightarrow & \End_B^A \ , \\ 
		& & & & & & & & \\ f 	\rhd \varphi & \coloneqq & \overline{\C} & \xrightarrow{\Delta_{(*,1)}} & \C \underset{(*,1)}{\boxtimes} \overline{\C}  &  \xrightarrow{f \underset{(*,1)}{\boxtimes} \varphi} &  \End_B^A \underset{(*,1)}{\boxtimes} \End_A & \longrightarrow & \End_B^A \ ,  \\
		& & \mathcal{I} & \overset{\cong}{\ \longrightarrow \ } & \mathcal{I} \ \boxtimes \ \mathcal{I} &  \xrightarrow{f \; \boxtimes \; \varphi} & \End_B^A \ \boxtimes \End_A & \longrightarrow & \End_B^A \ .
	\end{array}\]  An \emph{$\infty$-morphism} $\varphi \rightsquigarrow \psi$ is a degree-zero morphism $f \in \mathfrak{a}^A_B$ such that \[f 	\rhd \varphi -  \psi	\lhd f = d(f) \  .\]  Such an  $\infty$-morphism $f : \varphi \rightsquigarrow \psi$ is called
	\begin{itemize}
		\item[$\centerdot$] an \emph{$\infty$-quasi-isomorphism} if $f^{(0)}: A \to B$ is a quasi-isomorphism; 
		\item[$\centerdot$] an \emph{$\infty$-isomorphism} if $f^{(0)} : A \to B$ is an isomorphism; 
		\item[$\centerdot$] an \emph{$\infty$-isotopy} if $A = B$ and $f^{(0)} = \mathrm{id}_A$~. 
	\end{itemize}
\end{definition}

\begin{theorem}[{\cite[Proposition~3.19 and Theorem~3.22]{PHC}}]\label{invertibility prop} \leavevmode
	
	\begin{enumerate}
		\item The composite of degree-zero maps $f \in \mathfrak{a}_B^A$ and $g \in  \mathfrak{a}_C^B$ is defined by  \[g \circledcirc f \coloneqq \C \xrightarrow{\ \Delta \ } \C \ \boxtimes \ \C \xrightarrow{g \ \boxtimes \  f} \End_C^B \ \boxtimes \ \End_B^A \xrightarrow{\ \gamma \ } \End_C^A \ . \]
		\item  The composite of $f : \varphi \rightsquigarrow \psi$ and $g : \psi \rightsquigarrow \phi$ is an $\infty$-morphism \[g \circledcirc f :  \varphi  \rightsquigarrow \phi \ .\]
		\item If $f^{(0)}$ is an isomorphism, $f$ admits a unique inverse with respect to $\circledcirc$~, denoted $f^{-1}$~. 
	\end{enumerate}	
\end{theorem}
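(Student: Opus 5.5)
The plan is to work entirely at the level of the convolution spaces $\mathfrak a^A_B$ and to use the weight grading of $\C$ to solve for an inverse one weight at a time. For Point~(1), composability of the maps is immediate, since the decomposition map $\Delta\colon \C\to\C\boxtimes\C$ and the properadic composition $\gamma\colon \End_C^B\boxtimes\End_B^A\to\End_C^A$ are both defined. The substantive point is that the product over weights defining $\Hom_{\mathbb S}(\C,\End^A_C)$ receives only finitely many contributions in each weight, because the coproperad is reduced and weight-graded. I would also record that $\circledcirc$ is associative and unital, with unit $\mathrm{id}$ concentrated on $\mathrm I$; this follows from coassociativity of $\Delta$ and associativity of $\gamma$.

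For Point~(2), the first step is to rewrite the $\infty$-morphism equation $f\rhd\varphi-\psi\lhd f=d(f)$ as a single equation in the $\circledcirc$-formalism. I would extend $\varphi$ to $\C$ by setting $\varphi|_{\mathrm I}$ equal to the differential, or equivalently work with the total twisted map. The equation then reads $f\circledcirc \varphi^{\sharp}=\psi^{\sharp}\circledcirc f$, where $\varphi^\sharp$ is the infinitesimal (one-vertex, linear) extension of $\varphi$. The left and right actions are exactly the infinitesimal parts of the full composite, obtained by restricting $\Delta$ to $\Delta_{(*,1)}$ and $\Delta_{(1,*)}$. With this in place, the composite satisfies
\[
(g\circledcirc f)\rhd\varphi=g\circledcirc(f\rhd\varphi)
\quad\text{and}\quad
\phi\lhd(g\circledcirc f)=(\phi\lhd g)\circledcirc f .
\]
These are mixed associativity identities coming from coassociativity of $\Delta$, together with the Leibniz rule $d(g\circledcirc f)=d(g)\circledcirc f+g\circledcirc d(f)$. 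The second term of the Leibniz rule uses the middle-action identity $(g\rhd\psi)\circledcirc f = g\circledcirc(\psi\lhd f)$. Subtracting the two sides, the middle terms involving $\psi$ cancel, which yields the $\infty$-morphism equation for $g\circledcirc f$. I expect this step to be the main obstacle: making the infinitesimal-versus-full decomposition identities precise for properads, where the $\boxtimes$ product involves connected graphs with two levels, requires care with signs and with which vertices are linear. I would cite the corresponding identities in \cite{PHC} rather than redo them.

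For Point~(3), I would construct $f^{-1}$ by induction on weight. In weight $0$, set $(f^{-1})^{(0)}=(f^{(0)})^{-1}$. In weight $k$, the equation $(f^{-1}\circledcirc f)^{(k)}=0$ has the form
\[
(f^{-1})^{(k)}\circ (f^{(0)})^{\otimes}+(\text{terms involving }(f^{-1})^{(j)},\ j<k)=0,
\]
where $(f^{(0)})^{\otimes}$ denotes $f^{(0)}$ applied to every input. This equation is uniquely solvable because $f^{(0)}$ is invertible, and completeness of the weight filtration makes the infinite product well defined. The same procedure yields a right inverse, and associativity forces the two to coincide, which gives uniqueness. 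Finally, if $f$ is an $\infty$-morphism then so is $f^{-1}$: composing the $\infty$-morphism equation for $f$ on both sides with $f^{-1}$ and using the identities from Point~(2) gives the equation for $f^{-1}$.
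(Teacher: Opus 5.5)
Your overall plan is the standard one behind the cited result of \cite{PHC} (the paper itself gives no proof): mixed composition identities plus a Leibniz rule for Point~(2), and a weight-by-weight solution for Point~(3). Point~(3) works as you describe it. The leading term in weight $k$ is $(f^{-1})^{(k)}\circ(f^{(0)})^{\otimes n}$, so the left inverse exists and is unique. A right inverse comes from the symmetric induction, and associativity makes the two coincide.

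The problem is Point~(2). The identities you display are false as written, and correcting them is exactly the content of that step. The composite $g\circledcirc f$ applies $g$ at \emph{every} vertex of the upper level of $\Delta(c)$ and $f$ at \emph{every} vertex of the lower level. So $\circledcirc$ is linear in neither argument; for properads, unlike operads, the upper level also has several vertices. Consequently $g\circledcirc(f\rhd\varphi)$ and $g\circledcirc d(f)$ contain one copy of $\varphi$, resp.\ of the differential, under \emph{each} $f$-vertex. Likewise $(\phi\lhd g)\circledcirc f$, $(g\rhd\psi)\circledcirc f$ and $d(g)\circledcirc f$ contain one such insertion for each vertex of the upper level. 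The correct statements insert the new term at a single distinguished vertex, using the notation $g\circledcirc(f;\ell)$, $(g;h)\circledcirc f$ recalled before Proposition~\ref{Ad_f}:
\begin{gather*}
d(g\circledcirc f)=(g;d(g))\circledcirc f+g\circledcirc\bigl(f;d(f)\bigr),\qquad
(g\circledcirc f)\rhd\varphi=g\circledcirc(f;f\rhd\varphi),\\
\phi\lhd(g\circledcirc f)=(g;\phi\lhd g)\circledcirc f,\qquad
(g;g\rhd\psi)\circledcirc f=g\circledcirc(f;\psi\lhd f).
\end{gather*}
These are the $x$-free versions of the identities in the proof of Lemma~\ref{compo}. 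Substitute $d(g)=g\rhd\psi-\phi\lhd g$ and $d(f)=f\rhd\varphi-\psi\lhd f$ into the Leibniz rule and use linearity in the distinguished slot. The two terms involving $\psi$ then cancel, giving $d(g\circledcirc f)=(g\circledcirc f)\rhd\varphi-\phi\lhd(g\circledcirc f)$. The same corrected identities are what your final argument that $f^{-1}$ is an $\infty$-morphism needs.

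Your idea of encoding the equation as $f\circledcirc\varphi^\sharp=\psi^\sharp\circledcirc f$ can be made rigorous, and it then yields these formulas directly from associativity. Work over $R[\varepsilon]/(\varepsilon^2)$ with $|\varepsilon|=1$ and take $\varphi^\sharp=1+\varepsilon\varphi$. Since $\varepsilon^2=0$, the $\varepsilon$-linear part of $f\circledcirc(1+\varepsilon\varphi)$ is $f\rhd\varphi$, and that of $(1+\varepsilon\psi)\circledcirc f$ is $\psi\lhd f$. Comparing $\varepsilon$-linear parts in identities such as $(g\circledcirc f)\circledcirc(1+\varepsilon\varphi)=g\circledcirc\bigl(f\circledcirc(1+\varepsilon\varphi)\bigr)$ gives the distinguished-vertex formulas. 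This is the same device as in the proof of Proposition~\ref{twisted-differential}.

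One caveat: placing $d_A$ on $\mathrm{I}$ only accounts for the $d_A$ and $d_B$ parts of $d(f)$. The term $f\circ d_{\C}$ is not an insertion of an operation, so it must be handled through the coderivation property of $d_{\C}$, which is what the corrected Leibniz rule records.
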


\begin{theorem}[{\cite[Theorems~2.22 and 2.29]{graphexp}}]\label{graphexp2}
	Suppose that $R$ is a $\mathbb{Q}$-algebra. The gauge group associated to the convolution dg Lie admissible algebra $\mathfrak{g}_A$ and the group of $\infty$-isotopies are isomorphic through the \emph{graph exponential/logarithm maps}, \[\mathrm{exp} : ((\mathfrak{g}_A)_0, \mathrm{BCH}, 0) \cong (\infty-\mathrm{iso}, \circledcirc, 1) : \mathrm{log}\ . \] In particular, $\Omega \C$-algebra structures are gauge equivalent
	if and only if they are $\infty$-isotopic.
\end{theorem}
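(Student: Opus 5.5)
This is a cited result (\cite[Theorems~2.22 and 2.29]{graphexp}), so the plan is to reconstruct its argument. Since $R$ is a $\mathbb{Q}$-algebra, we can use exponential series in the complete Lie-admissible algebra $\mathfrak{g}_A$. For $\lambda\in(\mathfrak{g}_A)_0$, set
\[
\exp(\lambda)\coloneqq 1+\lambda+\tfrac{1}{2!}\lambda\circledcirc\lambda+\tfrac{1}{3!}\lambda\circledcirc\lambda\circledcirc\lambda+\cdots \in \mathfrak{a}_A ,
\]
where $\lambda^{\circledcirc n}$ is formed with the composite of Theorem~\ref{invertibility prop}. It is more precise to use the ``graph'' version: sum over connected level-graphs with $n$ vertices decorated by $\lambda$, weighted by $1/n!$ times the number of compatible vertex orderings. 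The series converges because $\lambda$ lies in $\mathcal{F}^1$ and the coproperad is reduced. Its weight-zero component is $\mathrm{id}_A$, so it is an $\infty$-isotopy candidate. The inverse map $\log$ would be defined by the corresponding $\log(1+x)$ series, and checking $\log\circ\exp=\mathrm{id}$ reduces to the usual formal power series identities, applied weight by weight.

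Second, I will show that $\exp$ is a group isomorphism from $(\mathfrak{g}_0,\mathrm{BCH},0)$ to $(\infty\text{-iso},\circledcirc,1)$. The key point is that $\circledcirc$ restricted to maps with identity weight-zero part behaves like the product in a complete associative algebra, up to the non-associativity coming from $\boxtimes$ versus $\boxtimes_{(1,1)}$. The strategy is to pass to a ``universal'' setting: the Lie-admissible product $\star$ is the infinitesimal (linear) part of $\circledcirc$. One shows that the graph exponential is characterised as the value at $t=1$ of the flow
\[
\tfrac{d}{dt}F_t=\lambda\circledcirc F_t , \qquad F_0=1 ,
\]
where $\lambda\circledcirc F_t$ means composing $\lambda$ on top via $\Delta_{(1,*)}$. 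Then the Lie subalgebra generated by $\lambda,\mu$ under $\star$-commutators acts on the pro-unipotent group of isotopies through a well-defined Lie-algebra-to-group correspondence, and BCH follows from the classical Lie correspondence for pro-nilpotent groups.

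Third, for the Maurer--Cartan statement I will check the twisting formula $\exp(\lambda)\rhd\varphi-(\lambda\cdot\varphi)\lhd\exp(\lambda)=d(\exp(\lambda))$. Again, I will differentiate along $t\lambda$: both sides satisfy the same first-order ODE in $t$. This uses the fact that $\frac{d}{dt}(t\lambda\cdot\varphi)=-d^{\,t\lambda\cdot\varphi}\lambda$, and both sides vanish at $t=0$, so uniqueness of solutions in the complete filtered setting (induction on weight) gives equality. Conversely, every $\infty$-isotopy $f:\varphi\rightsquigarrow\psi$ equals $\exp(\log f)$, and the identity then forces $\psi=\log f\cdot\varphi$. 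This yields the final equivalence. I expect the main obstacle to be the second step: $\circledcirc$ is not a bilinear associative product on all of $\mathfrak{a}_A$, since it is nonlinear in the right factor. The BCH compatibility therefore cannot be quoted directly. I would first try to handle it by encoding isotopies as automorphisms of the complete bar-type coalgebra $\C\boxtimes A$-functor, where composition becomes genuine composition of coderivation exponentials.
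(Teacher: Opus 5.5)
The paper does not prove this statement; it quotes it from \cite{graphexp}. Your outline therefore has to stand on its own, and it has a genuine gap exactly where you place the difficulty. The BCH compatibility, which is the real content of the theorem, is not established.

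For a coproperad, $g\circledcirc f$ applies $g$ at every vertex of one level and $f$ at every vertex of the other, so $\circledcirc$ is polynomial in \emph{both} arguments. The sentence about the Lie subalgebra generated by $\lambda,\mu$ acting ``through a Lie-algebra-to-group correspondence'' is not an argument. Your fallback is not available either. Encoding isotopies as automorphisms of a cofree coalgebra ``$\C\boxtimes A$'' and exponentiating coderivations is the operadic proof. Gebras over a properad have no cofree $\C$-cogebra or bar construction of this kind, which is precisely why $\infty$-morphisms are defined directly in $\mathfrak a_B^A$ and why a graph exponential is needed.

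Write $\lambda*\mu:=\exp^{-1}(\exp\lambda\circledcirc\exp\mu)$; bijectivity of $\exp$ is discussed below. Your flow idea can then be completed, but it needs three ingredients you do not supply.

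(a) You need the regrouping identity $\lambda\lhd(F\circledcirc G)=(F;\lambda\lhd F)\circledcirc G$, the three-level regrouping used in the proof of Lemma~\ref{compo}. Granting, as you assert, that $\frac{d}{ds}\exp(s\lambda)=\lambda\lhd\exp(s\lambda)$ (a count of linear extensions by their extremal vertex), this shows that $s\mapsto\exp(s\lambda)\circledcirc G$ solves $\dot H=\lambda\lhd H$. Uniqueness of solutions, coefficient by coefficient in $s$ and convergent by weight, then gives $\exp(s\lambda)\circledcirc\exp(t\lambda)=\exp((s+t)\lambda)$.

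(b) You need a second-order computation. Only connected two-vertex graphs contribute, so the bilinear part of $\lambda*\mu$ is $\tfrac12(\lambda\star\mu-\mu\star\lambda)$.

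(c) You need the canonical-coordinates theorem: a weightwise polynomial group law on $(\mathfrak g_A)_0$ in which every line is a one-parameter subgroup is the BCH law of its quadratic bracket. To prove it, show $\mathrm{Ad}_{t\lambda}=e^{t\,\mathrm{ad}_\lambda}$ and integrate the usual differential equation for $t\mapsto\lambda*t\mu$; this works over any $\mathbb Q$-algebra.

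A genuine linearization would have to be of another kind. For example, one could extend isotopies multiplicatively to horizontal (disconnected) composites, where vertical composition is bilinear and associative and $\lambda$ acts as a derivation.

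Three further points need repair.

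First, in your first step $\sum\lambda^{\circledcirc n}/n!$ is wrong. Since $\lambda$ vanishes on $\mathrm I$, $\lambda\circledcirc\lambda$ only sees two-level decompositions without identity strands. The quadratic term must instead be $\tfrac12\lambda\star\lambda=\tfrac12(1;\lambda)\circledcirc(1;\lambda)$, so the graph sum (one vertex per level, weight $1/n!$) has to be the definition.

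Second, the inverse is not the $\log(1+x)$ series. The formal identity $\log\circ\exp=\mathrm{id}$ requires an associative bilinear product, which is exactly what is missing. Already for operads, where the graph exponential is the pre-Lie exponential, the inverse differs from the naive series at cubic order because $\star$ is not associative. Bijectivity should instead come from triangularity, $\exp(\lambda)^{(k)}=\lambda^{(k)}+P_k(\lambda^{(1)},\ldots,\lambda^{(k-1)})$, solved weight by weight.

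Third, your third step is sound in outline but also rests on (a). Set $x_t:=\exp(t\lambda)\cdot\varphi$ as in Proposition~\ref{Ad_f}. Part (2) of that proposition and the one-parameter property give $x_{s+t}=\exp(s\lambda)\cdot x_t$. For $f=1+s\lambda+O(s^2)$ one computes $f\cdot\psi=\psi+s\bigl([\lambda,\psi]-d\lambda\bigr)+O(s^2)$. Hence $\dot x_t=-d^{x_t}\lambda$, the same affine equation as for $(t\lambda)\cdot\varphi$, and uniqueness identifies the two. Your converse argument, via invertibility of $\lhd f$ (Lemma~\ref{formuless}), is then fine.
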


\subsection{Gauge formality and properadic Kaledin classes}\label{2.2}

In this section, we recall the notion of gauge formality, the central theme of this paper, and  we recall the properadic Kaledin classes \cite[Section~2.6]{CE24b} that provide a complete characterization of this problem.

\begin{definition}[Gauge homotopy equivalence]
Two $\Omega \C$-algebras $(A,\varphi)$ and $(B,\psi)$ are said 
	\begin{itemize}
		\item[$\centerdot$] \emph{homotopy equivalent} if there exists a zig-zag of quasi-isomorphisms of $\Cobar \C$-algebras \[(A, \varphi) \; \overset{\sim}{\longleftarrow} \;  \cdot \;  \overset{\sim}{\longrightarrow} \;  \cdots \;  \overset{\sim}{\longleftarrow} \; \cdot \;  \overset{\sim}{\longrightarrow} \; (B, \psi) \ .\]
		\item[$\centerdot$] \emph{gauge homotopy equivalent} if there exists an $\infty$-quasi-isomorphism of $\Cobar \C$-algebras 
		\[\begin{tikzcd}[column sep=normal]
			(A,\varphi) \ar[r,squiggly,"\sim"] 
			& (B, \psi) \ .
		\end{tikzcd} 
		\]
		\item[$\centerdot$] \emph{gauge $n$-homotopy equivalent} if $(A,\varphi)$ is gauge homotopy
		equivalent to an $\Omega \C$-algebra $(B,\phi)$ such that $\phi - \psi \in \mathcal{F}^{n+2} \mathfrak{g}_B$.
	\end{itemize}
\end{definition}

\noindent Over a field, homotopy equivalence and gauge homotopy
equivalence coincide in characteristic zero
\cite[Theorem~1.16]{PHC}, for quasi-planar cooperads
\cite[Proposition~43]{LRL23}, and for non-symmetric
cooperads \cite[proof of Proposition~2.21]{CE24b}.
In the non-symmetric case, gauge homotopy equivalence
also implies homotopy equivalence over a commutative ring
whenever $C$ and $\Omega C$ are aritywise and degreewise flat. Any $\Cobar \C$-algebra $(A,\varphi)$ has an induced $\P$-algebra structure 
$(H(A),\varphi_*)$,	see \cite[Proposition~3.11]{Kaledin}.

\begin{definition}[Gauge formality]
	An $\Cobar \C$-algebra structure $(A, \varphi)$ is said to be
	
	\begin{enumerate}
		\item[$\centerdot$] \emph{formal} if it is homotopy equivalent to $(H(A), \varphi_*)$; 
		\item[$\centerdot$] \emph{gauge formal} if it is gauge homotopy equivalent to $(H(A),\varphi_*)$;
		\item[$\centerdot$] \emph{gauge $n$-formal} if it is gauge $n$-homotopy equivalent to $(H(A),\varphi_*)$.
	\end{enumerate} 
\end{definition}

\begin{theorem}[Homotopy transfer theorem] \label{HTT}  
	Suppose that \(\C\) is one of the following:
\begin{enumerate}
	\item a connected non-symmetric cooperad over \(R\);
	\item a connected symmetric cooperad over a \(\mathbb Q\)-algebra \(R\);
	\item a conilpotent quasi-planar cooperad over a field \(R\);
	\item a conilpotent coproperad over a field of characteristic zero.
\end{enumerate}
Any \(\Omega \C\)-algebra \((A,\varphi)\)
admits a transferred structure \((H(A),\varphi_t)\), with
\(\varphi_t^{(1)}=\varphi_*\), and \(\infty\)-quasi-isomorphisms whose weight-zero components are \(i\) and \(p\).
\[
i_\infty:(H(A),\varphi_t)\rightsquigarrow(A,\varphi),
\qquad
p_\infty:(A,\varphi)\rightsquigarrow(H(A),\varphi_t) \ .
\]
They may be chosen so that
\(p_\infty\circledcirc i_\infty=\mathrm{id}_{H(A)}\).
\end{theorem}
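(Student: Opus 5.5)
The plan is the classical homotopy transfer argument: write $\varphi_t$ and the two $\infty$-morphisms as explicit sums over decorated graphs, then check the Maurer--Cartan and $\infty$-morphism equations by a telescoping computation that only uses the contraction identities. The four cases differ only in how the graph sums are made well defined over $R$, that is, in whether symmetric coinvariants or invariants, and hence denominators, appear.

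First, in the non-symmetric connected case (1), I would define the transferred structure by the Kontsevich--Soibelman/Loday--Vallette tree formula. For $c\in\overline{\C}$, iterate the decomposition map of $\C$ into planar trees, decorate the vertices by $\varphi$, the internal edges by $h$, the leaves by $i$ and the root by $p$:
\[
\varphi_t \coloneqq p\circ\Bigl(\textstyle\sum_{T}\, \varphi^{\otimes T}\ \text{with } h \text{ on internal edges}\Bigr)\circ i^{\otimes} .
\]
The maps $i_\infty$ and $p_\infty$ are given by the same formula with $p$ replaced by $h$ at the root, respectively with $i$ replaced by $h$ on the inputs. Connectedness and the weight grading make each component a finite sum, since trees with $N$ vertices only see $\C^{(\geqslant N)}$. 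Hence no division occurs and everything makes sense over any $R$.

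Second, I would verify the Maurer--Cartan equation for $\varphi_t$ and the $\infty$-morphism equations for $i_\infty$ and $p_\infty$. Apply $d$ to the graph sums and replace $dh+hd$ on each internal edge by $ip-\mathrm{id}$. The $ip$ terms cut the tree into two pieces and produce the $\star$-, $\lhd$- or $\rhd$-products. The $\mathrm{id}$ terms contract an edge, and they cancel against the terms $\varphi\star\varphi$ coming from the Maurer--Cartan equation of $\varphi$ at a vertex, using coassociativity of $\Delta$. The side conditions $h^2=0$, $ph=0$ and $hi=0$ kill the degenerate contributions. The identity $p_\infty\circledcirc i_\infty=\mathrm{id}$ follows because every composite graph containing $p$ adjacent to $h$, or $h$ adjacent to $i$, vanishes; only the weight-zero term $pi=\mathrm{id}$ survives. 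Finally, $\varphi_t^{(1)}=p\varphi i=\varphi_*$ is immediate.

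For the remaining cases I would reduce to the same formulas. In case (2), the symmetric tree sum needs to pass between $\S$-invariants and coinvariants, which is possible since $\mathbb Q\subset R$. In case (3), quasi-planarity provides a filtration whose associated graded is planar, and the planar formulas are compatible with it; I would follow \cite{LRL23}. In case (4), I would replace trees by connected directed graphs and invoke the properadic transfer of \cite{PHC}. Conilpotency guarantees local finiteness of the graph sums, and the weight grading gives convergence.

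The main obstacle is the telescoping sign-and-cancellation check in the second step. In the properadic case this is harder, because contracting an edge of a genus-positive graph must match the infinitesimal decomposition $\Delta_{(1,1)}$. I expect to cite rather than reprove this step.
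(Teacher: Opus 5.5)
Your existence argument agrees with the paper, which simply cites \cite{berglund} for cases (1)--(2), \cite{LRL23} for (3) and \cite{PHC} for (4). The gap is in the last assertion, $p_\infty\circledcirc i_\infty=\mathrm{id}_{H(A)}$, because your argument for it rests on a formula for $p_\infty$ that cannot be right.

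\emph{Degree count.} A term of $\varphi_t$ indexed by a tree with $N$ vertices has degree $\sum_v(|c_v|-1)+(N-1)=|c|-1$. An $\infty$-morphism component must have degree $|c|$, so exactly one extra $h$ is allowed. Putting $h$ at the root does this for $i_\infty$. Putting $h$ on all $k$ inputs gives degree $|c|-1+k$; for $A_\infty$-algebras, $p\,m_2(h\otimes h)$ has degree $2$ instead of $1$. The same count shows that decorating every internal edge of a graph of genus $g>0$ by $h$ shifts the degree by $g$, so case (4) is not ``the same formula with graphs''.

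\emph{The correct $p_\infty$ breaks your vanishing argument.} The standard $p_\infty$, e.g.\ the one produced by the perturbation lemma in \cite{berglund}, puts $h$ on a single leaf, $\mathrm{id}$ on the leaves to one side and $ip$ on the others. That is, it uses the tensor-trick homotopy $H=\sum_j\mathrm{id}^{\otimes j}\otimes h\otimes(ip)^{\otimes(n-j-1)}$, symmetrized (which requires $\mathbb Q\subset R$) in case (2). With this formula, an output $h(\cdots)$ of $i_\infty$ can enter an $\mathrm{id}$-decorated leaf of $p_\infty$. That configuration is neither ``$p$ next to $h$'' nor ``$h$ next to $i$'', so your argument does not kill it.

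\emph{What can and cannot be rescued.} In case (1) the identity still holds, but for a different reason. One has $\sum_{k,l}P(\delta H)^k(H\delta)^lI=PI$, because the tensor-trick homotopy satisfies $PH=0$, $HI=0$ and $H^2=0$. The last of these uses $h\circ ip=0$ and $ip\circ h=0$ at a different leaf. In cases (2)--(4) you give no argument, and nothing you say ensures that the cited $p_\infty$ satisfies the identity.

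\emph{The missing idea, which is the paper's argument.} No formula is needed. For any pair $(i_\infty,p_\infty)$ supplied by the literature, set $c\coloneqq p_\infty\circledcirc i_\infty$.
\begin{itemize}
\item $c$ is an $\infty$-morphism $(H(A),\varphi_t)\rightsquigarrow(H(A),\varphi_t)$ with $c^{(0)}=pi=\mathrm{id}_{H(A)}$, hence an $\infty$-isotopy.
\item By Theorem~\ref{invertibility prop}~(3), $c$ is invertible under $\circledcirc$.
\item Replace $p_\infty$ by $c^{-1}\circledcirc p_\infty$. Its weight-zero component is still $p$, so it is still an $\infty$-quasi-isomorphism.
\item By associativity, the new composite is $c^{-1}\circledcirc c=\mathrm{id}_{H(A)}$.
\end{itemize}
This works uniformly in all four cases, since it only uses invertibility of $\infty$-isotopies.
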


\begin{proof} 	For the first two cases, see \cite[Theorem~1.3]{berglund};
	for the third, see \cite[Theorem~8 and Proposition~43]{LRL23};
	and for the fourth, see \cite[Theorem~4.14]{PHC}.
	For any such pair, \(c=p_\infty\circledcirc i_\infty\) has
	\(c^{(0)}=\mathrm{id}_{H(A)}\), hence is an \(\infty\)-isotopy.
	Replacing \(p_\infty\) by \(c^{-1}\circledcirc p_\infty\)
	gives the last assertion. 
\end{proof}

\begin{corollary}[{\cite[Proposition 2.18]{Kaledin}}]\label{gauge formality} An $\Cobar \C$-algebra $(A, \varphi)$  admitting a transferred structure $(H(A), \varphi_t)$ is gauge formal if and only if there exists an $\infty$-isotopy  \begin{center}
		\begin{tikzcd}[column sep=normal]
			f : (H(A), \varphi_t) \ar[r,squiggly,"="] 
			& (H(A), \varphi_*) \ .
		\end{tikzcd} 
	\end{center} It is \emph{gauge $n$-formal} if and only if there exist an $\Cobar \C$-algebra $(H(A),\psi)$  and an $\infty$-isotopy 	\begin{center}
	\begin{tikzcd}[column sep=normal]
		f : (H(A), \varphi_t) \ar[r,squiggly,"="] 
		& (H(A), \psi) \ ,
	\end{tikzcd} 
	\end{center} such that $\psi^{(k)} = 0$, for all $2 \leqslant k \leqslant n + 1$. 
\end{corollary}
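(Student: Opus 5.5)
The plan is to compose the $\infty$-quasi-isomorphisms provided by the homotopy transfer theorem with the given gauge homotopy equivalence, and then use invertibility of $\infty$-isomorphisms (Theorem~\ref{invertibility prop}) to reduce everything to an $\infty$-isotopy on $H(A)$. There are two directions for each of the two statements.

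\emph{Sufficiency.} Suppose first that an $\infty$-isotopy $f\colon (H(A),\varphi_t)\rightsquigarrow (H(A),\varphi_*)$ exists. By Theorem~\ref{HTT} we have $p_\infty\colon (A,\varphi)\rightsquigarrow (H(A),\varphi_t)$ with $p_\infty^{(0)}=p$, a quasi-isomorphism. By Theorem~\ref{invertibility prop}~(2), the composite $f\circledcirc p_\infty$ is an $\infty$-morphism. Its weight-zero component is $f^{(0)}p^{(0)}=p$, since the weight-zero part of $\circledcirc$ is ordinary composition. Hence $(A,\varphi)$ is gauge formal. The $n$-version is identical with $\psi$ in place of $\varphi_*$. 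One only has to note that $\psi^{(1)}=\varphi_t^{(1)}=\varphi_*$, because an $\infty$-isotopy preserves the weight-one part. Consequently $\psi-\varphi_*\in\mathcal F^{n+2}\mathfrak g_{H(A)}$.

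\emph{Necessity.} Conversely, suppose $g\colon (A,\varphi)\rightsquigarrow (H(A),\phi)$ is an $\infty$-quasi-isomorphism, where $\phi=\varphi_*$, or more generally $\phi-\varphi_*\in\mathcal F^{n+2}$. Consider $u\coloneqq g\circledcirc i_\infty\colon (H(A),\varphi_t)\rightsquigarrow (H(A),\phi)$. Its weight-zero component is $g^{(0)}i$, a quasi-isomorphism between complexes with zero differential, hence an isomorphism $\alpha$ of graded modules. This step is the main point to check. Set $\bar\alpha$ to be the strict $\infty$-isomorphism with components $\alpha^{-1}$ in weight zero and $0$ elsewhere. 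It transports $\phi$ to $\alpha^{-1}\cdot\phi$, which means conjugating each component by $\alpha$. Then $\bar\alpha\circledcirc u$ is an $\infty$-isotopy $(H(A),\varphi_t)\rightsquigarrow (H(A),\alpha^{-1}\cdot\phi)$.

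Weight-zero conjugation preserves the weight decomposition. So $(\alpha^{-1}\cdot\phi)^{(k)}=0$ for $2\leqslant k\leqslant n+1$ in the $n$-formal case. In the formal case, $\alpha^{-1}\cdot\phi$ is concentrated in weight one. Its weight-one part equals that of $\varphi_t$, namely $\varphi_*$, because the target of an $\infty$-isotopy has the same weight-one component. For the formal case this gives an isotopy onto $\varphi_*$ itself, and for the $n$-formal case it gives the required $\psi\coloneqq\alpha^{-1}\cdot\phi$.

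The main obstacle is checking that $\alpha$ is a $\P$-algebra automorphism up to the action of weight zero, and that the weight-one comparison is legitimate. Both follow from the weight-one part of the equation $f\rhd\varphi-\psi\lhd f=d(f)$, where $d(f)$ has no weight-one term when the differentials on $H(A)$ vanish and $\C$ is reduced. From this one reads off $\alpha\varphi_t^{(1)}=\phi^{(1)}\alpha^{\otimes}$.
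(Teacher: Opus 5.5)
Your proposal is correct and follows the paper's proof. For necessity, you precompose with $i_\infty$, note that the linear term of $g\circledcirc i_\infty$ is an isomorphism of $H(A)$, and postcompose with its strict inverse to get an $\infty$-isotopy whose target keeps the required weight vanishing; for sufficiency, you compose the given isotopy with $p_\infty$. The only cosmetic difference is that you read off the weight-one part of the normalized target from the weight-one component of the $\infty$-isotopy equation, whereas the paper phrases the same fact as the linear term being an automorphism of $(H(A),\varphi_*)$, so that conjugation preserves $\varphi_*$.
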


\begin{proof}
	Suppose that \(u:(A,\varphi)\rightsquigarrow(H(A),\vartheta)\)
	realizes gauge formality, or gauge \(n\)-formality. The composite
	\(v=u\circledcirc i_\infty\) is an \(\infty\)-isomorphism.
	Its linear term \(a\) is an automorphism of \((H(A),\varphi_*)\).
	Then \(a^{-1}\circledcirc v\) is an \(\infty\)-isotopy with
	target structure \(a^{-1}\cdot\vartheta\). Conjugation by \(a^{-1}\)
	preserves \(\varphi_*\) and the required vanishing of the higher
	weights. Conversely, compose the isotopy in the statement with
	\(p_\infty\). See also \cite[Proposition~2.17]{Kaledin}.
\end{proof}

\noindent Hence, if $R$ is a $\mathbb{Q}$-algebra, $(A, \varphi)$ is gauge formal if and only if $\varphi_t$ is gauge trivial through Theorem \ref{graphexp2}. Hence, we have a formality transmission theorem applying Theorem \ref{thm gauge transmission 2} to the  weight-graded dg Lie algebra $\mathfrak{g}_{H}$~. If $R$ is not a $\mathbb{Q}$-algebra, the gauge group does not even exist. However, we can still ask for the existence of an $\infty$-isotopy and formality transmission is generalized in Section \ref{2.5} to deal with any commutative ground ring. 

\begin{definition}[Properadic Kaledin classes]
	Let $(H,\varphi)$ be an $\Cobar \C$-algebra structure on a graded $R$-module $H$. Let us set $\Phi \coloneqq  \mathfrak{D}_1(\varphi)$. 
	\begin{enumerate}
		\item Its \emph{Kaledin class} $K_{\varphi}$ is the Kaledin class of the prismatic decomposition $\Phi$,  \[K_{\varphi} =  \left[\partial_{\hbar}   \Phi \right] \in H_{-1}\left(\mathfrak{g}_H[\![\hbar]\!]^{\Phi}\right) \ . \]
		\item  Its \emph{$n^{\text{th}}$-truncated Kaledin class} $K_{\varphi}^n$ is the $n^{\text{th}}$-truncated Kaledin class of $\Phi$,  \[ K^n_{\varphi} = \left[\varphi^{(2)} + 2 \varphi^{(3)} \hbar + \dots + n \varphi^{(n+1)} \hbar^{n-1} \right]  \in H_{-1}\left(\left(\mathfrak{g}_H[\![\hbar]\!]/(\hbar^{n })\right)^{\overline{\Phi}^n}\right) \ .  \] 
	\end{enumerate}
\end{definition}

\begin{theorem}[{\cite[Theorem 2.48]{CE24b}}]\label{B prop}
Let $(A, \varphi)$ be an $\Cobar \C$-algebra structure such that there exists a transferred structure $(H(A),\varphi_t)$.
	\begin{enumerate}

		\item If $R$ is a $\mathbb{Q}$-algebra, the algebra $(A, \varphi)$ is gauge formal if and only if  $K_{\varphi_t}= 0$.
		
		\item If $n \geqslant 1$ and $n ! \in R^{\times}$, the algebra $(A, \varphi)$ is gauge $n $-formal if and only if $K^n_{\varphi_t} = 0$.
\end{enumerate}\end{theorem}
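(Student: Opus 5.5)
The plan is to reduce both points to a comparison between $\infty$-isotopies and truncated Kaledin classes on $\mathfrak g_H$, with $H=H(A)$. By Corollary~\ref{gauge formality}, $(A,\varphi)$ is gauge formal (resp.\ gauge $n$-formal) if and only if there is an $\infty$-isotopy from $(H,\varphi_t)$ to $(H,\varphi_*)$ (resp.\ to some $(H,\psi)$ with $\psi^{(k)}=0$ for $2\leqslant k\leqslant n+1$). It therefore suffices to prove the analogue of Theorem~\ref{Kaledin wg} for $\infty$-isotopies acting on Maurer--Cartan elements of $\mathfrak g_H$. In Point~(1), $R$ is a $\mathbb Q$-algebra, so Theorem~\ref{graphexp2} identifies $\infty$-isotopies with gauge equivalences in the Lie algebra associated with the Lie-admissible $\mathfrak g_H$. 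Being $\infty$-isotopic to $\varphi_*=\varphi_t^{(1)}$ is then exactly gauge triviality, and Theorem~\ref{Kaledin wg}~(1) concludes. The same argument gives Point~(2) over a $\mathbb Q$-algebra, via Theorem~\ref{Kaledin wg}~(2).

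For Point~(2) over a general ring with $n!\in R^\times$, I would argue by induction on the weight, following the proof of Theorem~\ref{thm gauge transmission 2}.

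\textbf{Necessity.} First I would show that the truncated class $K^n$ is invariant under $\infty$-isotopies. This replaces Lemma~\ref{iso}: an isotopy $f$ should induce, through its prismatic decomposition $\mathcal D_0(f)$ and the adjoint action $\psi\mapsto f\circledcirc\psi\circledcirc f^{-1}$, a chain isomorphism of the twisted complexes modulo $\hbar^n$ carrying $K^n_{\varphi_t}$ to $K^n_{f\cdot\varphi_t}$. If $\psi=f\cdot\varphi_t$ satisfies $\psi^{(k)}=0$ for $2\leqslant k\leqslant n+1$, then the representative of $K^n_\psi$ vanishes modulo $\hbar^n$. Hence $K^n_{\varphi_t}=0$.

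\textbf{Sufficiency.} Conversely, assume $K^n_{\varphi_t}=0$ and that an isotopy has already killed weights $2,\dots,m$ with $m\leqslant n$. Extracting the coefficient of $\hbar^{m-1}$ in weight $m+1$ from a primitive of the Kaledin representative gives
\[
m\,\psi^{(m+1)} = d^{\varphi_*}\lambda
\]
for some $\lambda$ of weight $m$. Since $m\leqslant n$, the integer $m$ is invertible, so $\psi^{(m+1)}$ is itself a boundary. An elementary isotopy $\mathrm{id}+\lambda/m$, concentrated in weight $m$, then kills $\psi^{(m+1)}$ without changing lower weights. Iterating up to $m=n$ yields the required $\psi$.

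The main obstacle is making the $\infty$-isotopy action compatible with the twisted differentials modulo $\hbar^n$ without a Lie exponential. One must check that conjugation by an isotopy is a filtered chain isomorphism of $\mathfrak g_H[\![\hbar]\!]/(\hbar^n)$ twisted by the prismatic decompositions. One must also check that the denominators appearing there involve only $1,\dots,n$, which is where $n!\in R^\times$ enters. I would verify this weight by weight using the formula for $\circledcirc$ in Theorem~\ref{invertibility prop}, or else cite \cite[Section~2.6]{CE24b}, where the statement is proved.
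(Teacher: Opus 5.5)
Your proposal is correct and follows the route behind the cited result. Point~(1) is exactly the paper's remark after Corollary~\ref{gauge formality}: combine Theorem~\ref{graphexp2} with Theorem~\ref{Kaledin wg}. Point~(2) is the weight-by-weight induction of \cite{CE24b}, which combines the isotopy invariance of truncated Kaledin classes (Lemma~\ref{piquant prop}~(3)) with the elementary isotopy $1+\nu$ that removes a component once it is a boundary (recalled in the proof of Theorem~\ref{intrinsic}).

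One small correction: the adjoint action by an isotopy involves no denominators, so that invariance holds over any commutative ring. The hypothesis $n!\in R^\times$ is used only to divide $m\,\psi^{(m+1)}$ by $m\leqslant n$, as you correctly do in the sufficiency step.
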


\subsection{The adjoint action of $\infty$-isomorphisms}\label{2.3}
In this section, we recall the adjoint action of $\infty$-isomorphisms introduced in \cite{CE24b}.

\begin{notation}\label{nota}
	Let $(A,\varphi)$ and $(B,\psi)$ be $\Omega \C $-algebra structures and fix homogeneous  elements
	\[  f, \ell  : \C \to \mathrm{End}_B^A , \quad  \mbox{and} \quad  g , h : \C \to \mathrm{End}_C^B   \ .   \]  Recall the notation $g \circledcirc (f; \ell)$ of \cite{PHC}  for $g$ applied everywhere at the bottom level and $f$ applied everywhere at the top level except at one place where $\ell$ is applied. The operation $(g ; h)   \circledcirc f$ is defined similarly.  Let us note that we have \[\psi \lhd f = \left(1 ; \psi  \right)\circledcirc f, \quad  f \rhd \varphi = f \circledcirc \left(1 ; \varphi \right), \quad  x \star y = (1; x) \circledcirc \left(1 ; y \right) \ . \]
\end{notation}

\begin{proposition}[{\cite[Propositions 2.15 and 2.16]{CE24b}}]\label{Ad_f}
	If $f  \in \mathfrak{a}_B^A $ is invertible under $\circledcirc$, let us denote $\mathrm{Ad}_f \left( \varphi  \right) \coloneqq (f ; f \rhd  \varphi) \circledcirc f^{-1}$ and 
	\[ f \cdot \varphi \coloneqq \mathrm{Ad}_f \left(  \varphi  \right) -  \left(f ;d(f) \right) \circledcirc f^{-1} \ . \]  
	\begin{enumerate}
		\item The element $f \cdot \varphi$ defines an $\Omega \C $-algebra structure on $B$ and $f$ is an $\infty$-morphism  \[\begin{tikzcd}[column sep=normal]
			f : (A, \varphi)   \ar[r,squiggly,"\sim"] 
			& (B, f \cdot \varphi)  \ .
		\end{tikzcd} 
		\] 
		\item If $g \in \mathfrak{a}_C^B $ is invertible under $\circledcirc$, we have $g \cdot (f \cdot \varphi) = (g \circledcirc f) \cdot \varphi \ .$
		\item 	It induces an isomorphism  of dg Lie algebras \[\mathrm{Ad}_f : \mathfrak{g}_A^{\varphi} \longrightarrow \mathfrak{g}_B^{f  \cdot \varphi} \ ,\]   whose inverse is given by $\mathrm{Ad}_{f^{-1}}$~. 
	\end{enumerate}
\end{proposition}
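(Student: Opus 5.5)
The plan is to reduce everything to one characterization: \emph{$f\cdot\varphi$ is the unique element $\psi\in\mathfrak{a}_B$ for which $f$ satisfies the $\infty$-morphism equation $f\rhd\varphi-\psi\lhd f=d(f)$.} Once this is in place, Points~(1) and~(2) become formal, and Point~(3) is a linearized version of the same computation.

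\emph{Step 1: the cancellation identity.} The first step is a combinatorial identity for marked composites in the sense of Notation~\ref{nota}. For any $\ell$ and invertible $f$,
\[
(1;(f;\ell)\circledcirc f^{-1})\circledcirc f=(f;\ell)\circledcirc(f^{-1}\circledcirc f)=\ell,
\qquad\text{and}\qquad
(f;\,(1;x)\circledcirc f)\circledcirc f^{-1}=x .
\]
I would prove both from coassociativity of the decomposition map $\Delta$ of $\C$ together with its compatibility with the infinitesimal decompositions $\Delta_{(1,*)}$. The point is that a three-level composite with one marked vertex can be regrouped in two ways. These identities say that $x\mapsto x\lhd f=(1;x)\circledcirc f$ is a bijection $\mathfrak{a}_B\to\mathfrak{a}^A_B$ with inverse $\ell\mapsto(f;\ell)\circledcirc f^{-1}$. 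Applying this bijection with $\ell=f\rhd\varphi-d(f)$ gives $(f\cdot\varphi)\lhd f=f\rhd\varphi-d(f)$, which proves the characterization above, uniqueness included. I expect this regrouping to be the main obstacle, because of the properadic (multi-output) combinatorics and the Koszul signs. I would first check it on the weight filtration, by induction on weight: in weight zero it reduces to $f^{(0)}(f^{(0)})^{-1}=\mathrm{id}$.

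\emph{Step 2: Point (1).} It remains to show that $\psi\coloneqq f\cdot\varphi$ is Maurer--Cartan. Set $\mathrm{MC}(x)=d(x)+x\star x$. I would establish the compatibilities
\[
(x\star y)\lhd f=x\lhd(y\lhd f)\ \text{(up to terms with the mark distributed)},
\qquad
f\rhd(\varphi\star\varphi)=(f\rhd\varphi)\rhd\varphi ,
\]
both again from coassociativity. Then apply $d$ to the $\infty$-morphism equation and use $\mathrm{MC}(\varphi)=0$ together with $d^2=0$. This gives $\mathrm{MC}(\psi)\lhd f=0$, and injectivity of $-\lhd f$ from Step 1 yields $\mathrm{MC}(\psi)=0$. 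Filtration preservation and completeness are immediate, since $f^{-1}$ is built weight by weight.

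\emph{Step 3: Points (2) and (3).} For Point~(2), Theorem~\ref{invertibility prop}(2) shows that $g\circledcirc f$ is an $\infty$-morphism $\varphi\rightsquigarrow g\cdot(f\cdot\varphi)$. By Step 1 applied to $g\circledcirc f$, which is invertible with inverse $f^{-1}\circledcirc g^{-1}$, the unique target making it an $\infty$-morphism is $(g\circledcirc f)\cdot\varphi$, so the two agree.

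For Point~(3), $\mathrm{Ad}_f$ is linear in its argument. I would show:
\begin{itemize}
\item[$\centerdot$] it preserves brackets, via $\mathrm{Ad}_f(x)\star\mathrm{Ad}_f(y)=(f;\,(f\rhd x)\rhd y)\circledcirc f^{-1}$, using the Step 1 cancellation in the middle;
\item[$\centerdot$] it intertwines the twisted differentials $d+[\varphi,-]$ and $d+[f\cdot\varphi,-]$, by differentiating the defining formula and using the $\infty$-morphism equation from Step 1. This is equivalently the first-order term of Point~(1) applied to $\varphi+\epsilon x$ with $\epsilon^2=0$, which avoids a separate computation.
\end{itemize}
Finally, $\mathrm{Ad}_{f^{-1}}\circ\mathrm{Ad}_f=\mathrm{Ad}_{f^{-1}\circledcirc f}=\mathrm{id}$. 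This follows by the same regrouping as in Point~(2), noting that $f^{-1}\cdot(f\cdot\varphi)=\varphi$, so that $\mathrm{Ad}_{f^{-1}}$ indeed lands back in $\mathfrak g_A^{\varphi}$.
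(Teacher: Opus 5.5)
Your strategy is sound, and Points~(1) and~(2) are correct in substance. Characterizing $f\cdot\varphi$ as the unique $\psi$ with $f\rhd\varphi-\psi\lhd f=d(f)$, via invertibility of $-\lhd f$, is the natural route; the paper itself gives no argument and only cites \cite{CE24b}.

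Step~1 is easier than you expect. Take a square-zero parameter $\epsilon$, as in the proof of Proposition~\ref{twisted-differential}. Then $(f+\epsilon\ell)\circledcirc f^{-1}=1+\epsilon\,(f;\ell)\circledcirc f^{-1}$ and $(1+\epsilon x)\circledcirc f=f+\epsilon\, x\lhd f$, so associativity of $\circledcirc$ gives both cancellation identities at once, with no weight induction. The identities of Step~2 follow the same way from the three-fold composites $f\circledcirc(1+\epsilon\varphi)\circledcirc(1+\delta\varphi)$, $(1+\epsilon\psi)\circledcirc f\circledcirc(1+\delta\varphi)$ and $(1+\epsilon\psi)\circledcirc(1+\delta\psi)\circledcirc f$. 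The terms with two odd labels on distinct vertices cancel in pairs. Note also that your $x\lhd(y\lhd f)$ should read $x\lhd(f;y\lhd f)$.

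The genuine gap is in Point~(3). Point~(1) applies to $\varphi+\epsilon x$ only when $\varphi+\epsilon x$ is Maurer--Cartan, i.e.\ when $d^{\varphi}x=0$. Its first-order term therefore only shows that $\mathrm{Ad}_f$ sends cycles to cycles, not that it is a chain map. The same objection applies to extracting $\mathrm{Ad}_{f^{-1}}\circ\mathrm{Ad}_f=\mathrm{id}$ from Point~(2).

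The repair: your $d^2(f)=0$ computation never uses $\mathrm{MC}(\varphi)=0$ before its last line, so it proves
\[
\mathrm{MC}(f\cdot\varphi)\lhd f=f\rhd\mathrm{MC}(\varphi)
\]
for every degree $-1$ element $\varphi$. Since $f\cdot(\varphi+\epsilon x)=f\cdot\varphi+\epsilon\,\mathrm{Ad}_f(x)$, the $\epsilon$-linear part reads $\bigl(d^{f\cdot\varphi}\mathrm{Ad}_f(x)\bigr)\lhd f=f\rhd d^{\varphi}x$. Step~1 then yields $d^{f\cdot\varphi}\circ\mathrm{Ad}_f=\mathrm{Ad}_f\circ d^{\varphi}$ for all $x$. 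Alternatively, Lemma~\ref{formuless}~(d) gives $\mathrm{Ad}_f=(f^{-1})^*_f\circ f^1_*$, a composite of the chain maps of Lemma~\ref{compo}, since $f^{-1}\colon(B,f\cdot\varphi)\rightsquigarrow(A,\varphi)$ by (1)--(2). For the inverse, use $1+\epsilon\,\mathrm{Ad}_f(x)=f\circledcirc(1+\epsilon x)\circledcirc f^{-1}$; associativity then gives $\mathrm{Ad}_g\circ\mathrm{Ad}_f=\mathrm{Ad}_{g\circledcirc f}$ directly.

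The bracket step also needs fixing. In the properadic setting $(f\rhd x)\rhd y$ is not linear in $x$, since $-\rhd y$ places its argument on every bottom vertex. More importantly, $\mathrm{Ad}_f$ does not preserve $\star$. Taking the $\epsilon\delta$-coefficient of $(1+\epsilon\,\mathrm{Ad}_f x)\circledcirc(1+\delta\,\mathrm{Ad}_f y)=f\circledcirc(1+\epsilon x)\circledcirc(1+\delta y)\circledcirc f^{-1}$ gives
\[
\mathrm{Ad}_f(x)\star\mathrm{Ad}_f(y)=\mathrm{Ad}_f(x\star y)+S_f(x,y).
\]
Here $S_f(x,y)$ collects the configurations in which $x$ and $y$ label two distinct vertices of the same level: either above a single bottom vertex $f$, or in two distinct marked bottom vertices. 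This term is nonzero in general but graded symmetric in $(x,y)$, so it cancels in the commutator. That cancellation is what makes $\mathrm{Ad}_f$ a Lie morphism, and your argument has to state it.

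Finally, $f\cdot\varphi$ vanishes on $\mathcal I$, and hence lies in $\mathfrak g_B$, only if $f^{(0)}$ is a chain map: its weight-zero component is $-d(f^{(0)})\,(f^{(0)})^{-1}$. This is implicit in the statement, and automatic when $A$ and $B$ have zero differential.
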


\begin{proposition}\label{twisted-differential}
	Let $f : (A, \varphi) \rightsquigarrow (B, \psi)$ be an $\infty$-morphism between $\Omega \C$-algebra structures. The operator  $d^f : \mathfrak{a}_B^A \to \mathfrak{a}_B^A$  given by  \[d^f (x) \coloneqq d(x) + \psi \lhd (f; x) - (-1)^{|x|} (f;x) \rhd \varphi   \]  defines a twisted differential on $\mathfrak{a}_B^A$. 
\end{proposition}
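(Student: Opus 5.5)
We have to show that $d^f$ has degree $-1$ and that $(d^f)^2=0$. The plan is to expand $(d^f)^2(x)$ with the multilinear calculus of Notation~\ref{nota}. We then check that the terms cancel in groups, using three inputs: the Maurer--Cartan equations $d\varphi+\varphi\star\varphi=0$ and $d\psi+\psi\star\psi=0$, the $\infty$-morphism equation $d(f)=f\rhd\varphi-\psi\lhd f$, and the coassociativity of the decomposition maps of $\C$.

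The degree statement is immediate. Since $\varphi$ and $\psi$ have degree $-1$ and $f$ has degree $0$, both $\psi\lhd(f;x)$ and $(f;x)\rhd\varphi$ have degree $|x|-1$.

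For the square, write $d^f=d+L-R$ with $L(x)=\psi\lhd(f;x)$ and $R(x)=(-1)^{|x|}(f;x)\rhd\varphi$. We use the following facts.
\begin{enumerate}
\item $d^2=0$.
\item $d$ is a derivation for the composition operations:
\[
d\bigl(\psi\lhd(f;x)\bigr)=d\psi\lhd(f;x)-\psi\lhd\bigl((f;d(f));x\bigr)-\psi\lhd(f;dx),
\]
where $(f;d(f));x$ means that $d(f)$ sits at one of the remaining top places. There is a similar formula for the right action.
\item By coassociativity, $\psi\lhd\bigl(f;\psi\lhd(f;x)\bigr)$ splits into two parts. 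When both copies of $\psi$ are composed at the bottom we get $(\psi\star\psi)\lhd(f;x)$. When they are separated, a $\psi\lhd f$ appears at one vertex next to the vertex carrying $x$. Likewise, $\bigl((f;x)\rhd\varphi\bigr)\rhd\varphi$ splits into $(f;x)\rhd(\varphi\star\varphi)$ plus terms in which an $f\rhd\varphi$ sits beside $x$.
\item The mixed terms $L\circ R$ and $R\circ L$ produce configurations with $\psi$ below and $\varphi$ above. These appear with opposite signs and cancel, except for the configurations in which $\varphi$ feeds into an $f$ different from $x$. Those are exactly the terms of the form $(f\rhd\varphi;x)$.
\end{enumerate}

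Collecting terms, the contributions $d\psi+\psi\star\psi$ and $d\varphi+\varphi\star\varphi$ vanish by the Maurer--Cartan equations. The leftover terms assemble into $\psi\lhd\bigl((f;\,\psi\lhd f-f\rhd\varphi+d(f));x\bigr)$ and its right-hand analogue, and both vanish by the $\infty$-morphism equation.

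\emph{Main obstacle.} The hardest step is the bookkeeping of item~3. In the properadic setting, $(f;x)$ puts $x$ at one of possibly several top vertices, and a bottom $\psi$ vertex may connect to several of them. So the separated part of $\psi\lhd(f;\psi\lhd(f;x))$ is not a single $\lhd$-composite. It requires the decomposition of $\Delta$ into two-level and three-level graphs, and one must check that, summed over graphs, it equals the term where $\psi\lhd f$ is inserted at the non-$x$ vertices.

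\emph{Alternative route.} Rather than track all graph and sign cases by hand, I would first try a conceptual reduction. The idea is to realize $\mathfrak a_B^A$ as a corner of the convolution algebra of $A\oplus B$, so that $\varphi+\psi$ together with the component $f$ becomes a single twisting datum. The operator $d^f$ is then a linearized twisted differential, and $(d^f)^2=0$ follows from a Maurer--Cartan type identity for that datum, paralleling Proposition~\ref{technique}(1) and \cite[Section~2]{CE24b}. The difficulty with this route is that $f$ is nonlinear, so it is not an honest Maurer--Cartan element. The linearization at $f$ is exactly what the operations $(f;-)$ encode, and checking that this interpretation is legitimate is the point I expect to need the most care.
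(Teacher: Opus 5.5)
\textbf{Comparison with the paper.} Your direct expansion of $(d^f)^2(x)$ is a viable strategy, but it is not the paper's route. The paper's route is designed to avoid exactly the bookkeeping you flag as the main obstacle, and it never expands $(d^f)^2$.

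Instead, the paper considers the nonlinear curvature $Q(u):=d(u)+\psi\lhd u-u\rhd\varphi$ of an \emph{arbitrary} degree-zero $u\in\mathfrak a_B^A$. It uses the Bianchi-type identity
\[
dQ(u)+\psi\lhd\bigl(u;Q(u)\bigr)+\bigl(u;Q(u)\bigr)\rhd\varphi=0,
\]
obtained from the Leibniz rule, the Maurer--Cartan equations and the composition identities in the proof of \cite[Theorem~5.1]{PHC}. Every upper vertex carries the same label $u$, so no distinguished vertex has to be tracked.

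It then linearizes with a graded dual number $\varepsilon$ satisfying $\varepsilon^2=0$, $|\varepsilon|=-|x|$, $d\varepsilon=0$. For $u=f+\varepsilon x$, the $\infty$-morphism equation $Q(f)=0$ gives $Q(u)=(-1)^{|x|}\varepsilon\,d^f(x)$. The $\varepsilon$-coefficient of the identity is then exactly $(d^f)^2(x)=0$.

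This is the rigorous form of the ``linearization at $f$'' in your alternative route. You do not need $f$ to be a Maurer--Cartan element of anything. An identity valid for all $u$, differentiated by the substitution $u=f+\varepsilon x$, produces every $(f;\dots,x)$ term with its Koszul sign. Your route is more self-contained. The paper's is much shorter, because it reduces the combinatorics to unmarked identities that are already available.

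\textbf{Amendments needed for the direct route.} Items~3 and~4 are written for operads and must be adjusted for properads.

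Both mixed terms are sums over three-level graphs: $\psi$ on the output side, the $f$'s and $x$ in the middle, and $\varphi$ on the input side. $L\circ R$ is the sum over those graphs in which $x$ receives from $\varphi$, and $R\circ L$ over those in which $x$ feeds $\psi$. Hence two families survive the cancellation, not one:
\begin{enumerate}
\item $\psi\lhd(f;f\rhd\varphi,x)$, where $x$ is not fed by $\varphi$;
\item $(f;\psi\lhd f,x)\rhd\varphi$, where $x$ does not feed $\psi$.
\end{enumerate}
The second family is empty for operads. It is, however, exactly what your final ``right-hand analogue'' $\bigl(f;Q(f),x\bigr)\rhd\varphi$ requires, together with the $(f;d(f),x)\rhd\varphi$ and $(f;f\rhd\varphi,x)\rhd\varphi$ terms coming from $d\circ R$ and $R\circ R$.

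In item~3, the relevant term is $\bigl(f;(f;x)\rhd\varphi\bigr)\rhd\varphi$, not $\bigl((f;x)\rhd\varphi\bigr)\rhd\varphi$. The latter places $(f;x)\rhd\varphi$ at every lower vertex and is not even linear in $x$.

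There are also configurations in which the two $\psi$'s are not adjacent but are linked through a middle vertex with several outputs; dually for the two $\varphi$'s. These lie outside $(\psi\star\psi)\lhd(f;x)$ and $(f;x)\rhd(\varphi\star\varphi)$. They must be matched against the $\psi\lhd(f;\psi\lhd f,x)$ and $(f;f\rhd\varphi,x)\rhd\varphi$ terms, or cancelled pairwise using that $\psi$ and $\varphi$ are odd. With these amendments, the computation closes.
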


\begin{proof}
	For a degree-zero map $u$, set $Q(u):=d(u)+\psi\triangleleft u-u\triangleright\varphi.$ 	The Leibniz rule, the identities in the proof of	\cite[Theorem~5.1]{PHC}, and the Maurer--Cartan equations give
	\[
	dQ(u)+\psi\lhd \left(u;Q(u) \right)
	+\left(u;Q(u)\right)\rhd \varphi=0.
	\]
	The cancellations involved are valid over any commutative ring. Let $x$ be homogeneous and put $q=|x|$. Introduce a graded
	parameter $\varepsilon$ satisfying
	$\varepsilon^2=0$, $|\varepsilon|=-|x|$ and $d\varepsilon=0$.
	For $u=f+\varepsilon x$, the $\infty$-morphism equation
	$Q(f)=0$ gives
	\[
	Q(u)
	=\varepsilon\left(
	(-1)^{|x|} d(x)+(-1)^q\psi\lhd(f;x)
	-(f;x)\rhd \varphi
	\right)
	=(-1)^{|x|}\varepsilon\,d^f(x).
	\]
	Taking the coefficient of $\varepsilon$ in the preceding
	identity therefore yields
	\[
	0=d\left(d^f(x)\right)
	+\psi\lhd\left(f;d^f(x)\right)
	+(-1)^{|x|}\left(f;d^f(x)\right)\rhd \varphi
	=(d^f)^2(x) \ . \qedhere
	\]
\end{proof}

\begin{lemma}[Invariance of Kaledin classes under $\infty$-isomorphisms]\label{piquant prop} Let $f : (A,\varphi) \rightsquigarrow (B, \psi)$ be an $\infty$-morphism between two $\Omega \C$-algebra structures on graded $R$-modules. Let us set \[\Phi \coloneqq \mathfrak{D}_1(\varphi)~, \quad \Psi \coloneqq \mathfrak{D}_1(\psi) \quad \mbox{and} \quad F \coloneqq \mathfrak{D}_0(f) \ .\] 
	\begin{enumerate}
		\item The following identity holds \[F \rhd \partial_{\hbar} \Phi - \partial_{\hbar} \Psi \lhd F  = d^{F}\left(\partial_{\hbar} F \right)  \ . \]
		\item If $f$ is an $\infty$-isomorphism, the classes $K_{\Psi}$ and $\mathrm{Ad}_{F}\left(K_{\Phi}\right)$ are equal in \[H_{-1}\left(\mathfrak{g}_{B}[\![\hbar]\!]^{\Psi}\right) \ .\]  
		\item The same compatibility relation holds modulo $\hbar^n$, for every $n \geqslant 1$ : the classes $K_{\psi}^n$ and $\mathrm{Ad}_{F}^n(K_{\varphi}^n)$ are equal in homology.  
	\end{enumerate}
\end{lemma}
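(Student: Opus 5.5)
Since $f$ is an $\infty$-morphism, expanding with $\hbar$ gives $F \rhd \Phi - \Psi \lhd F = \mathcal{D}_1(d)(F)$ as an identity in $\mathfrak{a}_B^A[\![\hbar]\!]$; this is Maurer--Cartan-type bookkeeping of weights. I would then differentiate this identity with respect to $\hbar$ and derive (1), (2) and (3) in that order.

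\textbf{Step 1: the basic identity.} To see the equation, take the weight $k$ component of $f \rhd \varphi - \psi \lhd f = d(f)$. The terms involving $\varphi^{(j)}$ and $f$-components $f^{(i_1)},\dots$ carry total weight $(j-1)+\sum i_r$ once we shift so that $\varphi^{(1)}$ and $f^{(0)}$ carry $\hbar^0$. The same holds for the $\psi$-terms and for $d$, since $d_{\C}$ lowers weight by one. Multiplying the weight-$k$ equation by $\hbar^{k-1}$ then gives the $\hbar$-expanded equation, exactly as in the proof of the lemma on $\mathcal{D}_0(f)$ above.

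\textbf{Step 2: proof of (1).} The operator $\partial_\hbar$ is a derivation on $R[\![\hbar]\!]$, and the composites $\rhd$ and $\lhd$ are $R[\![\hbar]\!]$-multilinear. The Leibniz rule therefore gives
\[
\partial_\hbar(F\rhd\Phi)=(F;\partial_\hbar F)\rhd\Phi+F\rhd\partial_\hbar\Phi ,
\]
and similarly $\partial_\hbar(\Psi\lhd F)=\partial_\hbar\Psi\lhd F+\Psi\lhd(F;\partial_\hbar F)$. Here $(F;\partial_\hbar F)$ means that $\partial_\hbar$ hits exactly one copy of $F$, which is the sum over positions coming from the product rule. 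Also $\partial_\hbar$ commutes with $\mathcal{D}_1(d)$, because $\mathcal{D}_1(d)$ is $\hbar$-linear. Rearranging the differentiated identity gives
\[
F\rhd\partial_\hbar\Phi-\partial_\hbar\Psi\lhd F = d(\partial_\hbar F)+\Psi\lhd(F;\partial_\hbar F)-(F;\partial_\hbar F)\rhd\Phi ,
\]
which is $d^F(\partial_\hbar F)$ by Proposition~\ref{twisted-differential}, since $\partial_\hbar F$ has degree $0$. The sign conventions need care, but they are forced by degree zero.

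\textbf{Step 3: proof of (2).} Assume $f^{(0)}$ is invertible. Then $F$ is $\circledcirc$-invertible in $\mathfrak{a}_B^A[\![\hbar]\!]$ by Theorem~\ref{invertibility prop}, and $F\cdot\Phi=\Psi$ because $F$ is an $\infty$-morphism $\Phi\rightsquigarrow\Psi$ (Proposition~\ref{Ad_f}). I would apply the operation $(F;-)\circledcirc F^{-1}$ to the identity of (1). The term $(F;F\rhd\partial_\hbar\Phi)\circledcirc F^{-1}$ is by definition $\mathrm{Ad}_F(\partial_\hbar\Phi)$. For the second term, $(F;\partial_\hbar\Psi\lhd F)\circledcirc F^{-1}$ should reduce to $\partial_\hbar\Psi$, using coassociativity and $F\circledcirc F^{-1}=1$.

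For the right-hand side, the map $x\mapsto(F;x)\circledcirc F^{-1}$ should intertwine $d^F$ on $\mathfrak{a}_B^A[\![\hbar]\!]$ with the twisted differential $d^\Psi$ on $\mathfrak{a}_B[\![\hbar]\!]$. Granting this, $(F;d^F(\partial_\hbar F))\circledcirc F^{-1}$ is a $d^\Psi$-boundary of an element of $\mathfrak{a}_B$. That element must in fact lie in $\mathfrak{g}_B$: its component on $\mathrm{I}$ involves only $\partial_\hbar f^{(0)}=0$, so it vanishes. Hence $\mathrm{Ad}_F(K_\Phi)=K_\Psi$ in $H_{-1}(\mathfrak{g}_B[\![\hbar]\!]^\Psi)$.

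\textbf{Step 4: proof of (3).} All the constructions above are compatible with the quotient map modulo $\hbar^n$, so reducing Step 3 gives the truncated statement.

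\textbf{Main obstacle.} The hardest step is the intertwining property of $x\mapsto(F;x)\circledcirc F^{-1}$ in Step 3, together with the vanishing of its weight-zero part. I would prove it the way Proposition~\ref{twisted-differential} is proved. Introduce a square-zero parameter $\varepsilon$ and set $u=F+\varepsilon x$. Then $(u;\cdot)$-expressions linearize, and $u\cdot\Phi$ expands as $\Psi+\varepsilon(\dots)$, so Proposition~\ref{Ad_f}(1) applied to the invertible $u$ yields the needed relation after taking $\varepsilon$-coefficients.
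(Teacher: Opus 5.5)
\paragraph{Part (1).} Your argument is the paper's. You expand the $\infty$-morphism equation as $F\rhd\Phi-\Psi\lhd F=d(F)$, differentiate in $\hbar$ using the Leibniz rule, and recognise $d^F(\partial_\hbar F)$. Your $\mathcal D_1(d)$ is just $d$ here, since on graded modules $d$ raises weight by exactly one.

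\paragraph{Parts (2) and (3): a different route.} The paper gives no argument for these; it cites \cite[Lemma~2.47]{CE24b}. You instead deduce (2) from (1) by applying $(F^{-1})^*_F=(F;-)\circledcirc F^{-1}$. This is a valid and more self-contained route, parallel to Kaledin's argument in the Lie case. All the ingredients are in the paper, and none depends on the present lemma:
\begin{itemize}
\item $\mathrm{Ad}_F=(F^{-1})^*_F\circ(F\rhd)$ is Lemma~\ref{formuless}$(d)$.
\item $(F^{-1})^*_F\circ(\lhd F)=(F\circledcirc F^{-1})^*_1=\mathrm{id}$ follows from Lemma~\ref{formuless}$(b')$. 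This is also what gives $F\cdot\Phi=(F;\Psi\lhd F)\circledcirc F^{-1}=\Psi$; Proposition~\ref{Ad_f}(1) by itself only gives the converse direction.
\item The intertwining you need is Lemma~\ref{compo}, applied to $\Psi\overset{F^{-1}}{\rightsquigarrow}\Phi\overset{F}{\rightsquigarrow}\Psi$. Its target differential is $d^{\,F\circledcirc F^{-1}}=d^{1}=d+[\Psi,-]=d^\Psi$.
\end{itemize}
Your observation that $(F;\partial_\hbar F)\circledcirc F^{-1}$ has no $\mathrm{I}$-component is correct and necessary: it places the primitive in $\mathfrak g_B[\![\hbar]\!]$.

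\paragraph{The gap: your $\varepsilon$-argument for the intertwining.} As written, it does not prove what you need. Apply Proposition~\ref{Ad_f}(1) to $u=F+\varepsilon x$ and write $u\cdot\Phi=\Psi+\varepsilon y$. Then:
\begin{itemize}
\item the $\infty$-morphism identity for $u$ only identifies $y=\pm(F;d^Fx)\circledcirc F^{-1}$;
\item the Maurer--Cartan equation only gives $d^\Psi y=0$.
\end{itemize}
So you obtain that $(F;d^Fx)\circledcirc F^{-1}$ is closed, not that it equals $d^\Psi\bigl((F;x)\circledcirc F^{-1}\bigr)$.

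\paragraph{How to close it.} You need a second computation of $u\cdot\Phi$. Set $z:=(F;x)\circledcirc F^{-1}$. Lemma~\ref{formuless}$(e)$ gives $(1;z)\circledcirc F=x$, so $u=(1+\varepsilon z)\circledcirc F$. Proposition~\ref{Ad_f}(2) then gives
\[
u\cdot\Phi=(1+\varepsilon z)\cdot(F\cdot\Phi)=(1+\varepsilon z)\cdot\Psi=\Psi\pm\varepsilon\,d^\Psi z .
\]
Comparing $\varepsilon$-coefficients with the first expansion yields the intertwining. Alternatively, cite Lemma~\ref{compo} directly. With this repair, the reduction modulo $\hbar^n$ in your Step~4 goes through unchanged.
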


\begin{proof}
	Points~(2) and~(3) follow from \cite[Lemma~2.47]{CE24b}. For point~(1), since $f$ is an $\infty$-morphism, we have
	\[
	F \rhd \Phi - \Psi \lhd F = d(F) \ .
	\]
	Differentiating with respect to $\hbar$, and using the Leibniz rule, yields
	\[
	\left(F;\partial_{\hbar}F\right)\rhd\Phi
	+F\rhd\partial_{\hbar}\Phi
	-\partial_{\hbar}\Psi\lhd F
	-\Psi\lhd\left(F;\partial_{\hbar}F\right)
	=d\left(\partial_{\hbar}F\right) \ .
	\]
	This is precisely the identity in point~(1).
\end{proof}

\subsection{Properadic spectral sequences}\label{2.4}

The purpose of this section is to introduce the properadic spectral sequence associated with an $\infty$-morphism and study its homotopy invariance property.

\begin{definition}[Properadic spectral sequence] Let $f : (A, \varphi) \rightsquigarrow (B, \psi)$ be an $\infty$-morphism. 
	\begin{enumerate}
		
		\item 	The \emph{properadic spectral sequence} of $f$ is the spectral sequence \[\left(E(\C, A, B, f)^{p,q}_r, d_r \right) \] associated with the filtered complex $\left( \mathfrak{g}^A_B, d^f \right) $.   
		
		\item 	The \emph{properadic spectral sequence} of $(A, \varphi)$ is the spectral sequence \[\left(E(\C, A, \varphi)^{p,q}_r, d_r \right) \] associated with $\mathrm{id}_A$, that is, with the filtered complex $\mathfrak{g}_A^{\varphi} \coloneqq \left( \mathfrak{g}_A, d^{\varphi} \coloneqq d + \left[\varphi, -\right] \right) $. 
	\end{enumerate}
\end{definition}

\begin{lemma}\label{compo}
	Let us consider $\infty$-morphisms between $\Omega \C$-algebra structures \begin{center}
		\begin{tikzcd}[column sep=normal]
		 (A, \varphi) \ar[r,squiggly,"f"] 
			& (B, \psi) \ar[r,squiggly,"g"] 
			& (C, \phi) \ .
		\end{tikzcd} 
	\end{center} 
	The composition $\circledcirc$ induces morphisms of filtered chain complexes
		\begin{center}
			\begin{tikzcd}[column sep=normal]
			g^f_* :	\left(\mathfrak{a}^A_B , d^{f} \right) \ar[rr,squiggly,"g\; \circledcirc \; (f; -)"] 
				&& \left(\mathfrak{a}^A_C , d^{\,g \circledcirc f} \right)  
				&&\ar[ll,squiggly,"{(g;-) \; \circledcirc \; f}", swap] \left(\mathfrak{a}^B_C , d^{g} \right) : f^*_g \ .
			\end{tikzcd} 
		\end{center} 

\end{lemma}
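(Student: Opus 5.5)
We must show two things for each map: it is a chain map between the twisted differentials, and it preserves the weight filtration. Both maps are linear in the slot variable $x$. Filtration preservation is immediate. The coproperad $\C$ is weight-graded and its decomposition map $\Delta$ preserves total weight. In $g\circledcirc(f;x)$ the weight of the output component is therefore at least the weight at which $x$ is evaluated, because the remaining pieces $g$ and $f$ have nonnegative weight. The same holds for $(g;x)\circledcirc f$. So both maps send $\mathcal F^n$ into $\mathcal F^n$.

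For the chain map property we reuse the infinitesimal trick from the proof of Proposition~\ref{twisted-differential}. Take a homogeneous $x\in\mathfrak a^A_B$ and a graded parameter $\varepsilon$ with $\varepsilon^2=0$, $d\varepsilon=0$ and $|\varepsilon|=-|x|$. Set $u=f+\varepsilon x$ and $Q_{\varphi,\psi}(u)=d(u)+\psi\lhd u-u\rhd\varphi$. The computation there shows
\[
Q_{\varphi,\psi}(f+\varepsilon x)=(-1)^{|x|}\varepsilon\, d^f(x),
\qquad Q_{\varphi,\psi}(f)=0 .
\]
The key identity to establish is the functoriality of $Q$ under post-composition by an $\infty$-morphism. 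For any degree-zero $u\in\mathfrak a^A_B$ it reads
\[
Q_{\varphi,\phi}(g\circledcirc u)=(g;\, Q_{\psi,\phi}(g))\circledcirc u \;+\; g\circledcirc(u;\,Q_{\varphi,\psi}(u)) .
\]
It expresses that $d$ is a derivation of $\circledcirc$ and that $\circledcirc$ is associative with respect to the left and right actions: $\phi\lhd(g\circledcirc u)=(\phi\lhd g)\circledcirc u$ and $(g\circledcirc u)\rhd\varphi=g\circledcirc(u;u\rhd\varphi)$. The term $\psi$ inserted between the two levels appears with opposite signs and cancels. These are the same identities used in \cite[Theorem~5.1]{PHC} to prove Theorem~\ref{invertibility prop}(2), now applied to a non-morphism $u$. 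All the cancellations are sign- and combinatorics-level, so they hold over any commutative ring. Since $g$ is an $\infty$-morphism, $Q_{\psi,\phi}(g)=0$.

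Now substitute $u=f+\varepsilon x$. Then $g\circledcirc u=g\circledcirc f+\varepsilon\, g\circledcirc(f;x)$, with signs arranged so that this holds exactly. Comparing $\varepsilon$-coefficients gives
\[
d^{g\circledcirc f}\bigl(g\circledcirc(f;x)\bigr)=g\circledcirc\bigl(f;d^f(x)\bigr).
\]
Expanding $g\circledcirc(u;Q(u))$, the term $u;Q(u)$ is already of order $\varepsilon$, so $u$ may be replaced by $f$ there. This proves that $g^f_*$ is a chain map.

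Dually, for $f^*_g$ we use the same identity with $f$ fixed and $u=g+\varepsilon y$, where $y\in\mathfrak a^B_C$. Now $Q_{\varphi,\psi}(f)=0$ kills the second term, and the first gives
\[
d^{g\circledcirc f}\bigl((g;y)\circledcirc f\bigr)=\bigl(g;d^g(y)\bigr)\circledcirc f .
\]

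The main obstacle is proving the functoriality identity for $Q$ with arbitrary $u$, since $\circledcirc$ is not bilinear. The cleanest route is to observe that the identity is natural in $u$. One can then check it directly from the definitions of $\lhd$, $\rhd$ and $\circledcirc$ via the coassociativity of $\Delta$, which splits a two-level decomposition followed by an infinitesimal one into the three possible positions of the infinitesimal vertex. Tracking Koszul signs for the odd $\varepsilon$ is routine but needs care.
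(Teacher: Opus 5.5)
Your proof is correct, but it is organized differently from the paper's.

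\textbf{The paper's route.} It stays at the linear level throughout. It introduces the two-vertex term $\mathcal{M}(u,v)=g\circledcirc(f;u,v)$ with its graded symmetry $\mathcal{M}(u,v)=(-1)^{|u||v|}\mathcal{M}(v,u)$. It then expands the four pieces of $d^{\,g\circledcirc f}\bigl(g\circledcirc(f;x)\bigr)$ using the coderivation property, the three-level splitting and \cite[Lemma~2.13]{CE24b}, and removes the $g$-terms with the $\infty$-morphism identity of $g$. What remains is $\mathcal{M}(d(f),x)+\mathcal{M}(\psi\lhd f,x)-(-1)^{|x|}\mathcal{M}(x,f\rhd\varphi)$. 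This vanishes by the $\infty$-morphism identity of $f$, once the last term is reordered via the symmetry of $\mathcal{M}$. The case of $f^*_g$ is declared symmetric, and filtration preservation is left implicit.

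\textbf{Your route.} You prove the nonlinear identity $Q_{\varphi,\phi}(g\circledcirc u)=(g;Q_{\psi,\phi}(g))\circledcirc u+g\circledcirc(u;Q_{\varphi,\psi}(u))$ and linearize with the dual-number trick of Proposition~\ref{twisted-differential}. The paper's three $\mathcal{M}$-terms reappear as the part of $g\circledcirc(u;Q_{\varphi,\psi}(u))$ in which $\varepsilon x$ sits at a non-distinguished vertex and $Q_{\varphi,\psi}(f)$ at the distinguished one. They therefore vanish directly from $Q_{\varphi,\psi}(f)=0$. The regrouping is done once at the nonlinear level, where every label has degree zero, so no reordering signs and no symmetry lemma are needed.

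\textbf{What each buys.} Your route has three further advantages:
\begin{itemize}
\item it treats $g^f_*$ and $f^*_g$ with a single identity, by perturbing either factor;
\item it yields Theorem~\ref{invertibility prop}~(2) as a by-product;
\item it checks the filtration explicitly.
\end{itemize}
The price is that the nonlinear identity must be verified for arbitrary degree-zero $g$ and $u$. You already use the non-morphism left factor $g+\varepsilon y$ in the dual case, so state the identity that way. It must also hold after base change to $R[\varepsilon]$ with possibly odd $\varepsilon$. The paper, by contrast, does all its sign bookkeeping directly on the linearized terms.

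\textbf{A notational correction.} In the properadic setting the output level of a two-level graph may carry several vertices. The associativity you need is therefore $\phi\lhd(g\circledcirc u)=(g;\phi\lhd g)\circledcirc u$, which is what your key identity actually uses. The form $(\phi\lhd g)\circledcirc u$ that you wrote would place $\phi\lhd g$ at every output vertex.
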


\begin{proof} 
Let $x\in\mathfrak a_B^A$ be homogeneous. Let us prove that
\[	g\circledcirc\bigl(f;d^f(x)\bigr)
	=
	d^{\,g\circledcirc f}\bigl(g\circledcirc(f;x)\bigr)\ . \]
For homogeneous maps $u$ and $v$ of the same type as $f$, let us denote
		\[\mathcal{M}(u,v) \coloneqq g \circledcirc (f; u,v) \]  the signed sum of the graphs obtained by applying the
		comonadic decomposition map, with bottom-level vertices labeled
		by $g$, two distinguished upper vertices labeled by $u$ and $v$, and
		all remaining upper vertices labeled by $f$. Interchanging the two distinguished vertices yields
		\begin{equation}\label{eq:symmetry-M}
		\mathcal{M}(u,v)=(-1)^{|u||v|}\mathcal{M}(v,u).
		\end{equation} The iterated comonadic decomposition gives
		\[(g;g\rhd\psi)\circledcirc(f;x)
		=
		g\circledcirc\bigl(f;\psi\lhd(f;x)\bigr)
		+
			\mathcal{M}(\psi\lhd f,x) \ .\]
		Indeed, the first term consists of those graphs in which the
		$\psi$-labeled vertex lies in the distinguished branch containing
		$x$, whereas the second term consists of those in which
		$\psi\lhd f$ and $x$ lie over distinct inputs of $g$. Similarly, we have
\[
	\bigl(g\circledcirc f;
			g\circledcirc(f;x)\bigr)\rhd\varphi =
			g\circledcirc\bigl(f;(f;x)\rhd\varphi\bigr)
			+
				\mathcal{M}(x,f\rhd\varphi) \ . \] Since $d$ is a coderivation, the following identity holds
	\[
		d\bigl(g\circledcirc(f;x)\bigr)
			= 
			(g;d(g))\circledcirc(f;x)
			+
			g\circledcirc(f;d(x)) +
				\mathcal{M}(d(f),x) \ .\] Moreover, by \cite[Lemma~2.13 (2)]{CE24b}, with $x =1$, $y = \phi$ and $\ell = x$, we have
	\[
		\phi\lhd
		\bigl(g\circledcirc f;
		g\circledcirc(f;x)\bigr)
		=
		(g;\phi\lhd g)\circledcirc(f;x). \] Using the $\infty$-morphism identity of $g$, this leads to
		\[ d^{\,g\circledcirc f}\bigl(g\circledcirc(f;x)\bigr)
		-
		g\circledcirc\bigl(f;d^f(x)\bigr) =
			\mathcal{M}(d(f),x)
		+
			\mathcal{M}(\psi\lhd f,x)
		-
		(-1)^{|x|}	\mathcal{M}(x,f\rhd\varphi) \ . \] Using the $\infty$-morphism identity of $f$, this vanishes since 
		$f\rhd\varphi$ has degree $-1$ and by
		\eqref{eq:symmetry-M},
		\[
		\mathcal{M}(f\rhd\varphi,x)
		=
		(-1)^{|x|}	\mathcal{M}(x,f\rhd\varphi) \ .
		\] The proof that $f^*_g$ commutes with the differentials is completely symmetric. 
\end{proof}

\begin{lemma}\label{formuless}
	Let us consider $\infty$-morphisms between $\Omega \C$-algebra structures \begin{center}
		\begin{tikzcd}[column sep=normal]
			(D, \mu)\ar[r,squiggly,"h"] &	 (A, \varphi) \ar[r,squiggly,"f"] 
			& (B, \psi) \ar[r,squiggly,"g"] 
			& (C, \phi) \ .
		\end{tikzcd} 
	\end{center} The following identities hold: 
	\begin{enumerate}[label=$(\alph*)$]
		\item $f \; \rhd = f^{1}_* $ ;  \hfill $(a')$ $ \lhd \; f = f^*_{1} $ ;  \hfill  \textcolor{white}{;}
		\item $\left(g \circledcirc f\right)_*^h = g_*^{f \circledcirc h} \circ f_*^h$ ; \hfill $(b')$ $\left(f \circledcirc h\right)_g^* = h^*_{g \circledcirc f} \circ f^*_g$ ; \hfill \textcolor{white}{;}  
		\item $h^*_{g \circledcirc f} \circ g_*^f = g_*^{f \circledcirc h} \circ h^*_f $ .
		\end{enumerate}
	If $f$ is invertible under $\circledcirc$, the following propositions hold:
	
	\begin{enumerate}
		\item[$(d)$] $\mathrm{Ad}_f = (f^{-1})^*_f \circ f^1_* = f_*^{f^{-1}} \circ \left(f^{-1}\right)^*_1 $ ;
		\item[$(e)$] the maps $	f^h_*$  and  $f^*_g$  are isomorphisms with respective inverses \[\left(f^{-1}\right)^{f\circledcirc h}_* \quad \mbox{and}  \quad \left( f^{-1} \right)^*_{g \circledcirc f} . \]
	\end{enumerate} 
\end{lemma}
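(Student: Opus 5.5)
The plan is to reduce every identity to the associativity of the composition $\circledcirc$ and its ``one distinguished vertex'' variants from Notation~\ref{nota}. These come from coassociativity of the comonadic decomposition $\Delta$ of $\C$ together with associativity of the composition in the endomorphism properads. Concretely, each side of each identity is a sum over graphs obtained by iterating $\Delta$: levels of vertices are labelled by the given maps, and exactly one vertex carries the distinguished argument $x$. Two expressions agree as soon as they index the same set of decorated graphs with the same signs. Since every sign comes from the Koszul rule with only one odd-type input $x$, the signs automatically match. This is the same style of argument used in the proof of Lemma~\ref{compo}, and it works over any commutative ring.

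For (a) and (a$'$), I unfold the definitions. The map $f^1_*(x)=f\circledcirc(1;x)$ has bottom level $f$, and all top vertices are labelled by the unit $1$ except one labelled $x$. Because $1$ vanishes on $\overline{\C}$ and is the counit on $\mathcal I$, only decompositions with a single nontrivial top vertex survive. That is exactly $\Delta_{(*,1)}$, so $f\circledcirc(1;x)=f\rhd x$, as already noted in Notation~\ref{nota}. Statement (a$'$) is symmetric.

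For (b), I expand $g^{f\circledcirc h}_*\bigl(f^h_*(x)\bigr)=g\circledcirc\bigl(f\circledcirc h; f\circledcirc(h;x)\bigr)$ using the three-level iterate $(\Delta\boxtimes\mathrm{id})\Delta=(\mathrm{id}\boxtimes\Delta)\Delta$. The distinguished vertex $x$ sits at the top level, and the remaining levels are labelled $g,f,h$. The middle-level choices then regroup as $(g\circledcirc f)\circledcirc(h;x)$, which is $(g\circledcirc f)^h_*(x)$. Statements (b$'$) and (c) are proved the same way, with the distinguished vertex placed respectively at the bottom and at the middle of a three-level graph. For (c), both sides equal $g\circledcirc(f;x)\circledcirc h$, read with $x$ in the middle level.

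Statement (e) follows formally from (a), (b) and $f\circledcirc f^{-1}=1=f^{-1}\circledcirc f$ (Theorem~\ref{invertibility prop}). For instance,
\[
\left(f^{-1}\right)^{f\circledcirc h}_*\circ f^h_*=(f^{-1}\circledcirc f)^h_*=1^h_* ,
\]
and $1^h_*(x)=1\circledcirc(h;x)=x$. I do the other composite and the starred maps symmetrically, using (b$'$).

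For (d), I expand $\mathrm{Ad}_f(\varphi)=(f;f\rhd\varphi)\circledcirc f^{-1}$ and write $f\rhd\varphi=f^1_*(\varphi)$ by (a). The definition of $(-;\ell)\circledcirc f^{-1}$ is then precisely $(f^{-1})^*_f$ applied to $f^1_*(\varphi)$, giving the first equality. The second equality comes from (c) with $h=f^{-1}$ and $g=f$, together with (a$'$).

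The main obstacle I expect is making the graph bookkeeping in (b) and (c) rigorous in the properadic (non-tree) setting. There, two-level graphs can be connected in complicated ways, and one must check that a middle-level vertex connecting to several bottom vertices is counted exactly once on each side. I would handle this by citing the associativity of $\circledcirc$ from \cite[Proposition~3.19]{PHC} and its ``linearized'' variants \cite[Lemma~2.13]{CE24b}, obtained by substituting $f+\varepsilon x$ with $\varepsilon^2=0$, as in Proposition~\ref{twisted-differential}. The identities then follow by taking the coefficient of $\varepsilon$ in the associativity of $\circledcirc$, which avoids direct graph counting.
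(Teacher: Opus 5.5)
Your proposal is correct and essentially follows the paper's proof. You get (a)/(a$'$) by unwinding the definitions, (b), (b$'$) and (c) from associativity of $\circledcirc$ and its one-distinguished-vertex variants \cite[Lemma~2.13]{CE24b} (your $\varepsilon$-linearization of associativity is a valid way to derive these rather than cite them), and (e) from (b) using $f^{-1}\circledcirc f=1=f\circledcirc f^{-1}$ and $1^h_*=\mathrm{id}$.

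The one deviation is the second equality of (d). The paper cites part~(9) of that lemma, whereas you deduce it from (c) with $h=f^{-1}$, middle morphism $1_A$ (which you left implicit) and $g=f$. This is valid, slightly more economical, and does not actually need (a$'$).
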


\begin{proof}
	Formulas~$(a)$ and~$(a')$ follow directly from the definitions, since
	\[
	f \; \rhd = f \circledcirc (1;-)
	\qquad\text{and}\qquad
	\lhd f = (1;-) \circledcirc f \ .
	\]
The remaining formulas follow from \cite[Lemma~2.13]{CE24b}: formulas~$(b)$ and~$(b')$ are given by parts~$(2)$ and~$(1)$, respectively, with $x=g$, $g=f$, and $f=h$; formula~$(c)$ follows from part~$(3)$ with $k=h$; and the second equality in~$(d)$ follows from part~$(9)$, the first one being the definition. Finally, let us prove part ($e$). Applying twice formula $(b)$, first with $g = f$, $f = f^{-1}$ and $h =f \circledcirc h$ and second with $g = f^{-1}$, yields \[f_*^h \circ \left(f^{-1}\right)^{f\circledcirc h}_* = 1 = \left(f^{-1}\right)^{f\circledcirc h}_* \circ f_*^h \ . \] Applying twice formula $(b')$, first with $g = g \circledcirc f$, $f = f^{-1}$ and $h =f $ and second with $h = f^{-1}$, yields the proof of the second isomorphism. 
\end{proof}

\noindent For the following theorem, we assume that all underlying complexes admit contractions onto their homology. We also assume that either $R$ is a $\mathbb Q$-algebra, $\C$ is a non-symmetric cooperad, or each $\C^{(w)}_j(m,n)$ is projective  over
$R[\mathbb S_m^{\mathrm{op}}\times\mathbb S_n]$.

\begin{theorem}[Homotopy invariance of properadic spectral sequences]
	\label{homotopy invariance}
	Let us consider $\infty$-morphisms between $\Omega\C$-algebra structures
	\begin{center}
		\begin{tikzcd}[column sep=normal]
			(D,\mu)\ar[r,squiggly,"h","\sim"']
			& (A,\varphi)\ar[r,squiggly,"f"]
			& (B,\psi)\ar[r,squiggly,"g","\sim"']
			& (C,\phi),
		\end{tikzcd}
	\end{center}
	where $h$ and $g$ are $\infty$-quasi-isomorphisms.
	Then, for every $r\geq1$, the following diagram commutes
	and its vertical maps are isomorphisms:
	\[
	\begin{tikzcd}[
		row sep=2.8em,
		column sep=3em,
		ampersand replacement=\&]
		E_r(D)
		\arrow[rr,"{\ (g\circledcirc f\circledcirc h)\rhd \ }"]
		\arrow[d,"\cong"']
		\& \& E_r(g\circledcirc f\circledcirc h)
		\&\& E_r(C)
		\arrow[ll,"{\lhd(g\circledcirc f\circledcirc h)}"']
		\arrow[d,"\cong"]
		\\
		E_r(A)
		\arrow[rr,"{f\rhd}"']
		\&\& E_r(f)
		\arrow[u,"\cong","{g_*^{f\circledcirc h}\circ h_f^*}"']
		\&\& E_r(B)
		\arrow[ll,"{\lhd f}"]
	\end{tikzcd}
	\]
	Here $E_r(A)=E(\C,A,\varphi)_r$ and
	$E_r(f)=E(\C,A,B,f)_r$, with analogous notation
	for the other terms, and the arrows denote the induced maps
	on $E_r$. These isomorphisms are compatible with passage
	to the next page.
\end{theorem}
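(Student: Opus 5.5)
The argument splits into two parts: commutativity of the diagram, which is formal, and the claim that the vertical maps are isomorphisms on $E_r$, which is a comparison of spectral sequences.

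\textbf{Commutativity.} By Lemma~\ref{compo}, all arrows in the diagram are morphisms of filtered chain complexes, so they induce maps on every page $E_r$ compatible with $d_r$. It therefore suffices to check commutativity at the chain level, using the identities of Lemma~\ref{formuless}.

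For the left square, write $f\rhd = f^1_*$ by $(a)$. Formula $(b)$ applied to $g\circledcirc f$ and then to $f$ (with $h$ in the superscript) expresses $(g\circledcirc f\circledcirc h)^1_*$ through $g_*^{f\circledcirc h}\circ f^h_*\circ h^1_*$. Formula $(c)$ trades $f^h_*\circ h^1_*$ for $h^*_f\circ f^1_*$ on the image of $h\rhd$. The two composites around the left square are then those starting at $E_r(D)$ and at $E_r(A)$. Here the left vertical map is induced by $h$, namely $h^1_*$ followed by the identification via $(h^{-1})$-type maps, or equivalently the zig-zag $h\rhd$ and $\lhd h$. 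I will make this precise by taking the left vertical arrow to be the map $h^*_1\colon E_r(A)\to E_r(h)$ composed with the inverse of $h^1_*$, once both are shown to be isomorphisms.

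The right square is symmetric, using $(a')$, $(b')$ and $(c)$ again. The identity $h^*_{g\circledcirc f}\circ g_*^f = g_*^{f\circledcirc h}\circ h^*_f$ from $(c)$ shows that the two candidate middle vertical maps agree.

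\textbf{Vertical maps are isomorphisms.} The key step is to show that for an $\infty$-quasi-isomorphism $h$, the maps $h\rhd$ and $\lhd h$ induce isomorphisms on $E_1$. Since comparison maps of spectral sequences are compatible with passage to the next page, an $E_1$-isomorphism propagates to every $E_r$ with $r\geqslant1$, and completeness is not needed for that. On $E_0$ the differential is the weight-preserving part of $d^h$, which is just $d$ built from $d_A$, $d_D$ and $d_{\C}$ restricted to weight-zero components. Consequently $E_0^{p}(h\rhd)$ is postcomposition with $h^{(0)}$ on $\Hom_{\mathbb S}(\C^{(p)},\End^D_D)\to\Hom_{\mathbb S}(\C^{(p)},\End^D_A)$, and $E_0(\lhd h)$ is precomposition by tensor powers of $h^{(0)}$.

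Because all complexes admit contractions onto their homology, $h^{(0)}$ is a homotopy equivalence, so its tensor powers are homotopy equivalences. One subtlety is that the wrong-variance inputs are not automatically free; this is where the hypothesis that $R$ is a $\mathbb Q$-algebra, that $\C$ is non-symmetric, or that $\C^{(w)}_j(m,n)$ is projective enters. With that hypothesis, $\Hom_{\mathbb S}(\C^{(p)},-)$ preserves homotopy equivalences: over $\mathbb Q$ one averages the homotopies, in the projective case $\mathbb S$-equivariant Hom out of a projective preserves homotopy equivalences, and in the non-symmetric case no equivariance is involved. Hence $E_1(h\rhd)$ and $E_1(\lhd h)$ are isomorphisms. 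The same argument applies to $g$.

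\textbf{Assembling.} Combining these with Lemma~\ref{formuless}$(b)$ and $(c)$, the middle vertical map $g_*^{f\circledcirc h}\circ h^*_f$ is a composite of maps each of which is an $E_1$-isomorphism by the same $E_0$ analysis: it is postcomposition by $g^{(0)}$ and precomposition by tensor powers of $h^{(0)}$. It is therefore an isomorphism on every $E_r$.

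I expect the main obstacle to be the $E_0$-level identification. One must check that, on the associated graded, the twisted terms $\psi\lhd(f;x)$ and $(f;x)\rhd\varphi$ strictly raise weight, using that $\C$ is reduced and $\varphi,\psi$ vanish on $\mathrm I$. One must also check that the composites $g\circledcirc(f;-)$ reduce to composition with weight-zero components only. In addition, equivariant Hom must be shown to preserve homotopy equivalences under exactly the stated hypotheses.
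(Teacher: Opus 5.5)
Your proposal is correct and follows essentially the same route as the paper. It obtains the $E_1$-isomorphisms from the $E_0$-level description as pre- and postcomposition with tensor powers of $h^{(0)}$ and $g^{(0)}$, using the contractions and the hypotheses on $R$ and $\C$. It defines the vertical maps on $E_r$ by inverting one leg of the zig-zag; your left arrow $(h^1_*)^{-1}\circ h^*_1$ is just the inverse of the paper's $(h^*_1)^{-1}\circ h^1_*$. Commutativity then comes from Lemma~\ref{formuless}~(a)--(c).

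There is one slip in the left square. The instance of (c) you need is $f^h_*\circ h^*_1=h^*_f\circ f^1_*$, with $\lhd h$ rather than $h\rhd$. Also, the verification takes place on $E_r$ rather than at chain level, because the vertical maps involve inverses that only exist there.
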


\begin{proof}
	The contractions make $h^{(0)}$ and $g^{(0)}$ chain homotopy
	equivalences. On the associated graded, composition with $h$
	or $g$ acts by pre- or postcomposition with tensor powers of
	these maps. The assumptions on $R$ and $\C$ ensure that the
	resulting maps remain quasi-isomorphisms after applying
	equivariant $\Hom$. Thus the composition maps involving $h$
	or $g$ induce isomorphisms from $E_1$. Fix $r\geqslant 1$. All maps below are understood on $E_r$,
	and all inverses are taken on this page. Define the left
	and right vertical maps by
	\[
	(h_1^*)^{-1}\circ h_*^1
	\qquad\text{and}\qquad
	(g_*^1)^{-1}\circ g_1^*,
	\]
	respectively. These maps and the middle vertical map
	$g_*^{f\circledcirc h}\circ h_f^*$ are isomorphisms
	by the preceding paragraph. Let us prove that the left square commutes.
	By Lemma~\ref{formuless}~$(c)$,
	\[
	h_f^*\circ f_*^1=f_*^h\circ h_1^*.
	\]
	Using $(a)$ and applying $(b)$ twice, we obtain
	\[
	\begin{aligned}
		&g_*^{f\circledcirc h}\circ h_f^*
		\circ(f\rhd)\circ(h_1^*)^{-1}\circ h_*^1  =
		g_*^{f\circledcirc h}\circ h_f^*\circ f_*^1
		\circ(h_1^*)^{-1}\circ h_*^1\\
		&\qquad =
		g_*^{f\circledcirc h}\circ f_*^h\circ h_*^1  =
		(g\circledcirc f\circledcirc h)\rhd.
	\end{aligned}
	\] 	For the right square, Lemma~\ref{formuless}~$(c)$ gives
	\[
	g_*^{f\circledcirc h}\circ h_f^*
	=h_{g\circledcirc f}^*\circ g_*^f,
	\qquad
	g_*^f\circ f_1^*=f_g^*\circ g_*^1.
	\]
	Using $(a')$ and applying $(b')$ twice similarly yields
	\[ g_*^{f\circledcirc h}\circ h_f^*
		\circ(\lhd f)\circ(g_*^1)^{-1}\circ g_1^*  = \lhd(g\circledcirc f\circledcirc h) \ .
	\]
	Hence the diagram commutes. The isomorphisms are compatible
	with passage to the next page, and the same argument applies
	to the full convolution complexes.
\end{proof}

\subsection{Formality transmission}\label{2.5}

In this section, we prove the formality transmission theorem and present  first two applications : 
\begin{enumerate}
	\item[(A)] How do automorphisms of the transferred structure affect formality?
	\item[(B)] Given an $\infty$-morphism between two $\Cobar \mathcal C$-algebra structures \[\begin{tikzcd}[column sep=normal]
		f : (A, \varphi)   \ar[r,squiggly] 
		& (B, \psi)  \ ,
	\end{tikzcd} 
	\]  when does the formality of one of these algebras imply the formality of the other?
\end{enumerate}

\begin{theorem}[Formality transmission]\label{thm formality transmission}
	Let $(A, \varphi)$ be an $\Cobar \C$-algebra admitting a transferred structure. Let $(\mathfrak{h}, d')$ be a weight-graded chain complex and consider a filtered morphism  \[f : \mathfrak{g}_{H(A)}^{\varphi_t} \longrightarrow (\mathfrak{h}, d') \ .\]  For every $k \geqslant 1$, we consider the map induced on the  properadic spectral sequences:\[ \theta_{k} : E(\C, H(A), \varphi_t)_{k}^{k, -1} \longrightarrow E\left( \mathfrak h \right)_{k}^{k, -1} \ .\] Let $f_*$ be the map induced by the decomposition $\mathcal D_0(f)$ in homology. Then the following hold. \begin{enumerate} \item If $R$ is a $\mathbb{Q}$-algebra,  if $f_*(K_{\varphi_t}) = 0$ and if $\theta_k$ is injective for every $k > 1$, then $(A, \varphi)$ is gauge formal.  
		
		\item   Let $n \geqslant 1$ be an integer such that $n !$ is a unit in $R$. If \(f_*(K_{\varphi_t}^n)=0\) and if \(\theta_k\) is
		injective for every \(1 < k\leqslant n + 1 \), then $(A, \varphi)$ is gauge $n$-formal. 
	\end{enumerate}
\end{theorem}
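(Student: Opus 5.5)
The plan is to reduce both points to the argument of Theorem~\ref{thm gauge transmission 2}, now applied to $\mathfrak g = \mathfrak g_{H(A)}$ with the Maurer--Cartan element $\varphi_t$. The characteristic-zero dictionary between gauge triviality and formality is replaced by Theorem~\ref{B prop} throughout. Note that $\mathfrak g_{H(A)}$ is only Lie-admissible. Its antisymmetrization is a weight-graded dg Lie algebra, and the twisted complex $\mathfrak g_{H(A)}^{\varphi_t}$ is the same in either description. Its spectral sequence is $E(\C,H(A),\varphi_t)$, with weight $p$ and degree $q$ as in Section~\ref{1.3}.

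\textbf{Point~(1).} Since $R$ is a $\mathbb Q$-algebra, Theorem~\ref{thm gauge transmission 2}~(1) applies verbatim and shows that $\varphi_t$ is gauge trivial. There is then $\lambda$ with $\lambda\cdot\varphi_t=\varphi_t^{(1)}=\varphi_*$. By Theorem~\ref{graphexp2}, $\exp(\lambda)$ is an $\infty$-isotopy $(H(A),\varphi_t)\rightsquigarrow(H(A),\varphi_*)$. Corollary~\ref{gauge formality} then gives gauge formality. Equivalently, one can argue that $K_{\varphi_t}=0$ and invoke Theorem~\ref{B prop}~(1).

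\textbf{Point~(2).} Here $R$ is only assumed to have $n!$ invertible, so there is no gauge group. I will rerun the minimal-counterexample argument with $\infty$-isotopies in place of gauge transformations. Suppose $(A,\varphi)$ is not gauge $n$-formal. By Theorem~\ref{B prop}~(2), $K^n_{\varphi_t}\neq0$. Choose $m\leqslant n$ minimal with $K^m_{\varphi_t}\neq0$. Since $(m-1)!$ is a unit, Theorem~\ref{B prop}~(2) at level $m-1$ gives $K^{m-1}_{\varphi_t}=0$. Corollary~\ref{gauge formality} then provides an $\infty$-isotopy $u\colon(H(A),\varphi_t)\rightsquigarrow(H(A),\psi)$ with $\psi^{(2)}=\cdots=\psi^{(m)}=0$. (For $m=1$ take $u=\mathrm{id}$ and $\psi=\varphi_t$.) Consequently $K^m_\psi=[m\psi^{(m+1)}\hbar^{m-1}]$.

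By Proposition~\ref{Ad_f}, $\psi=u\cdot\varphi_t$ and $\mathrm{Ad}_u\colon\mathfrak g^{\varphi_t}\to\mathfrak g^{\psi}$ is a filtered isomorphism. Its weight-zero part is the identity, because $u^{(0)}=\mathrm{id}$. By Lemma~\ref{piquant prop}~(3), $\mathrm{Ad}^m_U$ sends $K^m_{\varphi_t}$ to $K^m_\psi$, where $U=\mathcal D_0(u)$. Set $g\coloneqq f\circ\mathrm{Ad}_{u^{-1}}\colon\mathfrak g^\psi\to\mathfrak h$. Then $g_*(K^m_\psi)=f_*(K^m_{\varphi_t})$, which is the reduction of $f_*(K^n_{\varphi_t})=0$ modulo $\hbar^m$, and hence vanishes. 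On spectral sequences, $E_k(g)=\theta_k\circ E_k(\mathrm{Ad}_{u^{-1}})$, and the second factor is an isomorphism.

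Now the coefficient-extraction step from the proof of Theorem~\ref{thm gauge transmission 2} applies without change. Write a primitive $\nu=\sum\nu_i\hbar^i$ and set $y=\sum\nu_i^{(i+1)}$. This produces $d'y\equiv g^{(0)}(m\psi^{(m+1)})$ modulo $\mathcal F^{m+2}\mathfrak h$, with $d'y\in\mathcal F^{m+1}\mathfrak h$. That step only uses the chain-map identity and the weight bookkeeping of $\mathcal D_0$ and $\mathcal D_1$, and is valid over any ring. Hence the class of $m\psi^{(m+1)}$ in $E_{m+1}^{m+1,-1}(\mathfrak g^\psi)$ maps to zero. Here one uses Lemma~\ref{lem spectral-page-identification}: $\psi$ has no weights $2,\dots,m$, so $E_{m+1}$ is $H(\mathfrak g,d^{\varphi_*})$. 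Injectivity of $\theta_{m+1}$, with $m+1\leqslant n+1$, gives $[m\psi^{(m+1)}]=0$. Therefore $K^m_\psi=0$, and so $K^m_{\varphi_t}=0$, a contradiction.

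The main obstacle is the last implication. The page-$(m+1)$ vanishing must be upgraded to vanishing of $K^m_\psi$ in $H_{-1}\bigl((\mathfrak g[\![\hbar]\!]/\hbar^m)^{\overline\Psi^m}\bigr)$ without using the exponential. The chain $K^m_\psi$ is concentrated in the single term $\hbar^{m-1}$. A primitive $z$ of $m\psi^{(m+1)}$ for $d^{\varphi_*}$, placed as $z\hbar^{m-1}$, then works modulo $\hbar^m$, since all higher corrections land in $\hbar^{\geqslant m}$. This needs only the identification $\mathcal D_1(d^\psi)\equiv d^{\varphi_*}$ on the $\hbar^{m-1}$ coefficient, with no division. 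The unit hypothesis $n!\in R^\times$ is used only through Theorem~\ref{B prop}.
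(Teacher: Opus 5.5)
Your proposal is correct and follows essentially the same route as the paper. Point~(1) goes through Theorem~\ref{thm gauge transmission 2}, the graph exponential of Theorem~\ref{graphexp2} and Corollary~\ref{gauge formality}. Point~(2) uses the same minimal-$m$ contradiction, with Theorem~\ref{B prop}~(2) and Lemma~\ref{piquant prop}~(3) replacing the gauge group; your isotopy runs $\varphi_t\rightsquigarrow\psi$ rather than $\psi\rightsquigarrow\varphi_t$, which is immaterial.

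One wording fix: $K^{m-1}_{\varphi_t}=0$ comes from the minimality of $m$, and Theorem~\ref{B prop}~(2) at level $m-1$ is what converts this vanishing into gauge $(m-1)$-formality, not the other way round.
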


\begin{proof}
	When $R$ is a $\mathbb{Q}$-algebra, this follows from gauge triviality transmission, that is Theorem \ref{thm gauge transmission 2}, by using Corollary \ref{gauge formality} and the isomorphism of Theorem \ref{graphexp2}. For Point (2), when $R$ is not a $\mathbb{Q}$-algebra, we use the same reasoning by using the properadic Kaledin classes. By Theorem \ref{B prop} (2), it is enough to prove that $K_{\varphi_t}^n=0$. Suppose, for a contradiction, that this class is nonzero and let $m$ be the smallest element such that  \[K_{\varphi_t}^m\neq 0 \ .\] 
	If \(m=1\), set \(g \coloneqq \mathrm{id}\) and \(\psi \coloneqq \varphi_t\). If \(m>1\), then
	\(K_{\varphi_t}^{m-1}=0\) and, by Theorem~\ref{B prop}~(2), there exists an $\Omega \C$-algebra structure $(H(A), \psi)$ such that $\psi^{(k)} = 0$ for $2 \leqslant k \leqslant m$ and an $\infty$-isotopy $g : \psi \rightsquigarrow \varphi_t $~.   By Lemma \ref{piquant prop} (3) applied to $\mathcal{D}_0(g)$, we have
	\[
	f_* \circ  \mathrm{Ad}_{\mathcal{D}_0(g)}(K_{\psi}^m)
	=
	f_*(K_{\varphi_t}^m)
	=
	0 \ .
	\]  Since $g$ induces an isomorphism, \[\mathrm{Ad}_g : \mathfrak{g}_{H(A)}^{\psi} \longrightarrow \mathfrak{g}_{H(A)}^{\varphi_t} \ ,\]  we deduce as in Theorem \ref{thm gauge transmission 2} that
	\(m\psi^{(m+1)}\) is a boundary for \(d^{\varphi_*}\), and hence
	$K_{\psi}^m=0.$	Finally, Lemma \ref{piquant prop} (3) applied to $\mathcal{D}_0(g)$ implies that
	\[ 	K_{\varphi_t}^m 	=
	\mathrm{Ad}_{\mathcal{D}_0(g)}(K_{\psi}^m)
	=
	0 \ , 
	\] which leads to a contradiction. 
\end{proof}

\begin{corollary}[Application A]\label{appli A}
	Let $A$ be an $\Omega \C$-algebra admitting a transferred structure $(H(A), \varphi_t)$. Let $S$ be an $\infty$-automorphism of this transferred structure. Let us consider \[\mathrm{Ad}_S - \mathrm{id} : \mathfrak{g}_{H(A)}^{\varphi_t} \longrightarrow \mathfrak{g}_{H(A)}^{\varphi_t} \] and  for every $k \geqslant 1$, its induced map on the $E_k$ page of the properadic spectral sequences \[ \theta_{k} :  E(\C, H(A), \varphi_t)^{k,-1}_k  \longrightarrow E(\C,  H(A), \varphi_t)^{k,-1}_k      \ . \] 
	\begin{enumerate}
		\item If $R$ is a $\mathbb{Q}$-algebra, and   $\theta_k$ is injective for every $k > 1$, then $(A, \varphi)$ is gauge formal.
		\item Let $n \geqslant 1$ be an integer such that $n !$ is a unit in $R$.  If \(\theta_k\) is
		injective for \(1  <  k\leqslant n + 1 \), then $(A, \varphi)$ is gauge $n$-formal. 
	\end{enumerate}
\end{corollary}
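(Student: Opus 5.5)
This is a direct application of Theorem~\ref{thm formality transmission}, taking $\mathfrak h := \mathfrak{g}_{H(A)}^{\varphi_t}$ and $f := \mathrm{Ad}_S - \mathrm{id}$.

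\emph{Step 1: $f$ is an admissible filtered chain map.} Since $S$ is an $\infty$-automorphism of $(H(A),\varphi_t)$, we have $S\cdot\varphi_t = \varphi_t$. Proposition~\ref{Ad_f}~(3) then makes $\mathrm{Ad}_S$ a dg Lie automorphism of $\mathfrak g_{H(A)}^{\varphi_t}$. Hence $\mathrm{Ad}_S - \mathrm{id}$ is a chain map. It is filtered because $\mathrm{Ad}_S$ is built from $\circledcirc$-composites, and these never lower the weight. The target is the weight-graded complex $\mathfrak g_{H(A)}$ with $d^{\varphi_t}$, so the maps $\theta_k$ of the statement are exactly those appearing in Theorem~\ref{thm formality transmission}.

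\emph{Step 2: $f_*$ kills the relevant Kaledin class.} We must show $f_*(K_{\varphi_t})=0$, respectively $f_*(K^n_{\varphi_t})=0$. Here $f_*$ is induced by $\mathcal D_0(f) = \mathcal D_0(\mathrm{Ad}_S) - \mathrm{id}$. We apply Lemma~\ref{piquant prop}~(2) and~(3) to the $\infty$-isomorphism $S\colon \varphi_t \rightsquigarrow \varphi_t$ with $F=\mathcal D_0(S)$, so that $\Phi=\Psi=\mathcal D_1(\varphi_t)$. This gives $\mathrm{Ad}_F(K_{\varphi_t}) = K_{\varphi_t}$ in $H_{-1}(\mathfrak g_{H(A)}[\![\hbar]\!]^{\Phi})$, and likewise modulo $\hbar^n$. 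The statement requires $\mathrm{Ad}_{F}=\mathcal D_0(\mathrm{Ad}_S)$, i.e.\ that the prismatic decomposition commutes with the adjoint action. We verify it by rescaling: an element of weight $k$ is sent to $\hbar^{k-1}$ times itself (shifted appropriately), and the $\circledcirc$-composites are weight additive. Consequently $(\mathcal D_0(\mathrm{Ad}_S)-\mathrm{id})_*K_{\varphi_t}=0$, and the same holds for the truncations.

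\emph{Step 3: conclude.} With both hypotheses in hand, Theorem~\ref{thm formality transmission}~(1) yields gauge formality over a $\mathbb Q$-algebra. Theorem~\ref{thm formality transmission}~(2) yields gauge $n$-formality when $n!\in R^\times$ and $\theta_k$ is injective for $1<k\le n+1$.

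\emph{Main obstacle.} The delicate point is Step~2, namely the identification $\mathrm{Ad}_{\mathcal D_0(S)} = \mathcal D_0(\mathrm{Ad}_S)$ on $\mathfrak g[\![\hbar]\!]$, including the term with $d(F)$. Here $d(F)\neq 0$ a priori, even though $d(S)$ vanishes up to the $\infty$-morphism equation. The cleanest route is to define $\mathrm{Ad}$ on the $\hbar$-extended convolution algebra and check that $\mathcal D_0$ is a morphism for $\circledcirc$, i.e.\ the map $x\mapsto \sum_k x^{(k)}\hbar^{k}$ up to the normalizing shift. Once that is established, Lemma~\ref{piquant prop} applies verbatim.
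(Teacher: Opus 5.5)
Your proposal is correct and is essentially the paper's proof: the paper observes via Lemma~\ref{piquant prop} that $(\mathrm{Ad}_S-\mathrm{id})_*(K_{\varphi_t})=0$ (and likewise for $K^n_{\varphi_t}$), then applies Theorem~\ref{thm formality transmission}. Your Steps~1--2 just make explicit two things the paper leaves implicit, namely that $\mathrm{Ad}_S-\mathrm{id}$ is a filtered chain map and that $\mathrm{Ad}_{\mathcal D_0(S)}=\mathcal D_0(\mathrm{Ad}_S)$; on your stated obstacle, note that $\mathrm{Ad}_F$ has no $d(F)$ term (it only enters $F\cdot\Phi$), and $F\cdot\Phi=\Phi$ follows by multiplying the weight-$w$ part of the $\infty$-morphism equation for $S$ by $\hbar^{w-1}$, as in Lemma~\ref{piquant prop}~(1), with no shift needed in $\mathcal D_0$.
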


\begin{proof}
	By Lemma \ref{piquant prop} (3),  we have $\left(\mathrm{Ad}_S - \mathrm{id} \right)_*(K_{\varphi_t}) = 0   .$ The result then follows from Theorem \ref{thm formality transmission}. 
\end{proof}

\begin{corollary}[Application B]\label{appli B}
	Let us fix an $\infty$-morphism between $\Cobar \mathcal C$-algebra structures \[\begin{tikzcd}[column sep=normal]
		f : (A, \varphi)   \ar[r,squiggly] 
		& (B, \psi)  \ ,
	\end{tikzcd} 
	\]  admitting transferred structures. For every $k \geqslant 1$, we consider the maps induced by $f_t$ on the $E_k$ page of the properadic spectral sequences \[ \theta_{k} :  E(\C, H(A), \varphi_t)^{k,-1}_k  \xrightarrow{\ f_t \, \rhd \ } E(\C, H(A), H(B), f_t)^{k,-1}_k   \xleftarrow{\ \lhd \,  f_t \ } E(\C, H(B), \psi_t)^{k,-1}_k : \tilde{\theta}_k  \ . \] 
	\begin{enumerate}
	\item If $R$ is a $\mathbb{Q}$-algebra and $\theta_k$ is injective
		for every $k>1$, then $(A,\varphi)$ is gauge formal
		whenever $(B,\psi)$ is.	
	
	\item Let $n \geqslant 1$ with $n!$ invertible in $R$.
		If $\theta_k$ is injective for $1<k\leqslant n+1$,
		then $(A,\varphi)$ is gauge $n$-formal
		whenever $(B,\psi)$ is.	
\end{enumerate}
	Both statements remain valid with $(A,\varphi)$ and $(B,\psi)$
	interchanged and $\theta_k$ replaced by $\tilde{\theta}_k$.
\end{corollary}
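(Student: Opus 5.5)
We deduce the corollary from Theorem~\ref{thm formality transmission}, applied to the transferred structure $(H(A),\varphi_t)$ and the filtered chain map
\[
f_t\rhd(-)=(f_t)^1_*\colon \mathfrak g_{H(A)}^{\varphi_t}\longrightarrow\bigl(\mathfrak g^{H(A)}_{H(B)},d^{f_t}\bigr).
\]
By Lemma~\ref{compo} with $f=\mathrm{id}$ and $g=f_t$, this is a morphism of filtered complexes. Here $f_t\coloneqq p^B_\infty\circledcirc f\circledcirc i^A_\infty\colon(H(A),\varphi_t)\rightsquigarrow(H(B),\psi_t)$ is an $\infty$-morphism by Theorem~\ref{invertibility prop}. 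Its induced map on the $E_k^{k,-1}$ pages is exactly $\theta_k$, so the injectivity hypotheses of the theorem are the ones we are given. It remains only to verify $F_*(K_{\varphi_t})=0$, respectively $F_*(K^n_{\varphi_t})=0$, where $F=\mathcal D_0(f_t\rhd -)$. This map coincides with $\mathcal D_0(f_t)\rhd -$ computed with the $\hbar$-decomposed structures.

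The key identity is Lemma~\ref{piquant prop}~(1), applied to $f_t$ with $\Phi=\mathcal D_1(\varphi_t)$, $\Psi=\mathcal D_1(\psi_t)$ and $F_t=\mathcal D_0(f_t)$:
\[
F_t\rhd\partial_\hbar\Phi-\partial_\hbar\Psi\lhd F_t=d^{F_t}(\partial_\hbar F_t).
\]
This says that in the homology of $(\mathfrak g^{H(A)}_{H(B)}[\![\hbar]\!],d^{F_t})$ the image of $K_{\varphi_t}$ under $F_t\rhd$ equals the image of $K_{\psi_t}$ under $\lhd F_t$. One point needs checking: $\partial_\hbar F_t$ lies in the filtered piece $\mathfrak g$ rather than only in $\mathfrak a$. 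This holds because $\mathcal D_0$ places the weight-zero part $f_t^{(0)}$ in $\hbar^0$, so it is killed by $\partial_\hbar$. Now assume $(B,\psi)$ is gauge formal (resp.\ gauge $n$-formal). Theorem~\ref{B prop} then gives $K_{\psi_t}=0$ (resp.\ $K^n_{\psi_t}=0$), and hence $F_*(K_{\varphi_t})=0$. The truncated version follows by reducing the identity modulo $\hbar^n$; this uses $n!\in R^\times$ only through Theorem~\ref{B prop}. Theorem~\ref{thm formality transmission} then yields gauge formality (resp.\ gauge $n$-formality) of $(A,\varphi)$.

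The symmetric statement follows in the same way. We apply the transmission theorem to $\lhd f_t=(f_t)^*_1\colon\mathfrak g_{H(B)}^{\psi_t}\to(\mathfrak g^{H(A)}_{H(B)},d^{f_t})$, which is filtered by Lemma~\ref{compo}, with injectivity of $\tilde\theta_k$. The same identity, read the other way, shows that $K_{\psi_t}$ maps to zero once $K_{\varphi_t}=0$.

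The main obstacle is the bookkeeping behind Theorem~\ref{thm formality transmission}. That theorem is stated for a target that is a weight-graded complex $(\mathfrak h,d')$, and $\mathcal D_0(f)$ is formed via the weight components of the map. We must check that $(\mathfrak g^{H(A)}_{H(B)},d^{f_t})$ fits this framework: it is weight-graded because $H(A)$ and $H(B)$ carry zero differential, and $d^{f_t}$ raises weight by at least one. We must also check that the $\hbar$-decomposition of $d^{f_t}$ is the twisted differential $d^{F_t}$ of the decomposed morphism. The latter is the analogue of the lemma on $\mathcal D_0(f)$ being a chain map. It should follow by comparing weight components of $d^{f_t}$, which involves $\varphi_t$, $\psi_t$ and $f_t$ with total weight matching the power of $\hbar$.
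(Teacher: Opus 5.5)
Your proposal is correct and follows the paper's proof essentially step for step. You apply Theorem~\ref{thm formality transmission} to the filtered chain map $f_t\rhd(-)$, use Lemma~\ref{piquant prop}~(1) together with Lemma~\ref{compo} (extended $\hbar$-linearly) to identify $F_*(K_{\varphi_t})$, resp.\ $F_*(K^n_{\varphi_t})$, with the image of $K_{\psi_t}$, resp.\ $K^n_{\psi_t}$, under $\lhd F_t$, which vanishes by Theorem~\ref{B prop}, and argue symmetrically for $\tilde\theta_k$; the extra checks you flag, namely that $\partial_\hbar F_t$ has no weight-zero part and that $\mathcal D_1(d^{f_t})=d^{F_t}$, are left implicit in the paper and go through as you describe.
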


\begin{proof}
	Let $(H(A), \varphi_t)$ and $(H(B), \psi_t)$ be the corresponding transferred structures and define 
	\begin{center}
		\begin{tikzcd}[column sep=normal]
			f_t : (H(A),\varphi_t) \ar[r,squiggly,"i_{\infty}^A "]  & 	(A, \varphi) \ar[r,squiggly,"f"] 
			& (B, \psi)   \ar[r,squiggly," p_{\infty}^B"] 		 
			& (H(B), \psi_t) \ .
		\end{tikzcd} 
	\end{center}  Let us set $\Phi \coloneqq \mathfrak{D}_1(\varphi_t)$~, $\Psi \coloneqq \mathfrak{D}_1(\psi_t)$  and $F \coloneqq \mathfrak{D}_0(f_t) \ .$ By Lemma \ref{piquant prop}, we have \[F \rhd \partial_{\hbar} \Phi - \partial_{\hbar} \Psi \lhd F  = d^{F}\left(\partial_{\hbar} F \right) \ . \]
	
	\begin{enumerate}
		\item If $R$ is a $\mathbb{Q}$-algebra and $(B, \psi)$ is gauge formal, then Theorem \ref{B prop} (1) implies that $K_{\psi_t} = 0$ and we can write $\partial_{\hbar} \Psi = d^\Psi (\Lambda) \ .$ Lemma \ref{compo} implies by $\hbar$-linearity that \[\partial_{\hbar} \Psi \lhd F = d^\Psi (\Lambda) \lhd F = d^F(\Lambda \lhd F)  \  . \]  Thus, $f_*(K_{\varphi_t}) = 0$ and the proof of Point (1) follows from Theorem \ref{thm formality transmission} (1).
		\medskip 
		\item    If $n \geqslant 1$ is such that $n !$ is a unit in $R$ and $(B, \psi)$ is gauge $n$-formal, then  Theorem \ref{B prop} (2) implies that $K_{\psi_t}^n = 0$ and $\partial_{\hbar} \Psi$ is a boundary modulo $(\hbar^n)$. Using Lemma \ref{compo} once again, we get that $F \rhd \partial_{\hbar} \Phi$ is a boundary modulo $(\hbar^n)$ and $f_*(K_{\varphi_t}^n) = 0$. The proof of Point (2) now follows from Theorem \ref{thm formality transmission} (2). 
	\end{enumerate} The proof of the symmetric statements involving  $\tilde{\theta}_k$ is completely symmetric. 
\end{proof}

\begin{remark}\label{error}
	For $ \mathcal{L}_{\infty}$, the above result gives the corrected form of \cite[Theorem~3.4]{Man15}; see also the corrigendum \cite[Section~5]{MR26}. 
	More generally, when $\Omega \C = \P_{\infty}$ is the Koszul resolution of a binary Koszul operad, it yields a corrected version of \cite[Theorem~5.2]{MR19}. 
	The original proofs rely on \cite[Lemma~5.3]{Man15}, whose proof contains an error.
\end{remark}

\subsection{Interlude: Properadic Euler classes}
\label{Euler classes}

We give a properadic version of the Euler-class criterion of \cite{MR19}, which characterizes formality in terms of the associated properadic spectral sequence. We require throughout this subsection that
\[ \C ^{(p)}_j=0 \qquad\text{for }j\ne p. \]
Thus, the weight grading of $\C$ agrees with its homological grading.
This holds, for example, for the bar construction of a reduced augmented properad
concentrated in degree zero, with weight given by the number of vertices. Let $(A,\varphi)$ admit a transferred structure $(H,\varphi_t)$. For an $\Omega C$-algebra structure $(H; \mu)$, let us consider the spectral sequence of $(\mathfrak a_H,d^\mu)$
\[
\left(\widetilde E(C,H,\mu)_r^{p,q},d_r\right) \ .
\]

\begin{definition}[Properadic Euler class]\label{def:properadic-euler}
	The Euler endomorphism of $H$ is
	\[
	e_H(a)=-|a|a
	\]
	for homogeneous $a\in H$, viewed as an element of
	$(\mathfrak a_H^{(0)})_0$.
	Its class in $\widetilde E(C,H,\mu)_2^{0,0}$ is called the Euler class
	of $(H,\mu)$ and is also denoted by $e_H$.
\end{definition}

\begin{remark}\label{rem:properadic-euler-identity}
	For every homogeneous $\psi\in\mathfrak a_H^{(p)}$ of degree $q$,
	\[
	[\psi,e_H]=(p+q)\psi.
	\]
	Indeed, for $c\in C^{(p)}(m,n)$, the map $\psi(c)$ has degree $p+q$;
	the bracket with $e_H$ multiplies it by the difference between total
	output degree and total input degree. This computation applies to any
	number of inputs and outputs.
	Moreover, $d(e_H)=0$, since $H$ has zero differential and the
	coaugmentation coideal of $C$ is preserved by $d_C$. Consequently,
	\begin{equation}\label{eq:properadic-euler-differential}
		d^\mu(e_H)=\sum_{p\ge2}(p-1)\mu^{(p)}\in F^2\mathfrak a_H,
	\end{equation}
	which makes the Euler class well defined.
	For $\Phi=D_1(\mu)$, the corresponding prismatic identity is
	$d^\Phi(e_H)=\hbar\partial_\hbar\Phi$.
\end{remark}

\begin{proposition}\label{euler classes}
	Let $n\geqslant 1$ be such that $n!$ is a unit in $R$.
	For an $\Omega C$-algebra structure $(H,\mu)$ on a graded $R$-module,
	the following are equivalent:
	\begin{enumerate}
		\item $(H,\mu)$ is gauge $n$-formal;
		\item $K_\mu^n=0$;
		\item the Euler class survives to
		$\widetilde E(C,H,\mu)_{n+2}$.
	\end{enumerate}
	In the last condition, survival means that $d_r(e_H)=0$ successively
	for $r=2,\ldots,n+1$.
\end{proposition}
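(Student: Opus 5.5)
The plan is to prove (1)$\Leftrightarrow$(2) by citing earlier results, and to prove (2)$\Leftrightarrow$(3) by a direct weight-by-weight comparison using the Euler identity of Remark~\ref{rem:properadic-euler-identity}. For (1)$\Leftrightarrow$(2), apply Theorem~\ref{B prop}~(2) to $(H,\mu)$ itself. Since $H$ has zero differential, the trivial contraction makes $(H,\mu)$ its own transferred structure, so gauge $n$-formality is equivalent to $K^n_\mu=0$ whenever $n!\in R^\times$.

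For (2)$\Leftrightarrow$(3), the key input is that the diagonal hypothesis $\C^{(p)}_j=0$ for $j\neq p$ forces an element of $\mathfrak a_H^{(p)}$ of degree $q$ to produce maps of degree $p+q$. For a Maurer--Cartan element ($q=-1$) we get $[\psi,e_H]=(p-1)\psi$. This is exactly \eqref{eq:properadic-euler-differential}:
\[
d^\mu(e_H)=\sum_{p\geqslant 2}(p-1)\mu^{(p)} .
\]
Its prismatic form is $d^\Phi(e_H)=\hbar\partial_\hbar\Phi$ with $\Phi=\mathcal D_1(\mu)$. So the coefficients of the truncated Kaledin class $K^n_\mu=[\mu^{(2)}+2\mu^{(3)}\hbar+\cdots+n\mu^{(n+1)}\hbar^{n-1}]$ are precisely the weight components of $d^\mu(e_H)$.

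The proof then proceeds by induction on $m\leqslant n$, comparing the two conditions one weight at a time:
\begin{itemize}
\item[$\centerdot$] \emph{Claim A:} $K_\mu^m=0$ if and only if $e_H$ survives to $\widetilde E_{m+2}$, meaning $d_r(e_H)=0$ for $2\leqslant r\leqslant m+1$.
\item[$\centerdot$] \emph{Base case $m=1$:} $d_2(e_H)$ is the class of $\mu^{(2)}$ in $E_2^{2,-1}$, namely $H(\mathfrak a_H,d^{\mu^{(1)}})$ in weight $2$. This is exactly $K^1_\mu$.
\item[$\centerdot$] \emph{Inductive step:} assume $K^{m-1}_\mu=0$. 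By Theorem~\ref{B prop} and Corollary~\ref{gauge formality}, replace $\mu$ through an $\infty$-isotopy $g$ by $\psi$ with $\psi^{(k)}=0$ for $2\leqslant k\leqslant m$.
\item[$\centerdot$] \emph{Conclusion for $\psi$:} $d^\psi(e_H)$ lies in $\mathcal F^{m+1}$ with leading term $m\psi^{(m+1)}$. Hence $e_H$ survives to $E_{m+1}$, and $d_{m+1}(e_H)=[m\psi^{(m+1)}]$. By the argument in the proof of Theorem~\ref{thm gauge transmission 2}, this class vanishes exactly when $K^m_\psi=0$. By Lemma~\ref{piquant prop}~(3), $K^m_\psi=0$ exactly when $K^m_\mu=0$.
\end{itemize}

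To pass between $\mu$ and $\psi$, I would use that $\mathrm{Ad}_g$ is an isomorphism of filtered complexes $(\mathfrak a_H,d^\psi)\to(\mathfrak a_H,d^\mu)$ with $g^{(0)}=\mathrm{id}$. It therefore induces an isomorphism of spectral sequences that is the identity on $E_1$. I also need $\mathrm{Ad}_g(e_H)-e_H\in\mathcal F^1$ to be a $d^\mu$-chain correction. To get this, I would prove a formula of the shape $\mathrm{Ad}_g(e_H)=e_H+d^\mu(\partial\text{-type term})+\dots$, obtained by differentiating $g$ along the grading action: $e_H$ generates the $\mathbb G_m$-action $\sigma_\alpha$, and conjugating by $g$ gives $e_H$ plus $d^\mu$ of the infinitesimal variation of $g$ under rescaling.

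The main obstacle is exactly this step: showing that survival of $e_H$ is invariant under the isotopy, and not merely invariance of some class. Concretely, I need $\mathrm{Ad}_g(e_H)\equiv e_H$ modulo $d^\mu$-boundaries and elements of high enough filtration that the differentials $d_r$ up to $r=m+1$ do not see the difference. My first attempt would be a weight-degree computation on $\mathcal D_0(g)$: since $g^{(p)}$ has degree $0$, the identity $[g^{(p)},e_H]=p\,g^{(p)}$ shows that the correction is $d^{\mu}$-exact up to the expected terms, by analogy with Lemma~\ref{piquant prop}~(1) with $\hbar\partial_\hbar$ in place of $\partial_\hbar$.

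If this fails, the fallback is to avoid changing $\mu$ altogether. I would prove directly that $d_r(e_H)=0$ for $r\leqslant m+1$ is equivalent to the existence of $x\in\mathcal F^1(\mathfrak a_H)_0$ with $d^\mu(e_H+x)\in\mathcal F^{m+2}$. I would then identify this, through the prismatic identity $d^\Phi(e_H)=\hbar\partial_\hbar\Phi$ and $\mathcal D_0$ of $x$, with $\partial_\hbar\Phi$ being a boundary modulo $\hbar^m$. Dividing by $\hbar$ is legitimate because the weight-$0$ part is untouched, and the integer coefficients up to $m$ appearing in this identification are invertible since $n!\in R^\times$.
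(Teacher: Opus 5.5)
Your treatment of (1)$\Leftrightarrow$(2) is the paper's, and your fallback for (2)$\Leftrightarrow$(3) is correct, so the proposal goes through. However, the step you call the main obstacle of the inductive route is not actually an obstacle.

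You already observe that $\mathrm{Ad}_g$ is a filtered isomorphism inducing the identity on $E_0=E_1$. The maps $E_r(\mathrm{Ad}_g)$ commute with the differentials. So induction on $r$ shows that the two spectral sequences have literally the same pages and differentials, and that $E_r(\mathrm{Ad}_g)$ is the identity on each page. Hence the class of $e_H$ is sent to the class of $e_H$, and it survives for $\psi$ if and only if it survives for $\mu$.

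You do not need $\mathrm{Ad}_g(e_H)-e_H$ to be a $d^\mu$-boundary modulo high filtration. That statement is false in general: its weight-one component is $\pm g^{(1)}$, and $d_1 g^{(1)}=\pm\mu^{(2)}$ need not vanish. This one-line spectral-sequence argument is exactly how the paper proves (1)$\Rightarrow$(3). It transports along the isotopy of Corollary~\ref{gauge formality} to a structure $\nu$ with $\nu^{(p)}=0$ for $2\leqslant p\leqslant n+1$, then reads off $d^\nu(e_H)\in\mathcal F^{n+2}$ from \eqref{eq:properadic-euler-differential}. No induction on $m$ is required.

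Your fallback organises the proof differently from the paper and is the more economical route.

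Its direction (3)$\Rightarrow$(2) is literally the paper's computation. The weight-$p$ components of $d^\mu(e_H-r)\in\mathcal F^{n+2}$ say exactly that $d^\Phi(\mathcal D_1(r))\equiv\partial_\hbar\Phi \pmod{\hbar^n}$.

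The direction (2)$\Rightarrow$(3), which the paper reaches only through (1), needs one remark your sketch leaves implicit. The operator $d^\Phi$ raises (weight) minus ($\hbar$-exponent) by exactly one, while $\partial_\hbar\Phi$ is homogeneous of value two. So any primitive $\sum_i\nu_i\hbar^i$ may be replaced by its diagonal part $\mathcal D_1(y)$ with $y=\sum_{i<n}\nu_i^{(i+1)}$, as in the proof of Theorem~\ref{thm gauge transmission 2}. Then $d^\mu(e_H-y)\in\mathcal F^{n+2}$.

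No integer is ever inverted in this equivalence, since the coefficients $p-1$ occur identically on both sides. Your remark about inverting the coefficients up to $m$ is therefore unnecessary.

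What each route buys: yours shows that (2)$\Leftrightarrow$(3) holds over any commutative ring and uses neither isotopies nor the adjoint action. The hypothesis $n!\in R^\times$ then enters only through Theorem~\ref{B prop}. The paper instead obtains (1)$\Rightarrow$(3) conceptually, from the invariance of the spectral sequence under $\infty$-isotopies, and reuses that invariance in Theorem~\ref{degeneration}.
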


\begin{proof}
	The equivalence of (1) and (2) is Theorem~\ref{B prop}.
	If (1) holds, Corollary~\ref{gauge formality} gives an $\infty$-isotopy
	$f:(H,\mu)\longrightarrow(H,\nu)$ with
	$\nu^{(p)}=0$ for $2\leqslant p\leqslant n+1$.
	The adjoint action of $f$ gives a filtered isomorphism
	\[
	\operatorname{Ad}_f:
	(\mathfrak a_H,d^\mu)\xrightarrow{\ \cong\ }
	(\mathfrak a_H,d^\nu)
	\]
	by Lemmas~\ref{compo} and~\ref{formuless}, which apply to the full convolution complexes.
	Since $f^{(0)}=\operatorname{id}_H$,
	it induces the identity on the associated graded and therefore preserves
	the Euler class. Equation~\eqref{eq:properadic-euler-differential}
	gives $d^\nu(e_H)\in F^{n+2}\mathfrak a_H$, proving (3). Conversely, (3) means that there exists a degree-zero element
	$r=r^{(1)}+\cdots+r^{(n)}$ such that
	\[
	d^\mu(e_H-r)\in F^{n+2}\mathfrak a_H.
	\]
	Taking weight $p$, for $2\leqslant p\leqslant n+1$, yields
	\[
	d\bigl(r^{(p-1)}\bigr)
	+\sum_{j=1}^{p-1}[\mu^{(j)},r^{(p-j)}]
	=(p-1)\mu^{(p)}.
	\]
	This proves that $K_\mu^n=0$ since these identities give
	\[
	d^\Phi\bigl(D_1(r)\bigr)
	\equiv\partial_\hbar D_1(\mu) \pmod{\hbar^n} \ . \qedhere
	\] 
\end{proof}

\begin{theorem}\label{degeneration}
	Let $(A,\varphi)$ admit a transferred structure $(H(A),\varphi_t)$,
	and assume that $C^{(p)}$ is concentrated in homological degree $p$.
	\begin{enumerate}
		\item If $R$ is a $\mathbb Q$-algebra, then $(A,\varphi)$ is gauge
		formal if and only if
		$\widetilde E(C,H(A),\varphi_t)$ degenerates at $E_2$.
		\item Let $n\geqslant 1$ with $n!$ invertible in $R$.
		Then $(A,\varphi)$ is gauge $n$-formal if and only if this
		spectral sequence satisfies $d_r=0$ for $2\leqslant r\leqslant n+1$.
	\end{enumerate}
\end{theorem}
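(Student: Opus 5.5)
The plan is to combine Proposition~\ref{euler classes} with a multiplicativity argument: every class on the $E_2$ page is, after rescaling, a multiple of its bracket with the Euler class. Throughout, $\widetilde E=\widetilde E(C,H(A),\varphi_t)$. The gauge-formal direction comes first. By Corollary~\ref{gauge formality} there is an $\infty$-isotopy $f\colon(H,\varphi_t)\rightsquigarrow(H,\nu)$ with $\nu^{(p)}=0$ for $2\leqslant p\leqslant n+1$ (for point~(1), with $\nu=\varphi_*$). As in the proof of Proposition~\ref{euler classes}, $\operatorname{Ad}_f$ is a filtered isomorphism $(\mathfrak a_H,d^{\varphi_t})\cong(\mathfrak a_H,d^\nu)$. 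By Theorem~\ref{homotopy invariance}, or rather by its argument applied to the full convolution complexes, it induces an isomorphism of spectral sequences from $E_1$ on. So it suffices to show that $d_r=0$ for $2\leqslant r\leqslant n+1$ for the structure $\nu$. For $\nu$ the differential is $d^{\nu}=d^{(1)}+\operatorname{ad}_{\nu^{(n+2)}}+\cdots$, since $C$ has no internal differential contribution: because $C^{(p)}$ sits in degree $p$ and $d_C$ lowers weight by one, $d$ raises weight by exactly one. The components of weight shift $2,\ldots,n+1$ therefore vanish, and Lemma~\ref{lem spectral-page-identification} gives $E_2\cong\cdots\cong E_{n+2}$, i.e. $d_r=0$ for $2\leqslant r\leqslant n+1$. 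In the $\mathbb Q$-algebra case with $\nu=\varphi_*$, the differential is $d^{(1)}$ alone, so the spectral sequence degenerates at $E_2$.

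For the converse, assume that $d_r=0$ for $2\leqslant r\leqslant n+1$. In particular $e_H\in\widetilde E_2^{0,0}$ survives to $\widetilde E_{n+2}$, so Proposition~\ref{euler classes} gives gauge $n$-formality when $n!\in R^\times$. For point~(1), degeneration at $E_2$ gives $d_r=0$ for every $r$, hence $K^n_{\varphi_t}=0$ for all $n$ (all factorials are invertible), and therefore $K_{\varphi_t}=0$ by Theorem~\ref{Kaledin wg}~(3) transported through Theorem~\ref{B prop}. More directly, Theorem~\ref{B prop}~(1) gives gauge formality once we know that $K_{\varphi_t}$ vanishes whenever all truncations do. I would verify this by building the primitive weight by weight, as in the converse part of Proposition~\ref{euler classes}: the relations for each weight $p$ extend compatibly, and completeness of $\mathfrak a_H$ lets the $r^{(p)}$ assemble.

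The main obstacle I expect is the compatibility step in this infinite converse. Survival of $e_H$ to every page gives, for each $N$, an element $r_N$ with $d^{\varphi_t}(e_H-r_N)\in F^{N+2}$, but the $r_N$ need not be compatible as $N$ grows. I would address this by first using Proposition~\ref{euler classes} to pass to a gauge-equivalent structure that is trivial up to weight $N+1$, and then inductively correcting $\nu^{(N+2)}$, which becomes a $d^{(1)}$-boundary up to the factor $N+1$. The gauge elements involved have weight at least $N+1$, so their BCH-composite converges. This is essentially Theorem~\ref{Kaledin wg}~(1) transported via Theorem~\ref{graphexp2}, and I would cite it in that form.
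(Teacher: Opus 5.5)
Your proposal is correct and follows the paper's proof essentially step for step. For the forward direction, you pass via Corollary~\ref{gauge formality} and the filtered isomorphism $\operatorname{Ad}_f$ to a structure $\nu$ whose twisted differential has no weight-shift components $2,\ldots,n+1$, and then apply Lemma~\ref{lem spectral-page-identification}. For the converse, you use survival of the Euler class (Proposition~\ref{euler classes}), and Theorems~\ref{Kaledin wg}~(3) and~\ref{B prop} settle the infinite case, exactly as in the paper.

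The ``multiplicativity'' argument announced in your first sentence is never used and can be dropped. The compatibility issue you raise at the end is exactly what Theorem~\ref{Kaledin wg}~(3) already handles, so no separate inductive construction is needed.
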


\begin{proof} \leavevmode
	\begin{enumerate}
		\item 	Suppose first that $(A,\varphi)$ is gauge $n$-formal.
		By Corollary~\ref{gauge formality} and the filtered adjoint isomorphism used above, we may
		replace $\varphi_t$ by a structure $\nu$ with
		$\nu^{(p)}=0$ for $2\leqslant p\leqslant n+1$.
		The differential $d^\nu$ then has only a weight-one component and
		components raising weight by at least $n+2$.
		Hence $d_r=0$ for $2\leqslant r\leqslant n+1$.
		Conversely, this vanishing implies that the Euler class survives to
		$\widetilde E_{n+2}$, so Proposition~\ref{euler classes}
		and Theorem~\ref{B prop} imply gauge $n$-formality.
		
		\item  If $R$ is a $\mathbb Q$-algebra and $(A,\varphi)$ is gauge formal,
		the same argument replaces $\varphi_t$ by $\varphi_*$, whose twisted
		differential has weight one; the spectral sequence therefore degenerates
		at $E_2$. Conversely, degeneration implies $K_{\varphi_t}^n=0$ for every
		$n\geqslant 1$. Theorems~\ref{Kaledin wg}~(3) and~\ref{B prop} give $K_{\varphi_t}=0$ and gauge
		formality. \qedhere
	\end{enumerate}

\end{proof}

\subsection{Formality transmission in the colored setting}\label{sec:colored-setting}

The constructions of the preceding sections admit a natural extension
to colored operads. The underlying principle is that the arguments
are formulated in terms of dg convolution algebras and their filtrations, whose colored analogues satisfy the same identities. We briefly describe the necessary changes of framework, without repeating the proofs.

\medskip

\noindent Let $\mathbb{V}$ be a groupoid. The usual set-colored
setting is recovered by taking $\mathbb{V}$ to be discrete.
We refer to \cite{War19} for the corresponding Koszul duality
theory; see also \cite[Section~5]{RL2}. Let $\C$ be a reduced, coaugmented,
conilpotent $\mathbb{V}$-colored dg cooperad over $R$, and write
$\overline{\C}$ for its coaugmentation coideal.

\begin{example}[Endomorphism operad]
	A $\mathbb{V}$-module is a functor $A \colon \mathbb{V} \longrightarrow \mathrm{dgMod}_R$.
	Its colored endomorphism operad is given by
	\[
	\End_A(v_0;v_1,\ldots,v_r)
	:=
	\Hom_R\bigl(
	A(v_1)\otimes\cdots\otimes A(v_r),
	A(v_0)
	\bigr).
	\]
	Here $v_0$ denotes the output color and $v_1,\ldots,v_r$
	the input colors. Composition is defined by substitution along
	matching colors. The symmetric structure is induced by permutations
	of the inputs, with the usual Koszul signs, while the groupoid
	structure is induced by the functoriality of $A$.
\end{example}

\begin{definition}[Colored convolution algebra]
	Let $\mathcal{S}_{\mathbb{V}}$ denote the groupoid of color
	profiles, whose objects are tuples $(v_0;v_1,\ldots,v_r)$
	and whose morphisms combine permutations of the inputs with
	isomorphisms of colors. The colored convolution complex is
	\[
	\mathfrak{g}_A
	:=
	\Hom_{\mathcal{S}_{\mathbb{V}}}
	\bigl(\overline{\C},\End_A\bigr).
	\]
	Here $\Hom_{\mathcal{S}_{\mathbb{V}}}$ denotes the internal
	Hom of colored collections. Thus an element is a family of
	graded maps
	\[
	\varphi_{v_0;v_1,\ldots,v_r}
	\colon
	\overline{\C}(v_0;v_1,\ldots,v_r)
	\longrightarrow
	\End_A(v_0;v_1,\ldots,v_r)
	\]
	compatible with permutations and all isomorphisms in
	$\mathbb{V}$. This complex carries the convolution pre-Lie product
	\[
	\varphi\star\psi
	:=
	\gamma_{(1)}
	\circ
	(\varphi\circ_{(1)}\psi)
	\circ
	\overline{\Delta}_{(1)},
	\]
	where $\overline{\Delta}_{(1)}$ is the reduced infinitesimal
	decomposition of $\C$ and $\gamma_{(1)}$ is the infinitesimal
	composition of $\End_A$, both taken in the colored setting.
	Its differential is
	\[
	d(\varphi)
	=
	d_{\End_A}\circ\varphi
	-
	(-1)^{|\varphi|}
	\varphi\circ d_{\overline{\C}} \ .
	\]
\end{definition}

\noindent We equip $\mathfrak{g}_A$ with the filtration induced by $\C$. The only changes are that decompositions
and compositions must respect the colors, and that all maps
must satisfy the relevant equivariance conditions.  All constructions formulated in
terms of the resulting complete filtered convolution algebra
then carry over to this setting. In particular, the
adjoint action of $\infty$-isomorphisms, properadic spectral sequences, formality transmission and Euler classes have the same form.

\begin{remark}[Transferred structures]
	Statements involving transferred structures additionally require
	homotopy retracts in the category of $\mathbb{V}$-modules. Over a field of characteristic zero,
	when every automorphism group $\mathrm{Aut}_{\mathbb{V}}(v)$
	is finite, such retracts onto homology exist. Homotopy transfer
	in this framework is developed in \cite[Section~2.8]{War19}. In the set-colored non-symmetric setting, the transfer formulas
	involve planar trees and require no averaging over symmetry
	groups. They therefore apply over an arbitrary commutative
	ground ring, see \cite[Theorem~5]{DV15}.
\end{remark}

\section{\textcolor{bordeau}{Formality criteria}}\label{section 4}
In this section, we use  formality transmission to revisit and extend several formality criteria, notably beyond characteristic zero and to dg algebras over operads or properads. The discussion is organized into the following subsections, each devoted to a different criterion:

\smallskip

\sommairesection

\medskip

\noindent Throughout this section, we work under Assumptions \ref{assump} and fix
an $\Omega \C$-algebra structure $(A, \varphi)$ admitting a transferred structure
$(H(A), \varphi_t)$.

\subsection{Formality under domination}
By \cite{MSZ23}, rational formality is preserved under domination:
If a closed orientable manifold (resp. rational Poincaré duality space) $X$ receives a map $Y \to X$ from a formal manifold (resp. space) $Y$ that hits a fundamental class, then $X$ is formal. This section aims to extend this 
result to more general coefficient rings and algebraic structures. In this section, we assume that the reduced weight-graded dg coproperad \(\C\) is in fact a cooperad. Let us recall the notion of module over an $\Omega\C$-algebra, following \cite{BM07}.

\begin{definition}[Modules over an $\Omega\C$-algebra]
	Let $\mathrm{Mod}_{\Omega\C}$ be the two-coloured dg operad with
	colours $\mathsf a$ and $\mathsf m$ given by
	\[
	\mathrm{Mod}_{\Omega\C}(c_1,\ldots,c_k;c)=
	\begin{cases}
		\Omega\C(k),
		& c=\mathsf a\text{ and }c_j=\mathsf a\text{ for all }j,\\[2pt]
		\Omega\C(k),
		& c=\mathsf m\text{ and exactly one }c_j=\mathsf m,\\[2pt]
		0,&\text{otherwise}.
	\end{cases}
	\]
	Its differential, symmetric group actions, units and colour-compatible
	compositions are induced by those of $\Omega\C$; see
	\cite[Section~1.5.1]{BM07}. 	For an $\Omega\C$-algebra $(A,\varphi)$, an \emph{$A$-module} is
	a chain complex $M$ together with a
	$\mathrm{Mod}_{\Omega\C}$-algebra structure on $(A,M)$ whose
	restriction to colour $\mathsf a$ is $\varphi$. Equivalently, it is
	specified by morphisms of complexes
	\[
	\mu^M_{k,i}\colon
	\Omega\C(k)\otimes A^{\otimes(i-1)}\otimes M
	\otimes A^{\otimes(k-i)}\longrightarrow M,
	\qquad k\geqslant1,\quad1\leqslant i\leqslant k,
	\]
	satisfying the operadic axioms together with the operations of $A$.
	Distinguishing one input in each operation of $A$ gives its
	\emph{adjoint module}, denoted $A^{\mathrm{ad}}$. Applying the same construction to $\C$ gives a two-coloured
	cooperad $\C_{\mathrm{mod}}$, with a canonical identification
	\[
	\Omega(\C_{\mathrm{mod}})\cong\mathrm{Mod}_{\Omega\C}.
	\]
	We use this presentation to define $\infty$-morphisms.
	An \emph{$\infty$-morphism of $A$-modules}
	$u:M\rightsquigarrow N$ is an $\infty$-morphism of the corresponding
	two-coloured algebras whose algebra-coloured component is the strict
	identity of $A$. Its module-coloured components are degree-zero maps
	\[
	u_{k,i}\colon
	\C(k)\otimes A^{\otimes(i-1)}\otimes M
	\otimes A^{\otimes(k-i)}\longrightarrow N,
	\]
	compatible with equivariance and satisfying
	\[
	u\star\varphi+u\star\mu^M
	-\mu^N\circledcirc(\id_A;u)=du.
	\]
	Here the first term inserts algebra operations in the algebra inputs,
	and the second inserts module operations in the distinguished module
	input. All expressions use the usual Koszul sign convention.
\end{definition}

\begin{remark}
	For $\Omega\C=A_\infty$, these modules are $A_\infty$-bimodules.
	In particular, the retractions used below must respect both module
	actions, up to the specified higher homotopies.
\end{remark}

\begin{definition}[Restriction of scalars]
	Let $f:(A,\varphi)\rightsquigarrow(B,\psi)$ be an $\infty$-morphism
	and let $M$ be a $B$-module. Its \emph{restriction along $f$}, denoted
	$f^*M$, has underlying complex $M$ and module structure
	\[
	\mu^{f^*M}=\mu^M\circledcirc(f;\id_M).
	\]
	Equivalently, this is the unique module structure for which
	$(f,\id_M):(A,f^*M)\rightsquigarrow(B,M)$ is an $\infty$-morphism,
	with module-coloured component concentrated in weight zero.
	Distinguishing an input in the components of $f$ also gives an
	$\infty$-morphism of $A$-modules
	\[
	f^{\mathrm{ad}}:A^{\mathrm{ad}}
	\rightsquigarrow f^*(B^{\mathrm{ad}}) \ .
	\]
\end{definition}

\begin{theorem}[Formality under domination]\label{formality under domination}
	Let \(f:(A,\varphi)\rightsquigarrow(B,\psi)\) be an \(\infty\)-morphism between $\Omega \C$-algebras admitting transferred structures. Suppose that $f$ admits a homotopy retract as an $A$-module, i.e.  there exists an \(\infty\)-morphism of
	\(A\)-modules
	\[
	r:f^*(B^{\mathrm{ad}})
	\rightsquigarrow A^{\mathrm{ad}}
\quad  \mbox{such that} \quad H\bigl(r^{(0)}\circ f^{(0)}\bigr)
	=
	\id_{H(A)} \]  where $r^{(0)}\colon B\longrightarrow A$ denotes the restriction to  \(\C^{(0)}=\mathbb I\). Then the following holds.
	
	\begin{enumerate}
		\item 	If $R$ is a $\mathbb{Q}$-algebra and if $(B, \psi)$ is gauge formal, so is $(A, \varphi)$.
		\item    Let $n \geqslant 1$ be an integer such that $n !$ is a unit in $R$. If $(B, \psi)$ is gauge $n$-formal, so is $(A, \varphi)$. 
	\end{enumerate}
\end{theorem}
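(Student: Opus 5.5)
The plan is to deduce the statement from Corollary~\ref{appli B}, applied to the $\infty$-morphism $f$. It then suffices to show that, for every $k\geqslant 2$, the map
\[
\theta_k\colon E(\C,H(A),\varphi_t)^{k,-1}_k\xrightarrow{\ f_t\,\rhd\ }E(\C,H(A),H(B),f_t)^{k,-1}_k
\]
is injective, where $f_t=p_\infty^B\circledcirc f\circledcirc i_\infty^A$. By Theorem~\ref{homotopy invariance}, I may replace everything by its transferred version. The module retraction $r$ is transported along $i_\infty^A$, $p_\infty^B$ and $i_\infty^B$ to an $\infty$-morphism of $H(A)$-modules
\[
r_t\colon f_t^*\bigl(H(B)^{\mathrm{ad}}\bigr)\rightsquigarrow H(A)^{\mathrm{ad}}.
\]
Its weight-zero component is $r_t^{(0)}=H(r^{(0)})$, and it satisfies $r_t^{(0)}\circ f_t^{(0)}=\id_{H(A)}$.

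The key step is to build a left inverse of $f_t\rhd$ at the level of filtered complexes, or at least on every page $E_k$. Define
\[
\rho\colon \bigl(\mathfrak g^{H(A)}_{H(B)},d^{f_t}\bigr)\longrightarrow \bigl(\mathfrak g_{H(A)},d^{\varphi_t}\bigr),\qquad
\rho(x)\coloneqq r_t\circledcirc(\id;x).
\]
Here an element $x\colon \C\to\End^{H(A)}_{H(B)}$ is viewed as a module-coloured operation: one output is distinguished as the module input of $r_t$, and the remaining algebra inputs are fed by the algebra-coloured identity. Concretely, $\rho(x)$ is the sum over decompositions of $\C$ with $x$ at the top and a component of $r_t$ at the bottom. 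The other branches of that bottom vertex receive raw inputs from $H(A)$, entering through the adjoint structure.

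Two properties must then be checked.
\begin{enumerate}
\item $\rho$ is a filtered chain map. The argument follows the proof of Lemma~\ref{compo}: expand $d\rho(x)$ by the coderivation property and use the $\infty$-morphism equation of $r_t$ as a map of $H(A)$-modules, namely
\[
r_t\star\varphi_t+r_t\star\mu^{f_t^*}-\mu^{\mathrm{ad}}\circledcirc(\id;r_t)=dr_t .
\]
The restricted module structure $\mu^{f_t^*}=\psi_t\circledcirc(f_t;\id)$ accounts for the term $\psi_t\lhd(f_t;x)$ in $d^{f_t}$. The adjoint module structure on $H(A)$ accounts for $[\varphi_t,-]$.
\item $\rho\circ(f_t\rhd)$ agrees with $x\mapsto r_t^{(0)}f_t^{(0)}\circ x=x$ on the associated graded. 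Indeed, on $E_0$ only the weight-zero components $r_t^{(0)}$ and $f_t^{(0)}$ contribute, so the composite equals the identity modulo higher filtration.
\end{enumerate}
Point (2) shows that $\rho\circ(f_t\rhd)$ induces an isomorphism on $E_0$, hence on every page $E_k$. Therefore $\theta_k$ has a left inverse up to an automorphism, and is injective for all $k$.

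Corollary~\ref{appli B}(1) then gives gauge formality in the $\mathbb Q$-algebra case. Corollary~\ref{appli B}(2), with $n!\in R^\times$, gives gauge $n$-formality.

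The main obstacle is point~(1), the verification that $\rho$ intertwines $d^{f_t}$ and $d^{\varphi_t}$. It requires a careful bookkeeping of the sign conventions and of which inputs are algebra- or module-coloured. If a strict chain map proves awkward, I would instead check the chain-map identity only modulo $\mathcal F^{p+1}$ at each stage. Alternatively, I would show that $\rho\circ(f_t\rhd)$ is a filtered chain map agreeing with the identity on $E_0$, which is all that the spectral-sequence argument needs.
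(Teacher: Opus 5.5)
Your core mechanism is the paper's. It has four parts: the map $x\mapsto r\circledcirc(\id;x)$ into $\mathfrak g_A^{\varphi}$; its compatibility with the twisted differentials, coming from the module $\infty$-morphism equation of $r$; the fact that its composite with $f\rhd$ only sees $r^{(0)}f^{(0)}$ on the first pages; and the final appeal to Corollary~\ref{appli B}.

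The gap is the preliminary step producing $r_t\colon f_t^*(H(B)^{\mathrm{ad}})\rightsquigarrow H(A)^{\mathrm{ad}}$. Theorem~\ref{homotopy invariance} does not give it, since it only concerns spectral sequences of algebras and of $\infty$-morphisms. Composing with the transfer maps does not give it either.

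On the target side restriction works. Restricting $(p^A_\infty)^{\mathrm{ad}}$ along $i^A_\infty$ gives $(i^A_\infty)^*A^{\mathrm{ad}}\rightsquigarrow H(A)^{\mathrm{ad}}$, because $p^A_\infty\circledcirc i^A_\infty=\id$. On the source side, however, restricting $(i^B_\infty)^{\mathrm{ad}}$ along $f_t$ lands in $(i^B_\infty\circledcirc p^B_\infty\circledcirc f\circledcirc i^A_\infty)^*B^{\mathrm{ad}}$. But $r$, restricted along $i^A_\infty$, starts from $(f\circledcirc i^A_\infty)^*B^{\mathrm{ad}}$, and $i^B_\infty\circledcirc p^B_\infty\neq\id_B$. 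Bridging this means inverting the module $\infty$-quasi-isomorphism $(p^B_\infty)^{\mathrm{ad}}$ up to homotopy. That needs homotopy transfer for modules or a homotopy theory of $\infty$-morphisms, and neither the paper nor your sketch supplies it. Your fallback of checking the chain-map identity only modulo $\mathcal F^{p+1}$ would not rescue this: it induces maps on $E_0$ and $E_1$ at most, not on later pages.

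The step is unnecessary, and the paper avoids it. Keep $r$ on the original complexes, $r_\sharp(x)=r\circledcirc(\id_A;x)\colon(\mathfrak g^A_B,d^f)\to(\mathfrak g_A,d^{\varphi})$. On the associated graded, $r_\sharp\circ(f\rhd)$ is postcomposition with $r^{(0)}f^{(0)}$. This is not the identity on $E_0$. However, $r^{(0)}f^{(0)}$ is chain homotopic to $\id_A$ through the contraction, because $H(r^{(0)}f^{(0)})=\id_{H(A)}$. Hence $E_k(r_\sharp)\circ E_k(f\rhd)=\id$ for every $k\geqslant 1$.

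Now use the commutative diagram of Theorem~\ref{homotopy invariance} with $h=i^A_\infty$ and $g=p^B_\infty$. It exhibits $\theta_k$ as $E_k(f\rhd)$ composed on both sides with isomorphisms, so $\theta_k$ is injective and Corollary~\ref{appli B} concludes. In short, your argument goes through once you transport $\theta_k$ back to $A$ and $B$ instead of transporting $r$ to homology.
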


\begin{proof}
	Insertion into the distinguished module input defines
	\[
	r_\sharp\colon\mathfrak g_{B}^A\longrightarrow\mathfrak g_A,
	\qquad
	r_\sharp(x):=r\circledcirc(\id_A;x) \ .
	\]
	Since \((\id_A,r)\) is an \(\infty\)-morphism of
	\(\mathrm{Mod}_{\Omega\C}\)-algebras, the same computation as in
	Proposition~\ref{twisted-differential}, applied in the two-coloured
	setting, gives \[r_\sharp\circ d^f=d^\varphi\circ r_\sharp \ .\] Hence \(r_\sharp\) is a filtered morphism of complexes. Let $\rho_k:=E_k(r_\sharp).$ Under the identification of the \(E_2\)-pages associated with the transferred structures, only the weight-zero components of \(f\) and \(r\) contribute. Therefore,
	\[
	\rho_2\circ\theta_2
	=
	H\left(r^{(0)}\circ f^{(0)}\right)\circ(-)
	=
	\id.
	\]
	Thus \(\theta_2\) is split injective. By functoriality of the
	spectral sequences, the splitting propagates to every subsequent
	page: $\rho_k\circ\theta_k=\id$ for every $k\geqslant2.$ In particular, $\theta_k$	is injective for every \(k>1\). The result now follows from
	Corollary~\ref{appli B}.
\end{proof}

\noindent For $A_\infty$-algebras over a field of characteristic zero,
this recovers the bimodule-retraction criterion of
\cite[Theorem~B]{MSZ23}.
The following corollaries give geometric and invariant-theoretic
applications over more general coefficient rings.    For the next three corollaries, fix $n\geqslant1$ with
$n!\in R^\times$. If $R$ is a $\mathbb Q$-algebra, the same
statements hold for gauge formality.

\begin{corollary}[Poincar\'e duality]
	\label{cor:pd-domination}
	Let $f\colon Y\to X$ be a map between spaces homotopy equivalent
	to finite CW complexes, with finitely generated projective
	$R$-cohomology. Suppose that $X$ is an $R$-oriented
	Poincar\'e duality space of dimension $d$ and that
	\[
	f_*c=[X]
	\qquad\text{for some }c\in H_d(Y;R).
	\]
	If $C^*(Y;R)$ is gauge $n$-formal, so is $C^*(X;R)$.
\end{corollary}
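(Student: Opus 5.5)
The plan is to apply Theorem~\ref{formality under domination} to the $\infty$-morphism of cochain algebras induced by $f$. Concretely, work with the non-symmetric cooperad $\C=\mathrm{Ass}^{\antishriek}$ (or $\Bar\,\mathrm{Ass}$), so that Theorem~\ref{HTT}~(1) provides transferred structures over an arbitrary commutative ring. The projectivity and finite generation assumptions on $H^*(X;R)$ and $H^*(Y;R)$, together with finiteness of the CW models, give contractions of $C^*(X;R)$ and $C^*(Y;R)$ onto their cohomology. This places us in Assumptions~\ref{assump}. The strict map $f^*\colon C^*(X;R)\to C^*(Y;R)$ is a morphism of dg associative algebras, hence an $\infty$-morphism with vanishing higher components. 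Thus, once a homotopy retract of $f^*$ as a $C^*(X;R)$-bimodule is constructed, Theorem~\ref{formality under domination}~(2) with $A=C^*(X;R)$ and $B=C^*(Y;R)$ yields the claim. In the rational case, part~(1) gives the gauge formality version.

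The retraction will be an umkehr (transfer) map $f_!\colon C^*(Y;R)\to C^*(X;R)$ built from Poincar\'e duality. Cap with a cycle representing $c$ to get $C^*(Y)\to C_{d-*}(Y)$, push forward with $f_*$ to get $C_{d-*}(X)$, then apply an inverse of the duality quasi-isomorphism $-\cap[X]\colon C^*(X)\to C_{d-*}(X)$. The projection formula $f_*(f^*a\cup b\cap c)=a\cap f_*(b\cap c)$ holds strictly at the chain level for cap products. It shows that the composite of the first two maps is a strict map of left $C^*(X)$-modules, and of right modules up to the usual homotopies. In cohomology, $f_!f^*(a)=(a\cap f_*c)\cap[X]^{-1}=a$, so $H(r^{(0)}\circ f^{(0)})=\id_{H(X)}$, which is exactly the required condition.

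The main obstacle is promoting $f_!$ to a genuine $\infty$-morphism of $A_\infty$-bimodules $f^*(B^{\mathrm{ad}})\rightsquigarrow A^{\mathrm{ad}}$ over $R$. The cap product is only a bimodule map up to homotopy, and the duality map must be inverted in the derived category of bimodules. I would handle this by working in the derived category of $C^*(X)$-bimodules, or of $A_\infty$-bimodules. There, $-\cap[X]\colon C^*(X)\to C_*(X)^\vee$-type duality gives a quasi-isomorphism of bimodules. Because the underlying complexes are projective with projective homology and admit contractions, homotopy transfer for the two-coloured non-symmetric cooperad $\C_{\mathrm{mod}}$ holds over any $R$ (planar trees, no averaging). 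This lets us invert bimodule quasi-isomorphisms by $\infty$-morphisms and compose with the homotopy-coherent cap-product bimodule map. Composing these $\infty$-morphisms of $A$-bimodules produces $r$, whose weight-zero part agrees up to homotopy with $f_!$. The cohomological identity above is preserved under this replacement, and Theorem~\ref{formality under domination} then concludes.
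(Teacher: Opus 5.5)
Your outline is the paper's: $A=C^*(X;R)$ and $B=C^*(Y;R)$, the strict $\infty$-morphism $f^*$, the umkehr composite $\operatorname{PD}_X^{-1}\circ f_*\circ(-\cap c)$, the projection-formula check that $H(r^{(0)}\circ f^{(0)})=\id$, and then Theorem~\ref{formality under domination}.

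The gap is at the step you yourself call the main obstacle. You never construct the $\infty$-morphism of $A_\infty$-bimodules; you presuppose ``the homotopy-coherent cap-product bimodule map''. Strict left linearity of $b\mapsto b\cap c$, plus a single homotopy for right linearity, does not assemble into an $\infty$-morphism of bimodules, which requires a whole coherent tower of higher components. Passing to the derived category of bimodules does not help by itself either. A morphism of $C^*(Y;R)$-bimodules $C^*(Y;R)\to C_{d-*}(Y;R)$ there is a Hochschild cohomology class with coefficients in $C_{d-*}(Y;R)$. The cycle $c$ lifts to such a class only if it is central in a coherent sense, and this is not automatic for bimodules. The same issue affects $-\cap[X]$, whose target is $C_{d-*}(X;R)$ rather than $C_*(X)^\vee$. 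Homotopy transfer for $\C_{\mathrm{mod}}$ only lets you invert an $\infty$-quasi-isomorphism once you have one; it cannot supply the missing coherence.

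The missing input is commutativity at the cochain level. Singular cochains are $E_\infty$-algebras, and chains are $E_\infty$-modules over them. Capping with a cycle, and $f_*$ after restriction along $f^*$, lift to morphisms of $E_\infty$-modules. For $E_\infty$-modules, maps out of the rank-one free module are classified by homology classes of the target, so capping with any cycle is automatically coherent. The paper takes exactly this from \cite[Theorem~5.13 and Corollaries~5.22 and~5.26]{GTZ12}. It then inverts the duality equivalence, restricts to $A_\infty$-bimodules, and passes to transferred models. If you replace your homotopy-transfer paragraph with this ingredient, the rest of your argument goes through as written.
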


\begin{proof}
	The finiteness and projectivity assumptions give contractions
	onto cohomology, hence transferred structures.
	Choosing cycles representing $c$ and $[X]$, we consider
	\[
C^*(Y;R) \xrightarrow{\ \cap c\ }C_{d-*}(Y;R)
	\xrightarrow{\ f_*\ }C_{d-*}(X;R)
	\xleftarrow[\simeq]{\ \cap[X]\ }C^*(X;R).
	\]
	After restriction along $f^*$, these maps lift to morphisms
	of $E_\infty$-modules
	\cite[Theorem~5.13 and Corollaries~5.22 and~5.26]{GTZ12}.
	Inverting the last map, restricting to $A_\infty$-bimodules
	and passing to transferred models gives the module morphism
	required in Theorem \ref{formality under domination}. Indeed, the projection formula gives
	\[
	\operatorname{PD}_X^{-1}f_*(f^*a\cap c)
	=\operatorname{PD}_X^{-1}(a\cap[X])
	=a
	\]
	for every $a\in H^*(X;R)$.
\end{proof}

\begin{remark}
For a map $f:Y\to X$ of degree $d$ between connected closed
$R$-oriented manifolds of the same dimension, with projective
cohomology, take $c=d^{-1}[Y]$ whenever $d\in R^\times$. In particular, if $C^*(Y;\mathbb F_\ell)$ is gauge
$n$-formal and $C^*(X;\mathbb F_\ell)$ is not, with
$1\leqslant n <\ell$, the degree of every map $Y\to X$ is divisible by $\ell$.
\end{remark}

\begin{corollary}[Finite coverings]
	\label{cor:finite-coverings}
	Let $\pi\colon Y\to X$ be a finite covering whose degree
	is a unit in $R$. Suppose that $C^*(Y;R)$ admits a contraction
	onto cohomology. If $C^*(Y;R)$ is gauge $n$-formal,
	so is $C^*(X;R)$.
\end{corollary}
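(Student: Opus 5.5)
The plan is to reduce to Theorem~\ref{formality under domination} applied to the $\infty$-morphism (in fact strict morphism) of cochain algebras
\[
\pi^*\colon C^*(X;R)\longrightarrow C^*(Y;R),
\]
viewed as $A_\infty$-algebras, or as $\Omega\C$-algebras for the relevant cooperad. Here the roles are $A=C^*(X;R)$ and $B=C^*(Y;R)$. The retraction will be the transfer map. For a finite covering of degree $m$ there is a chain-level transfer
\[
\operatorname{tr}\colon C^*(Y;R)\longrightarrow C^*(X;R),
\]
defined on singular cochains by summing a cochain over the $m$ lifts of each singular simplex. It satisfies $\operatorname{tr}\circ\pi^*=m\cdot\id$ on the nose. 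It also satisfies the projection formula $\operatorname{tr}(\pi^*a\cdot b)=a\cdot\operatorname{tr}(b)$ up to coherent homotopy, and strictly for the cup product on singular cochains when evaluated appropriately. Setting $r^{(0)}\coloneqq m^{-1}\operatorname{tr}$, which is allowed since $m\in R^\times$, gives $H(r^{(0)}\circ f^{(0)})=\id_{H(X)}$.

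The key steps, in order, are as follows.
\begin{enumerate}
\item Check that $C^*(X;R)$ admits a contraction onto cohomology, so that both algebras admit transferred structures as required in Theorem~\ref{formality under domination}. On $H$, $m^{-1}\operatorname{tr}\circ\pi^*=\id$ exhibits $H^*(X;R)$ as a direct summand of $H^*(Y;R)$, and similarly at chain level $C^*(X;R)$ is a retract of $C^*(Y;R)$. A retract of a complex admitting a contraction (degreewise projective homology, projective cycles and boundaries) again has projective homology and splittings. So the contraction on $C^*(Y;R)$ induces one on $C^*(X;R)$ by composing with $\pi^*$ and $m^{-1}\operatorname{tr}$ and correcting the homotopy.
\item Upgrade $m^{-1}\operatorname{tr}$ to an $\infty$-morphism of $C^*(X;R)$-modules $r\colon\pi^*(C^*(Y;R)^{\mathrm{ad}})\rightsquigarrow C^*(X;R)^{\mathrm{ad}}$. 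As in the proof of Corollary~\ref{cor:pd-domination}, I would invoke the $E_\infty$-module machinery of \cite{GTZ12}. Alternatively, I would realize the transfer geometrically as the map induced by $\pi_!$ on local systems: $C^*(Y;R)\simeq C^*(X;\pi_*\underline R)$, and the trace $\pi_*\underline R\to\underline R$ is a map of local systems of $\underline R$-modules. This trace induces a map of $C^*(X;R)$-(bi)modules, strict if one uses a multiplicative cochain model with coefficients in local systems. Then restrict to $A_\infty$-bimodules and transfer to homology.
\item Apply Theorem~\ref{formality under domination}~(2), or~(1) over a $\mathbb Q$-algebra. This gives that gauge $n$-formality of $C^*(Y;R)$ implies that of $C^*(X;R)$.
\end{enumerate}

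The main obstacle is step 2: producing the higher components of $r$ compatibly with both module actions, since the singular cochain transfer only satisfies the projection formula strictly for cup products. Higher $A_\infty$ or $E_\infty$ coherences need an operadic argument. I would try first the local-system formulation, where the trace is a morphism of coefficient systems and the module structure is functorial in the coefficients. This makes $r$ a strict module map in a suitable model, so that only step 1's bookkeeping about contractions remains.
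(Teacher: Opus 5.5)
Your proposal follows the paper's proof. The paper uses the same singular-cochain transfer normalized to $m^{-1}\operatorname{tr}$ as a retraction of $\pi^*$, notes that the contraction descends to the direct summand $C^*(X;R)$, and applies Theorem~\ref{formality under domination}. The ``main obstacle'' in your step~2 does not arise, so neither the $E_\infty$-module machinery nor the local-system model is needed. Formality here concerns the strict associative cochain algebra, and the identities $\operatorname{tr}(\pi^*a\smile b\smile\pi^*c)=a\smile\operatorname{tr}(b)\smile c$ and $\operatorname{tr}\,\delta=\delta\,\operatorname{tr}$ hold on the nose: under Alexander--Whitney, lifts of a simplex correspond bijectively to lifts of each of its faces. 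Hence $m^{-1}\operatorname{tr}$ is already a strict bimodule map, that is, an $\infty$-morphism of modules concentrated in weight zero, with no higher components to construct.
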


\begin{proof}
	Let $d$ be the degree of $\pi$. The cochain transfer
	is defined by
	\[
	(\operatorname{tr}_\pi b)(\sigma)
	=\sum_{\widetilde{\sigma}}b(\widetilde{\sigma}),
	\]
	where the sum runs over the lifts of $\sigma$.
	Since a lift of a face extends uniquely to a lift of the
	simplex, the Alexander--Whitney formula gives
	\[
	\operatorname{tr}_\pi(\pi^*a\smile b\smile\pi^*c)
	=a\smile\operatorname{tr}_\pi(b)\smile c,
	\qquad
	\operatorname{tr}_\pi\pi^*=d\,\mathrm{id}.
	\]
	Thus $d^{-1}\operatorname{tr}_\pi$ is a strict bimodule
	retraction of $\pi^*$. The contraction onto cohomology
	descends to this direct summand, so Theorem \ref{formality under domination} applies.
\end{proof}

\begin{corollary}[Finite-group invariants]
	\label{cor:finite-invariants}
	Let a finite group $G$ act by strict automorphisms on an
	$\Omega C$-algebra $(A,\varphi)$, and suppose that $|G|$
	is a unit in $R$. Assume that $A$ and $A^G$ admit
	transferred structures. If $A$ is gauge $n$-formal,
	so is $A^G$.
\end{corollary}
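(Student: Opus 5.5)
The plan is to apply Theorem~\ref{formality under domination} to the inclusion of invariants $\iota\colon(A^G,\varphi|_{A^G})\to(A,\varphi)$, viewed as a strict $\infty$-morphism concentrated in weight zero. The retraction will be the Reynolds operator
\[
r\coloneqq\frac{1}{|G|}\sum_{g\in G}g\colon A\longrightarrow A^G ,
\]
which makes sense because $|G|\in R^\times$. Since $G$ acts by strict automorphisms of $(A,\varphi)$, the subcomplex $A^G$ is stable under all operations $\varphi(c)$, $c\in\overline{\C}$. Hence $A^G$ is an $\Omega\C$-subalgebra and $\iota$ is a strict morphism.

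The key step is to check that $r$ is a strict morphism of $A^G$-modules $\iota^*(A^{\mathrm{ad}})\to (A^G)^{\mathrm{ad}}$, concentrated in weight zero. For $c\in\overline{\C}(k)$, $a_j\in A^G$ and $m\in A$ in the distinguished slot, equivariance gives
\[
g\cdot\varphi(c)(a_1,\ldots,m,\ldots,a_k)=\varphi(c)(a_1,\ldots,g m,\ldots,a_k),
\]
because $g a_j=a_j$ (the action commutes with the $\C$-coefficients, since $G$ acts only on $A$). Averaging over $G$ gives $r\circ\mu^{A}_{k,i}=\mu^{A^G}_{k,i}\circ(\id\otimes r\otimes\id)$. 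The map $r$ is also a chain map. Therefore the equation $u\star\varphi+u\star\mu^M-\mu^N\circledcirc(\id_A;u)=du$ holds with $u=r$ in weight zero, and all higher components vanish. Moreover $r\circ\iota=\id_{A^G}$ on the nose, so $H(r^{(0)}\circ\iota^{(0)})=\id_{H(A^G)}$.

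The hypotheses of Theorem~\ref{formality under domination} are then satisfied: both algebras admit transferred structures by assumption, and $n!\in R^\times$. Part~(2) of that theorem gives gauge $n$-formality of $A^G$ from that of $A$. Part~(1) gives the statement for gauge formality when $R$ is a $\mathbb{Q}$-algebra.

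The only delicate point is the identification of the strict retraction with an $\infty$-morphism of modules in the two-coloured sense. In particular, the term $u\star\varphi$ inserts algebra operations of $A^G$ into algebra inputs, and one must check that it matches the restricted structure. This reduces to the invariance computation above, so no genuine obstruction is expected. Theorem~\ref{formality under domination} only asks for the identity on homology, so the strict splitting $r\iota=\id$ is more than enough.
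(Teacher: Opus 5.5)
Your proposal is correct and follows the paper's proof. In both, the Reynolds operator $r=\frac{1}{|G|}\sum_{g\in G}g$ is a strict $A^G$-module retraction of the inclusion $A^G\hookrightarrow A$, by averaging the equivariance identity with all but one input invariant, and Theorem~\ref{formality under domination} then applies; the term $u\star\varphi$ you flag vanishes outright, since a weight-zero module component has no algebra inputs.
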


\begin{proof}
	The averaging map is a retraction of the inclusion:
	\[
	r=\frac{1}{|G|}\sum_{g\in G}g\colon A\longrightarrow A^G
	\] 	 Since each $g$ fixes
	$A^G$, averaging commutes with every operation whose inputs,
	except one, lie in $A^G$. Thus $r$ is a strict
	$A^G$-module morphism, and Theorem~\ref{formality under domination} applies.
\end{proof}

\noindent Over a field of characteristic $\ell>0$, this applies
whenever $n<\ell$ and $\ell\nmid|G|$. No $G$-equivariant
formality isomorphism for $A$ is assumed.

\begin{corollary}[Quotients of arrangement complements]
	\label{cor:arrangement-quotients}
	\label{ex:arrangement-complements}
	Let $K/\mathbb Q_p$ be a finite extension with residue field
	$\mathbb F_q$. Fix an embedding $K\hookrightarrow\mathbb C$.
	Let $X$ be the complement of a finite affine hyperplane arrangement defined over $K$.
	Let $\ell\neq p$ be a prime and $h$ be the order of $q$ in  $\mathbb F_\ell^\times$. 
	\begin{enumerate}
		\item If $\pi:X\to Z$ is a finite covering of a Hausdorff space of
		degree prime to $\ell$, then $C^*(Z;R)$ is gauge $n$-formal for
		$1\leqslant n<h$ and
		\[
		R=\mathbb Z_\ell
		\quad\text{or}\quad
		R=\mathbb Z/\ell^s\mathbb Z,\qquad s\geqslant1.
		\]
		\item  	If the hyperplanes are individually defined over $\mathbb Q$,
		then $C^*(Z;R)$ is gauge $(\ell-2)$-formal for every odd
		prime $\ell$ not dividing the covering degree. 
	\end{enumerate}	These conclusions hold for $Z=X/G$ whenever
	a finite group $G$ of order prime to $\ell$ acts freely
	on $X$ by homeomorphisms.
\end{corollary}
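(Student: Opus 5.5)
The strategy is to combine a known partial-formality result for the arrangement complement $X$ itself with Corollary~\ref{cor:finite-coverings}. Everything reduces to three checks: $C^*(X;R)$ is gauge $n$-formal in the stated range, $C^*(X;R)$ admits a contraction onto cohomology, and the covering degree is a unit in $R$. The last check is immediate, since the degree is prime to $\ell$ and hence invertible in $\mathbb Z_\ell$ and in $\mathbb Z/\ell^s\mathbb Z$. Moreover $n<h\leqslant \ell-1$ gives $n!\in R^\times$, as Corollary~\ref{cor:finite-coverings} requires.

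\textbf{Step 1: contraction onto cohomology.} The complement $X$ has the homotopy type of a finite CW complex, and its integral cohomology is torsion free: by Orlik--Solomon, $H^*(X;\mathbb Z)$ is finitely generated free. Hence $H^*(X;R)$ is free of finite rank. Since $R$ is a local ring of finite or $\ell$-adically complete type, one can choose a finite free cochain model for $C^*(X;R)$ with free cohomology. Splitting its cycles and boundaries then produces the contraction of Assumptions~\ref{assump}, and so a transferred $A_\infty$-structure by Theorem~\ref{HTT}~(1). I would use a quasi-isomorphic finite free model and justify replacing $C^*$ by it via homotopy invariance (Theorem~\ref{homotopy invariance}).

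\textbf{Step 2: partial formality of $X$.} This is the main input and the expected obstacle. I would invoke the purity argument of \cite{DCH21}. Frobenius at a prime of good reduction acts on $H^k_{\mathrm{et}}(X_{\overline{K}};\mathbb Z_\ell)$ by $q^k$, because arrangement complements have pure Tate cohomology of weight $2k$. This Frobenius lifts to a chain-level automorphism of étale cochains. By Artin comparison along $K\hookrightarrow\mathbb C$, these cochains are identified with singular cochains of $X(\mathbb C)$ as $E_\infty$-, hence $A_\infty$-, algebras. The resulting grading-type automorphism $\sigma_q$ has $q^k-1\in R^\times$ for $1\leqslant k<h$. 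Applying Corollary~\ref{appli A} with $S$ this lift, the map $\mathrm{Ad}_S-\mathrm{id}$ acts on weight $k$ deformations by a factor of the form $q^{k}-1$, possibly shifted by degree. This gives injectivity of $\theta_k$ for $k\leqslant n+1\leqslant h$, and thus gauge $n$-formality of $C^*(X;R)$ for $n<h$. The delicate points are the weight bookkeeping, since the shift between arity weight and $q$-exponent must land in the range $1\leqslant k<h$, and the passage from $\mathbb Z_\ell$ to $\mathbb Z/\ell^s$, which follows by reduction since units stay units. If this proves messy, I would simply cite the corresponding theorem of \cite{DCH21}.

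\textbf{Step 3: the two cases and the group quotient.} In case (2), every hyperplane is defined over $\mathbb Q$, so for $\ell$ odd one may choose the prime $p\neq\ell$ with $p$ a primitive root mod $\ell$ (Dirichlet), and take $K=\mathbb Q_p$. Then $h=\ell-1$ and Step 2 gives $n=\ell-2$. Finally, Corollary~\ref{cor:finite-coverings} transmits gauge $n$-formality from $X$ to $Z$. If $G$ of order prime to $\ell$ acts freely on $X$ by homeomorphisms, then $X\to X/G$ is a finite covering of a Hausdorff space of degree $|G|$, so the preceding argument applies to $Z=X/G$. Alternatively, one can use Corollary~\ref{cor:finite-invariants} with $C^*(X)^G\simeq C^*(X/G)$.
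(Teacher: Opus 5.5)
Your proposal is correct and follows the paper's proof. The paper likewise takes gauge $n$-formality of $C^*(X;\mathbb Z_\ell)$ from \cite{DCH21} and reduces it modulo $\ell^s$. It notes that the covering degree and $n!$ are units because $n<h\leqslant \ell-1$, and applies Corollary~\ref{cor:finite-coverings}. In case (2) it chooses an auxiliary prime $p$ generating $\mathbb F_\ell^\times$, so that $h=\ell-1$.

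Your Step~1 and your sketch via Corollary~\ref{appli A} are compatible additions that the paper leaves implicit. On $E_k^{k,-1}$ the factor is $q^{1-k}-1$, which is a unit for $2\leqslant k\leqslant n+1$. The contraction is more directly obtained by dualizing a chain homotopy equivalence $C_*(X;\mathbb Z)\simeq H_*(X;\mathbb Z)$ than by appealing to Theorem~\ref{homotopy invariance}.
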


\begin{proof}
	By \cite{DCH21}, the algebra $C^*(X;\mathbb Z_\ell)$
	is gauge $n$-formal in our convention.
	Reducing the contraction and the resulting $\infty$-isomorphism
	modulo $\ell^s$ gives the same conclusion over
	$\mathbb Z/\ell^s\mathbb Z$. The covering degree is a unit in $R$, as is $n!$ because
	$n<h\leqslant\ell-1$. Corollary~\ref{cor:finite-coverings}
	therefore applies.
	For hyperplanes defined over $\mathbb Q$, choose an auxiliary
	prime $p$ generating $\mathbb F_\ell^\times$, as in
	\cite[Remark~4.7]{DCH21}, then $h=\ell-1$.
\end{proof}

\begin{example}[A non-orientable quotient of a configuration space]
	For $m\geqslant1$, consider
	\[
	X_m=\operatorname{Conf}_m(\mathbb C^\times)
	=\{(z_1,\ldots,z_m):z_i\neq0,\ z_i\neq z_j\text{ for }i\neq j\}
	\]
	and the involution
	\[
	\tau(z_1,\ldots,z_m)
	=\left(-\frac{1}{\overline z_1},\ldots,
	-\frac{1}{\overline z_m}\right).
	\]
	For every odd prime $\ell$, the cochains
	$C^*(X_m/\langle\tau\rangle;R)$ are gauge $(\ell-2)$-formal for
	\[
	R=\mathbb Z_\ell
	\quad\text{or}\quad
	R=\mathbb Z/\ell^s\mathbb Z,\qquad s\geqslant1.
	\]
	Indeed, $X_m$ is the complement of the hyperplanes $z_i=0$ and
	$z_i=z_j$, all defined over $\mathbb Q$. The involution preserves
	$X_m$ and is free, since a fixed point would satisfy $|z_i|^2=-1$.
	The quotient map is a two-sheeted covering, so
	Corollary~\ref{cor:arrangement-quotients} applies.
	Finally, $\tau$ is antiholomorphic and acts on the orientation
	of the connected complex $m$-manifold $X_m$ by $(-1)^m$.
	For odd $m$, the quotient is therefore a non-orientable smooth manifold.
\end{example}

\subsection{Formality under puncturing}

Stasheff's theorem \cite{Sta83} states that a closed simply connected
manifold is rationally formal if and only if its puncture is.
We extend this result to other coefficient rings, Poincar\'e duality
spaces, and \'etale cochains. Throughout this subsection, we use cohomological grading, strictly
unital $A_\infty$-algebras and morphisms, and normalized Hochschild
complexes, with cochain weight equal to arity minus one.

\begin{theorem}[Formality under puncturing]
	\label{thm:pd-transmission}
	Let $H$ be a finite projective connected graded-commutative
	Poincar\'e duality $R$-algebra of dimension $d\geqslant2$,
	with $H^1=0$, equipped with a minimal $A_\infty$-structure
	extending its product. Assume that the Poincar\'e duality map
	$H\to H^\vee[-d]$ extends to an $\infty$-morphism of
	$H$-bimodules.
	If $d+i\in R^\times$ for $1\leqslant i\leqslant d-4$, then
	\[
	H\text{ is gauge formal}
	\quad\Longleftrightarrow\quad
	H/H^d\text{ is gauge formal},
	\]
	where the quotient carries the induced $A_\infty$-structure.
\end{theorem}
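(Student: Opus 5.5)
The plan is to apply Corollary~\ref{appli B} to the strict projection $\pi\colon H\to Q\coloneqq H/H^d$, treating the two implications separately.

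\emph{$\pi$ is a strict morphism.} Write the structure as $m=m_2+m_3+\cdots$ with $|m_k|=2-k$, where $m_2$ is the product. We first check that $H^d$ is an $A_\infty$-ideal. Take $k\geqslant3$ and an input in $H^d$. By strict unitality and normalization, the other $k-1$ inputs may be taken in degrees $\geqslant2$, because $H^1=0$ and $H$ is connected. The output then has degree at least
\[
d+2(k-1)+2-k=d+k>d ,
\]
so $m_k$ vanishes on such inputs, and $m_2$ preserves $H^d$ by degree. Hence $Q$ inherits the induced minimal $A_\infty$-structure and $\pi$ is a strict morphism.

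\emph{$H$ gauge formal implies $Q$ gauge formal.} This direction should hold over any $R$. Take an $\infty$-isotopy $f=\id+f_2+f_3+\cdots$ from $m$ to $m_2$, as given by Corollary~\ref{gauge formality}, with $|f_k|=1-k$. The same degree count shows that $f_k$ sends any tuple containing an element of $H^d$ into degrees $\geqslant d+k-1>d$. So $f$ preserves $H^d$ and descends to an $\infty$-isotopy on $Q$ from the induced structure to the induced product.

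\emph{$Q$ gauge formal implies $H$ gauge formal.} Here I would use Corollary~\ref{appli B} for $\pi$. It suffices to show that
\[
\theta_k\colon E(H)_k^{k,-1}\to E(\pi)_k^{k,-1}
\]
is injective for every $k\geqslant2$. The target is computed by the complex of normalized Hochschild cochains $C(H,Q)$ with the twisted differential. The bimodule short exact sequence $0\to H^d\to H\to Q\to0$ gives a short exact sequence of filtered complexes
\[
0\to C(H,H^d)\to C(H,H)\to C(H,Q)\to0 .
\]
So $\ker\theta_k$ is controlled by the image of the spectral sequence of $C(H,H^d)$ in the relevant bidegree. In weight $k$ and degree $-1$, a cochain $H^{\otimes(k+1)}\to H^d$ of degree $1-k$ needs inputs of total degree $d+k-1$, each of degree $\geqslant2$. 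This forces $k\leqslant d-3$. Therefore only finitely many pages and weights contribute, and for $k>d-3$ injectivity is automatic because the source of the obstruction vanishes.

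\emph{Main step and expected obstacle.} The remaining task is to show that, for $2\leqslant k\leqslant d-3$, every class coming from $C(H,H^d)$ dies in $E_k(H)$. The intended mechanism runs as follows.

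First, $H^d\cong R[-d]$ with $H^{>0}$ acting trivially, and the Poincaré duality $\infty$-morphism $H\to H^\vee[-d]$ is an isomorphism of bimodules. This identifies $C(H,H)$ with $C(H,H^\vee[-d])$, which is the dual of the Hochschild chains $C_*(H,H)$.

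Second, under this identification a cochain $\phi\colon H^{\otimes(k+1)}\to H^d$ becomes the functional $a_0\otimes a_1\otimes\cdots\otimes a_{k+1}\mapsto\varepsilon(a_0)\,\phi(a_1,\ldots,a_{k+1})$.

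Third, I would produce an explicit primitive for this functional, of the form
\[
\psi(a_0,\ldots,a_k)=\phi(a_0a_1,a_2,\ldots)\ \text{or its cyclic rotations},
\]
averaged in the spirit of a Connes-type homotopy. The combinatorics of rotating the extra input should produce multiplicative constants $d+i$ with $1\leqslant i\leqslant k-1\leqslant d-4$. Inverting them yields a chain-level primitive, and this is where the hypothesis $d+i\in R^\times$ enters.

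I expect this third step to be the main obstacle. It has two parts: getting the constants exactly right, and making the argument work for the twisted differential rather than only for $m_2$. For the latter, the primitive must be compatible with the higher pages $d_r$, and I would either build it with respect to the full twisted differential by a weight induction or use the filtered $\infty$-bimodule morphism to transport it. Finiteness of the weights, namely $k\leqslant d-3$, should make such an induction terminate.

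Once $\theta_k$ is injective for all $k\geqslant2$, Corollary~\ref{appli B}(1) gives the rational statement, and Corollary~\ref{appli B}(2), applied for all $n$, gives gauge formality in the allowed characteristics.
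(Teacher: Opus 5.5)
\textbf{There is a genuine gap.} Your forward implication and the degree bookkeeping (strictness of $\pi$, vanishing of the source for $k>d-3$) agree with the paper. The main step, however, fails.

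\textbf{The projection alone does not give injectivity.} The maps $\theta_k\colon E(H)_k^{k,-1}\to E(\pi)_k^{k,-1}$ induced by $\pi$ are not injective in general, so the primitive you hope to build does not exist. A counterexample already occurs on $E_2$, which only sees the strict product. Since $(H,m_2)$ itself satisfies every hypothesis of the theorem, with strict bimodule duality, no use of Poincar\'e duality can rescue injectivity there.

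Take $H=H^*(S^2\times S^3;\mathbb{Q})$, so $d=5$ and $k=2$ lies in your range. Let $c$ be the normalized cochain with $c(x,x,x)=xy$ and zero on all other basis tensors.
\begin{itemize}
\item It is a cocycle, because arity-four inputs have degree at least $8$.
\item It is not a coboundary. An arity-two normalized cochain $b$ of internal degree $-1$ can only have $b(x,x)=\mu y$ and $b(y,y)=\nu xy$ nonzero, and $\delta b(x,x,x)=\mu(xy-yx)=0$. Equivalently, by K\"unneth, $[c]=\xi\otimes y$, where $\xi(x,x,x)=x$ is a nonzero class in $HH^3(\mathbb{Q}[x]/(x^2))$.
\item Yet $\pi\circ c=0$, so $0\neq[c]\in\ker\theta_2$.
\end{itemize}
Cyclic or Connes-type manipulations do not kill such classes. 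What they yield is the BV relation $(1-k)[c]=(\Delta e_H)\smile[c]=d[c]$, and only once you also know that $\Delta[c]$ is top-degree. In the example, $\Delta c$ is $(x,x)\mapsto\pm3y$, which is not cohomologous to a top-degree cochain.

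\textbf{The missing idea.} Use the bimodule duality to enlarge the target of the comparison map, and apply Theorem~\ref{thm formality transmission} directly rather than Corollary~\ref{appli B}. First lift a formality isotopy of $H/H^d$ so that $q=\pi$ and $H/H^d$ are strict. The duality sends the unit $0$-cochain to a closed functional $\tau$ extending the trace $\varepsilon$. The paper then uses $F(c)=(q\star c,\,L_c^\vee\tau)$, whose second component takes values in positive-length functionals on $C_*^+(H)$.

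The duality hypothesis enters twice:
\begin{itemize}
\item $d^\Phi\tau(\hbar)=0$ gives $F_*(K^n_{\varphi_t})=0$, since the resulting primitive has positive length.
\item On $E_2$ the trace component is $\Delta c$ restricted to $C_*^+(H)$. So a class killed by both components has top-degree representatives for both $[c]$ and $\Delta[c]$. Remark~\ref{rem:properadic-euler-identity} together with $\Delta e_H=d\cdot1$ then gives $(d+k-1)[c]=0$, and a degree argument carries injectivity to later pages.
\end{itemize}

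\textbf{The final step also needs adjusting.} Part~(2) cannot be applied for all $n$, since $n!$ need not be a unit. Apply it once with $n=d-4$. Note that $n!$ divides $\prod_{i=1}^{n}(d+i)=n!\binom{d+n}{n}$, so $n!$ is a unit. Then use the vanishing of all weights $\geqslant d-2$ to upgrade gauge $(d-4)$-formality to gauge formality.
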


\begin{proof}
	Since $H^1=0$, higher operations and higher morphism components
	vanish on inputs in $H^d$. Thus the quotient structure is well
	defined, and the forward implication follows from
	Corollary~\ref{gauge formality}: every formality isotopy descends
	to $H/H^d$.
	Conversely, we assume that $\bar H:=H/H^d$ is gauge formal.
	We lift a formality isotopy through $H=H^{<d}\oplus H^d$ and
	transport the structure and the bimodule duality. We may therefore
	assume that $q\colon H\to\bar H$ and $\bar H$ are strict, so all
	higher operations on $H$ take values in $H^d$.
	
	\noindent The bimodule duality sends the unit Hochschild $0$-cochain to a closed functional $\tau$ whose length-zero component is the Poincar\'e trace $\varepsilon$, see \cite[Definition~9]{Tra08}.
	We use normalized deformation complexes throughout and write $C_*^+(H)$ for the normalized Hochschild chains with
	all entries in $H^{>0}$. Lemma~\ref{compo} and the Lie derivative identities give a
	filtered chain map
	\[
	\begin{aligned}
		F\colon\mathfrak g_H^{\varphi_t}
		&\longrightarrow
		(\mathfrak g_{\bar H}^{H},d^q)
		\oplus
		\bigl(\mathcal F^1(C_*^+(H)^\vee[-d]),d^{\varphi_t}\bigr),
		\\
		c&\longmapsto\bigl(q\star c,L_c^\vee\tau\bigr).
	\end{aligned}
	\]
	The dual is completed and filtered by bar length, and
	$\mathcal F^1$ consists of functionals vanishing in length zero.
	The Lie derivative $L_c$ acts by cyclic insertion on full chains
	before restriction to $C_*^+(H)$. We verify the hypotheses of Theorem~\ref{thm formality transmission}. Let us set $\Phi=\mathcal D_1(\varphi_t)$ and
	$\tau(\hbar)=\mathcal D_0(\tau)$.
	Since $q$ is strict and $d^\Phi\tau(\hbar)=0$, the same
	differentiation argument as in
	Lemma~\ref{piquant prop}\,\textup{(1)} gives
	\[
	\mathcal D_0(F)(\partial_\hbar\Phi)
	=
	\bigl(0,-d^\Phi(\partial_\hbar\tau(\hbar))\bigr).
	\]
	The primitive has positive bar length, since the length-zero
	component of $\tau(\hbar)$ is the constant $\varepsilon$.
	Thus $F_*(K_{\varphi_t}^n)=0$ for every $n\geqslant1$. We now consider \[[c]\in\ker E_2^{k,-1}(F) , \] with
	$k\geqslant2$.
	On this page, the trace component is the restriction of the
	functional associated by Poincar\'e duality to $\Delta c$, where
	$\Delta$ is the Hochschild BV operator for the strict product
	on $H$ \cite[Sections~2--3]{Tra08}.
	Lifting a primitive for $q\rhd c$ through $q$ gives a top-degree
	representative of $[c]$. We likewise extend a primitive for the
	trace component by zero on the distinguished unit input and
	dualize. Subtracting the resulting coboundary from $\Delta c$
	gives a representative of $\Delta[c]$ with values in
	$\left(H^{>0}\right)^\perp=H^d$. In our cohomological convention, the Euler endomorphism is
	$e_H(a)=|a|a$. Since $[c]$ and $\Delta[c]$ have top-degree
	representatives, \[[e_H]\smile[c]=[e_H]\smile\Delta[c]=0 \ .\]
	The BV identity, with our Gerstenhaber bracket convention, and
	Remark~\ref{rem:properadic-euler-identity} therefore give
	\[
	(1-k)[c]=(\Delta e_H)\smile[c]=d[c],
	\qquad (d+k-1)[c]=0.
	\]
	Here $\Delta e_H=d\,1$, since
	$\varepsilon(e_H(a))=d\,\varepsilon(a)$.
	Hence $E_2^{k,-1}(F)$ is injective whenever $d+k-1$ is a unit. 	This injectivity persists on later pages because every target
	$E_2$-class in deformation degree zero has a closed lift.
	Indeed, a mixed cochain of weight $p\geqslant1$ and degree zero
	vanishes on top-degree inputs: its output would have degree
	at least $d+p$, whereas $\bar H^{\geqslant d}=0$.
	Similarly, a positive-length trace functional of degree zero
	tests degree $d$, while a positive-length chain containing a
	top-degree entry has degree at least $d+1$.
	Since the trace functionals vanish in length zero, higher
	operations contribute nothing in either summand.
	Consequently, no higher differential enters the target
	obstruction line, and injectivity persists by induction. If $d+1,\ldots,d+n$ are units, the maps $E_k^{k,-1}(F)$ are
	therefore injective for $2\leqslant k\leqslant n+1$.
	Moreover, $\prod_{i=1}^n(d+i)=n!\binom{d+n}{n}$ shows that
	$n!$ is a unit. Theorem~\ref{thm formality transmission}\,\textup{(2)}
	therefore gives gauge $n$-formality.
	
	\noindent Finally, an operation of arity $r\geqslant3$ has output degree
	at least $r+2$ on positive inputs, so it vanishes for $r>d-2$.
	The case $d\leqslant4$ is therefore immediate; otherwise, gauge
	$(d-4)$-formality gives gauge formality, proving the converse.
\end{proof}

\begin{remark}[Partial formality]
	The proof also shows that, if $H/H^d$ is gauge formal, then
	$H$ is gauge $n$-formal whenever $d+1,\ldots,d+n$ are units.
	This partial conclusion applies to the geometric corollaries
	below as well, with $d$ replaced by $2d$ in the \'etale case.
\end{remark}

\begin{corollary}[Formality under puncturing]
	\label{formality under puncturing}
	Let $M^d$ be a closed connected $R$-oriented manifold,
	$d\geqslant2$, with $H^*(M;R)$ finite projective and
	$H^1(M;R)=0$. Fix $x\in M$.
	If $d+i\in R^\times$ for $1\leqslant i\leqslant d-4$, then
	\[
	C^*(M;R)\text{ is gauge formal}
	\quad\Longleftrightarrow\quad
	C^*(M\setminus\{x\};R)\text{ is gauge formal}.
	\]
\end{corollary}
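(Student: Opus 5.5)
The plan is to deduce the corollary from Theorem~\ref{thm:pd-transmission}, applied to a minimal model of $C^*(M;R)$. Since $H^*(M;R)$ is finite projective, the cochains admit a contraction onto cohomology. We work with the strictly unital $A_\infty$-structure on cochains, which is non-symmetric, so Theorem~\ref{HTT}~(1) applies over any commutative ring. This gives a transferred minimal $A_\infty$-structure $(H,\varphi_t)$ on $H=H^*(M;R)$ extending the cup product, and $(H,\varphi_t)$ is $\infty$-quasi-isomorphic to $C^*(M;R)$. By Corollary~\ref{gauge formality}, gauge formality of $C^*(M;R)$ is equivalent to the existence of an $\infty$-isotopy $\varphi_t\rightsquigarrow\varphi_*$. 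Thus it suffices to check the hypotheses of Theorem~\ref{thm:pd-transmission} for $(H,\varphi_t)$, and to identify $C^*(M\setminus\{x\};R)$ with the quotient $H/H^d$ carrying its induced structure.

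\textbf{Step 1: hypotheses on $H$.} Because $M$ is closed, connected and $R$-oriented with projective cohomology, $H$ is a connected graded-commutative Poincar\'e duality algebra of dimension $d$, and $H^1=0$ by assumption. For the bimodule extension of the duality map I would argue as in the proof of Corollary~\ref{cor:pd-domination}. Capping with a cycle representing $[M]$ gives a quasi-isomorphism $C^*(M;R)\to C_{d-*}(M;R)\cong C^*(M;R)^\vee[-d]$, where the last identification uses finite type and projectivity. By \cite{GTZ12} this map lifts to a morphism of $E_\infty$-modules, hence of $A_\infty$-bimodules. Transporting it along the contraction gives an $\infty$-morphism of $H$-bimodules $H\to H^\vee[-d]$ extending the Poincar\'e duality map.

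\textbf{Step 2: the puncture.} The inclusion $M\setminus\{x\}\hookrightarrow M$ induces an isomorphism in cohomology in degrees $<d$, and $H^d(M\setminus\{x\};R)=0$ because $M$ is connected and oriented. So the restriction map $C^*(M)\to C^*(M\setminus\{x\})$ is a strict dg algebra map inducing $q\colon H\to H/H^d$ in cohomology. Since $H^1=0$, the higher transferred operations vanish on inputs of top degree, which is the same observation used at the start of the proof of Theorem~\ref{thm:pd-transmission}. I would then choose compatible contractions, with the contraction of $C^*(M\setminus\{x\})$ obtained from that of $C^*(M)$ by killing $H^d$. With this choice the transferred structure on $H^*(M\setminus\{x\})$ is exactly the induced structure on $H/H^d$. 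Gauge formality of $C^*(M\setminus\{x\};R)$ is then equivalent to gauge formality of $H/H^d$ with this structure.

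\textbf{Main obstacle.} I expect the delicate point to be this compatibility of transferred structures. The transferred structure on $H^*(M\setminus\{x\})$ is only well defined up to $\infty$-isotopy. One must show that some choice agrees with the quotient structure, or at least is $\infty$-isotopic to it. To do this I would transfer the strict map $C^*(M)\to C^*(M\setminus\{x\})$ to an $\infty$-morphism between the minimal models whose linear part is $q$. Using $H^1=0$ and degree reasons, its higher components can be killed, which yields the desired $\infty$-isotopy.

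With Steps 1 and 2 in place, the unit hypothesis $d+i\in R^\times$ for $1\leqslant i\leqslant d-4$ is exactly the hypothesis of Theorem~\ref{thm:pd-transmission}. That theorem then gives the equivalence.
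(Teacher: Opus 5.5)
Your proposal is correct and follows the paper's own proof. The ingredients are the same:
\begin{itemize}
\item restriction in cohomology is the quotient by $H^d$;
\item the transferred restriction has higher components vanishing on top-degree inputs, which identifies the induced quotient of the minimal model of $M$ with a minimal model of the puncture;
\item cap product with the fundamental class, lifted to $E_\infty$-modules via \cite{GTZ12}, restricted to $A_\infty$-bimodules and transferred, supplies the duality needed for Theorem~\ref{thm:pd-transmission}.
\end{itemize}
The only point to sharpen concerns your ``main obstacle''. The higher components need not be killed: by degree they vanish on inputs involving $H^d$. Hence the transferred $\infty$-morphism factors through the strict quotient $H\to H/H^d$, and this yields an $\infty$-isomorphism from the quotient structure whose linear part is the identification $H/H^d\cong H^*(M\setminus\{x\};R)$. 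This also makes your ``compatible contractions'' idea unnecessary.
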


\begin{proof}
	Excision and orientation identify restriction in cohomology with
	the quotient by $H^d(M;R)$.
	Transfer and minimal-model comparison \cite{Pet20} identify the
	induced quotient of a minimal model of $M$ with a minimal model
	of its puncture: the higher components of restriction vanish
	on top-degree inputs by the degree argument above. It remains to supply the bimodule duality.
	Cap product with the fundamental class gives an equivalence of
	$E_\infty$-modules \cite[Corollary~5.26]{GTZ12}.
	Restriction to $A_\infty$-bimodules and transfer therefore give
	the required lift of Poincar\'e duality and
	Theorem~\ref{thm:pd-transmission} applies.  
\end{proof}

\begin{remark}[Poincar\'e duality spaces]
	The same conclusion holds for a finite connected $R$-oriented
	Poincar\'e duality complex $X=Y\cup_\alpha e^d$, with
	$\dim Y\leqslant d-1$, replacing $(M,M\setminus\{x\})$ by $(X,Y)$
	and imposing the same cohomological and numerical assumptions
	on $X$.
	The cohomology sequence of the pair identifies restriction with
	the quotient by the top degree, and cap product supplies the
	same bimodule duality.
\end{remark}

\begin{remark}[Highly connected manifolds]
	If $M$ is $k$-connected, with $k\geqslant1$, the positive cohomology
	of its puncture lies in degrees $k+1,\ldots,d-k-1$.
	An operation of arity $r\geqslant3$ has output degree at least
	$rk+2\geqslant3k+2$, so the puncture is gauge formal when
	$d\leqslant4k+2$.
	Over $\mathbb Q$, the corollary thus recovers Miller's theorem
	\cite{Mil79}.
	The direct argument of \cite[Theorem~3.3]{CE24b}, with $\ell=3$,
	gives the same dimension bound over fields of characteristic
	different from $2$ and $3$. The numerical condition above is
	therefore not optimal in general.
\end{remark}

\noindent
For the \'etale analogue, let $K$ be separably closed and let $R$
be a finite commutative ring of order invertible in $K$, or
$\mathbb Z_\ell$ or a finite extension of $\mathbb Q_\ell$,
with $\ell$ invertible in $K$.
Write
$C^*_{\mathrm{\acute et}}(X;R)=R\Gamma(X_{\mathrm{\acute et}},R)$,
using continuous cochains for $\ell$-adic coefficients.

\begin{corollary}[\'Etale formality under puncturing]
	\label{cor:etale-puncturing}
	Let $X$ be a smooth proper geometrically connected $K$-variety
	of dimension $d>0$, with $H^*_{\mathrm{\acute et}}(X;R)$
	finite projective and $H^1_{\mathrm{\acute et}}(X;R)=0$.
	Fix $x\in X(K)$.
	If $2d+i\in R^\times$ for $1\leqslant i\leqslant2d-4$, then
	\[
	C^*_{\mathrm{\acute et}}(X;R)\text{ is gauge formal}
	\quad\Longleftrightarrow\quad
	C^*_{\mathrm{\acute et}}(X\setminus\{x\};R)
	\text{ is gauge formal}.
	\]
\end{corollary}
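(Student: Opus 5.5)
The plan is to reduce Corollary~\ref{cor:etale-puncturing} to Theorem~\ref{thm:pd-transmission}, as in the proof of Corollary~\ref{formality under puncturing}. The real dimension $d$ is replaced by the étale cohomological dimension $2d$. Set $H:=H^*_{\mathrm{\acute et}}(X;R)$.

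\textbf{Step 1: Poincaré duality algebra.} Since $K$ is separably closed, we may trivialize the Tate twist $R(d)\cong R$. Because $X$ is smooth, proper and geometrically connected, Poincaré--Verdier duality makes $H$ a connected graded-commutative Poincaré duality algebra of dimension $2d$. It is finite projective with $H^1=0$ by hypothesis, and its trace is $H^{2d}(X;R(d))\cong R$. For $\ell$-adic coefficients, pass to the limit over $\mathbb Z/\ell^s$ using continuous cochains; finiteness makes this limit well behaved.

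\textbf{Step 2: minimal model and bimodule duality.} The complex $C^*_{\mathrm{\acute et}}(X;R)$ admits an $E_\infty$-structure, for instance via Godement resolutions, and its underlying $A_\infty$-algebra has a transferred minimal model on $H$ because $H$ is projective. Duality is realized at chain level as a quasi-isomorphism $R\Gamma(X,R)\to R\Gamma(X,R)^\vee[-2d]$ of $R\Gamma(X,R)$-modules. It is induced by the cup product pairing composed with the trace $R\Gamma(X,R(d))[2d]\to R$, and it is a module map because the cup product is associative up to coherent homotopy. Restricting to $A_\infty$-bimodules and transferring gives the $\infty$-morphism of $H$-bimodules $H\to H^\vee[-2d]$ that Theorem~\ref{thm:pd-transmission} requires.

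\textbf{Step 3: the puncture.} Set $U=X\setminus\{x\}$. The localization sequence $H^*_{\{x\}}(X)\to H^*(X)\to H^*(U)$ and purity, $H^*_{\{x\}}(X;R)\cong R[-2d]$, show that restriction identifies $H^*(U)$ with $H/H^{2d}$ for $*<2d-1$. The degrees $2d-1$ and $2d$ need separate care. Surjectivity of $H^{2d}_{\{x\}}\to H^{2d}(X)$, given by the cycle class of a point, kills the top class and gives $H^{2d}(U)=0$. Duality together with $H^1=0$ gives $H^{2d-1}(U)\cong H^{2d-1}(X)=0$. Hence $H^*(U)\cong H/H^{2d}$. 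As in the proof of Corollary~\ref{formality under puncturing}, the higher components of the restriction $\infty$-morphism vanish on top-degree inputs by the degree argument, so a minimal model of $C^*_{\mathrm{\acute et}}(U;R)$ is the quotient $A_\infty$-structure on $H/H^{2d}$.

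\textbf{Step 4: conclusion.} Apply Theorem~\ref{thm:pd-transmission} with dimension $2d$; its numerical hypothesis becomes $2d+i\in R^\times$ for $1\leqslant i\leqslant 2d-4$. It gives that the minimal model on $H$ is gauge formal if and only if the one on $H/H^{2d}$ is. Transport back to the cochains using Theorem~\ref{HTT} and Corollary~\ref{gauge formality}. The case $d=1$ is immediate.

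\textbf{Main obstacle.} I expect Step 2 to be the hardest: lifting étale duality to a coherent $A_\infty$-bimodule morphism when no simplicial cap product is available. I would first try the formal argument that any $E_\infty$-algebra $A$ with a trace map $A\to R[-2d]$ gives the module map $A\to A^\vee[-2d]$ adjoint to multiplication followed by the trace. This map is automatically a morphism of $A$-modules in the $\infty$-categorical sense, and strictification then yields an $\infty$-morphism of bimodules.
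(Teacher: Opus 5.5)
Your proposal is correct and follows essentially the same route as the paper. Purity and localization identify restriction with the quotient by $H^{2d}_{\mathrm{\acute et}}(X;R)$, since the Gysin class of $x$ generates the top degree. Trivializing the Tate twist lets multiplication and trace give Poincar\'e duality in formal dimension $2d$ as an $E_\infty$-module map. Restriction to bimodules and transfer then feed Theorem~\ref{thm:pd-transmission} with $d$ replaced by $2d$. The one justification the paper makes explicit that your Step~2 leaves implicit is that $C^*_{\mathrm{\acute et}}(X;R)$ is perfect (bounded with finite projective cohomology); this is what makes its dual compute $H^\vee$, and hence makes the duality map an equivalence.
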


\begin{proof}
	Purity and localization identify restriction in cohomology with
	the quotient by the top degree: the Gysin class of $x$ has
	trace $1$ and generates $H^{2d}_{\mathrm{\acute et}}(X;R)$
	\cite[Sections~16 and~24]{Mil13}.
	Thus transfer identifies the quotient of a minimal model of $X$
	with a minimal model of its puncture. After trivializing the Tate line $R(-d)$, multiplication and
	trace give Poincar\'e duality in formal dimension $2d$.
	The cochain complex is perfect because its cohomology is bounded
	and finite projective, so this duality is an equivalence of
	$E_\infty$-modules.
	Restriction to bimodules and transfer provide the duality required
	by Theorem~\ref{thm:pd-transmission}.
	The result follows as in the topological case.
\end{proof}

\subsection{Intrinsic formality}

A cohomology algebra $H$ is called \emph{intrinsically formal}
if every algebra whose cohomology algebra is isomorphic to $H$
is gauge formal. Although intrinsic formality is stronger than
formality, it can sometimes be easier to establish, as illustrated
by Tamarkin's proof of Kontsevich's formality theorem; see
\cite{Hinich03bis}. Introduced by Sullivan
and developed through the obstruction theory of Halperin--Stasheff, this notion admits several classical criteria. In particular, quotients of finitely generated free graded-commutative algebras by homogeneous
regular sequences are intrinsically formal \cite{Sul77,HJ79,FH82,Lup90};
see \cite[Section~3.2]{Suc23} for an overview and
\cite{MP91} for a deformation-theoretic treatment. More recently, a formulation in terms of
Kaledin classes for properadic algebraic structures may be found in \cite[Theorem~4.6]{Kaledin}. 

\medskip

\noindent The formality transmission yields an intrinsic formality criterion, extending former criteria to arbitrary coefficient rings. Throughout this section, suppose that the ring $R$ and the coproperad $\C$ are such that any $\Omega \C$-algebra admits a transferred structure. 

\begin{theorem}[Intrinsic formality]\label{intrinsic}  Let $(H, \psi)$ be a graded $R$-module equipped with a strict   $\Omega \mathcal{C}$-algebra structure $\psi = \psi^{(1)}$. Let  $E \coloneqq E(\C, H, \psi)$ be its properadic spectral sequence. 
	\begin{enumerate}
		\item If $E^{k,-1}_2 = 0$ for all $k >1$, then $(H, \psi)$ is intrinsically formal, i.e. any $\Cobar \C$-algebra whose induced structure is $(H, \psi)$ is gauge formal. 
		\item Let $n \geqslant 1$. If $E^{k,-1}_2 = 0$ for  \(1 <  k\leqslant n + 1 \), then $(H, \psi)$ is intrinsically $n$-formal, i.e. any $\Cobar \C$-algebra whose induced structure is $(H, \psi)$ is gauge $n$-formal.  
	\end{enumerate}
\end{theorem}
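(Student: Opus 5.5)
The plan is to argue directly by induction on the weight, rather than through Theorem~\ref{thm formality transmission} applied with target $\mathfrak h=0$. The statement imposes no invertibility condition on $n!$, whereas Theorem~\ref{B prop} and Theorem~\ref{thm formality transmission}~(2) do. The shape of the argument is still that of the proof of Theorem~\ref{thm gauge transmission 2} in the case $B=0$: a vanishing $E_2$-term kills the lowest nontrivial obstruction, and one then iterates. The gauge action is replaced throughout by the action $f\cdot\varphi$ of $\infty$-isotopies from Proposition~\ref{Ad_f}, which exists over any commutative ring.

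\textbf{Step 1 (reduction to $H$).} Let $(A,\varphi)$ be an $\Omega\C$-algebra with induced structure $(H,\psi)$. By the standing hypothesis it has a transferred structure $\varphi_t$ on $H$ with $\varphi_t^{(1)}=\psi$. By Corollary~\ref{gauge formality}, it suffices to build an $\infty$-isotopy from $\varphi_t$ to $\psi$, respectively to a structure whose components of weights $2,\dots,n+1$ vanish. I will also record the relevant page. Since $H$ has zero differential and $d_\C$ lowers weight by one, the weight-one part of $d^{\psi}$ on $\mathfrak g_H$ is $d+[\psi,-]$. It follows that $E_2^{k,-1}=H^{(k)}_{-1}(\mathfrak g_H,d^{\psi})$, exactly as in Lemma~\ref{lem spectral-page-identification}.

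\textbf{Step 2 (inductive killing).} Suppose $\mu$ is an $\Omega\C$-structure on $H$, $\infty$-isotopic to $\varphi_t$, with $\mu^{(1)}=\psi$ and $\mu^{(j)}=0$ for $2\leqslant j\leqslant m$.
\begin{itemize}
\item[$\centerdot$] The weight-$(m+1)$ component of the Maurer--Cartan equation for $\mu$ reads $d^{\psi}\mu^{(m+1)}=0$, because every quadratic term has weight at least $m+2$. So $\mu^{(m+1)}$ defines a class in $E_2^{m+1,-1}=0$.
\item[$\centerdot$] Hence $\mu^{(m+1)}=d^{\psi}(\lambda)$ for some degree-zero $\lambda$ of weight $m$.
\item[$\centerdot$] Set $u:=\mathrm{id}_H-\lambda$, viewed in $\mathfrak a_H$. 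This is an $\infty$-isotopy, so it is invertible by Theorem~\ref{invertibility prop}~(3). Up to sign its inverse is $\mathrm{id}+\lambda$ modulo $\mathcal F^{m+1}$.
\end{itemize}
The main computation is then the leading term of $u\cdot\mu=\mathrm{Ad}_u(\mu)-(u;d(u))\circledcirc u^{-1}$ from Proposition~\ref{Ad_f}. In weights at most $m+1$, the only contributions besides $\mu$ itself are those linear in $\lambda$. These are $[\psi,\lambda]$ coming from $\mathrm{Ad}_u$, and $d\lambda$ coming from the second term. This should give
\[
(u\cdot\mu)^{(j)}=\mu^{(j)}\ \ (j\leqslant m),
\qquad
(u\cdot\mu)^{(m+1)}=\mu^{(m+1)}-d^{\psi}(\lambda)=0 .
\]
I expect this step to be the main obstacle. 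One has to check the signs, and choose $\lambda$ or $-\lambda$ accordingly. One also has to check that no term quadratic in $\lambda$, or mixing $\lambda$ with $\mu^{(\geqslant m+1)}$, lands in weight at most $m+1$. This follows from $m\geqslant1$ and the filtration compatibility of $\circledcirc$, and it holds over any commutative ring because no denominators appear.

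\textbf{Step 3 (conclusion).} For point~(2), apply Step~2 for $m=1,\dots,n$, starting from $\mu=\varphi_t$. The composite of the resulting isotopies gives a structure vanishing in weights $2,\dots,n+1$, which is gauge $n$-formality. For point~(1), iterate for all $m$ to obtain isotopies $u_1,u_2,\dots$, where $u_m-\mathrm{id}$ lies in weight $m$. The partial composites $u_m\circledcirc\cdots\circledcirc u_1$ stabilize in each fixed weight, so they converge in the complete space $\mathfrak a_H$ to an $\infty$-isotopy $u$. The structure $u\cdot\varphi_t$ agrees with $\psi$ in every weight, by continuity of the action (Proposition~\ref{Ad_f}~(2)) and completeness. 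Hence $(A,\varphi)$ is gauge formal.
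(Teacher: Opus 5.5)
Your proposal is correct and is essentially the paper's own characteristic-free argument. The paper likewise identifies $E_2^{k,-1}$ with $H^{(k)}_{-1}(\mathfrak g_H,d^{\psi})$, kills the lowest surviving component $\phi_k^{(k)}=d^{\psi}(\nu^{(k-1)})$ with the $\infty$-isotopy $1+\nu^{(k-1)}$ (citing the leading-term computation from \cite[Proposition~2.33]{CE24b} instead of redoing it, and mentioning the route through Corollary~\ref{appli B} with $B=0$ only as the characteristic-dependent alternative), and passes to the weightwise-stabilizing composite for point~(1).

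Two harmless slips: the identity $d^{\psi}\mu^{(m+1)}=0$ is the weight-$(m+2)$ component of the Maurer--Cartan equation, and with the paper's conventions $(\mathrm{id}+\lambda)\cdot\mu\equiv\mu-d^{\mu}\lambda$ to first order in $\lambda$, so the sign you left open is fixed by taking $u=\mathrm{id}+\lambda$ when $\mu^{(m+1)}=d^{\psi}(\lambda)$.
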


\begin{proof} Let $(A, \varphi)$ be an $\Omega \C$-algebra whose induced structure is $ (H , \psi)$. If $E^{k,-1}_2 = 0$, we have  \[E^{k,-1}_k (\C, H(A), \varphi_t) = 0 \ , \]  for all $k >1$. Over a $\mathbb{Q}$-algebra, point (1) then follows from Corollary \ref{appli B} (1): by considering $B = 0$ the involved maps are necessarily injective. 	Similarly, if $n !$ is a unit in $R$, point (2) follows from Corollary \ref{appli B} (2). Nonetheless, in this setting formality transmission can be refined to avoid these characteristic constraints. Let us recall \cite[Proposition~2.33]{CE24b}. Suppose that $ \varphi - \psi \in \mathcal{F}^{n} \mathfrak{h}$, for $n \geqslant 1$.  If the homology class \[\vartheta_{n} \coloneqq \left[\varphi^{(n)} \right] \in H_{-1}\left( \mathfrak{h} / \mathcal{F}^{n+1}\mathfrak{h}  \right) \quad \mbox{and} \quad \mathfrak{h} \coloneqq \mathfrak{g}_H^{\psi} \]  vanishes, the $\Cobar \C$-algebra structures $\varphi$ and $\psi$ are gauge $(n-1)$-homotopy equivalent. More precisely, if $\varphi^{(n)} = d^{\psi} (\nu)$, the $\infty$-isotopy $f = 1 +  \nu^{(n-1)}$ and the Maurer--Cartan element $\phi = f \cdot \varphi$ are such that $\phi - \psi \in \mathcal{F}^{n +1} \mathfrak{h}$. We apply this construction to $\phi_2:=\varphi_t$.
	For $k\geqslant 2$, the group $E_2^{k,-1}$ is the homology
	at the middle term of
	\[
	\mathfrak h_0^{(k-1)}
	\xrightarrow{\, d^\psi \,}
	\mathfrak h_{-1}^{(k)}
	\xrightarrow{\, d^\psi \, }
	\mathfrak h_{-2}^{(k+1)}.
	\]
	Suppose inductively that
	$\phi_k-\psi\in\mathcal F^k\mathfrak h$. Whenever $E_2^{k,-1}=0$,
	there exists $\nu^{(k-1)}\in\mathfrak h_0^{(k-1)}$ such that
	\[
	\phi_k^{(k)}=d^\psi\left(\nu^{(k-1)}\right).
	\]
	Thus the obstruction class vanishes, and the construction
	above yields
	\[
	f_k:=1+\nu^{(k-1)},
	\qquad
	\phi_{k+1}:=f_k\cdot\phi_k,
	\qquad
	\phi_{k+1}-\psi\in\mathcal F^{k+1}\mathfrak h.
	\]
	This induction involves no division. Under the hypothesis of \textup{(1)}, the construction
	continues indefinitely. Since $f_k-1$ has weight $k-1$,
	the successive compositions stabilize in each weight,
	defining an $\infty$-isotopy
	$(H,\varphi_t)\rightsquigarrow(H,\psi)$.
	Thus $(A,\varphi)$ is gauge formal. Under the hypothesis of \textup{(2)}, iterating for
	$k=2,\ldots,n+1$ yields an $\infty$-isotopic structure
	$\phi_{n+2}$ satisfying
	\[
	\phi_{n+2}-\psi\in\mathcal F^{n+2}\mathfrak h,
	\]
	which proves gauge $n$-formality.
	Since $(A,\varphi)$ was arbitrary, both conclusions
	are intrinsic.
\end{proof}

\begin{example}[Properadic coformality of spheres]
	An application of this framework is the properadic coformality of spheres. Recall that a topological space $X$ is coformal if the chain algebra of its based loop space, $C_*(\Omega X;\mathbb{Q})$, is formal as an $\mathcal{A}_{\infty}$-algebra. When $X$ is an oriented manifold, this algebra carries additional structure: Kontsevich, Takeda, and Vlassopoulos showed in \cite{KTV21} that it admits a pre-Calabi--Yau structure encoding Poincaré duality. Properadic coformality strengthens ordinary coformality by requiring the zig-zag of quasi-isomorphisms realizing formality to preserve this additional structure, and hence Poincaré duality. In \cite{ET24}, we apply intrinsic formality criteria to show that $C_*(\Omega S^n;\mathbb{Q})$ is intrinsically formal as a pre-Calabi--Yau algebra for every $n \geqslant 1$.
\end{example}

\subsection{Formality descent}

Let $S$ be a flat commutative $R$-algebra. If the $\Cobar \C$-algebra $(A, \varphi)$ is formal, then the algebra $(A \otimes_{R} S,\varphi \otimes 1)$ stays formal. Formality descent gives sufficient conditions under which the converse holds true.
For extensions of characteristic-zero fields, formality descent holds for finite-type structures \cite{Sul77, NM78, HJ79, FOT08}. This was extended to dg operads in \cite{GNPR05} and to any $\Omega \C$-algebras over a commutative ring in \cite{Kaledin}. This latter generalization can be seen as a consequence of formality transmission. This approach significantly simplifies the proof but does not offer any meaningful generalization. We give a counterexample showing that the characteristic-zero
assumption cannot be omitted in general: full gauge formality
need not descend along faithfully flat extensions in positive
characteristic, even for finite-dimensional strictly unital
$A_\infty$-algebras.

\begin{theorem}[Formality descent, {\cite[Theorem~4.1]{Kaledin}}]\label{formality descent}
	Let $S$ be a faithfully flat commutative $R$-algebra and denote $A_{S} \coloneqq A \otimes_{R} S$ Assume that scalar extension induces isomorphisms
	\[
	(\mathfrak g_{H(A)})^{(w)}\otimes_R S
	\xrightarrow{\sim}
	(\mathfrak g_{H(A) \otimes S})^{(w)}
	\qquad\text{for every }w\geq1 . 
	\]
	\begin{enumerate}
		\item If $R$ is a $\mathbb{Q}$-algebra, the $\Cobar \C$-algebra $(A, \varphi)$ is gauge formal if and only if $(A_{S},\varphi \otimes 1)$ is gauge formal.
		\item Let $n \geqslant 1$ be an integer such that $n !$ is a unit in $R$.  The $\Cobar \C$-algebra $(A, \varphi)$ is gauge $n$-formal if and only if $(A_{S},\varphi \otimes 1)$ is gauge $n$-formal.
	\end{enumerate}
\end{theorem}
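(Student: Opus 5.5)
The forward direction (formal over $R$ implies formal over $S$) is formal: an $\infty$-isotopy $(H(A),\varphi_t)\rightsquigarrow(H(A),\psi)$ base-changes to one over $S$. Flatness of $S$ gives $H(A_S)\cong H(A)\otimes_R S$, and the contraction of $A$ tensored with $S$ is a contraction of $A_S$. Hence $(\varphi\otimes 1)_t=\varphi_t\otimes 1$ is a transferred structure for $A_S$. For the converse I will apply Theorem~\ref{thm formality transmission} to the scalar-extension map
\[
f\colon \mathfrak g_{H(A)}^{\varphi_t}\longrightarrow \mathfrak g_{H(A)\otimes S}^{\varphi_t\otimes 1},\qquad x\longmapsto x\otimes 1 .
\]
This map is a filtered chain map, strictly compatible with brackets and weights, and $\mathcal D_0(f)$ is again scalar extension on $\hbar$-series.

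The first hypothesis to check is the vanishing of the image of the Kaledin class. Scalar extension commutes with prismatic decomposition and with $\partial_\hbar$. Therefore
\[
f_*(K^n_{\varphi_t})=K^n_{\varphi_t\otimes 1},\qquad f_*(K_{\varphi_t})=K_{\varphi_t\otimes1}.
\]
If $A_S$ is gauge ($n$-)formal, Theorem~\ref{B prop}, applied over $S$, makes these classes vanish. Its coefficient assumptions hold over $S$: $S$ is a $\mathbb Q$-algebra when $R$ is, and $n!\in S^\times$ when $n!\in R^\times$.

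The second hypothesis is injectivity of the maps $\theta_k$ on $E_k^{k,-1}$. By the assumed isomorphisms, $f$ identifies weightwise with $x\mapsto x\otimes1$ into $(\mathfrak g_{H(A)})^{(w)}\otimes_R S$. Its differential $d^{\varphi_t}\otimes 1$ only involves finitely many weights at each step, so it is computed weight by weight. Since $S$ is flat, $-\otimes_R S$ commutes with kernels, images and homology of maps between finitely many weight pieces. Each page $E_r^{p,q}$ is a subquotient built from finitely many weight components of $\mathfrak g$ (cycles modulo $\mathcal F^{p+r}$ and boundaries from $\mathcal F^{p-r+1}$). Consequently $E_r(\mathfrak g_{H(A)\otimes S})\cong E_r(\mathfrak g_{H(A)})\otimes_R S$, and $\theta_k$ becomes the map $M\to M\otimes_R S$, $m\mapsto m\otimes1$. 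Faithful flatness makes $M\to M\otimes_R S$ injective for every $R$-module $M$, by the standard argument via $M\otimes S\to M\otimes S\otimes S$. Thus every $\theta_k$ is injective, and Theorem~\ref{thm formality transmission}~(1) resp.~(2) concludes.

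The main obstacle is the page-by-page identification $E_r\otimes_R S\cong E_r$ over $S$. It requires that the product over weights in $\mathfrak g$ does not spoil commutation with $\otimes_R S$. I will handle this by working with the truncations $\mathfrak g/\mathcal F^{p+r}$ and the sub-pieces $\mathcal F^{p-r+1}$, which are finite products of weight components. The weightwise isomorphism hypothesis then applies directly, and exactness of $\otimes_R S$ gives the identification.
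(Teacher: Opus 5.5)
Your proposal is correct and follows the paper's proof essentially step by step. You apply Theorem~\ref{thm formality transmission} to the scalar-extension map $\mathfrak g_{H(A)}^{\varphi_t}\to\mathfrak g_{H(A)\otimes S}^{\varphi_t\otimes 1}$, use naturality of Kaledin classes together with Theorem~\ref{B prop} over $S$, identify $E_r$ over $S$ with $E_r\otimes_R S$ by flatness so that $\theta_k$ becomes $M\to M\otimes_R S$ (injective by faithful flatness), and obtain the forward direction by base-changing isotopies. The only cosmetic difference is that you justify the page identification through the finite truncations $\mathcal F^{a}/\mathcal F^{b}$, whereas the paper argues by induction on $r$; both work.
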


\begin{proof}
	Write $H:=H(A)$ and $\mu:=\varphi_t$. Flatness identifies
	$H(A_S)$ with $H_S:=H\otimes_R S$, with transferred structure
	$\mu_S:=\mu\otimes 1$. Scalar extension induces a filtered map
	\[
	\iota:\mathfrak g_H^\mu\longrightarrow
	\mathfrak g_{H_S}^{\mu_S},
	\]
	where the target is formed over $S$ and regarded as an
	$R$-complex. The assumed weightwise base-change isomorphisms identify
	the associated graded complexes after scalar extension.
	Since $S$ is flat over $R$, taking homology commutes with
	tensoring by $S$. Induction on $r$ therefore gives
	\[
	E_r^{p,q}(\mathfrak g_{H_S}^{\mu_S})
	\cong
	E_r^{p,q}(\mathfrak g_H^\mu)\otimes_R S
	\]
	for every finite page $r\geqslant0$ and every bidegree $(p,q)$.
	Under these identifications, the maps induced by $\iota$
	are the canonical maps $x\mapsto x\otimes1$, which are
	injective by faithful flatness. Moreover, naturality gives
	\[
	\iota_*(K_\mu)=K_{\mu_S},
	\qquad
	\iota_*(K_\mu^n)=K_{\mu_S}^n.
	\]
	If $A_S$ is gauge formal, respectively gauge $n$-formal,
	the corresponding class vanishes by Theorem \ref{B prop}. 
	Theorem \ref{thm formality transmission} therefore proves both descent implications under the stated assumptions on $R$ and $n$.
	The converse implications follow by extending the
	$\infty$-isotopies of Corollary \ref{gauge formality} by scalars.
\end{proof}

\begin{remark}
	The weightwise base-change hypothesis holds in the
	following situations.
	\begin{enumerate}
		\item[$\centerdot$] $S$ is finite projective as an $R$-module.
		This includes finite extensions of fields and requires
		no finiteness assumptions on $C$ or $H$.
		
		\item[$\centerdot$] $C$ is a cooperad and, for every $w\geq1$, the following total $R$-module is finitely presented
		\[
		C^{(w)}(H)
		:=
		\bigoplus_{n\geq0}
		C^{(w)}(n)\otimes_{R[\mathbb S_n]}H^{\otimes n} \ .
		\]

		\item[$\centerdot$] $H$ is bounded, degreewise finitely generated
		projective, each $C^{(w)}(m,n)$ is degreewise finitely
		presented over $R$, and only finitely many pairs $(m,n)$
		occur in each weight.
	\end{enumerate}
\end{remark}

\begin{proposition}[Failure of formality descent in characteristic two]\label{failure}
	Let
	\[
	R=\mathbb F_2(t),
	\qquad
	S=R[u]/(u^2-t).
	\]
	There exists an $A_\infty$-algebra $(A,\varphi)$ over $R$
	that is not gauge formal but becomes so over $S$.
\end{proposition}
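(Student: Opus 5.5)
We build the counterexample by exploiting the one purely characteristic-$p$ phenomenon that obstructs descent in positive characteristic. Over a field of characteristic $2$, $x\mapsto x^2$ is additive but not surjective on $R=\mathbb F_2(t)$, since $t$ has no square root, while it becomes surjective enough over $S$, where $t=u^2$. The plan is to choose a minimal $A_\infty$-structure on a small graded vector space $H$ whose removal of higher operations requires solving, in some weight $k$, an equation of the form $\lambda^2 = t$ (or $\lambda^2+\lambda\cdot c=t$) for a component $\lambda$ of an $\infty$-isotopy. This equation is solvable over $S$ but not over $R$. Theorem~\ref{formality descent} does not contradict this: it needs $n!\in R^\times$, and in characteristic $2$ only $n=1$ is allowed. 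So the quadratic obstruction must appear at weight at least $3$, i.e. it must concern gauge $2$-formality or beyond, where composition of isotopies produces the square $\lambda\circ\lambda$ of a weight-one component.

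\textbf{Construction.} Take $H$ concentrated in degrees chosen so that only one higher operation is possible, say a nonzero $m_3$ or $m_4$. Let $m_2$ be a strict unital product, for instance $H=R\langle 1,x,y\rangle$ or a truncated algebra with a square-zero generator. Degrees are arranged so that $m_2$ and the candidate weight-one isotopy component $f_2\colon H^{\otimes2}\to H$ are tightly constrained. Set $(A,\varphi)=(H,m_2+m_{k})$ with the coefficient of $m_k$ equal to $t$. The $A_\infty$ relations are checked directly; with so few nonzero products this is a finite computation, and $A:=H$ is its own transferred structure.

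\textbf{Key computation.} Write a general strictly unital $\infty$-isotopy $f=\mathrm{id}+f_2+f_3+\cdots$ and expand the target structure $f\cdot\varphi$ weight by weight. The degree constraints should force the following. The weight-two component of $f\cdot\varphi$ is $[m_2,f_2]$, which must vanish, so $f_2$ is a Hochschild cocycle. The weight-$(k-1)$ component equals $t\,m_k + (\text{terms quadratic in } f_2) + d^{m_2}(f_{k-1})$. The main obstacle is to arrange the degrees so that three things hold simultaneously. First, $m_k$ is not a Hochschild coboundary, so the linear part alone cannot kill it. Second, the only contribution of $f_2$ is the Frobenius-type term $\alpha^2\, m_k$ for $f_2=\alpha\cdot\beta$, with $\beta$ a fixed cocycle. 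Third, cross terms vanish because their coefficient is $2=0$. To achieve this, I would first try the smallest candidate, where the Gerstenhaber square $\beta\circ\beta$ represents the class of $m_k$. Its symmetrization $[\beta,\beta]=2\beta\circ\beta$ vanishes in characteristic $2$, and this is exactly why the class survives linear analysis but is hit quadratically.

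\textbf{Conclusion.} Once the reduction to the equation $\alpha^2=t$ in the relevant class is established, the two cases are immediate. Over $R$, $t$ is not a square, so the class of the weight-$(k-1)$ component can never vanish; hence the structure is not gauge formal by Corollary~\ref{gauge formality}. Over $S$, take $\alpha=u$: the corresponding isotopy $\mathrm{id}+u\beta+f_{k-1}$ kills $m_k$, and the absence of further operations makes the resulting structure strict, so $A\otimes_R S$ is gauge formal. Faithful flatness of $S$ is clear, since $S$ is free of rank $2$.
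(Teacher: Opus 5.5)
Your proposal is a search strategy rather than a proof. The statement is existential, so its content lies in exhibiting a specific $(A,\varphi)$ and proving that \emph{no} $\infty$-isotopy $g\colon(A,\varphi)\rightsquigarrow(A,\varphi_2)$ exists over $R$. You defer both to ``the degree constraints should force the following.''

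The mechanism you sketch is also misidentified. In the arity-four equation of such a $g$, the only term quadratic in $g_2$ is the cup square $\varphi_2(g_2\otimes g_2)$, which has weight three. The pre-Lie square $\beta\circ\beta$ of an arity-two cochain has arity three, hence weight two. It could therefore only represent the class of an $m_3$, which contradicts your own (correct) observation that the obstruction must sit in weight at least three. Moreover, $\beta$ has degree zero in $\mathfrak g_H$, so $[\beta,\beta]=0$ identically in every characteristic. The identity $[\beta,\beta]=2\beta\circ\beta$ plays no role.

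What is really missing is a way to control all possible $g_2$ and $g_3$ at once, and the small candidates you suggest fail in opposite ways.
\begin{itemize}
\item Take $H=R\langle 1,x,y\rangle$ with $|x|=1$, $|y|=2$, $x^2=y$ and $m_4(x,x,x,x)=ty$. Then the linear term $g_3(y,x,x)=ty$ already cancels $m_4$ in arity four. In fact, every normalized cochain of weight $k$ and degree $-1$ is supported on $x^{\otimes(k+1)}$ and is a coboundary for $d^{m_2}$. Hence $E_2^{k,-1}=0$ for all $k\geqslant2$, and this algebra is intrinsically formal over any ring by Theorem~\ref{intrinsic}.
\item If instead $x^2=0$, then $\varphi_2(g_2(x,x),g_2(x,x))=0$, and $m_4$ survives over $S$ as well.
\end{itemize}

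The paper reconciles these two requirements with two degree-one generators $a,b$ satisfying $a^2=b^2=c$ and $\varphi_4(b,b,b,b)=tc$.
\begin{itemize}
\item Over $S$, the isotopy with $f_2(b,b)=ub$ works, since $\varphi_2(ub,ub)=tc$.
\item Over $R$, evaluate the arity-four equation of a putative isotopy $g$ on $x^{\otimes 4}$ with $x=a+b$. Since $\varphi_2(x,x)=2c=0$, the terms $g_3(\ldots,\varphi_2(x,x),\ldots)$ vanish. The terms $\varphi_2(x,g_3(x,x,x))+\varphi_2(g_3(x,x,x),x)$ cancel because $\varphi_2$ is symmetric on degree one. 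Writing $g_2(x,x)=ra+sb$ leaves $tc=(r^2+s^2)c=(r+s)^2c$. This is impossible, since $t$ is not a square in $\mathbb F_2(t)$.
\end{itemize}
This isotropic test vector, which kills every linear contribution while turning the quadratic one into a perfect square, is the idea your sketch lacks.
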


\begin{proof}
Let us use the cohomological convention and consider the graded vector space
	\[
	A=R1\oplus Ra\oplus Rb\oplus Rc,
	\qquad
	|1|=0,\quad |a|=|b|=1,\quad |c|=2 \ ,
	\] equipped with the strictly unital minimal $A_\infty$-structure
	$\varphi$ defined as follows. The binary operation has unit $1$
	and satisfies $\varphi_2(a,a)=\varphi_2(b,b)=c,$
	all other products between $a,b,c$ being zero. The only
	nonzero higher operation is
	\[
	\varphi_4(b,b,b,b)=tc,
	\]
	with $\varphi_4$ zero on all other basis tensors. The scalar extension $A_S:=A\otimes_R S$ is gauge formal.
	Indeed, we define an
	$\infty$-isotopy
	\[
	f:(A_S,\varphi\otimes1)
	\rightsquigarrow
	(A_S,\varphi_2\otimes1), \qquad 
	f_1=\mathrm{id} ,
	\quad
	f_2(b,b)=ub,
	\quad
	f_r=0\quad(r\geqslant3),
	\]
	with $f_2$ zero otherwise. Suppose, however, that $(A,\varphi)$ is gauge formal and let 
	\[
	g:(A,\varphi)\rightsquigarrow(A,\varphi_2) \ ,
	\] be an $\infty$-isotopy. Let us set $x=a+b$ and, by degree, write $g_2(x,x)=ra+sb$ for some $r,s\in R$.
	Since $\varphi_2(x,x)=0$, the  $\infty$-morphism identity gives
	\[
	tc=\varphi_2\bigl(g_2(x,x),g_2(x,x)\bigr)
	=(r+s)^2c,
	\]
	the terms involving $g_3$ cancelling by commutativity
	in characteristic $2$. Thus $t=(r+s)^2$, contradicting
	the fact that $t$ is not a square in $\mathbb F_2(t)$.
	Hence $(A,\varphi)$ is not gauge formal.
\end{proof}

\subsection{Formality in families}
Kaledin's approach to formality in families \cite{Kal07}
was extended by Lunts \cite{Lun07} to $A_\infty$-algebras
over a $\mathbb Q$-algebra, under suitable finiteness and
projectivity assumptions. These results were generalized
in \cite[Section 4.2]{Kaledin} to algebras over colored
operads. As with formality descent,
formality in families can also be deduced from formality
transmission. We collect the resulting criteria in the
following theorem. Throughout this subsection, suppose
that $\C$ is a cooperad.

\begin{theorem}[Formality in families]\label{formality in families}
	\label{thm:formality-families}
	For a commutative $R$-algebra $S$, set $H:=H(A)$,
	\[
	M_k(S):=E_2^{k,-1}(\C_S,H_S,\varphi_*\otimes1),
	\qquad M_k:=M_k(R).
	\]
	Assume that $\C^{(w)}(H)$ is finitely presented as a total
	$R$-module for every $w\geqslant0$.	Let $n\geqslant1$ with $n!\in R^\times$. All conditions on $M_k$ below concern
	$2\leqslant k\leqslant n+1$.
	For statements \textup{(2)--(4)}, assume additionally that
	each $M_k$ is projective and that the canonical maps
	\begin{equation}
		M_k\otimes_R S\longrightarrow M_k(S)
		\tag{$\ast$}\label{eq:families-bc}
	\end{equation}
	are isomorphisms for every commutative $R$-algebra $S$.
	Then the following hold.
	\begin{enumerate}
		\item \emph{Generic fibre.}		Suppose that $R$ is an integral domain and each $M_k$
		is torsion-free. Then $A$ is gauge $n$-formal if and
		only if $A_{\operatorname{Frac}(R)}$ is gauge $n$-formal,
		or equivalently, if and only if every fibre
		$A_{\kappa(\mathfrak p)}$ is gauge $n$-formal.
		
		\item \emph{Specialization.}
		The locus
		\[
		F_n(A):=
		\{\mathfrak p\in\operatorname{Spec}R
		\mid A_{\kappa(\mathfrak p)}
		\text{ is gauge }n\text{-formal}\}
		\]
		is closed under specialization.
		
		\item \emph{Infinitesimal thickenings.}
		Let $I\subseteq R$ be an ideal such that
		$\bigcap_{\ell\geqslant1}I^\ell=0$.
		Then $A$ is gauge $n$-formal if and only if
		$A_{R/I^\ell}$ is gauge $n$-formal for every
		$\ell\geqslant1$.
		
		\item \emph{Closed fibres.}
		Suppose that $\operatorname{Jac}(R)=0$.
		Then $A$ is gauge $n$-formal if and only if
		$A_{R/\mathfrak m}$ is gauge $n$-formal for every
		maximal ideal $\mathfrak m$.
	\end{enumerate}
	
	\noindent If $R$ is a $\mathbb Q$-algebra, the same statements hold
	for gauge formality, with the corresponding conditions
	imposed for every $k\geqslant2$.
\end{theorem}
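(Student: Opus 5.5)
The plan is to reduce all four statements to Theorem~\ref{thm formality transmission}\,(2), applied to the scalar-extension map, in the same way as the proof of Theorem~\ref{formality descent}. For a map $R\to S$ of commutative rings, let $\mu=\varphi_t$ and $\mu_S=\mu\otimes1$, and consider the filtered chain map
\[
\iota_S\colon\mathfrak g_H^{\mu}\longrightarrow\mathfrak g_{H_S}^{\mu_S}.
\]
Naturality gives $\iota_{S*}(K^n_\mu)=K^n_{\mu_S}$. Hence gauge $n$-formality of $A_S$ forces $\iota_{S*}(K^n_\mu)=0$, by Theorem~\ref{B prop}. Since $n!\in R^\times$, it is also a unit in $S$, so Theorem~\ref{B prop} applies over $S$ as well. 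Finite presentation of $\C^{(w)}(H)$ gives weightwise base change $\mathfrak g_H^{(w)}\otimes_R S\cong\mathfrak g_{H_S}^{(w)}$, because $\Hom$ out of a finitely presented module commutes with this base change. On the $E_2$-page the map $\theta_2$ is then the canonical map $M_k\to M_k(S)$, which factors as $M_k\to M_k\otimes_R S\to M_k(S)$. It remains to show that the $\theta_k$ are injective on later pages, and to choose $S$ in each case.

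\emph{Passage to later pages.} This is the step I expect to be the main obstacle. Flatness is unavailable in (2)--(4), so the pages $E_k$ need not commute with base change. I would first replace $\mu$, by induction on $m$, using the minimal $m$ argument of Theorem~\ref{thm formality transmission}. Concretely, assume $K^{m-1}_\mu=0$ and replace $\mu$ by an isotopic structure $\psi$ with $\psi^{(j)}=0$ for $2\le j\le m$. By Lemma~\ref{lem spectral-page-identification}, the pages $E_2,\dots,E_{m+1}$ of both $\mathfrak g_H^\psi$ and $\mathfrak g_{H_S}^{\psi_S}$ are then canonically the $E_2$-pages. The only injectivity needed at step $m$ is therefore that of $M_{m+1}\to M_{m+1}(S)$, for the class $[m\psi^{(m+1)}]$. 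So I would run the induction of Theorem~\ref{thm formality transmission} directly, checking injectivity only on $E_2$. This is legitimate because the proof there uses $\theta_{m+1}$ only through this identification.

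\emph{Choice of $S$ in each case.}
\begin{enumerate}
\item For (1), take $S=\operatorname{Frac}(R)$. This $S$ is flat, and $M_k\to M_k\otimes\operatorname{Frac}(R)$ is injective precisely because $M_k$ is torsion-free. The converse directions follow by extending isotopies, as in Corollary~\ref{gauge formality}. For the fibrewise equivalence: gauge $n$-formality of $A$ gives it on every fibre, and the fibre at the generic point $(0)$ is $A_{\operatorname{Frac}(R)}$.
\item For (3), take $S=\prod_\ell R/I^\ell$, or rather test the vanishing of the class against each $R/I^\ell$. Projectivity of $M_k$ together with $(\ast)$ turns the map into $M_k\to\prod_\ell M_k/I^\ell M_k$. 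This map is injective, since for $M_k$ a direct summand of a free module, $\bigcap_\ell I^\ell M_k=0$ follows from $\bigcap_\ell I^\ell=0$. The delicate point is that Theorem~\ref{thm formality transmission} needs a single map $f$ with $f_*(K^n)=0$. So I would use $\prod_\ell\iota_{R/I^\ell}$, and note that a family of primitives, one for each $\ell$, gives a primitive in the product complex.
\item For (4), proceed in the same way with $\prod_{\mathfrak m}R/\mathfrak m$. Here $M_k\to\prod_{\mathfrak m} M_k/\mathfrak m M_k$ is injective because $M_k$ is projective and $\operatorname{Jac}(R)=0$.
\item For (2), let $\mathfrak p\in F_n(A)$ and let $\mathfrak q\supseteq\mathfrak p$. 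Apply the argument with $R$ replaced by the local domain $(R/\mathfrak p)_{\mathfrak q}$, mapping to its fraction field $\kappa(\mathfrak p)$. The hypothesis $(\ast)$ identifies the new $M_k$ with $M_k\otimes(R/\mathfrak p)_{\mathfrak q}$. This module is projective over a domain, hence torsion-free, so (1) yields gauge $n$-formality over $(R/\mathfrak p)_{\mathfrak q}$. Base change to $\kappa(\mathfrak q)$ then shows $\mathfrak q\in F_n(A)$.
\end{enumerate}

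Over a $\mathbb Q$-algebra, the same arguments with all $k\ge2$, combined with Theorem~\ref{Kaledin wg}\,(3), give the full gauge formality statements.
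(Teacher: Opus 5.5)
Your proposal is correct, and its overall structure matches the paper's. You apply formality transmission along the scalar-extension maps $\mathfrak g_H^{\varphi_t}\to\prod_\lambda\mathfrak g_{H_{S_\lambda}}^{\varphi_t\otimes1}$. Injectivity on $E_2$ comes from torsion-freeness in (1), and from $(\ast)$ plus projectivity in (3)--(4), which gives $\bigcap_\ell I^\ell M_k=0$ and $\bigcap_{\mathfrak m}\mathfrak mM_k=0$. Statement (2) is then deduced from (1) over a domain quotient. The paper uses $T=R/\mathfrak p$ where you use $(R/\mathfrak p)_{\mathfrak q}$; the difference is immaterial.

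The step where you argue differently is the passage from $E_2$ to $E_k$.
\begin{itemize}
\item The paper keeps Corollary~\ref{appli B} as a black box. Each fibre is gauge $n$-formal, so an $\mathrm{Ad}$-isomorphism shows that the target spectral sequence is stationary from $E_2$ through $E_{n+1}$. The source $E_k^{k,-1}$ is a subquotient of $E_2^{k,-1}$, so injectivity of $\theta_2$ forces injectivity of $\theta_k$.
\item You instead reopen the minimal-$m$ induction and normalize the source to $\psi$ with $\psi^{(2)}=\cdots=\psi^{(m)}=0$. The target structure is the base change $\psi_S$, so both sides collapse at once by Lemma~\ref{lem spectral-page-identification}. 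Indeed, $K^m_{\psi_S}=0$ directly exhibits $m\psi_S^{(m+1)}$ as a $d^{\varphi_*}$-boundary, because the twisted differential modulo $\hbar^m$ is just $d^{\varphi_*}$. So only injectivity of $M_{m+1}\to M_{m+1}(S)$ is used.
\end{itemize}
Your version uses formality of the fibres only through the vanishing of their Kaledin classes, and needs no control of the target's higher differentials. This works because scalar extension carries the normalization of the source to the target, which fails for a general $\infty$-morphism. The paper's version is more modular, since it only checks the hypotheses of an already proved corollary.

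One small correction. The isomorphism $\operatorname{Hom}_R(P,N)\otimes_RS\cong\operatorname{Hom}_R(P,N\otimes_RS)$ for finitely presented $P$ holds for flat $S$, not for arbitrary $S$. This does no harm, because you only need it for the flat extensions $R\to\operatorname{Frac}(R)$ and $T\to\operatorname{Frac}(T)$. For (2)--(4) you correctly rely on $(\ast)$ instead.
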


\begin{proof}
	Let $(S_\lambda)_\lambda$ be a family of $R$-algebras such
	that every $A_{S_\lambda}$ is gauge $n$-formal, and consider
	\[
	f:(H,\varphi_t)\longrightarrow
	\prod_\lambda(H\otimes_R S_\lambda,\varphi_t\otimes1).
	\]
	Its target is gauge $n$-formal componentwise.
	Scalar-extension adjunction identifies the spectral sequence
	of $f$ with the product of those of the fibres, which are
	stationary from $E_2$ through $E_{n+1}$.
	Consequently, injectivity of the induced map
	\begin{equation}
		M_k\longrightarrow\prod_\lambda M_k(S_\lambda)
		\label{eq:joint-detection}
	\end{equation}
	on $E_2^{k,-1}$ persists through $E_k$.
	Thus Corollary~\ref{appli B} \textup{(2)} applies whenever
	\eqref{eq:joint-detection} is injective for
	$2\leqslant k\leqslant n+1$.
	
	\smallskip
	\noindent
	\textup{(1)}
	For $S=\operatorname{Frac}(R)$, flat base change identifies
	\eqref{eq:joint-detection} with
	\[
	M_k\longrightarrow M_k\otimes_R\operatorname{Frac}(R),
	\]
	which is injective by torsion-freeness.
	The equivalence with formality of all fibres follows by
	scalar extension and inclusion of the generic fibre.
	
	\smallskip
	\noindent
	\textup{(2)}
	Let $\mathfrak p\subseteq\mathfrak q$ and suppose that
	$A_{\kappa(\mathfrak p)}$ is gauge $n$-formal.
	For $T=R/\mathfrak p$, condition~\eqref{eq:families-bc}
	identifies $M_k(T)$ with the projective $T$-module
	$M_k\otimes_R T$, hence it is torsion-free.
	Applying \textup{(1)} over $T$ gives gauge $n$-formality
	of $A_T$, and scalar extension gives that of
	$A_{\kappa(\mathfrak q)}$.
	
	\smallskip
	\noindent
	\textup{(3)--(4)}
	Take respectively $S_\ell=R/I^\ell$ and
	$S_{\mathfrak m}=R/\mathfrak m$.
	By \eqref{eq:families-bc}, the corresponding kernels are
	\[
	\bigcap_{\ell\geqslant1}I^\ell M_k=0,
	\qquad
	\bigcap_{\mathfrak m\in\operatorname{MaxSpec}R}
	\mathfrak mM_k=0.
	\]
	These vanish because $M_k$ is a direct summand of a free
	module and, respectively, $\bigcap_\ell I^\ell=0$ and
	$\operatorname{Jac}(R)=0$. The converse implications follow by scalar extension.
	Over a $\mathbb Q$-algebra, the same argument in all weights
	uses Corollary~\ref{appli B} \textup{(1)}.
\end{proof}

\noindent As an arithmetic application, we recover the following
result of El Haouari.

\begin{corollary}[El Haouari~\cite{El92}]
	\label{cor:el-haouari}
	Let $X$ be a finite simply connected CW-complex. Then
	$C^*(X;\mathbb Q)$ is formal as an associative dg algebra if and
	only if $C^*(X;\mathbb F_p)$ is formal for all but finitely many
	primes $p$.
\end{corollary}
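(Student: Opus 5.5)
The plan is to reduce El Haouari's criterion to the family statement of Theorem~\ref{formality in families}, applied over a localization of $\mathbb Z$, and then to use the finite-order truncation argument to handle all weights at once.

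\textbf{Step 1: reduction to finitely many weights.} Since $X$ is a finite simply connected CW-complex, its cohomology is bounded, say concentrated in degrees $0$ and $2,\ldots,N$. On the minimal $A_\infty$ model, an operation $m_r$ has degree $2-r$ and takes $r$ reduced inputs of degree at least $2$. Its output therefore has degree at least $r+2$, so $m_r=0$ for $r>N-2$. Likewise, only finitely many weights of the deformation complex contribute in total degree $-1$. Consequently gauge formality is equivalent to gauge $n$-formality for a fixed $n$ depending only on $N$, and this holds over any coefficient ring, as in the last paragraph of the proof of Theorem~\ref{thm:pd-transmission}.

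\textbf{Step 2: spreading out over $\mathbb Z$.} Choose a finite model, namely the cellular cochains $C^*(X;\mathbb Z)$. The groups $H^*(X;\mathbb Z)$ are finitely generated, so there is an integer $D$ such that $n!$ divides a power of $D$ and, over $R=\mathbb Z[1/D]$, the following hold.
\begin{itemize}
\item[$\centerdot$] The cohomology $H^*(X;R)$ is free.
\item[$\centerdot$] The cochains admit a contraction onto cohomology, so the non-symmetric cooperad $\C=\mathrm{Ass}^{\antishriek}$ gives transferred structures by Theorem~\ref{HTT}.
\item[$\centerdot$] The finitely many modules $M_k=E_2^{k,-1}$ for $2\leqslant k\leqslant n+1$ are finitely generated, because the Hochschild complex is finite in each weight and degree.
\end{itemize}
Enlarging $D$, we may further assume that each $M_k$ is free and that its formation commutes with every base change $S$, so that condition~\eqref{eq:families-bc} holds. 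Generic freeness of finitely many finitely generated complexes over $\mathbb Z$ gives this. Flatness also gives $C^*(X;\mathbb F_p)\simeq C^*(X;R)\otimes_R\mathbb F_p$ for $p\nmid D$.

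\textbf{Step 3: applying the family theorem.} We apply Theorem~\ref{formality in families} to $R$, using points (1) and (4). The ring $R$ is a domain with generic fibre $\mathbb Q$, each $M_k$ is torsion-free, and $\operatorname{Jac}(R)=0$ because $R$ has infinitely many maximal ideals $(p)$ with $p\nmid D$.
\begin{itemize}
\item[$\centerdot$] If $C^*(X;\mathbb Q)$ is formal, point (1) shows that every fibre is gauge $n$-formal, hence gauge formal by Step~1.
\item[$\centerdot$] Conversely, suppose that $C^*(X;\mathbb F_p)$ is formal for all $p$ outside a finite set $T$. Enlarge $D$ to include the primes of $T$; the remaining primes still have zero Jacobson intersection. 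Point (4) then gives gauge $n$-formality of $A_R$, hence of $A_{\mathbb Q}$ by scalar extension.
\end{itemize}
Finally, formality and gauge formality agree over the fields $\mathbb Q$ and $\mathbb F_p$ in the non-symmetric setting, by \cite[proof of Proposition~2.21]{CE24b}.

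\textbf{Main obstacle.} The hardest step is Step~2, namely producing a single $D$ that makes all the $M_k$ projective and compatible with arbitrary base change. My first attempt would be to bound the relevant $E_2$ terms by the homology of finite free complexes over $\mathbb Z$. After inverting the torsion primes of the finitely many cycle and boundary modules involved, these complexes split into free pieces. Homology of such a split free complex commutes with every base change, and this yields~\eqref{eq:families-bc}.
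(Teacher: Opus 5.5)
Your route is the paper's: spread out over $\mathbb Z[1/N]$, use the degree bound on a minimal model to identify gauge formality with gauge $n$-formality, and split the finitely many finite free complexes computing $M_2,\ldots,M_{n+1}$ so they are free and satisfy the base-change condition. You then apply points (1) and (4) of Theorem~\ref{thm:formality-families} and use that gauge formality agrees with formality over fields.

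One step fails as written. Cellular cochains of a general finite CW complex do not carry an associative cup product at the cochain level: a cellular diagonal approximation is coassociative only up to homotopy. So they are not an associative dg algebra to which the family theorem applies. The paper instead replaces $X$ by a homotopy-equivalent finite simplicial complex and uses ordered simplicial cochains with the Alexander--Whitney product. This is a finite free dg algebra over $\mathbb Z$ whose base changes compute $C^*(X;\mathbb Q)$ and $C^*(X;\mathbb F_p)$ as algebras. Its degreewise freeness, not flatness of $\mathbb F_p$ (which fails), is what justifies reduction mod $p$.

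Your Step~1 degree count also needs strict unitality, which you do not arrange. The bound only covers operations whose inputs lie in $H^{>0}$. For a minimal model that is not strictly unital, degree reasons do not kill operations with several unit inputs: $m_{2j}(1,a,1,a,\ldots)$ can land in degree $2$ for every $j$. The paper fixes this with a unit-preserving contraction, which gives a strictly unital minimal model compatible with scalar extension, and then restricts to $H^{>0}$ and adjoins the unit.

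A smaller correction to your ``main obstacle'' paragraph: over $\mathbb Z[1/N]$ the cycle and boundary modules are already free. What must be made free by inverting primes is the homology of these complexes. Once it is free, the complexes split and homology commutes with every base change.
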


\begin{proof}
Let us choose a finite simplicial cochain model $A$ of $X$ over
	$\mathbb Z$ and fix $n>\dim X$. After passing to
	$R=\mathbb Z[1/N]$, its cohomology $H$ is finite free, and a
	unit-preserving contraction gives a strictly unital minimal model
	compatible with scalar extension. Since $H^{>0}$ starts in degree
	two, an $r$-ary higher operation on positive inputs has output
	degree at least $r+2$. Restricting to $H^{>0}$ and adjoining the
	unit therefore identifies gauge $n$-formality with gauge formality.  Enlarging $N$, we may invert $n!$ and split the finitely many
	finite free complexes computing $M_2,\ldots,M_{n+1}$ for this
	associative model. Thus the $M_k$ are finite free and satisfy
	\eqref{eq:families-bc}. Since
	$\operatorname{Frac}(R)=\mathbb Q$ and $\operatorname{Jac}(R)=0$,
	Theorem~\ref{thm:formality-families} \textup{(1), (4)} identifies
	gauge $n$-formality over $\mathbb Q$ with gauge $n$-formality over
	every $\mathbb F_p$ with $p\nmid N$.
	Over fields, gauge formality agrees with dg formality.
	The result follows, enlarging $N$ to include the exceptional
	primes in the converse direction.
\end{proof}

\begin{remark}
	The same equivalence holds for any unital dg associative algebra
	over $\mathbb Z[1/N]$ whose underlying complex is bounded and
	finite projective, provided that its rational cohomology is
	$\mathbb Q\cdot1\oplus V$ with $V^i=0$ for $i<2$.
	Here only primes $p\nmid N$ are considered.
\end{remark}

\subsection{Lifts of homology automorphism}

Motivated by the Weil conjectures, there is a long-standing tradition of using \emph{purity} as a tool to establish formality. We refer to \cite{Notes} for a survey of this approach.

\begin{theorem}[Purity]
Assume that \(C^{(p)}\) is concentrated in homological degree \(p\) for every \(p\geqslant 0\). Let $\alpha$ be a unit in $R$ such that $\alpha^k-1$ is a unit  for every positive integer $k$. Let $\sigma_{\alpha}$ be the grading automorphism, acting on homology by multiplication by $\alpha^k$ in homological degree $k$. If $(A,\phi)$ satisfies a purity property, i.e. if $\sigma_{\alpha}$ admits a lift to the chain level, then the algebra is formal.
\end{theorem}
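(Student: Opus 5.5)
The plan is to deduce the statement from Application~A, Corollary~\ref{appli A}, by turning the chain-level lift of $\sigma_\alpha$ into an $\infty$-automorphism $S$ of the transferred structure. We then check that $\mathrm{Ad}_S-\mathrm{id}$ is invertible on the relevant spectral-sequence terms.

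\emph{Step 1: build $S$.} Let $\widetilde\sigma\colon(A,\phi)\to(A,\phi)$ be a strict lift of $\sigma_\alpha$; more generally an $\infty$-endomorphism whose weight-zero component induces $\sigma_\alpha$ would do. Using Theorem~\ref{HTT}, set
\[
S\coloneqq p_\infty\circledcirc\widetilde\sigma\circledcirc i_\infty\colon(H(A),\phi_t)\rightsquigarrow(H(A),\phi_t).
\]
By Theorem~\ref{invertibility prop}(2) this is an $\infty$-morphism. Its weight-zero component is $p\,\widetilde\sigma^{(0)}\, i=H(\widetilde\sigma)=\sigma_\alpha$, which is invertible because $\alpha\in R^\times$. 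Hence, by Theorem~\ref{invertibility prop}(3), $S$ is an $\infty$-automorphism, and by Proposition~\ref{Ad_f} it gives a filtered isomorphism $\mathrm{Ad}_S$ of $\mathfrak g_{H(A)}^{\phi_t}$, since $S\cdot\phi_t=\phi_t$.

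\emph{Step 2: compute the associated graded.} On $\mathrm{gr}\,\mathfrak g_{H(A)}$, only the weight-zero component of $S$ contributes. So $\mathrm{gr}\,\mathrm{Ad}_S$ is conjugation by $\sigma_\alpha$: it sends $x\colon\C^{(k)}\to\End_{H(A)}$ to $\sigma_\alpha\circ x\circ(\sigma_\alpha^{-1})^{\otimes}$. This is the multiplicative analogue of Remark~\ref{rem:properadic-euler-identity}. For $c\in\C^{(k)}(m,n)$ of degree $k$ and $x$ of degree $-1$, the map $x(c)$ raises total degree by $k-1$. Conjugation therefore multiplies it by $\alpha^{k-1}$, which holds for any number of inputs and outputs.

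Thus on $E_0^{k,-1}$, and hence on $E_1^{k,-1}$ and $E_2^{k,-1}$, the map $\mathrm{Ad}_S-\mathrm{id}$ acts as multiplication by $\alpha^{k-1}-1$. For $k\geqslant2$ this is a unit by hypothesis. The same argument in the neighbouring bidegrees $(k\pm r,\ast)$ shows that $\mathrm{gr}(\mathrm{Ad}_S-\mathrm{id})$ is multiplication by $\alpha^{j}-1$ on weight-$p$, degree-$q$ pieces with $j=p+q$. This is a unit on the whole line of total degree $p+q=k-1\geqslant1$.

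\emph{Step 3: propagate to later pages.} The filtered map $\mathrm{Ad}_S-\mathrm{id}$ has the property that its associated-graded map is an isomorphism on every bidegree with $p+q>0$. By the usual comparison argument, the induced map on $E_r^{p,q}$ is an isomorphism in those bidegrees for every $r$. The $d_r$ preserve $p+q-1\mapsto p+q-1$ shifted appropriately; since $d_r$ lowers $q$ by one and raises $p$ by $r$, the total $p+q$ changes by $r-1$, so we stay in the region $p+q\geqslant1$ whenever we start at $(k,-1)$ with $k\geqslant2$. In particular each $\theta_k$ is injective for $k\geqslant2$.

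\emph{Step 4: conclude.} Corollary~\ref{appli A}(1) gives gauge formality when $R$ is a $\mathbb Q$-algebra. Corollary~\ref{appli A}(2) gives gauge $n$-formality whenever $n!\in R^\times$. One then passes from gauge formality to formality using the comparison results recalled after the definition of gauge homotopy equivalence (characteristic zero, or the non-symmetric or quasi-planar cases over a field).

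The main obstacle is Step~3: justifying rigorously that multiplication by a unit on $E_0$, for a map that is not itself a chain automorphism, survives to all $E_k^{k,-1}$. I would first try to restrict to the subcomplex, or quotient complex, spanned by bidegrees with $p+q\geqslant1$. This subcomplex is preserved by the differential, because the $d^{(r)}$ components change $p+q$ by $r-1\geqslant0$. On it, $\mathrm{Ad}_S-\mathrm{id}$ is a filtered map whose graded is an isomorphism, hence a filtered isomorphism by completeness. The induced map on its spectral sequence is then an isomorphism, and I would compare that spectral sequence with the original one at the $(k,-1)$ spots. For a general ring without $n!$ invertible, I expect the finer argument of Theorem~\ref{rev2} to be needed, and I would not claim that case here.
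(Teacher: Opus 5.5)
Within its scope your argument is correct. $\mathrm{gr}\,\mathrm{Ad}_S$ is conjugation by $\sigma_\alpha$ and acts on $\mathfrak g_q^{(p)}$ by $\alpha^{p+q}$. The fix you sketch for Step~3 also works: $W_q:=\mathcal F^{\max(1,1-q)}\mathfrak g_q$ is stable under $d^{\varphi_t}$, $\mathrm{Ad}_S-\mathrm{id}$ restricts to a filtered automorphism of $W$ by completeness, and every cycle and boundary entering $E_k^{k,-1}$ lies in $W$.

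The gap is in scope. Routing through Corollary~\ref{appli A} means routing through Kaledin classes and Theorem~\ref{B prop}, which requires $R$ to be a $\mathbb Q$-algebra for full gauge formality, or $n!\in R^\times$ for gauge $n$-formality. The statement imposes no such condition, and its hypotheses hold in positive characteristic. For example, take $R=\mathbb F_p(t)$ and $\alpha=t$: every $t^k-1$ is a nonzero element of a field, hence a unit. There your proof yields only gauge $(p-1)$-formality, which is genuinely weaker. Proposition~\ref{failure} gives, over $\mathbb F_2(t)$, an algebra that is gauge $1$-formal but not gauge formal. So the case you set aside is part of the statement, not a technicality. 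Your route is exactly Theorem~\ref{relevements} with $u=\sigma_\alpha$. The paper's framework instead covers purity over an arbitrary $R$ through the sharper Theorem~\ref{rev2}.

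The missing idea is that for $u=\sigma_\alpha$ the operator $T=\mathrm{Ad}_u-\mathrm{id}$ is invertible on the convolution algebra itself, not merely injective on the terms $E_k^{k,-1}$. This permits a division-free construction that avoids Kaledin classes and spectral-sequence propagation. Here $E_1^{p,q}=\mathfrak g_q^{(p)}$, and $T$ acts by $\alpha^k-1$ on both $\mathfrak g_0^{(k)}$ and $\mathfrak g_{-1}^{(k+1)}$. Start from $\psi_1=\varphi_t$ and $s_1=S$. Suppose inductively that $\psi_k\equiv\varphi_*$ modulo $\mathcal F^{k+1}$ and that $s_k$ is an $\infty$-automorphism of $\psi_k$ with $s_k\equiv u$ modulo $\mathcal F^{k}$. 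Set $\xi_k:=T^{-1}\bigl((s_k\circledcirc u^{-1})^{(k)}\bigr)\in\mathfrak g_0^{(k)}$, and let $\psi_{k+1}:=f_k\cdot\psi_k$ and $s_{k+1}:=f_k\circledcirc s_k\circledcirc f_k^{-1}$, where $f_k=1+\xi_k$ is an isotopy. This kills the weight-$k$ part of the automorphism. The $\infty$-automorphism equation for $s_{k+1}$ in weight $k+1$ then reduces to $(\alpha^k-1)\psi_{k+1}^{(k+1)}=0$, so $\psi_{k+1}^{(k+1)}=0$. The composites $f_N\circledcirc\cdots\circledcirc f_1$ stabilise weightwise and give an $\infty$-isotopy $\varphi_t\rightsquigarrow\varphi_*$ over any $R$. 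Corollary~\ref{gauge formality} and the comparison results then give formality, as in your Step~4.
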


\noindent This criterion was proved by Sullivan \cite{Sul77} for associative algebras, and was later generalized to many other types of algebraic structures; see \cite{GNPR05, Petersen14,Dup16,CH20,DCH21,CS22, CH22}.  In \cite[Theorem~4.9]{Kaledin}, this purity criterion is generalized using Kaledin classes to any algebra over a properad and in particular to other homology automorphisms. This generalization can be seen as a consequence of formality transmission. This approach allows us to refine the hypothesis on the automorphism and yields the following result.

\begin{theorem}[Automorphism lifts]\label{relevements}
 Let $u \in \mathrm{Aut}(H(A), \varphi_*)$ be a strict automorphism of the homology that admits a chain level lift $v$.   
Let us define $S \coloneqq  p_{\infty} \circledcirc  v \circledcirc i_{\infty} $ and consider for every $k \geqslant 1$, the map induced by $\mathrm{Ad}_S - \mathrm{id}$ on the $E_k$ page of the properadic spectral sequences \[ \theta_{k} :  E(\C, H(A), \varphi_t)^{k,-1}_k  \longrightarrow E(\C,  H(A), \varphi_t)^{k,-1}_k      \ . \] 
\begin{enumerate}
	\item If $R$ is a $\mathbb{Q}$-algebra, and   $\theta_k$ is injective for every $k > 1$, then $(A, \varphi)$ is gauge formal.
	\item Let $n \geqslant 1$ be an integer such that $n !$ is a unit in $R$.  If \(\theta_k\) is
	injective for \(1  <  k\leqslant n + 1 \), then $(A, \varphi)$ is gauge $n$-formal. 
\end{enumerate}
\end{theorem}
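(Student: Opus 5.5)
The plan is to reduce the statement to Corollary~\ref{appli A}. That corollary already gives exactly the two conclusions once we know that $S$ is an $\infty$-automorphism of the transferred structure $(H(A),\varphi_t)$, and that the maps $\theta_k$ in the statement coincide with those of Corollary~\ref{appli A}. So the work consists in checking that $S$ is such an automorphism.

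First, the lift $v$ is interpreted as a strict morphism of $\Omega\C$-algebras $(A,\varphi)\to(A,\varphi)$, that is, an $\infty$-morphism concentrated in weight zero. By hypothesis, $H(v)=u$. By Theorem~\ref{invertibility prop}~(2), the composite
\[
S=p_\infty\circledcirc v\circledcirc i_\infty\colon(H(A),\varphi_t)\rightsquigarrow(H(A),\varphi_t)
\]
is an $\infty$-morphism. Its weight-zero component is $S^{(0)}=p\,v^{(0)}\,i$. This component induces $u$ in homology, and since $H(A)$ has zero differential it equals $u$. Because $u$ is an automorphism, Theorem~\ref{invertibility prop}~(3) shows that $S$ is invertible under $\circledcirc$. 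Hence $S$ is an $\infty$-automorphism of $(H(A),\varphi_t)$.

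Next, Proposition~\ref{Ad_f}~(2),(3) give $S\cdot\varphi_t=\varphi_t$, and $\mathrm{Ad}_S$ is a filtered isomorphism of $\mathfrak g_{H(A)}^{\varphi_t}$. It follows that $\mathrm{Ad}_S-\mathrm{id}$ is a filtered chain endomorphism, and its induced maps on $E_k^{k,-1}$ are the $\theta_k$ of the statement. Lemma~\ref{piquant prop}~(2),(3), applied to $\mathcal D_0(S)$, yields $\mathrm{Ad}_{\mathcal D_0(S)}(K_{\varphi_t})=K_{\varphi_t}$, and likewise modulo $\hbar^n$. Therefore $(\mathrm{Ad}_S-\mathrm{id})_*$ annihilates $K_{\varphi_t}$ and $K^n_{\varphi_t}$.

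Points (1) and (2) then follow verbatim from Corollary~\ref{appli A}, equivalently from Theorem~\ref{thm formality transmission} with $f=\mathrm{Ad}_S-\mathrm{id}$. The main obstacle is a bookkeeping point: one must make sure that $\mathcal D_0(\mathrm{Ad}_S)$ agrees with $\mathrm{Ad}_{\mathcal D_0(S)}$ on the $\hbar$-completed complexes, so that Lemma~\ref{piquant prop} really applies to the map used in Theorem~\ref{thm formality transmission}. I would verify this weight by weight from the definition $\mathrm{Ad}_f(x)=(f;f\rhd x)\circledcirc f^{-1}$, using that the prismatic decomposition is multiplicative in weights.
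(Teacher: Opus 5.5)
Your proposal is correct and is essentially the paper's argument: the paper proves this theorem as a direct consequence of Corollary~\ref{appli A}, and your checks that $S$ is an $\infty$-automorphism of $(H(A),\varphi_t)$ (its weight-zero component $p\,v^{(0)}\,i$ is invertible) and that $\mathcal D_0(\mathrm{Ad}_S)=\mathrm{Ad}_{\mathcal D_0(S)}$ only make explicit what the paper leaves implicit. One small attribution point: $S\cdot\varphi_t=\varphi_t$ does not come from parts (2) and (3) of Proposition~\ref{Ad_f}; it follows from part (1) together with the injectivity of $-\lhd S$ given by Lemma~\ref{formuless}~(e).
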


\begin{proof}
This is a direct consequence of Corollary \ref{appli A}. 
\end{proof}

\noindent This result generalizes the criterion of \cite[Theorem~4.10]{Kaledin}. Indeed, the invertibility of $\mathrm{Ad}_u-\mathrm{id}$ implies, in particular, the injectivity of the maps $\theta_k$. More importantly, under this stronger assumption, the additional hypotheses that $R$ be a $\mathbb{Q}$-algebra, or that there exist an integer $n\geqslant 1$ such that $n!$ is a unit in $R$, are no longer needed. The following theorem makes this refinement precise and may be viewed as a sharper form of the formality transmission phenomenon in this setting.

\begin{theorem}\label{rev2}
	Let $u$ be an automorphism of $(H(A),\varphi_*)$
	admitting a chain-level lift $v$. Set
	\[
	s:=p_\infty\circledcirc v\circledcirc i_\infty,
	\qquad
	E:=E(C,H(A),\varphi_t), \qquad
	T_1:=E_1(\operatorname{Ad}_s)-\mathrm{id}
	\]
	be the induced endomorphism of the first page.
	Then the following statements hold.
	\begin{enumerate}
		\item Let $n\geqslant 1$. If the maps
		\[
		T_1:E_1^{k,0}\longrightarrow E_1^{k,0},
		\qquad
		T_1:E_1^{k+1,-1}\longrightarrow E_1^{k+1,-1}
		\]
		are isomorphisms for every $1\leqslant k\leqslant n$, then
		$(A,\varphi)$ is gauge $n$-formal.
		
		\item If these maps are isomorphisms for every $k\geqslant1$,
		then $(A,\varphi)$ is gauge formal.
	\end{enumerate}
	No additional assumption on the characteristic of $R$
	is required.
\end{theorem}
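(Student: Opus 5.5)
We argue directly, without Kaledin classes, by an obstruction-theoretic induction in the spirit of the proof of Theorem~\ref{intrinsic}. The key is to use the symmetry $s$ to kill the successive higher components of the transferred structure by $\infty$-isotopies, with no division involved. Write $\mathfrak h:=\mathfrak g_{H(A)}$ and $H:=H(A)$. By Corollary~\ref{gauge formality}, we must produce an $\infty$-isotopy from $\varphi_t$ to a structure whose components of weight $2,\dots,n+1$ vanish. First note that $s$ is an $\infty$-automorphism of $(H,\varphi_t)$ with $s^{(0)}=p\,v\,i=u$ on homology, and that $u$ is a strict automorphism of $\varphi_*$. On $E_1=H(\mathfrak h,d^{(1)})$ only the weight-zero part of $s$ contributes, so $T_1$ is the map $x\mapsto u\cdot x\cdot u^{-1}-x$, i.e.\ conjugation by $u$ minus the identity, computed on $d^{\varphi_*}$-homology.

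We induct on $k\geqslant 1$ with the following hypothesis. There is a structure $\phi_{k+1}$ with $\phi_{k+1}^{(j)}=0$ for $2\leqslant j\leqslant k$, an $\infty$-isotopy $g_k:\varphi_t\rightsquigarrow\phi_{k+1}$, and an $\infty$-automorphism $s_k:=g_k\circledcirc s\circledcirc g_k^{-1}$ of $\phi_{k+1}$ satisfying $s_k^{(j)}=0$ for $1\leqslant j\leqslant k-1$. In other words, $s_k$ is $u$ up to weight $k$. Now we write the $\infty$-morphism equation $s_k\rhd\phi_{k+1}-\phi_{k+1}\lhd s_k=d(s_k)$ and extract its component of weight $k+1$. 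Since $u$ strictly preserves $\varphi_*$, the terms linear in $u$ combine to give
\[
u\cdot\phi_{k+1}^{(k+1)}\cdot u^{-1}-\phi_{k+1}^{(k+1)}=\pm d^{\varphi_*}\bigl(s_k^{(k)}u^{-1}\bigr).
\]
All other contributions vanish by the vanishing hypotheses. Hence $T_1[\phi^{(k+1)}_{k+1}]=0$ in $E_1^{k+1,-1}$. Here $\phi^{(k+1)}_{k+1}$ is $d^{\varphi_*}$-closed, because the Maurer--Cartan equation in weight $k+2$ gives $d^{\varphi_*}\phi^{(k+1)}=0$ once the intermediate weights vanish. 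Injectivity of $T_1$ on $E_1^{k+1,-1}$ then gives $\phi^{(k+1)}_{k+1}=d^{\varphi_*}\nu$ with $\nu$ of weight $k$. Applying the division-free step of \cite[Proposition~2.33]{CE24b}, as in the proof of Theorem~\ref{intrinsic}, with $f_k=1+\nu$, we obtain $\phi_{k+2}:=f_k\cdot\phi_{k+1}$ whose components of weight $2,\dots,k+1$ vanish. We set $g_{k+1}=f_k\circledcirc g_k$.

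It remains to restore the normalization of $s_{k+1}:=f_k\circledcirc s_k\circledcirc f_k^{-1}$. Its weight-$k$ component is $s_k^{(k)}$ corrected by $\nu u-u\nu$. Rewriting, $s_k^{(k)}u^{-1}$ is now $d^{\varphi_*}$-closed, since the displayed identity becomes zero after the isotopy, and so it defines a class in $E_1^{k,0}$. Surjectivity of $T_1$ on $E_1^{k,0}$ lets us modify $\nu$ by a $d^{\varphi_*}$-cycle $\mu$ of weight $k$ and degree $0$ so that $s_k^{(k)}u^{-1}+T_1(\mu)$ is a boundary $d^{\varphi_*}(\beta)$. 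The choice of $\mu$ does not affect the previous step, because adding a cycle to $\nu$ changes $\phi_{k+2}$ only in weights at least $k+2$. A further isotopy $1+\beta$ of weight $k-1$ then kills the remaining boundary. One must check that this last isotopy leaves the weight-$k+1$ vanishing of the structure unchanged, since it adds $d^{\varphi_*}\beta=0$ at lowest order. Injectivity of $T_1$ on $E_1^{k,0}$ is what ensures that this adjustment does not reintroduce obstructions at lower weight.

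I expect the main obstacle to be this bookkeeping of how $s$ transforms under the successive isotopies. One has to verify that exactly the two $E_1$-isomorphisms, in degrees $0$ and $-1$, suffice, and that no higher-page differentials intervene. For (2), the composites $g_k$ stabilize weightwise because $f_k-1$ has weight increasing with $k$, which yields an isotopy $\varphi_t\rightsquigarrow\varphi_*$. Since no step involved division, no hypothesis on the characteristic of $R$ is needed.
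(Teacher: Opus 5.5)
There is a genuine gap, and it comes from how you read the first page. In the conventions of Section~\ref{1.3}, the filtered differential strictly raises weight, so $d_0=0$ and $E_1^{p,q}\cong\mathfrak g_q^{(p)}$. The $d^{\varphi_*}$-homology is $E_2$, not $E_1$. The hypothesis is therefore that $T=\operatorname{Ad}_u-\mathrm{id}$ is bijective \emph{on cochains} in $\mathfrak g_0^{(k)}$ and $\mathfrak g_{-1}^{(k+1)}$. Your argument uses only its action on homology, so you are in effect attempting a stronger statement.

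With only that information, your normalization of $s$ fails. Suppose you have arranged $s^{(k)}u^{-1}+T(\mu)=d^{\varphi_*}\beta$. Then $\beta$ has weight $k-1$ and degree $+1$, because $s^{(k)}u^{-1}$ has degree $0$ and $d^{\varphi_*}$ has degree $-1$. So $1+\beta$ is not an $\infty$-isotopy. Even a degree-zero conjugation in weight $k-1$ would change $s^{(k-1)}$ and $\phi^{(k)}$, destroying your inductive hypotheses. Your remark that it ``adds $d^{\varphi_*}\beta=0$'' is also inconsistent, since $d^{\varphi_*}\beta$ is exactly the nonzero term you want to remove.

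Conjugation by $\infty$-isotopies can only change $s^{(k)}u^{-1}$ by elements of the form $-T(\xi)$ with $\xi$ of degree $0$, never by an arbitrary boundary. Removing a boundary would require replacing $s$ by a homotopic $\infty$-automorphism, which you do not construct. Consequently, your inductive hypothesis $s_{k+1}^{(j)}=0$ for $j\leqslant k$ is not re-established. The later weight components of the automorphism identity then pick up nonlinear terms in the surviving lower components of $s$, so the claim that ``all other contributions vanish'' is no longer justified.

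The repair is to use the cochain-level bijectivity the hypothesis actually gives. This also makes the detour through homology and \cite[Proposition~2.33]{CE24b} unnecessary. This is the paper's argument:
\begin{enumerate}
\item Assume $\psi_k\equiv\varphi_*\pmod{F^{k+1}}$ and $s_k\equiv u\pmod{F^k}$.
\item Set $a_k=(s_k\circledcirc u^{-1})^{(k)}$, $\xi_k=T^{-1}(a_k)\in\mathfrak g_0^{(k)}$ and $f_k=1+\xi_k$.
\item The weight-$k$ conjugation formula makes the weight-$k$ part of $f_k\circledcirc s_k\circledcirc f_k^{-1}\circledcirc u^{-1}$ equal to $a_k-T(\xi_k)=0$ exactly.
\item Since $f_k-1$ has weight $k$, the structure $\psi_{k+1}=f_k\cdot\psi_k$ still agrees with $\varphi_*$ below weight $k+1$.
\item The weight-$(k+1)$ component of the automorphism identity then reduces to $T(\psi_{k+1}^{(k+1)})=0$ on the nose, and injectivity on $\mathfrak g_{-1}^{(k+1)}$ gives $\psi_{k+1}^{(k+1)}=0$.
\end{enumerate}
So one normalizes $s$ first, and the next component of the structure vanishes automatically, with no obstruction class killed separately. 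In your ordering, the same point reads: the exact cochain-level choice of weight-$k$ isotopy that normalizes $s^{(k)}$ already kills $\phi^{(k+1)}$. This follows from the identity you displayed, the fact that $T$ commutes with $d^{\varphi_*}$, and injectivity of $T$ on $\mathfrak g_{-1}^{(k+1)}$.
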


\begin{proof}
	Set $H:=H(A)$ and $\mathfrak g:=\mathfrak g_H$.
	Since $E_1^{p,q}\cong\mathfrak g_q^{(p)}$, the hypotheses
	apply to $T:=\operatorname{Ad}_u-\mathrm{id}$ in the
	corresponding weights and degrees. Starting with $\psi_1:=\varphi_t$ and $s_1:=s$, we construct
	successively a structure $\psi_k$ and an
	$\infty$-automorphism $s_k$ of $\psi_k$ such that
	\[
	\psi_k\equiv\varphi_*\pmod{F^{k+1}\mathfrak g},
	\qquad
	s_k\equiv u\pmod{F^k\mathfrak g}.
	\]
	Suppose these conditions hold. Define
	\[
	a_k:=(s_k\circledcirc u^{-1})^{(k)},
	\qquad
	\xi_k:=T^{-1}(a_k)\in\mathfrak g_0^{(k)},
	\qquad
	f_k:=1+\xi_k,
	\]
	and set
	\[
	\psi_{k+1}:=f_k\cdot\psi_k,
	\qquad
	s_{k+1}:=f_k\circledcirc s_k\circledcirc f_k^{-1}.
	\]
	The weight-$k$ conjugation formula gives
	\[
	(s_{k+1}\circledcirc u^{-1})^{(k)}
	=a_k+\xi_k-\operatorname{Ad}_u(\xi_k)=0.
	\]
	Thus $s_{k+1}\equiv u\pmod{F^{k+1}\mathfrak g}$.
	Moreover, since $f_k-1$ has weight $k$, we still have
	$\psi_{k+1}\equiv\varphi_*\pmod{F^{k+1}\mathfrak g}$.
	The $\infty$-automorphism identity for $s_{k+1}$ in
	weight $k+1$ therefore reduces to
	\[
	T\left(\psi_{k+1}^{(k+1)}\right)=0.
	\]
	Injectivity yields
	$\psi_{k+1}^{(k+1)}=0$, completing the induction. For (1), performing the construction for $k=1,\ldots,n$
	gives an $\infty$-isotopy from $\varphi_t$ to a structure
	congruent to $\varphi_*$ modulo $F^{n+2}\mathfrak g$.
	Corollary~\ref{gauge formality} implies gauge $n$-formality. For (2), the compositions
	$f_N\circledcirc\cdots\circledcirc f_1$ stabilize in each
	weight and hence define an $\infty$-isotopy from
	$\varphi_t$ to $\varphi_*$. Gauge formality follows again
	from Corollary~\ref{gauge formality}.
\end{proof}

\begin{example}\label{exemple pureté}
A key example is given by the action of a Frobenius on the étale cohomology of any smooth projective variety, see \cite[Section 4.5]{Kaledin}. 
\end{example}

\subsection{Formality under change of algebraic structure}

Saleh proved that a commutative dg algebra over a field of
characteristic zero is formal if and only if its underlying
associative dg algebra is formal~\cite{Sal17}.
Campos--Petersen--Robert-Nicoud--Wierstra  \cite{CPW} extended this comparison
to other operads. The criterion below extends their characteristic-zero comparison
by replacing the existence of an infinitesimal
bimodule retraction with injectivity on the relevant
spectral-sequence terms, and yields finite-order comparisons
over more general coefficient rings.

\begin{theorem}[Change of structure]\label{cooperad change}
	Let $(A,\varphi)$ be an $\Omega\C$-algebra admitting a transferred
	structure $\varphi_t$, and let $\iota:\C'\to\C$ be a morphism of
	reduced weight-graded dg coproperads over $R$.
	Precomposition induces a filtered morphism of deformation
	complexes, and hence maps
	\[
	\theta_k:
	E_k(\C,H(A),\varphi_t)^{k,-1}
	\longrightarrow
	E_k(\C',H(A),\varphi_t\circ\iota)^{k,-1}.
	\]
	\begin{enumerate}
		\item If $R$ is a $\mathbb Q$-algebra and $\theta_k$ is injective
		for every $k\geqslant2$, then $(A,\varphi)$ is gauge formal if
		and only if $(A,\varphi\circ\iota)$ is gauge formal.
		\item Let $n\geqslant1$ with $n!\in R^\times$.
		If $\theta_k$ is injective for $2\leqslant k\leqslant n+1$,
		then $(A,\varphi)$ is gauge $n$-formal if and only if
		$(A,\varphi\circ\iota)$ is gauge $n$-formal.
	\end{enumerate}
\end{theorem}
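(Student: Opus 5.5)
The plan is to apply Theorem~\ref{thm formality transmission} to the precomposition map
\[
\iota^*\colon \mathfrak g_{H(A)}^{\varphi_t}=\bigl(\Hom_{\mathbb S}(\overline{\C},\End_{H(A)}),d^{\varphi_t}\bigr)\longrightarrow \bigl(\Hom_{\mathbb S}(\overline{\C'},\End_{H(A)}),d^{\varphi_t\circ\iota}\bigr)=\mathfrak g'^{\,\varphi_t\circ\iota}_{H(A)} .
\]
This handles the implication ``$\varphi\circ\iota$ gauge ($n$-)formal $\Rightarrow$ $\varphi$ gauge ($n$-)formal''. The converse is elementary. An $\infty$-isotopy $g\colon\varphi_t\rightsquigarrow\varphi_*$, or one towards a structure $\psi$ with $\psi^{(k)}=0$ for $2\leqslant k\leqslant n+1$, precomposes to an $\infty$-isotopy $g\circ\iota\colon\varphi_t\circ\iota\rightsquigarrow\varphi_*\circ\iota$ (respectively towards $\psi\circ\iota$). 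This uses that $\iota$ is a morphism of coproperads, so it commutes with the decomposition maps entering $\circledcirc$, $\rhd$, $\lhd$, and that it preserves weights. Moreover $\varphi_t\circ\iota$ is a transferred structure for $(A,\varphi\circ\iota)$, because $i_\infty\circ\iota$ and $p_\infty\circ\iota$ remain $\infty$-quasi-isomorphisms. Corollary~\ref{gauge formality} then concludes this direction.

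For the main direction, I would first check that $\iota^*$ is a filtered chain map. Since $\iota$ commutes with $\Delta_{(1,1)}$, we get $\iota^*(x\star y)=\iota^*x\star\iota^*y$ and $\iota^*\circ d=d\circ\iota^*$. Hence $\iota^*[\varphi_t,x]=[\varphi_t\circ\iota,\iota^*x]$, and weights are preserved. By definition $\theta_k=E_k^{k,-1}(\iota^*)$. It remains to verify the hypothesis $f_*(K_{\varphi_t})=0$, respectively $f_*(K^n_{\varphi_t})=0$, for $f=\iota^*$.

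Because $\iota^*$ is a dg Lie-admissible morphism preserving weights, $\mathcal D_0(\iota^*)$ sends $\mathcal D_1(\varphi_t)$ to $\mathcal D_1(\varphi_t\circ\iota)$ and commutes with $\partial_\hbar$. By naturality it therefore sends $K_{\varphi_t}$ to $K_{\varphi_t\circ\iota}$, and similarly modulo $\hbar^n$. Now assume $(A,\varphi\circ\iota)$ is gauge formal (resp.\ gauge $n$-formal). Theorem~\ref{B prop} gives $K_{\varphi_t\circ\iota}=0$ (resp.\ $K^n_{\varphi_t\circ\iota}=0$) in the twisted complex over $\C'$. Hence $f_*(K_{\varphi_t})=0$ (resp.\ $f_*(K^n_{\varphi_t})=0$), and Theorem~\ref{thm formality transmission} (1), resp.\ (2), gives gauge formality (resp.\ gauge $n$-formality) of $(A,\varphi)$.

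The only delicate points are the compatibility checks. First, the transferred structure used for $\varphi\circ\iota$ must be $\varphi_t\circ\iota$; this needs $\iota$ to commute with the full decomposition $\Delta$, not only with the infinitesimal one. Second, the coefficient and transfer hypotheses must be satisfied by $\C'$ as well; I would assume these implicitly, as in the statement's phrase ``admitting a transferred structure.'' I expect the first point to be the main obstacle. I would handle it by observing that every operation in sight ($\circledcirc$, $\rhd$, $\lhd$, $\star$) is defined by a decomposition map of the coproperad followed by composition in $\End$, so precomposition by a coproperad morphism is multiplicative for all of them.
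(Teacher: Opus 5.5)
Your proposal is correct and follows the paper's proof. Restriction along $\iota$ preserves $\infty$-isotopies and weights, which gives the easy direction, and it carries the (truncated) Kaledin class of $\varphi_t$ to that of $\varphi_t\circ\iota$, so Theorem~\ref{thm formality transmission} applied to $\iota^*$ gives the converse; you simply spell out the checks the paper leaves implicit, namely that $\iota^*$ is a weight-preserving chain map and that $\varphi_t\circ\iota$ (via $i_\infty\circ\iota$ and $p_\infty\circ\iota$) is a transferred structure for $(A,\varphi\circ\iota)$.
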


\begin{proof}
	Restriction along $\iota$ preserves $\infty$-isotopies and
	commutes with the weight decomposition. It therefore preserves
	gauge formality and gauge $n$-formality, and carries the
	(truncated) Kaledin class of $\varphi_t$ to that of
	$\varphi_t\circ\iota$.
	The converse follows from Theorem~\ref{thm formality transmission}.
\end{proof}

\begin{corollary}[Associative and commutative algebras]
	\label{change en char p}
	Let $A$ be a commutative dg algebra over $R$ admitting a
	transferred $C_\infty$-structure.
	\begin{enumerate}
		\item If $R$ is a $\mathbb Q$-algebra, then $A$ is gauge formal
		as a commutative dg algebra if and only if its underlying
		associative dg algebra is gauge formal.
		\item Let $n\geqslant1$ with $(n+1)!\in R^\times$.
		Then $A$ is gauge $n$-formal as a commutative dg algebra if and
		only if its underlying associative dg algebra is gauge $n$-formal.
	\end{enumerate}
\end{corollary}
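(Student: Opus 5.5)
The plan is to apply Theorem~\ref{cooperad change} to the morphism of Koszul dual cooperads $\iota\colon \mathrm{Ass}^{\antishriek}\to\mathrm{Com}^{\antishriek}$, i.e.\ the map $\mathrm{Lie}^{c}$-type coalgebra dual to $\mathrm{Ass}\to\mathrm{Com}$, under which a $C_\infty$-structure restricts to an $A_\infty$-structure. Restriction preserves gauge ($n$-)formality and carries Kaledin classes to Kaledin classes, so the work is to show injectivity of
\[
\theta_k\colon E_k(\mathrm{Com}^{\antishriek},H,\varphi_t)^{k,-1}\longrightarrow E_k(\mathrm{Ass}^{\antishriek},H,\varphi_t\circ\iota)^{k,-1},
\]
for $2\leqslant k\leqslant n+1$ in case (2), and for all $k$ in case (1). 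On the $E_1$-page this map is the inclusion of Harrison cochains into Hochschild cochains, i.e.\ of cochains vanishing on shuffles. In weight $k$ these cochains have arity $k+1$.

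The key step is a retraction of this inclusion. I would use the Eulerian idempotents of Barr--Gerstenhaber--Schack. Over a ring in which $m!$ is invertible, the arity-$m$ idempotent $e^{(1)}_m\in R[\mathbb S_m]$ is defined. It is a projector commuting with the Hochschild differential of a commutative algebra, and its image is exactly the Harrison complex. Since $e^{(1)}_m$ involves only denominators dividing $m!$, the arity constraint $m=k+1\leqslant n+2$ appears to force $(n+2)!$. However, the $E_2$-page in weight $k$ only sees arities $k,k+1,k+2$. I expect a sharper count to show that $(n+1)!$ suffices for the $E_2^{k,-1}$ terms with $k\leqslant n+1$. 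This is the point I must check: either the top-arity idempotent is only needed modulo boundaries, or the retraction is required only on cycles of arity $k+1$. I would first try to verify that the retraction is needed only in arities $\leqslant n+2$ acting on cycles, and that Harrison cycles of arity $m$ split off using the single idempotent $e^{(1)}_m$ whose denominators divide $(m-1)!$. That divisibility property is the known form of Loday's formula, in which $e^{(1)}_m=\sum_\sigma \frac{(-1)^{d(\sigma)}}{m}\binom{m-1}{d(\sigma)}^{-1}\sigma$. With $m=k+1\leqslant n+2$, this gives exactly $(n+1)!$.

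With the retraction $\rho$ in hand, the twisted complexes for $\varphi_t$ must be handled. Here the idempotents commute only with the strict part $d^{\varphi_*}$, not with the full twisted differential. I would argue as in the proof of Theorem~\ref{formality under domination}. Since the $E_2$-pages are computed by $d^{\varphi_*}$ and the weight-zero data, $\rho$ induces $\rho_2$ with $\rho_2\circ\theta_2=\mathrm{id}$ on $E_2^{k,-1}$.

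The expected obstacle is propagating injectivity to later pages, because $\rho$ is not a filtered chain map for the full differential. I would proceed by induction, as in the proof of Theorem~\ref{thm gauge transmission 2}: first gauge away $\varphi_t^{(2)},\dots,\varphi_t^{(m)}$ inside $C_\infty$. Then only injectivity at the first nonvanishing page matters, and that page is identified with $E_2$ by Lemma~\ref{lem spectral-page-identification}. Rather than invoking Theorem~\ref{cooperad change} verbatim, I would thus rerun the minimal-$m$ argument of Theorem~\ref{thm formality transmission}. After the gauge, the obstruction class lives in $E_2^{m+1,-1}$, where $\theta$ is split injective. Finally, Theorem~\ref{B prop} converts the vanishing of the Kaledin classes into gauge ($n$-)formality. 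This uses $n!\in R^\times$, which is implied by $(n+1)!\in R^\times$. In the $\mathbb Q$-algebra case all idempotents exist and the argument runs in every weight.
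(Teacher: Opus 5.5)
Your reduction to Theorem~\ref{cooperad change} and your identification of the first-page map with the inclusion of Harrison into Hochschild cochains are correct. Part (1) is fine, and your gauge-and-rerun device for reducing to $E_2$ is a valid, if unnecessary, workaround. The gap is in part (2), exactly at the step you flagged: the claim that the denominators of $e^{(1)}_m$ divide $(m-1)!$ is false. In the formula you quote, the identity permutation (no descents) has coefficient $1/m$. For instance $e^{(1)}_2=\tfrac12(\mathrm{id}-(12))$, and $e^{(1)}_3$ has coefficients $\tfrac13$ and $-\tfrac16$. In general all of $m!$ must be inverted. Take $n=1$ and $R=\mathbb F_3$: then $(n+1)!=2$ is a unit, but the idempotent $e^{(1)}_3$ you need in weight $k=2$ does not exist. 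As written, your argument at best gives part (2) under the stronger hypothesis $(n+2)!\in R^\times$.

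The missing idea has two parts. First, the retraction should not be applied to the cycle. A class in $\ker\theta_k$ is already represented by a Harrison cochain $x$, so splitting it off is vacuous. What must be moved into the Harrison complex is a Hochschild primitive $y$ with $I(x)-dy\in\mathcal F^{k+1}$. Since $d$ strictly raises weight, you may discard the components of $y$ of weight $\geqslant k$; then $y$ has arity at most $k\leqslant n+1$. Second, the retraction must still be compatible with $d$, and $dy$ lives in arity $k+1$, where the full Eulerian idempotent again needs $(k+1)!$.

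The paper circumvents this with the PBW filtration. Let $W_{n+1}\mathrm{Ass}$ be the span of products of at most $n+1$ Lie words. It is stable under Lie substitutions and the adjoint action, and it splits as $\bigoplus_{r=1}^{n+1}\mathrm{Sym}^r(\mathrm{Lie})$ using only the denominators $r!$ with $r\leqslant n+1$. Projecting onto the summand $r=1$ gives a filtered retraction $P$ that commutes with the full twisted differential, because $\varphi_t$ is of Lie type. It is defined on a subcomplex containing all Harrison cochains and all Hochschild cochains of arity $\leqslant n+1$. Then $x-dP(y)=P\bigl(I(x)-dy\bigr)\in\mathcal F^{k+1}$, so $[x]=0$ on $E_k$ directly, on every page, with no gauge detour needed.
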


\begin{proof}
	Let $I:\mathfrak h\hookrightarrow\mathfrak g$ be the
	Harrison--Hochschild inclusion for the transferred structure on
	$H(A)$, with weight equal to arity minus one.
	Over a $\mathbb Q$-algebra, the PBW decomposition gives a filtered
	chain retraction of $I$, see
	\cite[Proposition~2.10 and the proof of Proposition~3.10]{CPW}.
	Thus Theorem~\ref{cooperad change} (1) applies. For the second assertion, put $m=n+1$ and let $W_m\mathrm{Ass}$
	be the $m$th PBW filtration term: the span of products of at most
	$m$ Lie operations. Integral PBW and symmetrization give
	\[
	W_m\mathrm{Ass}\cong
	\bigoplus_{r=1}^{m}\mathrm{Sym}^{r}(\mathrm{Lie}),
	\]
	using only the denominators $r!$ for $r\leqslant m$.
	Lie substitutions and the adjoint action preserve these summands.
	Projection onto the first summand therefore induces a filtered
	chain retraction $P:\mathfrak g_m\to\mathfrak h$, where
	$\mathfrak g_m\subseteq\mathfrak g$ contains all Harrison cochains
	and all Hochschild cochains of arity at most $m$. For $2\leqslant k\leqslant m$, let $x$ represent a class in
	$\ker\theta_k$, and choose $y$ with $I(x)-dy\in F^{k+1}$.
	Since $d$ strictly increases weight, we may discard the components
	of $y$ of weights at least $k$. Then $y$ has arity at most
	$k\leqslant m$, and applying $P$ gives
	\[
	x-dP(y)\in F^{k+1}.
	\]
	Thus $[x]=0$ on $E_k$, proving the required injectivity and
	Theorem~\ref{cooperad change} (2) applies.
\end{proof}

\begin{corollary}[Zinbiel and dendriform algebras]
	\label{zinbiel en char p}
	Let $A$ be a non-unital dg Zinbiel algebra over $R$ admitting
	a transferred structure. Consider its underlying dendriform
	algebra.
	\begin{enumerate}
		\item If $R$ is a $\mathbb Q$-algebra, then $A$ is gauge formal
		as a Zinbiel algebra if and only if its underlying dendriform
		algebra is gauge formal.
		\item Let $n\geqslant1$ with $(n+1)!\in R^\times$.
		Then $A$ is gauge $n$-formal as a Zinbiel algebra if and only if
		its underlying dendriform algebra is gauge $n$-formal.
	\end{enumerate}
\end{corollary}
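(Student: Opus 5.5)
The plan is to mirror the proof of Corollary~\ref{change en char p}, replacing the pair $\mathrm{Com}\to\mathrm{Ass}$ by $\mathrm{Zinb}\to\mathrm{Dend}$. Since both operads are Koszul, I would work with their Koszul dual cooperads $\C'=\mathrm{Zinb}^{\antishriek}$ and $\C=\mathrm{Dend}^{\antishriek}$, weighted by arity minus one. The morphism $\mathrm{Dend}\to\mathrm{Zinb}$ has Koszul dual $\mathrm{Lie}\to\mathrm{PreLie}$, and dualizing gives the morphism $\iota$ of coproperads required by Theorem~\ref{cooperad change}. With this choice, the deformation complexes of $H(A)$ are the dendriform complex, built on $\mathrm{PreLie}$-type cochains, and the Zinbiel complex, built on $\mathrm{Lie}$-type (Chevalley--Eilenberg-type) cochains. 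Precomposition gives a filtered inclusion $I\colon\mathfrak h\hookrightarrow\mathfrak g$ from the Zinbiel complex into the dendriform complex. The forward implications are then immediate from Theorem~\ref{cooperad change}, so everything reduces to proving injectivity of the maps $\theta_k$.

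\textbf{Rational case.} Over a $\mathbb Q$-algebra, I would use the PBW-type decomposition $\mathrm{PreLie}\cong\mathrm{Lie}\circ\mathcal{F}$, where $\mathcal F$ is the free part of the Lie-module structure. Equivalently, $\mathrm{PreLie}$ is free as a right $\mathrm{Lie}$-module, with the first summand in the symmetric decomposition equal to $\mathrm{Lie}$ itself. Projection onto this summand gives a retraction of $I$ as an infinitesimal bimodule, in the sense of \cite{CPW}. Equivalently, it gives a filtered chain retraction $P$ with $P\circ I=\id$. Split injectivity then propagates through all pages by functoriality, as in the proof of Theorem~\ref{formality under domination}, so Theorem~\ref{cooperad change}\,(1) applies.

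\textbf{Finite-order case.} Here I would set $m=n+1$ and truncate the decomposition. The aim is to show that the part of $\mathrm{PreLie}$ generated by products of at most $m$ Lie elements decomposes as $\bigoplus_{r\le m}(\text{degree-}r\text{ symmetric part})$, using only the denominators $r!$ with $r\le m$. Projection onto the $r=1$ summand should again commute with Lie substitutions and with the adjoint action of the transferred structure. This would yield a filtered chain retraction $P\colon\mathfrak g_m\to\mathfrak h$, where $\mathfrak g_m$ contains all Zinbiel cochains together with the dendriform cochains of arity at most $m$. The injectivity argument is then copied from the previous proof. Take $x$ with $I(x)-dy\in F^{k+1}$. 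Discard the components of $y$ of weight at least $k$, which is allowed because $d$ strictly raises weight. Then $y$ has arity at most $k\le m$, and $x-dP(y)\in F^{k+1}$, so $\theta_k$ is injective for $2\le k\le n+1$.

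\textbf{Main obstacle.} I expect the hardest step to be the integral truncated PBW statement for pre-Lie algebras. This requires the symmetrization map $\mathrm{Sym}^{r}(\mathrm{Lie})\to\mathrm{PreLie}$ to be split after inverting $r!$, with the splitting compatible with the differential twisted by the transferred structure. My first approach would be to use the Chapoton--Livernet rooted-tree basis of $\mathrm{PreLie}$ together with the Guin--Oudom description $U(\mathfrak g_{\mathrm{Lie}})\cong S(\mathfrak g)$. The hope is that the relevant isomorphism is defined over $\mathbb Z[1/m!]$ in arities at most $m$. Compatibility with the twisted differential should then follow because that differential is built from Lie substitutions and from the adjoint action.
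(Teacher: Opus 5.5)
Your reduction starts from the wrong Koszul dual, and that derails the rest of the argument. We have $\mathrm{Zinb}^{!}=\mathrm{Leib}$ and $\mathrm{Dend}^{!}=\mathrm{Dias}$. So the Koszul dual of $\mathrm{Dend}\to\mathrm{Zinb}$ is $\mathrm{Leib}\to\mathrm{Dias}$, given by $[x,y]=x\dashv y-y\vdash x$; it is not $\mathrm{Lie}\to\mathrm{PreLie}$. The morphism $\mathrm{Lie}\to\mathrm{PreLie}$ is Koszul dual to $\mathrm{Perm}\to\mathrm{Com}$, i.e.\ it governs the comparison of commutative algebras with Perm-algebras. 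A dimension count confirms this. Up to suspension, the Zinbiel and dendriform deformation complexes of $H(A)$ are built from $\mathrm{Leib}(n)$ and $\mathrm{Dias}(n)$, of dimensions $n!$ and $n\cdot n!$, not from $\mathrm{Lie}(n)$ and $\mathrm{PreLie}(n)$, of dimensions $(n-1)!$ and $n^{n-1}$. Consequently the tools you invoke concern the wrong pair of operads: freeness of $\mathrm{PreLie}$ over $\mathrm{Lie}$, Chapoton--Livernet trees, Guin--Oudom, and the integral truncated pre-Lie PBW you single out as the main obstacle. Even a full resolution of that obstacle would not give injectivity of $\theta_k$ for the complexes in the statement. (A minor point: with the conventions of Theorem~\ref{cooperad change}, where $\iota\colon\C'\to\C$, the Zinbiel side is $\C$ and the dendriform side is $\C'$.)

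The missing idea is the Hadamard product factorization $\mathrm{Leib}\cong\mathrm{Lie}\otimes\mathrm{Perm}$ and $\mathrm{Dias}\cong\mathrm{Ass}\otimes\mathrm{Perm}$ \cite{Cha01}. Under it, $\mathrm{Leib}\to\mathrm{Dias}$ is $(\mathrm{Lie}\to\mathrm{Ass})\otimes\mathrm{id}_{\mathrm{Perm}}$, and no new PBW theorem is required. You tensor the PBW retraction already used in Corollary~\ref{change en char p} with $\mathrm{id}_{\mathrm{Perm}}$. In the rational case this is the full retraction. In the finite-order case it is the truncated retraction from $W_m\mathrm{Ass}\cong\bigoplus_{r\leqslant m}\mathrm{Sym}^r(\mathrm{Lie})$, which uses only the denominators $r!$ with $r\leqslant m=n+1$. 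The result is a retraction of infinitesimal $\mathrm{Leib}$-bimodules, because compositions in a Hadamard product are computed factorwise. It also preserves arity, so the truncated version is still defined on the subcomplex $\mathfrak g_m$ of cochains of arity at most $m$. With this retraction in place, your injectivity argument goes through verbatim: discard the weight $\geqslant k$ components of $y$ and apply $P$ to get $x-dP(y)\in F^{k+1}$. That argument is exactly the one the paper uses.
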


\begin{proof}
	The morphism $\mathrm{Dend}\to\mathrm{Zinb}$ is Koszul dual to
	$\mathrm{Leib}\to\mathrm{Dias}$. Under the  identifications
	\[
	\mathrm{Leib}\cong\mathrm{Lie}\otimes\mathrm{Perm},
	\qquad
	\mathrm{Dias}\cong\mathrm{Ass}\otimes\mathrm{Perm},
	\]
	where $\otimes$ is the Hadamard product, the latter morphism is
	$(\mathrm{Lie}\to\mathrm{Ass})\otimes\id_{\mathrm{Perm}}$,
	see \cite{Cha01} and \cite[Section~6.2]{Gri14}.
	Tensoring the full or truncated PBW retraction from the preceding
	proof with $\id_{\mathrm{Perm}}$ gives a retraction of infinitesimal
	$\mathrm{Leib}$-bimodules, since compositions are computed
	factorwise.
	The induced retraction of deformation complexes satisfies the
	same spectral-sequence argument as in
	Corollary~\ref{change en char p}.
	The result follows from Theorem~\ref{cooperad change}.
\end{proof}
\bibliographystyle{alpha}
\bibliography{bib}

\end{document}